\numberwithin{equation}{section}
\newtheorem{theorem}{Theorem}[section]
\theoremstyle{plain}
\newtheorem{theoremletter}{Theorem}
\newtheorem{lemma}[theorem]{Lemma}
\newtheorem{lemmaletter}[theoremletter]{Lemma}
\newtheorem{corollary}[theorem]{Corollary}
\newtheorem{proposition}[theorem]{Proposition}
\newtheorem{remark}[theorem]{Remark}
\newcommand{\dx}{\,\mathrm{d}x}
\newcommand{\dt}{\,\mathrm{d}t}
\newcommand{\ds}{\,\mathrm{d}s}
\DeclareMathOperator{\supp}{supp}
\newcommand{\loca}{\operatorname{loc}}
\title[]{Application of a profile decomposition theorem to elliptic equations with critical growth}
\author[D.~Ferraz]{Diego Ferraz}
\address{Department of Mathematics,
	Federal University of Rio Grande do Norte
	59078-970, Natal-RN, Brazil}
\email{diego.ferraz.br@gmail.com}
\thanks{Corresponding author: Diego Ferraz}
\subjclass[2020]{35J61; 35B33; 58E05}
\date{\today}
\keywords{Profile decomposition; Critical Sobolev exponent; Concentration-compactness; Oscillatory nonlinearity}
\begin{document}
		
\begin{abstract}
	This paper introduces new variational methods centered on the direct application of a profile decomposition theorem for bounded sequences in Sobolev spaces. We employ these methods to prove the existence of ground state solutions for a class of semilinear elliptic equations in $\mathbb{R}^N$ with critical Sobolev growth, set in an asymptotically periodic framework where the coefficients converge to periodic functions at infinity. Our approach successfully addresses highly general nonlinearities, including a subcritical term that does not need to satisfy the classical Ambrosetti-Rabinowitz condition and a critical term that extends far beyond the standard pure power assumption to include functions with oscillatory behavior. We prove the existence of ground states under two alternative conditions: either a strict energy gap between the minimax levels of the original and asymptotic problems or a direct energy comparison between the associated functionals. Some restrictive assumptions, such as specific decay rates for the coefficients or monotonicity properties of the nonlinearities, are not required in our results.
\end{abstract}

	\maketitle
 

\section{Introduction}

The study of variational problems in unbounded domains is fundamentally linked to the loss of compactness in Sobolev embeddings. A breakthrough in understanding this phenomenon came with the seminal concentration-compactness principle, introduced by P.-L. Lions in his foundational works \cite{zbMATH04155283,MR778970,zbMATH04155284,zbMATH04155282}. A cornerstone of this theory, presented in \cite[Lemma I.1]{zbMATH04155283}, provides a precise description of the behavior of weakly convergent sequences. It establishes that any loss of strong convergence manifests as the concentration of the sequence's ``mass" at a countable set of points, forming profiles that behave as ``Dirac masses". Crucially, the lemma provides a quantitative relationship between the size of these concentrated masses and the corresponding concentration of gradient energy, becoming an indispensable tool in the analysis of elliptic equations with nonlinearities having critical growth.

Building upon the concentration-compactness principle, M. Struwe \cite{zbMATH03849694} offered a more detailed characterization of the behavior of non-convergent Palais-Smale sequences. His analysis focused on the classical Brézis-Nirenberg problem \cite{MR709644},
\begin{equation}\label{B-N}
	\left\{
		\begin{aligned}
			-\Delta u &= |u|^{2^\ast -2} u + \lambda u,\quad &&\text{in }\Omega,\\
			u&=0,\quad &&\text{on }\partial \Omega,
		\end{aligned}
		\right.
	\end{equation}
	where $\Omega \subset \mathbb{R}^N$ is a bounded smooth domain, $N \geq 3,$ $\lambda >0$ and $2^\ast = 2N/(N-2). $ In essence, Struwe's global compactness result shows that any loss of compactness is highly structured. A non-convergent Palais-Smale sequence can be decomposed, or ``split", into a sum consisting of its weak limit (which is a solution to the original problem) and a series of ``bubbles". These ``bubbles" are, up to translation and rescaling, solutions to the limiting problem at infinity $-\Delta u = |u|^{2^\ast -2} u,$ in $\mathbb{R}^N,$ $u(x)\rightarrow 0,$ as $|x|\rightarrow \infty.$ This description leads to the terminology ``Splitting Lemma", which provides a complete picture of all Palais-Smale sequences, demonstrating that the failure of convergence is precisely quantified by the appearance of solutions to the limit equation.

Following the global compactness result of M. Struwe \cite{zbMATH03849694}, a central question in the field became the precise characterization of the remainder term in the splitting of Palais-Smale sequences. A significant advance in this direction was made by S. Solimini in \cite{zbMATH00834238}. While the remainder was known to ``vanish" in some sense, S. Solimini provided a much finer analysis. He proved that although the remainder may not converge to zero in the optimal Lorentz space $L^{2^\ast, 2}(\mathbb{R}^N)$, it does converge to zero in $L^{2^\ast, q}(\mathbb{R}^N)$ for any $q > 2$. This result offered a more precise, quantitative description of the vanishing phenomenon, showing that the loss of compactness was more structured than previously understood.

This line of inquiry, aiming for a complete characterization of the remainder term and avoiding Dirac masses, culminated in the framework of \textit{profile decomposition}, whose central ideas were rigorously formulated by P. Gérard in \cite{zbMATH01215984} and later extended by S. Jaffard \cite{zbMATH01270852}. This type of result, now central to the analysis of non-compact problems, provides a complete description of why a bounded sequence in a Sobolev space may fail to converge strongly. It expresses any such sequence as a sum of two components: a series of well-chosen ``profiles", which are generated from a fixed set of functions through scaling and translation, and a remainder term that vanishes in a stronger sense (e.g., in a better Lebesgue space). The power of this framework was recognized, and the theory was extended from the specific context of Palais-Smale sequences to arbitrary bounded sequences in a wide range of function spaces. Since then, significant effort has been dedicated to generalizing profile decomposition to various settings, and the literature on this topic has become vast, we refer the reader to \cite{zbMATH07573810,tintabook,zbMATH06252858,zbMATH06419145,zbMATH06324011,zbMATH06505690} and the references therein.

While the foundational principles of concentration-compactness have become ubiquitous in nonlinear analysis, the modern and more technical framework of profile decomposition has been, in contrast, less frequently exploited to directly establish existence results for semilinear elliptic equations. Bridging this gap between the abstract theory and its application are the noteworthy contributions of K. Tintarev, I. Schindler and K.-H. Fieseler \cite{tintabook,MR2409935,MR2465979}. They not only refined the profile decomposition framework for the Sobolev space $D^{1,2}(\mathbb{R}^N)$ but also demonstrated its power as a direct tool for solving variational problems. A cornerstone of their work (\cite[Lemma 5.3 and Theorem 5.1]{tintabook}) is the following result,
\begin{theoremletter}\cite[Theorem 6.1]{MR2465979}\label{teo_tinta}
	Let $(u_k) \subset D^{1,2}(\mathbb{R}^N)$ be a bounded sequence in the standard $D^{1,2}(\mathbb{R}^N)$--norm and $\gamma > 1.$ There exist $(w^{(n)})_{n \in \mathbb{N}_\ast} \subset D^{1,2}(\mathbb{R}^N),$ $(y^{(n)}_k)_{k \in \mathbb{N}}\subset \mathbb{Z}^N,$ $(j_k^{(n)})_{k \in \mathbb{N}}\subset \mathbb{Z}$ and disjoint sets (if nonempty) $ \mathbb{N}_0,$ $\mathbb{N}_{+ },$ $\mathbb{N}_{-}\subset \mathbb{N},$ with $\mathbb{N}_\ast = \mathbb{N}_0\cup \mathbb{N}_+\cup \mathbb{N}_{-}$ such that, for a renumbered subsequence of $(u_k),$
	\begin{align}
		&\gamma ^{-\frac{N-2}{2} j_k ^{(n)} } u_k (\gamma ^{ - j_k ^{(n)}}\cdot + y_k ^{(n)}   ) \rightharpoonup  w^{(n)},\quad \text{as }k\rightarrow \infty,\ \text{in }D^{1,2}(\mathbb{R}^N),& \label{tinta1}\\
		&| j_k ^{(n)} - j_k^{(m)}| + | \gamma^{j_k ^{(n)}}(y_k ^{(n)}  -y_k ^{(m)}  ) | \rightarrow +\infty,\quad \text{as }k \rightarrow \infty,\ \text{for }m \neq n,& \label{tinta2}\\
		&\sum  _{n\in \mathbb{N}_\ast} \| \nabla w^{(n)}\|^2 _{2}  \leq \limsup_k \| \nabla  u_k \|^2_2,& \label{tinta3}\\
		&u_k - \sum _{n \in \mathbb{N}_\ast } \gamma ^{\frac{N-2}{2} j^{(n)}_k} w^{(n)} (\gamma^{j_k ^{(n)}}(\cdot - y_k^{(n)})  ) \rightarrow 0, \quad \text{as }k\rightarrow \infty,\ \text{in }L^{2^\ast }(\mathbb{R}^N),& \label{tinta4}
	\end{align}
	and the series in \eqref{tinta4} converges uniformly in $k.$ Furthermore, $1\in \mathbb{N}_0,$ $y_k^{(1)} = 0;$ $j_k^{(n)} = 0$ whenever $n \in \mathbb{N}_0;$ $j_k ^{(n)} \rightarrow - \infty,$ as $k \rightarrow \infty,$ whenever $n \in \mathbb{N}_{-};$ $j_k ^{(n)} \rightarrow + \infty,$ as $k \rightarrow \infty,$ whenever $n \in \mathbb{N}_{+}.$
\end{theoremletter}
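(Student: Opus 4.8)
The plan is to run the standard iterative gauge-extraction scheme for profile decompositions in Hilbert spaces, carried out for the noncompact group $G$ of operators on $D^{1,2}(\mathbb{R}^{N})$ generated by the rescalings $u\mapsto\gamma^{\frac{N-2}{2}j}u(\gamma^{j}\,\cdot\,)$, $j\in\mathbb{Z}$, and the integer translations $u\mapsto u(\cdot-y)$, $y\in\mathbb{Z}^{N}$; every element of $G$ is simultaneously an isometry of $D^{1,2}(\mathbb{R}^{N})$ and of $L^{2^{\ast}}(\mathbb{R}^{N})$, $D^{1,2}(\mathbb{R}^{N})$ is a Hilbert space, and these two facts are what make the scheme work. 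The cornerstone — and, I expect, the main obstacle — is the \emph{cocompactness} of the embedding $D^{1,2}(\mathbb{R}^{N})\hookrightarrow L^{2^{\ast}}(\mathbb{R}^{N})$ relative to $G$: if $(v_{k})$ is bounded in $D^{1,2}(\mathbb{R}^{N})$ and $g_{k}v_{k}\rightharpoonup0$ in $D^{1,2}(\mathbb{R}^{N})$ for \emph{every} sequence $(g_{k})\subset G$, then $\|v_{k}\|_{2^{\ast}}\to0$. I would prove the contrapositive: assuming $\limsup_{k}\|v_{k}\|_{2^{\ast}}\ge t>0$, cover $\mathbb{R}^{N}$ at each scale $\gamma^{-j}$, $j\in\mathbb{Z}$, by the cubes $\gamma^{-j}(y+Q)$ with $y\in\mathbb{Z}^{N}$, $Q=[0,1)^{N}$, interpolate the $L^{2^{\ast}}$-norm on each such cube between the Dirichlet energy of $v_{k}$ there and a weaker scale-invariant norm (of Besov type), and pigeonhole over the scales to locate $j_{k}$ and $y_{k}$ on which a definite fraction of $\|v_{k}\|_{2^{\ast}}$ survives; the gauge $g_{k}\in G$ with data $(j_{k},y_{k})$ then satisfies $g_{k}v_{k}\not\rightharpoonup0$ along a subsequence. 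This can be made quantitative: there is a nondecreasing $\eta\colon(0,\infty)\to(0,\infty)$ with $\eta(0^{+})=0$ such that the weak limit $w$ of $(g_{k}v_{k})$ may be chosen with $\|\nabla w\|_{2}\ge\eta(t)$, provided $\|\nabla v_{k}\|_{2}$ stays bounded. All the genuinely hard analysis — the criticality of the exponent, the geometry of the rescaling group — is concentrated here; what follows is comparatively soft.

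Next comes the iteration. Set $g_{k}^{(1)}=\mathrm{id}$ (hence $j_{k}^{(1)}=0$, $y_{k}^{(1)}=0$), let $w^{(1)}$ be the weak limit of a subsequence of $(u_{k})$ — so $1\in\mathbb{N}_{0}$ — and put $v_{k}^{(1)}=u_{k}-w^{(1)}$. Inductively, given the bounded sequence $v_{k}^{(n-1)}=u_{k}-\sum_{m<n}(g_{k}^{(m)})^{-1}w^{(m)}$, stop the process if $\|v_{k}^{(n-1)}\|_{2^{\ast}}\to0$; otherwise cocompactness furnishes $(g_{k}^{(n)})\subset G$, with data $(j_{k}^{(n)},y_{k}^{(n)})$, such that $g_{k}^{(n)}v_{k}^{(n-1)}\rightharpoonup w^{(n)}\ne0$ along a further subsequence, and one sets $v_{k}^{(n)}=v_{k}^{(n-1)}-(g_{k}^{(n)})^{-1}w^{(n)}$. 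Two companion facts are then proved together by induction on $n$: (i) the asymptotic orthogonality \eqref{tinta2}, namely $|j_{k}^{(m)}-j_{k}^{(n)}|+|\gamma^{j_{k}^{(n)}}(y_{k}^{(n)}-y_{k}^{(m)})|\to\infty$ for $m\ne n$ — for if this quantity stayed bounded along a subsequence, the gauge $g_{k}^{(m)}(g_{k}^{(n)})^{-1}$ would assume only finitely many values, hence be eventually a fixed $g_{\infty}\in G$, so that $g_{k}^{(m)}v_{k}^{(n-1)}=g_{\infty}\bigl(g_{k}^{(n)}v_{k}^{(n-1)}\bigr)\rightharpoonup g_{\infty}w^{(n)}\ne0$, whereas $v_{k}^{(n-1)}=v_{k}^{(m-1)}-\sum_{l=m}^{n-1}(g_{k}^{(l)})^{-1}w^{(l)}$ together with the inductive orthogonality forces $g_{k}^{(m)}v_{k}^{(n-1)}\rightharpoonup0$, a contradiction; and (ii) $g_{k}^{(m)}v_{k}^{(n)}\rightharpoonup0$ for every $m\le n$. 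Fact (ii) and the Hilbert structure eliminate the cross terms in $\|\nabla u_{k}\|_{2}^{2}=\|\nabla v_{k}^{(n)}\|_{2}^{2}+\sum_{m\le n}\|\nabla w^{(m)}\|_{2}^{2}+o(1)$ (the mixed profile terms vanishing because $g_{k}^{(m)}(g_{k}^{(l)})^{-1}w^{(l)}\rightharpoonup0$ for $m\ne l$), so $\sum_{m\le n}\|\nabla w^{(m)}\|_{2}^{2}\le\limsup_{k}\|\nabla u_{k}\|_{2}^{2}$; letting $n\to\infty$ gives \eqref{tinta3}.

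It remains to terminate the scheme and control the remainder. If the process stops after finitely many steps, \eqref{tinta4} is immediate; otherwise the quantitative cocompactness bound gives $\|\nabla w^{(n)}\|_{2}\ge\eta\bigl(\limsup_{k}\|v_{k}^{(n-1)}\|_{2^{\ast}}\bigr)$, and since $\sum_{n}\|\nabla w^{(n)}\|_{2}^{2}<\infty$ this forces $\limsup_{k}\|v_{k}^{(n)}\|_{2^{\ast}}\to0$ as $n\to\infty$. A diagonal argument over the nested subsequences then yields a single subsequence of $(u_{k})$ along which \eqref{tinta1}–\eqref{tinta3} persist and $\|v_{k}^{(n)}\|_{2^{\ast}}\to0$ at a rate controlled by $n$. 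The series $\sum_{n}(g_{k}^{(n)})^{-1}w^{(n)}$ converges in $L^{2^{\ast}}(\mathbb{R}^{N})$ uniformly in $k$: asymptotic orthogonality makes the rescaled-translated profiles asymptotically disjointly supported, so $\bigl\|\sum_{m\in F}(g_{k}^{(m)})^{-1}w^{(m)}\bigr\|_{2^{\ast}}^{2^{\ast}}\le2\sum_{m\in F}\|w^{(m)}\|_{2^{\ast}}^{2^{\ast}}$ once $k$ is large, and $\sum_{m}\|w^{(m)}\|_{2^{\ast}}^{2^{\ast}}\le\bigl(S^{-1}\sum_{m}\|\nabla w^{(m)}\|_{2}^{2}\bigr)^{2^{\ast}/2}<\infty$ by the Sobolev inequality (with best constant $S$) and $2^{\ast}>2$; this uniform-tail bookkeeping is the one other genuinely delicate point. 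Writing $u_{k}-\sum_{m}(g_{k}^{(m)})^{-1}w^{(m)}=v_{k}^{(n)}-\sum_{m>n}(g_{k}^{(m)})^{-1}w^{(m)}$ and sending first $n\to\infty$ and then $k\to\infty$ yields \eqref{tinta4}. A final passage to a subsequence arranges that, for each $n$, $(j_{k}^{(n)})_{k}$ tends to $+\infty$ (put $n\in\mathbb{N}_{+}$), tends to $-\infty$ (put $n\in\mathbb{N}_{-}$), or is bounded; in the bounded case, passing to its constant value and absorbing the corresponding fixed rescaling into the definition of $w^{(n)}$, one arranges $j_{k}^{(n)}=0$ and puts $n\in\mathbb{N}_{0}$. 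Since $g_{k}^{(1)}=\mathrm{id}$ we have $1\in\mathbb{N}_{0}$ with $j_{k}^{(1)}=0$ and $y_{k}^{(1)}=0$, and the disjoint decomposition $\mathbb{N}_{\ast}=\mathbb{N}_{0}\cup\mathbb{N}_{+}\cup\mathbb{N}_{-}$ follows.
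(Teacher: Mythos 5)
The paper offers no proof of this statement: it is quoted from Tintarev \cite{MR2465979} (cf.\ Tintarev--Fieseler \cite{tintabook}), with the remark that replacing the dyadic base by $\gamma>1$ and $\mathbb{R}^N$-translations by $\mathbb{Z}^N$ leaves the argument unchanged, and your proposal reconstructs precisely that argument: cocompactness of $D^{1,2}(\mathbb{R}^N)\hookrightarrow L^{2^\ast}(\mathbb{R}^N)$ relative to the dilation--translation gauges, iterative profile extraction, asymptotic orthogonality, Hilbert-space energy bookkeeping, and the almost-disjointness estimate giving the uniform convergence of the remainder series. The only local slip is in the orthogonality step: if the separation in \eqref{tinta2} stays bounded, the relative gauges $g_k^{(m)}(g_k^{(n)})^{-1}$ need not take finitely many values nor lie eventually in the group (their translation part is a bounded real vector, not necessarily eventually constant), but along a subsequence they converge in the strong operator topology to a fixed dilation--translation unitary, which is all your contradiction argument requires.
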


Equipped with Theorem \ref{teo_tinta}, these authors provided direct applications to prove existence results for stationary Schr\"{o}dinger equations of the form
\begin{equation*}
	-\Delta u + \lambda u = h(x,u)\quad\text{in }\mathbb{R}^N,
\end{equation*}
where $N \geq 3,$ $\lambda \geq 0$ and $h(x,u)$ satisfies general Sobolev growth conditions. A key insight, particularly developed by K. Tintarev in \cite{MR2465979}, is a general criterion for compactness based on energy level comparisons. The profile decomposition \ref{teo_tinta} characterizes all possible ways a Palais-Smale sequence can lose compactness, with each ``way" corresponding to a simpler ``asymptotic problem" generated by the symmetries of the underlying space $D^{1,2}(\mathbb{R}^N)$. The central idea is that if the minimax level of the original problem is strictly below the infimum of the minimax levels of all possible asymptotic problems, then no loss of compactness can occur, and the sequence must contain a convergent subsequence.

On the other hand, a fundamental tool in the study of problems with the critical Sobolev exponent is the Pohozaev identity, which often provides a structural criterion for the nonexistence of solutions. On star-shaped domains, including $\mathbb{R}^N$ itself, this identity often acts as a non-existence theorem for problems with pure critical growth. A groundbreaking insight, due to H. Brézis and L. Nirenberg in their celebrated work \cite{MR709644}, was to show that this rigidity can be broken by introducing a lower-order perturbation. They demonstrated that adding a linear term $\lambda u$ to the equation (see \eqref{B-N}) on a bounded domain restores the possibility of existence for certain values of $\lambda$. This idea of using a subcritical perturbation to overcome the critical obstruction proved to be exceptionally fruitful. In the context of $\mathbb{R}^N,$ a natural extension is to consider power-type perturbations, leading to the study of equations such as
\begin{equation}\label{eq_basic}
	-\Delta u + u = \lambda |u|^{p-2}u + |u|^{2^\ast -2}u,\quad \text{in }\mathbb{R}^N,
\end{equation}
where $\lambda >0$ and $2<p<2^\ast.$ This class of equations, featuring competing subcritical and critical nonlinearities, has since become a vast and active area of research; see for instance \cite{zbMATH06723549,MR1455065,MR2532816,MR1843972} and the references therein.

Within the vast research landscape of problems with competing nonlinearities, a significant class of nonautonomous problems is given by equations with asymptotically periodic coefficients, such as
\begin{equation*}
	\left\{		
	\begin{aligned}
		&-\Delta u + V(x) u = a(x)u^{p-1} + b(x)u^{2^\ast -1}\quad\text{in }\mathbb{R}^N,\\
		&u \in H^1 (\mathbb{R}^N),\ u>0\text{ in }\mathbb{R}^N,
	\end{aligned}
	\right.
\end{equation*}
where $2<p<2^\ast$, and the positive, continuous functions $V,$ $a,$ and $b$ approach $\mathbb{Z}^N$--periodic functions $V_P,$ $a_P,$ and $b_P$ as $|x| \to \infty$. This general setting includes the well-known autonomous problem \eqref{eq_basic} as a particular case. A powerful strategy for this class of problems is to first establish the existence of a solution for the limiting periodic equation and then use this as a baseline to find a solution for the original problem. This approach was successfully employed by C. O. Alves, J. M. do {\'O} and O. Miyagaki in \cite{zbMATH01658785}. To implement this strategy, their work relies on a set of precise technical assumptions. Notably, they impose a specific behavior on the critical coefficient, requiring that $b(x) = \| b \|_\infty + O(|x|^{2-N})$ as $|x| \to \infty$. The cornerstone of their proof, however, is a variational condition that orders the coefficients with respect to their periodic limits: $V(x) \leq V_P(x),$  $a(x) \geq a_P(x),$ and $b(x) \geq b_P(x)$ in $\mathbb{R}^N,$ where at least one of the inequalities is strict on a set of positive measure. 

Further generalizations in this direction were made by H. F. Lins and E. A. B. Silva in \cite{MR2532816}. A key contribution of their work was to significantly relax the conditions on the subcritical nonlinearity, removing the need for the classical Ambrosetti-Rabinowitz condition,
\begin{equation}\label{AR}
	\text{There exists } \mu > 2 \text{ such that } \mu F(x,s) \leq f(x,s)s, \quad \forall\, (x,s) \in \mathbb{R}^N \times \mathbb{R}.
\end{equation}
They established the existence of nontrivial solutions for equations of the form
\begin{equation}\label{elvis}
	\left\{  
	\begin{aligned}
		&-\Delta u + V(x) u = f(x,u) + b(x)|u|^{2^\ast -2}u\quad\text{in }\mathbb{R}^N,\\
		&u \in H^1 (\mathbb{R}^N),
	\end{aligned}
	\right.
\end{equation}
where $f(x,u)$ is a general subcritical term not necessarily satisfying \eqref{AR} and $b$ satisfies $b(x) = \| b \|_\infty + \mathcal{O}(|x-x_0|^{N-2} ),$ for some $x_0 \in \mathbb{R}^N.$ Their framework is also asymptotically periodic, but they impose a more technical condition on the convergence to the periodic limit, requiring that $|f(x,s) - f_P(x,s)| \leq \eta_0(x)|s|^q$, where $f_P$ is the $\mathbb{Z}^N$--periodic limit and $\eta_0(x)$ is a function that decays sufficiently fast at infinity, with $2<q<2^\ast.$ 

	A guiding strategy in the study of nonautonomous problems like \eqref{elvis}, as seen in \cite{MR1455065,MR2532816,MR1843972}, is to compare the original problem with its periodic counterpart at infinity. The key idea for proving existence is often to establish an ``energy gap", i.e., a strict inequality between the minimax levels of the two problems. This gap serves as a sufficient condition to ensure the compactness of Palais-Smale sequences, ultimately leading to the existence of a solution. However, H.F. Lins and E. A. B. Silva \cite{MR2532816} went a step further, investigating the challenging case where the energy levels may coincide. They proved that a solution can still be found in this scenario if a specific, well-chosen path attains the minimax level as a maximum. In such a case, a critical point is guaranteed to exist along that path by abstract topological results (see \cite[Theorem 2.1]{MR2532816}).

The preceding discussion highlights a clear trajectory in the literature which is to progressively relax the hypotheses on the subcritical term $f$ for the class of problems \eqref{elvis}. Based on this, a natural question arises:
\begin{quote}
	\textit{What is the most general class of critical nonlinearities $k(x,u)$ that still guarantees the existence of a solution, moving beyond the classical pure power assumption $b(x)|u|^{2^\ast-2}u$?}
\end{quote}
Inspired by the work of K. Tintarev in \cite{MR2465979}, we aim to answer this question by studying existence of nontrivial solutions, via an application of the profile decomposition result Theorem \ref{teo_tinta}, for the following semilinear elliptic equation
\begin{equation}\tag{$\mathscr{P}$}\label{P}
	-\Delta u + V(x) u = f(x,u) + k(x,u)\quad\text{in }\mathbb{R}^N,
\end{equation}
with $N \geq 3,$ under a combination of general assumptions. Here $V:\mathbb{R}^N \rightarrow \mathbb{R}$ is a nonnegative continuous potential allowed to vanish at some points of $\mathbb{R}^N,$ $f$ is a general subcritical term that does not need to satisfy the Ambrosetti-Rabinowitz condition, and, most importantly, $k$ is a critical term belonging to a broad class of functions that allows for oscillatory behavior. The precise hypotheses are detailed below.

Our approach generalizes the works previously discussed in several key aspects. For instance, unlike the framework in \cite{MR1843972}, we do not require the critical term to have the restrictive pure power form $b(x)|u|^{2^\ast - 2}u$ with its associated structural conditions on the weight $b(x)$. Furthermore, we dispense with the specific decay rates for the subcritical term $f(x,u)$ that were required in \cite{MR2532816}. Instead, our framework relies only on the natural asymptotic convergence of the problem to its periodic counterpart:
\begin{enumerate}[label=($H_\infty$),ref=$(H_\infty)$]
	\item \label{h_infinito}$\lim _{|x| \rightarrow \infty} |V(x) - V_P(x)| = \lim _{|x| \rightarrow \infty} | f(x,s) - f_P(x,s)| = \lim _{|x| \rightarrow \infty} | k(x,s) - k_P(x,s)|= 0,$
\end{enumerate}
where the continuous functions $V_P, f_P,$ and $k_P$ are $\mathbb{Z}^N$-periodic, $V_P \in L ^\infty (\mathbb{R}^N)$ and the limits hold uniformly on compact subsets of $\mathbb{R}$ for the nonlinear terms. The corresponding periodic problem is thus given by
\begin{equation}\tag{$\mathscr{P}_P$}\label{PP}
	-\Delta u + V_P(x) u = f_P(x,u) + k_P(x,u)\quad\text{in }\mathbb{R}^N.
\end{equation}

The main contribution of our work is to establish a new method, based on the profile decomposition Theorem \ref{teo_tinta}, for proving the existence of ground state solutions for the problem \eqref{P} and its $\mathbb{Z}^N$--periodic limit \eqref{PP}. Our framework is designed for a general asymptotically nonautonomous setting and accommodates highly flexible assumptions on the nonlinearities. It allows for oscillatory critical and subcritical terms while removing the need for classical monotonicity assumptions, such as the requirement that the function $s \mapsto (f(x,s)+k(x,s))/s$ be increasing.

Our analysis proceeds in two main stages, distinguished by the structure assumed for the critical term $k(x,s)$. We first address the semi-autonomous case, where $k(x,s) = b(x)g(s)$, establishing the core methodology in this more structured setting. Subsequently, we demonstrate how these techniques can be adapted, with necessary modifications, to handle the fully nonautonomous case. This division allows us to introduce our novel techniques in a more concrete context initially, deferring a lengthy and potentially abstract list of assumptions required for the fully nonautonomous case.

Employing the profile decomposition technique, we demonstrate that the concentration-compactness analysis fundamentally relies on the relationship between the problem's \eqref{P} minimax level $c(I)$ and the minimax level $c(J_{\ast} )$ associated with the limit problem $-\Delta u = g_{\ast}(u)$ in $\mathbb{R}^N,$ where $g_\ast$ is a suitable self-similar function (see \ref{g_selfsimilar}). This limit problem originates from the loss of compactness due to concentrating profiles arising from the action of dilations. While the inequality $c(I) \leq c(J_{\ast} )$ holds generally (see Proposition \ref{p_jota}), establishing the strict inequality $c(I) < c(J_{\ast })$ serves as a crucial sufficient condition for recovering compactness, particularly in the fully nonautonomous setting. At this point, one can either follow the classical Br\'{e}zis-Nirenberg approach, proving a direct estimate for $c(I)$ under suitable conditions on $f(x,s)$ and $k(x,s)$, or assume the abstract condition $c(I) < c(J_{\ast})$ directly.


\subsection{Standard hypotheses} In both cases, we make use of some standard hypotheses. The asymptotic condition \ref{h_infinito} is always assumed to hold, and we denote $F(x,s) = \int _0 ^s f(x, t ) \dt$ and $K(x,s) = \int _0 ^s k(x, t) \dt.$ Our main assumption on the potential $V$ is that it is nonnegative and satisfies the following condition, which is fundamental to ensure a well-posed variational setting:
\begin{enumerate}[label=($V_1$),ref=$(V_1)$]
	\item\label{V_autovalor} $V(x) \geq 0$ for all $x \in \mathbb{R}^N$ and
	\begin{equation*}
		d _1 := \inf \left\lbrace    \int _{\mathbb{R}^N} |\nabla u| ^2\mathrm{d}x +V(x)|u|^2\dx  :  u\in C_0 ^\infty  (\mathbb{R}^N)\text{ and } \int _{\mathbb{R}^N } u ^2\dx =1 \right\rbrace >0.
	\end{equation*}
\end{enumerate}
Motivated by \cite[Lemma 1.9]{tintabook}, we impose a general growth condition on the nonlinearity $f$. The next hypothesis controls the behavior of $f$ with quasi-critical growth near the origin and allowing it to oscillate on the Sobolev subcritical range at infinity.
\begin{enumerate}[label=($f_1$),ref=$(f_1)$]
	\item \label{f_geral} Given $\varepsilon>0,$ there exist $C_\varepsilon >0$ and $p_\varepsilon \in (2,2^\ast)$ such that
	\begin{equation*}
		|f(x,s)| \leq \varepsilon (|s| + |s|^{2^\ast -1}) + C_\varepsilon |s|^{p_\varepsilon - 1}.
	\end{equation*}
\end{enumerate}
To ensure that any related Palais-Smale sequence of \eqref{P} is bounded, we adopt a technical condition on $f$ introduced by X. H. Tang in \cite{MR3194360}. This hypothesis is weaker than the classical Ambrosetti-Rabinowitz condition.
\begin{enumerate}[label=($f_2$),ref=$(f_2)$]
	\item \label{f_tang} There is $\theta _0 \in (0,1)$ such that
	\begin{equation*}
		\frac{1-\theta ^2}{2}f(x,s) s \geq F(x,s)- F(x,\theta s), \quad \forall \, \theta \in [0,\theta _0].
	\end{equation*}
\end{enumerate}
We assume that the critical nonlinearity $k(x,s)$ satisfies a general growth condition governed by the critical Sobolev exponent.
\begin{enumerate}[label=($k_1$),ref=$(k_1)$]
	\item\label{k_geral} There is $\hat{a} _\ast >0$ such that $|k(x,s)| \leq \hat{a}_\ast |s|^{2^\ast-1}.$
\end{enumerate}
Moreover, the above conditions are assumed to hold for their periodic counterparts in Eq. \eqref{PP}. We impose a sufficient condition that enables a crucial comparison between the energy levels of \eqref{P} and \eqref{PP}.
\begin{enumerate}[label=($h_\ast$),ref=$(h_\ast)$]
	\item\label{h_minimax} Either the function $ s \mapsto \left(f_P(x,s)+k_P(x,s) \right)|s|^{-1}$ is strictly increasing, for all $x \in \mathbb{R}^N;$ or Eq. \eqref{PP} is independent of $x,$ that is, $V_P(x) = V_P >0,$ $f_P(x,s) = f_P(s)$ and $k_P(x,s) = k_P(s).$
\end{enumerate}

As discussed, the key to proving compactness is to establish an ``energy gap" between the original problem \eqref{P} and its periodic counterpart \eqref{PP}. Classically, this gap is guaranteed by imposing direct comparison conditions on the coefficients such as the ones described above in \cite{MR1455065,MR2532816,MR1843972} (see also Section \ref{s_remark}). However, following a more abstract and powerful approach by K. Tintarev \cite{MR2465979}, we instead assume this energy gap directly as a fundamental hypothesis. To formalize this concept, we now define the energy functional $I$ associated with \eqref{P} and the functional $I_P$ associated with \eqref{PP}.

We consider the Sobolev space $H^1_V(\mathbb{R}^N)=\overline{C_0^\infty (\mathbb{R}^N) }^{\| \, \cdot \, \| _V},$ as the completion of $C_0^\infty (\mathbb{R}^N)$ with respect to the norm
\begin{equation*}
	\| u \|_V = \left( \int _{\mathbb{R}^N} |\nabla u | ^2 + V(x) u^2 \dx \right)^{1/2}.
\end{equation*}
In Proposition \ref{p_sobolev} we prove that this space is well defined, with
\begin{equation}\label{charac}
	H^1 _V(\mathbb{R}^N) = \left\{  u \in H^1 (\mathbb{R}^N): \int _{\mathbb{R}^N} V(x) u^2\dx < + \infty\right\}.
\end{equation}
The Sobolev space $H^1 _{V} (\mathbb{R}^N)$ is a Hilbert space when endowed with the corresponding inner product
\begin{equation*}
	(u,v)_V :=  \int _{\mathbb{R}^N}   \nabla u \cdot \nabla v + V(x) u v  \dx.
\end{equation*}
Similarly, $H^1 _{V_P} (\mathbb{R}^N)$ is well defined, with the same characterization in \eqref{charac} being true, replacing $V$ by $V_P.$  The spaces $H^1 _{V} (\mathbb{R}^N)$ and $H^1 _{V_P} (\mathbb{R}^N)$ coincide with $H^1(\mathbb{R}^N) = H^1 _{V} (\mathbb{R}^N) =H^1 _{V_P} (\mathbb{R}^N).$ Next, we define
\begin{equation*}
	I(u) = \frac{1}{2}\int _{\mathbb{R}^N} |\nabla u | ^2 + V(x) u^2 \dx - \int _{\mathbb{R}^N} F(x,u) + K(x,u) \dx,\quad u \in H^1(\mathbb{R}^N),
\end{equation*}
and introduce the minimax level related to $I$ as $c(I) : = \inf _{\xi \in \Gamma _I} \sup_{t \geq 0} I( \xi (t)),$ where
\begin{equation*}
	\Gamma _I = \left\{  \xi \in C([0,\infty), H^1(\mathbb{R}^N) ) : \xi (0)=0\text{ and }\lim _{t \rightarrow \infty} I(\xi (t)) = - \infty \right\}.
\end{equation*}
The functional $I_P$ and the minimax level $c(I_P)$ are defined in an analogous way, replacing $V$ with $V_P$ and $F$ with $F_P(x,s) = \int _0 ^s f_P(x,t) \dt.$ As we establish in Lemma \ref{l_mpgeometry} and Remark \ref{r_compar}, these minimax levels are well-defined, positive, and finite. Moreover, under our assumptions, we prove that $c(I) \leq c(I_P)$ always holds (see Proposition \ref{p_ccp}). Our main compactness criterion is the following assumption:
\begin{enumerate}[label=($\mathscr{C}$),ref=$(\mathscr{C})$]
	\item\label{ce} $c(I) < c(I_P).$
\end{enumerate}



\subsection{First case: Semi-autonomous critical nonlinearity} We begin our analysis with the semi-autonomous case, where the critical nonlinearity has the structure $k(x,u) = b(x)g(u)$. Here $g$ is an autonomous nonlinearity, while the coefficient $b \in C(\mathbb{R}^N) \cap L^\infty(\mathbb{R}^N)$ is positive, for which we denote $b_0 = \inf _{x \in \mathbb{R}^N} b(x)>0.$ The general problem \eqref{P} thus takes the specific form,
\begin{equation}\tag{$\mathscr{Q}$}\label{Q}
	-\Delta u + V(x) u = f(x,u) + b(x)g(u)\quad\text{in }\mathbb{R}^N.
\end{equation}
This problem is studied alongside its corresponding asymptotic periodic equation, which is naturally induced by the general condition \ref{h_infinito},
\begin{equation}\tag{$\mathscr{Q}_P$}\label{QP}
	-\Delta u + V_P(x) u = f_P(x,u) + b_P(x)g(u)\quad\text{in }\mathbb{R}^N,
\end{equation}
where the positive coefficient $b_P\in C(\mathbb{R}^N) \cap L^\infty(\mathbb{R}^N)$ is the $\mathbb{Z}^N$--periodic limit of $b,$ i.e.,
\begin{equation}\label{b_limite}
	\lim _{|x|\rightarrow \infty } |b(x) - b_P(x)| = 0 .
\end{equation}

In our first case, the subcritical perturbation $f$ must be sufficiently strong to ensure the energy functional has the desired minimax structure. This is a standard requirement in the variational methods developed in \cite{MR709644,MR1455065,MR1843972}, and it is guaranteed by the next superquadratic condition:
\begin{enumerate}[label=($f_3$),ref=$(f_3)$]
	\item \label{f_porbaixo} There are $\lambda >0$ and $p_0 \in (2,2^\ast )$ such that $F(x,s) \geq \lambda  |s|^{p_0}.$ Additionally, either one of the following conditions holds,
	\begin{enumerate}[label=\roman*):]
		\item $N\geq4;$
		\item $N=3$ and $4<p_0<2^\ast;$ or
		\item $N=3,$ $2<p_0 \leq 4$ and $\lambda $ sufficiently large.
	\end{enumerate}
\end{enumerate}
For the critical term $g$, we work within the class of \textit{self-similar} functions, introduced and developed in \cite{MR2409935,tintabook,MR2465979,MR2409928}. This choice provides a significant generalization of the classical pure power nonlinearity, $g(s) = |s|^{2^\ast -2}s.$ Crucially, this class includes functions that exhibit oscillatory behavior, which is a central feature of the problem studied herein:
\begin{enumerate}[label=($g_1$),ref=$(g_1)$]
	\item \label{g_selfsimilar} There exists $\gamma>1$ such that $G(s) = \gamma ^{-Nj}G(\gamma ^{\frac{N-2}{2}j }s),$ for all $j \in \mathbb{Z}$ and $s \in \mathbb{R},$
\end{enumerate}
where $G(s) = \int _0 ^s g(t) \dt.$ This can be equivalently formulated as $g(s) = \gamma ^{-\frac{N+2}{2} j   }  g(\gamma ^{ \frac{N-2}{2}j  }   s ).$ Moreover, we verify in Section \ref{s_ss} that if $g$ satisfies \ref{g_selfsimilar}, then
\begin{enumerate}[label=($\hat{g}_1$),ref=$(\hat{g}_1)$]
	\item\label{g_geral} There is $a_\ast>0$ such that $ |g(s)| \leq a_\ast |s|^{2^\ast -1}.$
\end{enumerate}
In addition, we must also ensure that the critical term is sufficiently strong from below. In analogy with the subcritical condition \ref{f_porbaixo}, we impose the following requirement on $G$:
\begin{enumerate}[label=($g_2$),ref=$(g_2)$]
\item\label{g_porbaixo} $g(s)s>0$ and there is $\lambda _\ast >0$ such that $G (s) \geq \lambda _\ast |s|^{2^\ast}.$
\end{enumerate}
We now introduce a refined Ambrosetti-Rabinowitz type condition. We define $\bar{G}(s) =  g(s)s$ and consider its associated Sobolev-type constant
\begin{equation*}
	\mathbb{S}_{\bar{G}} = \inf \left\{ \|\nabla u\|_2^2 : u \in D^{1,2}(\mathbb{R}^N) \text{ and } \int_{\mathbb{R}^N} \bar{G}(u) \dx = 1 \right\}.
\end{equation*}
Because $\bar{G}$ is positive and also self-similar, it is known that this constant is positive and attained (see \cite[Theorem 5.2]{tintabook} and \cite[Proposition 2.2]{MR2465979}). Introducing also the classical Sobolev constant
\begin{equation}\label{constS}
	\mathbb{S} = \inf \left\{  \| \nabla u \|_2 ^2 : u \in D^{1,2}(\mathbb{R}^N)  \text{ and }\| u \|_{2^\ast} = 1\right\},
\end{equation}
we state our condition as follows,
\begin{enumerate}[label=($g_3$),ref=$(g_3)$]
	\item\label{g_AR} $\mu_\ast G(s) \leq g(s)s$ for some $\mu_\ast > 2$ such that $\mu_\ast \geq 2N/(N-2\kappa_\ast)$, where
	\begin{equation*}
		\kappa_\ast := \left( \frac{(\mathbb{S}/\mathbb{S}_{\bar{G}})^{N/(N-2)}}{2^\ast \lambda_\ast} \frac{\| b \|_\infty }{ b_0} \right)^{\frac{N-2}{2}} < \frac{N}{2}.
	\end{equation*}
\end{enumerate}
We note that in the classical case where $b(x) = 1$ and $g(s) = |s|^{2^\ast - 2}s$, we have $\bar{G}(s) = |s|^{2^\ast}$, which implies $\mathbb{S}_{\bar{G}} = \mathbb{S},$ $\kappa_\ast = 1$ and $\mu_\ast = 2^\ast.$ Next, we recall that a solution $u$ is a \textit{ground state solution} if it is a critical point of the energy functional $I$ that achieves the minimum energy among all nontrivial solutions. This level is defined by
\begin{equation}\label{defgs}
	\mathcal{G}_S (I) = \inf _{u \in \mathrm{Crit}(I) }  I(u),\ \text{where }\mathrm{Crit}(I) = \left\{ u \in H^1(\mathbb{R}^N) \setminus \{ 0 \} : I'(u)  = 0 \right\}.
\end{equation}

Our first result establishes the existence of a ground state for the purely periodic problem.
\begin{theorem}[$\mathbb{Z}^N$--periodic case]\label{teo_periodic}
	If $V = V_P,$ $f= f_P$ and $b=b_P,$ then Eq. \eqref{Q} admits a ground state solution. If, in addition, \ref{h_minimax} holds, then this solution is at the mountain pass level, i.e., $I(u) =\mathcal{G}_S(I)= c(I).$
\end{theorem}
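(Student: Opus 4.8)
Since $V=V_P$, $f=f_P$, $b=b_P$, the functional in \eqref{Q} is $I=I_P$ and $c(I)=c(I_P)$; the only source of non-compactness we must control directly is concentration by dilations, translation non-compactness being absorbed by $\mathbb Z^N$--periodicity. The plan is a concentration--compactness argument driven by the profile decomposition Theorem~\ref{teo_tinta}. By Lemma~\ref{l_mpgeometry}, $I$ has the mountain pass geometry, so $c(I)\in(0,\infty)$ and there is a Palais--Smale sequence for $I$ at the level $c(I)$; moreover, thanks to \ref{f_tang} (used together with the growth bounds \ref{f_geral}, \ref{k_geral} and with \ref{g_AR} to tame the critical term) every Palais--Smale sequence of $I$ is bounded in $H^1(\mathbb R^N)$, hence in $D^{1,2}(\mathbb R^N)$, so Theorem~\ref{teo_tinta} is available for it. The decisive preliminary step --- and the one I expect to be the main obstacle --- is the strict dilation gap $c(I)<c(J_\ast)$, where $c(J_\ast)$ is the minimax (equivalently least-energy) level of the self-similar limit problem $-\Delta w=g_\ast(w)$ and $c(I)\le c(J_\ast)$ holds unconditionally by Proposition~\ref{p_jota}. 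This is a Br\'ezis--Nirenberg type estimate: testing $I$ along a concentrating family of suitably scaled and truncated bubbles adapted to the self-similar critical term, one checks that the superquadratic lower bound \ref{f_porbaixo} on $F$ strictly lowers the maximal energy below $c(J_\ast)$ --- the three alternatives in \ref{f_porbaixo} ($N\ge4$; $N=3$ with $p_0>4$; or $N=3$ with $2<p_0\le4$ and $\lambda$ large) being exactly what is needed to absorb the error from truncating the bubble --- while \ref{g_AR}, through the explicit constant $\kappa_\ast$ comparing $\mathbb S$, $\mathbb S_{\bar{G}}$, $\lambda_\ast$, $b_0$ and $\|b\|_\infty$, supplies precisely the room needed for this estimate.

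Granting $c(I)<c(J_\ast)$, the heart of the argument is the following analysis, via Theorem~\ref{teo_tinta}, of an arbitrary bounded Palais--Smale sequence $(u_k)$ of $I$ at a level $c<c(J_\ast)$. For a dilation profile ($n\in\mathbb N_+\cup\mathbb N_-$) I would rescale the equation $-\Delta u_k+V_P u_k=f_P(x,u_k)+b_P(x)g(u_k)$ as in \eqref{tinta1}: by the self-similarity \ref{g_selfsimilar} the critical term is scale-invariant, whereas the factors attached to $V_P u$ and to $f_P(x,u)$ are killed by the dilation (for the $f_P$-part because $p_\varepsilon<2^\ast$ in \ref{f_geral}), so each such $w^{(n)}$ solves a limit equation $-\Delta w=g_\ast(w)$ in $D^{1,2}(\mathbb R^N)$ with $g_\ast$ self-similar; hence any nontrivial dilation profile carries energy at least $c(J_\ast)$. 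For a translation profile ($n\in\mathbb N_0$, so $j_k^{(n)}=0$ and $y_k^{(n)}\in\mathbb Z^N$), the $\mathbb Z^N$--periodicity of $V_P$, $f_P$, $b_P$ makes $u_k(\cdot+y_k^{(n)})$ again a bounded Palais--Smale sequence, with $u_k(\cdot+y_k^{(n)})\rightharpoonup w^{(n)}$ in $H^1(\mathbb R^N)$; testing $I'(u_k(\cdot+y_k^{(n)}))$ against $C_0^\infty(\mathbb R^N)$ and passing to the limit --- Rellich for the subcritical part, and a.e.\ convergence plus boundedness in $L^{(2^\ast)'}$ for the term $b_P\,g(\cdot)$ --- yields $I'(w^{(n)})=0$, so every nontrivial translation profile solves \eqref{Q}. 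Combining the decomposition \eqref{tinta4}, the asymptotic orthogonality \eqref{tinta2} and \ref{g_selfsimilar} in a Br\'ezis--Lieb type splitting of the energy gives $c=\sum_{n\in\mathbb N_0}I(w^{(n)})+\sum_{n\in\mathbb N_+\cup\mathbb N_-}J_\ast(w^{(n)})$ with all summands nonnegative; since $c<c(J_\ast)$, every dilation profile must be trivial, whence $c=\sum_{n\in\mathbb N_0}I(w^{(n)})$. Finally, not all profiles vanish: otherwise $u_k\to0$ in $L^{2^\ast}(\mathbb R^N)$, forcing $\int_{\mathbb R^N}F_P(x,u_k)+K_P(x,u_k)\dx\to0$ and $\int_{\mathbb R^N}f_P(x,u_k)u_k+k_P(x,u_k)u_k\dx\to0$, hence simultaneously $\|u_k\|_V^2\to2c$ (from $I(u_k)\to c$) and $\|u_k\|_V^2\to0$ (from $I'(u_k)\to0$), which is impossible because $c>0$.

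Applying this analysis first to a bounded Palais--Smale sequence at $c=c(I)$ (which exists by Lemma~\ref{l_mpgeometry}) produces a nontrivial solution $w^{(n_0)}$ of \eqref{Q} with $I(w^{(n_0)})\le\sum_n I(w^{(n)})=c(I)$; thus $\mathrm{Crit}(I)\neq\emptyset$ and $\mathcal G_S(I)\le c(I)$. Moreover $\mathcal G_S(I)>0$, because \ref{f_geral}, \ref{k_geral}, \ref{f_tang} and \ref{g_AR} yield a uniform positive lower bound for $I$ on $\mathrm{Crit}(I)$; so $\mathcal G_S(I)\in(0,c(I)]\subset(0,c(J_\ast))$. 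Now take $(u_k)\subset\mathrm{Crit}(I)$ with $I(u_k)\to\mathcal G_S(I)$; this is a bounded Palais--Smale sequence at the level $\mathcal G_S(I)<c(J_\ast)$, so the analysis above gives $\mathcal G_S(I)=\sum_{n\in\mathbb N_0}I(w^{(n)})$ with each $w^{(n)}$ a critical point of $I$, each summand nonnegative, and at least one $w^{(n_1)}\neq0$. Since $w^{(n_1)}\in\mathrm{Crit}(I)$ we obtain $I(w^{(n_1)})\ge\mathcal G_S(I)=\sum_n I(w^{(n)})\ge I(w^{(n_1)})$, forcing $I(w^{(n_1)})=\mathcal G_S(I)$ and the vanishing of every other profile; hence $w^{(n_1)}$ is a ground state solution of \eqref{Q}.

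It remains to show that \ref{h_minimax} forces $\mathcal G_S(I)=c(I)$. As $\mathcal G_S(I)\le c(I)$ is established, I would build, from the ground state $u\in H^1(\mathbb R^N)$ above, a path in $\Gamma_I$ along which $I$ peaks exactly at $I(u)$. If $s\mapsto(f_P(x,s)+k_P(x,s))|s|^{-1}$ is strictly increasing, then the monotonicity forces $t\mapsto I(tu)$ to be strictly increasing on $(0,1)$ and strictly decreasing on $(1,\infty)$ --- its unique critical point being $t=1$ because $I'(u)=0$ puts $u$ on the Nehari set --- so $\max_{t\ge0}I(tu)=I(u)$; since also $I(tu)\to-\infty$ as $t\to\infty$ by \ref{f_porbaixo} and \ref{g_porbaixo}, the path $\xi(t)=tu$ lies in $\Gamma_I$ and $c(I)\le\sup_{t\ge0}I(tu)=I(u)=\mathcal G_S(I)$. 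If instead \eqref{QP} is autonomous, then by elliptic regularity $u$ is a classical solution satisfying the Pohozaev identity, and substituting it into the rescaled functional gives $I(u(\cdot/t))=\tfrac12\|\nabla u\|_2^2\bigl(t^{N-2}-\tfrac{N-2}{N}t^N\bigr)$ for $t>0$, a quantity maximised at $t=1$ with value $\tfrac1N\|\nabla u\|_2^2=I(u)$ and tending to $-\infty$ as $t\to\infty$; hence $\xi(t)=u(\cdot/t)$, extended by $\xi(0)=0$, lies in $\Gamma_I$ and again $c(I)\le\sup_{t\ge0}I(u(\cdot/t))=I(u)=\mathcal G_S(I)$. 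Either way $\mathcal G_S(I)=c(I)$, which would complete the proof of Theorem~\ref{teo_periodic}.
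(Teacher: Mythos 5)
Your plan is workable and lands in the right place, but it is not the paper's route, and its load-bearing step is asserted rather than proved. The paper's proof of Theorem \ref{teo_periodic} never needs to exclude the dilation profiles: it keeps them in the energy estimate \eqref{c_ddoisP} and uses only that each $w^{(n)}$, $n\in\mathbb{N}_+$, is a critical point of the limit functional, so $J(w^{(n)})\ge0$ by \ref{g_AR} (see \eqref{jotamais}); a nonzero translation profile is then produced by showing that if all $w^{(n)}$, $n\in\mathbb{N}_0$, vanished, then $u_k\to0$ in $L^p$ for $2<p<2^\ast$ and the Sobolev-type computation with $\mathbb{S}_{\bar{G}_{b_P}}$ and \ref{g_AR} would force $c(I_P)\ge\tfrac{\mu_\ast-2}{2\mu_\ast}\mathbb{S}_{\bar{G}_{b_P}}^{N/2}$, contradicting Lemma \ref{l_minimax}. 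You instead rest everything on the strict gap $c(I)<c(J_\ast)$. That inequality is true, but your Br\'ezis--Nirenberg bubble computation (essentially Lemma \ref{l_minimax}) only bounds $c(I)$ from above by $\tfrac{\mu_\ast-2}{2\mu_\ast}\mathbb{S}_{\bar{G}_{b}}^{N/2}$; to get the gap you must still prove the lower bound $c(J_\nu)\ge\tfrac{\mu_\ast-2}{2\mu_\ast}\mathbb{S}_{\bar{G}_{b}}^{N/2}$, and uniformly in the limit coefficient, since the equation attached to a dilation profile is $-\Delta w=b^{(n)}g(w)$ with $b^{(n)}=b_P(z^{(n)})$ or $b_P(0)$, not a single problem. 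That bound does follow from \ref{g_AR} together with the attainment of $c(J_\nu)$ (Appendix \ref{s_app_minimax}), but it is missing from your write-up, and it is exactly the ingredient the paper's argument avoids needing.

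Three smaller corrections. First, Theorem \ref{teo_tinta} gives only the one-sided estimate \eqref{tinta3}, so the claimed Br\'ezis--Lieb \emph{equality} $c=\sum_{n\in\mathbb{N}_0}I(w^{(n)})+\sum_{n\in\mathbb{N}_+}J_\ast(w^{(n)})$ is not available from the profile decomposition; your argument only uses the inequality $c\ge\sum_{n\in\mathbb{N}_0}I(w^{(n)})+\sum_{n\in\mathbb{N}_+}J_\ast(w^{(n)})$, and the chain $I(w^{(n_1)})\ge\mathcal{G}_S(I)\ge\sum_nI(w^{(n)})\ge I(w^{(n_1)})$ works with it, so state it that way. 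Second, \ref{f_tang} does not make \emph{every} Palais--Smale sequence bounded: the argument of Proposition \ref{p_psbounded} needs $I'(u_k)\cdot u_k\to0$, i.e.\ a Cerami sequence (which Lemma \ref{l_mpgeometry} plus Ekeland's result provides) or exact critical points; both sequences you actually use qualify, but the blanket claim does not, and likewise your asserted uniform positive lower bound for $I$ on $\mathrm{Crit}(I)$ needs an argument (a lower bound for $\|u\|_{2^\ast}$ on $\mathrm{Crit}(I)$) --- the paper instead rules out total vanishing of the minimizing sequence via Lemma \ref{l_salvou}. Third, profiles with $n\in\mathbb{N}_-$ vanish because the sequence is bounded in $L^2$ (Remark \ref{rem_tinta}); they do not solve the limit equation, as the rescaled potential term does not disappear when $j_k^{(n)}\to-\infty$. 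Your identification $\mathcal{G}_S(I)=c(I)$ under \ref{h_minimax} coincides with the paper's Remark \ref{r_compar} argument and is correct.
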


Next, we provide a compactness result for the asymptotically periodic problem \eqref{Q} under the ``energy gap" condition \ref{ce}.
\begin{theorem}[Compactness]\label{teo_compact}
	If \ref{h_minimax} and \ref{ce} hold, then any sequence $(u_k) \subset H^1 (\mathbb{R}^N)$ such that $I(u_k) \rightarrow c(I)$ and $I'(u_k) \rightarrow 0,$ has a convergent subsequence. In particular, Eq. \eqref{Q} has a nontrivial solution.
\end{theorem}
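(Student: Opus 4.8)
The plan is to feed the Palais--Smale sequence into the profile decomposition Theorem \ref{teo_tinta} and then show, via an energy comparison built from \ref{ce} and the periodic ground state Theorem \ref{teo_periodic}, that every profile other than the weak limit is trivial. First I would record that $(u_k)$ is bounded in $H^1(\mathbb{R}^N)$: this is the now-standard consequence of \ref{f_tang} (which, letting $\theta\to0$, yields $\tfrac12 f(x,s)s\ge F(x,s)$), of \ref{g_AR} (which gives $\tfrac12 g(s)s-G(s)\ge(\tfrac{\mu_\ast}{2}-1)G(s)\ge0$ using also \ref{g_porbaixo}), and of the growth bounds \ref{f_geral}, \ref{g_geral}; I assume this preliminary estimate is in place. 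Being then bounded in $D^{1,2}(\mathbb{R}^N)$, the sequence admits along a subsequence a decomposition as in Theorem \ref{teo_tinta}, with profiles $w^{(n)}$ indexed by $\mathbb{N}_0\cup\mathbb{N}_+\cup\mathbb{N}_-$, scales $j^{(n)}_k$, shifts $y^{(n)}_k\in\mathbb{Z}^N$, and $w^{(1)}$ equal to the weak $H^1$-limit $u$ of $(u_k)$.

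Next I would pin down the equation satisfied by each profile. Testing $I'(u_k)\to0$ against $C_0^\infty(\mathbb{R}^N)$ and using the local compactness of the Sobolev embedding together with \ref{f_geral}, \ref{g_geral} shows $I'(u)=0$, so $u$ solves \eqref{Q}. For $n\in\mathbb{N}_-$, where $j^{(n)}_k\to-\infty$, the $L^2$-norm of the rescaled function in \eqref{tinta1} carries the factor $\gamma^{2 j^{(n)}_k}\to0$, so that sequence tends to $0$ in $L^2(\mathbb{R}^N)$ and therefore $w^{(n)}=0$. For $n\in\mathbb{N}_0\setminus\{1\}$ one has $j^{(n)}_k=0$ and, by \eqref{tinta2} together with $y^{(1)}_k=0$, $|y^{(n)}_k|\to\infty$; comparing $I'_P(u_k(\cdot+y^{(n)}_k))$ with $I'(u_k)$ and invoking \ref{h_infinito} (recalling that $V_P,f_P,b_P$ are $\mathbb{Z}^N$-periodic) shows that $u_k(\cdot+y^{(n)}_k)$ is an asymptotic Palais--Smale sequence for $I_P$, hence $I'_P(w^{(n)})=0$, i.e.\ $w^{(n)}$ solves \eqref{QP}. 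Finally, for $n\in\mathbb{N}_+$, where $j^{(n)}_k\to+\infty$, I would rescale the equation along $v_k=\gamma^{-\frac{N-2}{2}j^{(n)}_k}u_k(\gamma^{-j^{(n)}_k}\cdot+y^{(n)}_k)$: the potential term picks up $\gamma^{-2 j^{(n)}_k}\to0$, the subcritical term a factor tending to $0$ (using \ref{f_geral} and $p_\varepsilon<2^\ast$), the coefficient $b(\gamma^{-j^{(n)}_k}\cdot+y^{(n)}_k)\to\hat b_n$ for some constant $\hat b_n\in[b_0,\|b\|_\infty]$ (by \ref{h_infinito} and the periodicity of $b_P$), while the critical term is scale invariant thanks to the self-similarity \ref{g_selfsimilar}; hence $w^{(n)}$ solves the self-similar critical problem $-\Delta w=\hat b_n\,g(w)$ in $\mathbb{R}^N$, and $\hat b_n g$ is again self-similar.

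The core step is then a Br\'ezis--Lieb-type splitting of the energy and of the norm along the profiles, exploiting the asymptotic orthogonality \eqref{tinta2} and the uniform-in-$k$ convergence of the series in \eqref{tinta4}. Writing $J^{(n)}(w):=\tfrac12\|\nabla w\|_2^2-\hat b_n\int_{\mathbb{R}^N} G(w)$, this should yield
\[
c(I)=\lim_k I(u_k)=I(u)+\sum_{n\in\mathbb{N}_0\setminus\{1\}}I_P(w^{(n)})+\sum_{n\in\mathbb{N}_+}J^{(n)}(w^{(n)}),
\]
together with the corresponding splittings of $\|\nabla u_k\|_2^2$ and $\|u_k\|_2^2$ (concentrating profiles contributing no $L^2$-mass). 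Now $I(u)\ge0$, because $I(u)=I(u)-\tfrac12 I'(u)u=\int_{\mathbb{R}^N}\big(\tfrac12 f(x,u)u-F(x,u)\big)+\int_{\mathbb{R}^N} b\big(\tfrac12 g(u)u-G(u)\big)\ge0$ by \ref{f_tang}, \ref{g_AR}, \ref{g_porbaixo}. Each nontrivial $w^{(n)}$ with $n\in\mathbb{N}_0\setminus\{1\}$ is a nontrivial critical point of $I_P$, so $I_P(w^{(n)})\ge\mathcal{G}_S(I_P)=c(I_P)$ by Theorem \ref{teo_periodic} under \ref{h_minimax}; each nontrivial $w^{(n)}$ with $n\in\mathbb{N}_+$ gives $J^{(n)}(w^{(n)})\ge c(J_\ast)$, and the Br\'ezis--Nirenberg-type estimate $c(I)<c(J_\ast)$ is available in the semi-autonomous setting (it follows from the superquadraticity \ref{f_porbaixo} and the refined Ambrosetti--Rabinowitz condition \ref{g_AR}). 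Since \ref{ce} gives $c(I)<c(I_P)$, the presence of any nontrivial profile beyond $w^{(1)}$ would force $c(I)>c(I)$; hence all such profiles vanish. Then \eqref{tinta4} gives $u_k\to u$ in $L^{2^\ast}(\mathbb{R}^N)$, \eqref{tinta3} with weak lower semicontinuity gives $\|\nabla u_k\|_2\to\|\nabla u\|_2$, and the $L^2$-splitting gives $\|u_k\|_2\to\|u\|_2$; weak convergence together with convergence of norms in the Hilbert space $H^1(\mathbb{R}^N)$ then yields $u_k\to u$. For the last assertion, Lemma \ref{l_mpgeometry} and Remark \ref{r_compar} furnish a Palais--Smale sequence at the level $c(I)>0$, which by what precedes converges to a solution $u$ of \eqref{Q} with $I(u)=c(I)>0$, whence $u\neq0$.

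The step I expect to be the main obstacle is this energy/norm decomposition: one must simultaneously track the nonlinearities $F$ and $K=bG$ under $\mathbb{Z}^N$-translations and under dilations, using the self-similarity of $g$ to keep the critical part scale invariant and \ref{h_infinito} to control the drift of $V$, $f$, $b$ toward their periodic limits, and one must moreover recover the $L^2$-mass splitting that the $D^{1,2}$-valued statement of Theorem \ref{teo_tinta} does not provide directly. A secondary difficulty, requiring some care, is the rescaling analysis that identifies the limit equation and the exact constant $\hat b_n$ for the concentrating profiles and confirms $J^{(n)}(w^{(n)})\ge c(J_\ast)$.
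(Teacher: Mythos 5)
Your global strategy is the same as the paper's (profile decomposition of the Palais--Smale sequence, identification of the limiting equations for the translated and dilated profiles, exclusion of all profiles except $w^{(1)}$ by energy comparison using \ref{ce} and Lemma \ref{l_minimax}, then strong convergence), but the concluding convergence step, as you describe it, does not work. Theorem \ref{teo_tinta} does \emph{not} give an equality splitting of $\|\nabla u_k\|_2^2$ or of $\|u_k\|_2^2$ along the profiles: \eqref{tinta3} is a one--sided inequality, $\sum_n\|\nabla w^{(n)}\|_2^2\le\limsup_k\|\nabla u_k\|_2^2$, and so is Proposition \ref{p_convv} for the $V$--term; consequently the energy identity you write with an equal sign is only available as ``$c(I)\ge\dots$'' (which is all the exclusion argument needs, so that part is fine). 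More importantly, once all profiles except $w^{(1)}$ vanish, the statement ``\eqref{tinta3} with weak lower semicontinuity gives $\|\nabla u_k\|_2\to\|\nabla u\|_2$'' is false: both \eqref{tinta3} and weak lower semicontinuity bound $\limsup_k\|\nabla u_k\|_2^2$ \emph{from below} by $\|\nabla w^{(1)}\|_2^2$; neither gives the needed upper bound, and no $L^2$--mass splitting is provided either. The paper closes this step differently and more cheaply: since $u_k\to w^{(1)}$ in $L^p(\mathbb{R}^N)$ for $p\in(2,2^\ast]$ by \eqref{tinta4} and \eqref{er_tinta}, one tests the equation, $I'(u_k)\cdot u_k=o_k(1)$, and uses Lemmas \ref{l_converge} and \ref{l_Gconverge} to get $\|u_k\|_V^2\to\|w^{(1)}\|_V^2$, which together with weak convergence yields strong convergence in $H^1$. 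Your sketch needs this (or an equivalent) argument; as written, the mechanism you propose would fail.

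A secondary deviation: to kill the dilation profiles you invoke the strict inequality $c(I)<c(J_\ast)$ as if it were already established in the semi-autonomous case. The paper never proves this there; it only has the non-strict comparison $c(I)\le c(J_n)$ (Proposition \ref{p_jota}/Lemma \ref{l_jota}) and obtains the contradiction by combining it with the strict positivity $I(w^{(1)})>0$, which is why the paper must first prove $w^{(1)}\neq 0$ (via Lemma \ref{l_minimax}) \emph{before} excluding $n\in\mathbb{N}_+$. Your route can be repaired without that step: any nontrivial critical point $w$ of $J^{(n)}$ satisfies $\|\nabla w\|_2^2=\int b^{(n)}g(w)w$, hence $\|\nabla w\|_2^2\ge\mathbb{S}_{\bar G_{b^{(n)}}}^{N/2}$ and, by \ref{g_AR}, $J^{(n)}(w)\ge\frac{\mu_\ast-2}{2\mu_\ast}\mathbb{S}_{\bar G_{b^{(n)}}}^{N/2}\ge\frac{\mu_\ast-2}{2\mu_\ast}\mathbb{S}_{\bar G_{b}}^{N/2}>c(I)$ by $b^{(n)}\le\|b\|_\infty$ and Lemma \ref{l_minimax}; but this computation (or the paper's alternative through $I(w^{(1)})>0$) has to be supplied, since the bare assertion that ``$c(I)<c(J_\ast)$ is available'' is exactly the point at issue -- indeed in the fully nonautonomous setting the paper must either derive it from \ref{h_um} or assume it as \ref{h_dois}.
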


Finally, we state our main existence theorem for the asymptotically periodic case, which provides two alternative conditions for finding a ground state. In what follows, the notation $I \le I_P$ means that $I(v) \le I_P(v)$ for all $v \in H^1(\mathbb{R}^N)$.
\begin{theorem}[Ground state]\label{teo_gs}
	Assume \ref{h_minimax} holds. If either
	\begin{enumerate}[label=\bf \roman*):]
		\item The energy gap condition \ref{ce} holds, or;
		\item $I \leq I_P$;
	\end{enumerate}
	then Eq. \eqref{Q} has a ground state solution.
\end{theorem}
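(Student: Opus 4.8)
\textit{Proof proposal.} The plan is to prove, under either alternative, that the ground-state level $\mathcal{G}_S(I)$ of \eqref{defgs} lies in $(0,c(I)]$ and is attained by a nontrivial critical point of $I$. I first record three facts. (a) A nontrivial solution of \eqref{Q} at level $c(I)$ exists, so $\mathcal{G}_S(I)\le c(I)$: under alternative i) this is Theorem~\ref{teo_compact} applied to a mountain-pass sequence (whose existence is guaranteed by the geometry in Lemma~\ref{l_mpgeometry}); under alternative ii) it is produced by the path argument of the third paragraph below. (b) $\mathcal{G}_S(I)>0$, because every $u\in\mathrm{Crit}(I)$ obeys $\|u\|_V\ge\rho_0>0$ (from \ref{V_autovalor}, \ref{f_geral}, \ref{g_geral} and Sobolev's inequality) while Palais--Smale sequences of $I$ are bounded (from \ref{f_tang} and \ref{g_AR}). (c) In the present semi-autonomous setting the hypotheses \ref{f_porbaixo}, \ref{g_porbaixo}, \ref{g_AR} yield the strict Br\'ezis--Nirenberg-type inequality $c(I)<c(J_\ast)$, sharpening Proposition~\ref{p_jota}; moreover, using \ref{f_tang} at $\theta=0$ (so $\tfrac12 f(x,s)s\ge F(x,s)$) and \ref{g_porbaixo}--\ref{g_AR} (so $\tfrac12 g(s)s-G(s)\ge(\tfrac{\mu_\ast}{2}-1)\lambda_\ast|s|^{2^\ast}$), every nontrivial critical point $u$ of $I$ --- and likewise of $I_P$ or of the self-similar limit functional $J_\ast$ --- satisfies $I(u)=I(u)-\tfrac12\langle I'(u),u\rangle\ge b_0\big(\tfrac{\mu_\ast}{2}-1\big)\lambda_\ast\|u\|_{2^\ast}^{2^\ast}>0$; in particular $I_P(u)\ge\mathcal{G}_S(I_P)=c(I_P)$ by Theorem~\ref{teo_periodic} and \ref{h_minimax}, and $J_\ast(u)\ge c(J_\ast)$.

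Now take $u_k\in\mathrm{Crit}(I)$ with $I(u_k)\to\mathcal{G}_S(I)$; being a Palais--Smale sequence it is bounded, so $u_k\rightharpoonup u$ in $H^1(\mathbb{R}^N)$ along a subsequence. By weak continuity of $I'$ --- local compactness of Sobolev embeddings together with \ref{h_infinito} for the subcritical term, and a.e.\ convergence together with the critical growth \ref{g_geral} for $b(x)g(u_k)$ --- one gets $u\in\mathrm{Crit}(I)\cup\{0\}$. Applying the profile decomposition Theorem~\ref{teo_tinta} (adapted to $H^1(\mathbb{R}^N)$) to $(u_k)$ and passing to the limit in the suitably rescaled, translated identities $I(u_k)\to\mathcal{G}_S(I)$, $I'(u_k)=0$, one obtains profiles $w^{(m)}$ solving, for translation-type indices ($j_k^{(m)}=0$, $|y_k^{(m)}|\to\infty$) the periodic equation \eqref{QP} --- hence $w^{(m)}\in\mathrm{Crit}(I_P)$, by \ref{h_infinito} and $\mathbb{Z}^N$-periodicity --- and, for dilation-type indices ($|j_k^{(m)}|\to\infty$), the self-similar limit problem $-\Delta w^{(m)}=g_\ast(w^{(m)})$ with $g_\ast=b_\ast g$ for some $b_\ast\in(0,\|b\|_\infty]$ --- hence $w^{(m)}\in\mathrm{Crit}(J_\ast)$, by \ref{g_selfsimilar}. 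The decisive ingredient is the Br\'ezis--Lieb-type energy splitting along this decomposition, $\mathcal{G}_S(I)=\lim_k I(u_k)=I(u)+\sum_m\Phi^{(m)}(w^{(m)})$ with $\Phi^{(m)}\in\{I_P,J_\ast\}$; by fact (c) every summand is nonnegative, and is positive unless the corresponding function vanishes.

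I then exclude loss of compactness. A dilation profile would force $\mathcal{G}_S(I)\ge J_\ast(w^{(m)})\ge c(J_\ast)>c(I)\ge\mathcal{G}_S(I)$, which is impossible; so no dilation profile occurs. A translation profile contributes at least $c(I_P)$, so at most one can occur, and only when $\mathcal{G}_S(I)=c(I_P)=c(I)$. Under alternative i), \ref{ce} gives $c(I)<c(I_P)$, hence no translation profile either, so the decomposition is trivial: $u_k\to u$ strongly in $H^1(\mathbb{R}^N)$, $I(u)=\mathcal{G}_S(I)>0$ forces $u\neq0$, and $u$ is the sought ground state. Under alternative ii): if \ref{ce} holds we argue as in i); otherwise $c(I)=c(I_P)$ (recall $c(I)\le c(I_P)$ by Proposition~\ref{p_ccp}), and then, if $u\neq0$, the positivity $I(u)>0$ together with the splitting forces $\sum_m\Phi^{(m)}(w^{(m)})=0$ (else $\mathcal{G}_S(I)\ge I(u)+c(I_P)>c(I)\ge\mathcal{G}_S(I)$), so again $u_k\to u$ and $u$ is a ground state; while if $u=0$ there is exactly one profile, a translation profile $w$ with $I_P(w)=c(I_P)=c(I)$, i.e.\ a periodic ground state. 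Using $I\le I_P$ together with the fibering identity $\max_{t\ge0}I_P(tw)=I_P(w)$ --- which is precisely what \ref{h_minimax} supplies, in either of its alternatives --- the path $t\mapsto tw$ gives $c(I)\le\max_{t\ge0}I(tw)\le\max_{t\ge0}I_P(tw)=c(I)$, and since $I(tw)\le I_P(tw)\le c(I_P)$ with the last inequality strict for $t\neq1$, the maximum of $t\mapsto I(tw)$ is attained only at $t=1$; hence $I(w)=I_P(w)$, so $w$ maximizes the nonpositive functional $I-I_P$, whence $I'(w)=I_P'(w)=0$, i.e.\ $w\in\mathrm{Crit}(I)$ with $I(w)=c(I)=\mathcal{G}_S(I)$ is a ground state of \eqref{Q}. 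The very same path argument, applied to the periodic ground state of Theorem~\ref{teo_periodic}, also supplies the critical point of $I$ at level $c(I)$ invoked in fact (a) when \ref{ce} fails.

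I expect the main obstacle to be the Br\'ezis--Lieb-type splitting of both the energy and the derivative along the profile decomposition in this generality: since the coefficients are only asymptotically periodic, one must verify that translation profiles solve \eqref{QP} via \ref{h_infinito} while dilation profiles solve $-\Delta w=g_\ast(w)$ with $g_\ast$ self-similar via \ref{g_selfsimilar}, that the potential and the subcritical term become negligible on the concentrating or spreading scales, that the remainder in Theorem~\ref{teo_tinta} carries no energy, and that the series of profile energies is controlled uniformly in $k$, as in \eqref{tinta4}. Two further delicate points are the Br\'ezis--Nirenberg estimate $c(I)<c(J_\ast)$ of fact (c), which hinges on the dimensional dichotomy in \ref{f_porbaixo} and on the constant $\kappa_\ast$ built into \ref{g_AR}, and the verification of the fibering identity $\max_{t\ge0}I_P(tw)=I_P(w)$ for periodic ground states under the autonomous alternative of \ref{h_minimax}.
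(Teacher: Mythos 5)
Your proposal is correct in its essentials and runs on the same machinery as the paper --- profile decomposition of a bounded minimizing sequence of exact critical points, an energy splitting along the profiles, exclusion of profiles by energy comparison under \ref{ce}, and a fibering-path argument under $I\le I_P$ --- but it departs from the paper at two points. First, you exclude dilation profiles through the strict inequality $c(I)<c(J_\ast)$, asserted in your fact (c); the paper deliberately avoids proving this, using only the non-strict Proposition \ref{p_jota} after first showing $w^{(1)}\neq 0$, so that the contradiction in its Step 4 comes from $I(w^{(1)})>0$ as in \eqref{w1dif}. Your strict inequality is in fact available (since $c(J_n)$ is attained by a critical point, \ref{g_AR} together with $b^{(n)}\le\|b\|_\infty$ gives $c(J_n)\ge \tfrac{\mu_\ast-2}{2\mu_\ast}\,\mathbb{S}^{N/2}_{\bar G_{b}}>c(I)$ by Lemma \ref{l_minimax}), but this auxiliary lemma must be written out, and it is precisely what carries your branch $u=0$, where the paper's trick is unavailable. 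Second, in the borderline case $c(I)=c(I_P)$ you replace the paper's appeal to the abstract Lemma \ref{l_localmp} by the observation that the translation profile $w$ is a global maximizer of $I-I_P\le 0$, hence $I'(w)=I_P'(w)=0$; this is a genuinely different, more elementary conclusion, and applying it to the periodic ground state of Theorem \ref{teo_periodic} also settles the nonemptiness of $\mathrm{Crit}(I)$ in case ii), a point the paper leaves implicit.

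Three imprecisions should be repaired. The splitting along profiles is only an inequality, $\mathcal{G}_S(I)\ge I(w^{(1)})+\sum_{n\in\mathbb{N}_0\setminus\{1\}} I_P(w^{(n)})+\sum_{n\in\mathbb{N}_+} J_n(w^{(n)})$ (cf. \eqref{tinta3}, Proposition \ref{p_convv} and \eqref{gsfim}), not an equality; all your exclusions survive with the inequality, so state it that way. Under the autonomous alternative of \ref{h_minimax} the fibering identity with unique maximum at $t=1$ holds for the dilation path $t\mapsto w(\cdot/t)$ via the Pohozaev identity (Remark \ref{r_compar}), not for $t\mapsto tw$ as you wrote, so your case-ii) chain must be run with that path there, exactly as the paper does. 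Finally, your fact (b) ($\mathcal{G}_S(I)>0$) does not follow in one line from the norm bound; it is cleaner to invoke Lemma \ref{l_salvoufim} directly: if all profiles vanish, then $u_k\to 0$ in $H^1_V(\mathbb{R}^N)$, contradicting $\|u_k\|_V\ge C$ for critical points.
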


\subsection{Second case: The fully nonautonomous problem} We now extend our analysis to the fully nonautonomous Eq. \eqref{P}. The standing hypotheses on the potential $V$, the subcritical term $f$ (namely, \ref{V_autovalor}, \ref{f_geral} and \ref{f_tang}), and the critical growth control \ref{k_geral} are maintained for both the coefficients and their $\mathbb{Z}^N$--periodic counterparts. The fundamental structural assumption is that $k$ exhibits self-similar behavior at infinity.
\begin{enumerate}[label=($k_2$),ref=$(k_2)$]
	\item\label{k_ss} For $\nu \in \mathbb{Z}^N$ or $\nu = \infty,$ the following limit defines the self-similar function $g_\nu$ (in the sense of \ref{g_selfsimilar}), holding uniformly in compact sets of $\mathbb{R}^N\times \mathbb{R},$ for some $\gamma >1:$
	\begin{enumerate}[label=\roman*):]
	\item For $\nu \in \mathbb{Z}^N,$ $g_\nu (s) := \lim _{j \rightarrow \infty,\, j \in \mathbb{Z}} \gamma ^{-\frac{N+2}{2}j} k(\gamma ^{-j} x + \nu, \gamma^{\frac{N-2}{2} j} s).$
	\item For $\nu = \infty$, $g_\infty(s) := \lim _{m \rightarrow \infty } \gamma ^{-\frac{N+2}{2}j_m} k(\gamma ^{-j_m} x +y_m, \gamma^{\frac{N-2}{2} j_m} s)$ exists and is independent of the choice of sequences $(j_m)\subset \mathbb{Z}$ and $(y_m)\subset \mathbb{Z}^N$ satisfying $j_m \rightarrow \infty $ and $|y_m|\rightarrow \infty.$
	\end{enumerate}
\end{enumerate}
While the following assumption may appear abstract, it is in fact a natural generalization of a property inherent to the semi-autonomous case (cf. Section \ref{s_remark}). Indeed, we prove in Lemma \ref{l_geprop} that, if $k(x,s)=b(x)g(s)$ (with $g$ being self-similar), this condition is automatically satisfied.
\begin{enumerate}[label=($k_3$),ref=$(k_3)$]
	\item\label{k_tec} Given $a_1, \ldots, a_M \in \mathbb{R}$ there exists $C=C(M)>0$ such that
	\begin{equation*}
		\left|  K\left(x , \sum _{m=1}^M a_m \right)  - \sum _{m=1}^M K(x, a_m)\right| \leq C \sum _{m \neq n}|a_m|^{2^\ast -1}|a_n|,\quad \forall \, x \in \mathbb{R}^N.
	\end{equation*}
\end{enumerate}
To establish the correct minimax structure for the energy functional, we impose a superquadratic growth on the subcritical term $f.$ 
\begin{enumerate}[label=($f_0$),ref=$(f_0)$]
	\item\label{f_zero} $F(x,s) \geq 0$ and $\lim _{|s|\rightarrow \infty} F(x,s)s^{-2} = +\infty.$
\end{enumerate}
We also impose the classical Ambrosetti-Rabinowitz condition on the critical term $k$. We require these conditions to hold for both the original nonlinearities and their $\mathbb{Z}^N$--periodic counterparts.
\begin{enumerate}[label=($k_4$),ref=$(k_4)$]
	\item\label{k_AR} There are $\hat{\mu} _\ast >2$ and $\hat{\lambda}_\ast>0$ such that $\displaystyle \hat{\mu} _\ast \hat{\lambda}_\ast |s|^{2^\ast} \leq  \hat{\mu} _\ast K(x,s) \leq k(x,s)s.$
\end{enumerate}
Our primary challenge is to address the lack of compactness introduced by the general critical term $k(x,s)$, particularly due to dilation invariance captured by the profiles $w^{(n)}, n \in \mathbb{N}_+$. To overcome this, we propose two alternative sets of hypotheses as follows. 
\begin{enumerate}[label=($H_1$),ref=$(H_1)$]
	\item \label{h_um}
	\begin{enumerate}[label= \roman*):]
		\item There is a self-similar function $\hat{G} \in C(\mathbb{R})$ satisfying $k(x,s)s\leq \hat{G}(s)$ (see \ref{g_selfsimilar}).
		\item Let 
		\begin{equation*}
			\mathbb{S}_{\hat{G}} = \inf \left\{ \|\nabla u\|_2^2 : u \in D^{1,2}(\mathbb{R}^N) \text{ and } \int_{\mathbb{R}^N} \hat{G}(u) \dx = 1 \right\}.
		\end{equation*}
		We suppose that 
		$\hat{ \mu} _\ast \geq 2N / (N-2 \hat{ \kappa }_\ast ) $ where 
		\begin{equation*}
			\hat{\kappa}_\ast := \left( \frac{(\mathbb{S}/\mathbb{S}_{\hat{G}})^{N/(N-2)}}{2^\ast \hat{\lambda}_\ast} \right)^{\frac{N-2}{2}} < \frac{N}{2}.
		\end{equation*}
		\item \ref{f_porbaixo} and there exists a self-similar function $\underline{G} \in C(\mathbb{R})$ such that $K(x,s) \geq \underline{G} (s) \geq \hat{\lambda} _\ast |s|^{2^\ast}.$
	\end{enumerate}
\end{enumerate}
Hypothesis \ref{h_um} provides a route to compactness based on structural conditions and fine energy estimates (following the lines of Lemma \ref{l_minimax}), in the spirit of the Br\'{e}zis-Nirenberg approach. In contrast, inspired by our profile decomposition technique, the next hypothesis offers an alternative path by directly assuming an \textit{abstract energy gap condition}.  Consider the functional $J_\nu : D^{1,2}(\mathbb{R}^N) \rightarrow \mathbb{R}$ given by
\begin{equation}\label{jotaast}
	J_\nu (u) = \frac{1}{2} \int _{\mathbb{R}^N} |\nabla u |^2 \dx-\int _{\mathbb{R}^N} G_\nu (u) \dx,\quad G_\nu (s) = \int _0 ^s g_\nu (t) \dt,
\end{equation}
and its minimax level $c(J_\nu ) : = \inf _{\xi \in \Gamma _{J_\nu }} \sup_{t \geq 0} J_\nu ( \xi (t)),$ where
\begin{equation*}
	\Gamma _{J_\nu } = \left\{  \xi \in C([0,\infty), D^{1,2}(\mathbb{R}^N) ) : \xi (0)=0\text{ and }\lim _{t \rightarrow \infty} J_\nu (\xi (t)) = - \infty \right\}.
\end{equation*}
\begin{enumerate}[label=($H_2$),ref=$(H_2)$]
	\item \label{h_dois}$c(I) < c(J_\nu).$
\end{enumerate}

Under these more general assumptions, we establish a parallel set of existence and compactness results that perfectly mirror those obtained for the semi-autonomous case. The energy functional for \eqref{P} is the same as the functional $I$ previously defined, with the semi-autonomous term $b(x)g(s)$ replaced by its nonautonomous counterpart $k(x,s).$
\begin{theorem}[$\mathbb{Z}^N$--periodic case]\label{teo_periodic2}
	Supposing \ref{h_um}, if $V = V_P,$ $f= f_P$ and $k=k_P$ then Eq. \eqref{P} admits a ground state solution $u \in H^1 (\mathbb{R}^N)$. If in addition \ref{h_minimax} holds, then the ground state is at the mountain pass level, i.e. $I(u) =\mathcal{G}_S(I)= c(I).$
\end{theorem}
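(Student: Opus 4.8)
The plan is to adapt the argument behind Theorem~\ref{teo_periodic} to the fully nonautonomous critical term, using \ref{k_ss}, \ref{k_tec}, \ref{k_AR} and \ref{h_um} in the roles played there by \ref{g_selfsimilar}, \ref{g_porbaixo} and \ref{g_AR}; note that $I=I_P$ here, since $V=V_P$, $f=f_P$, $k=k_P$. First I set up the variational framework: by \ref{f_geral} and \ref{k_geral} the functional $I$ is well defined and $C^1$ on $H^1(\mathbb{R}^N)$; by \ref{f_porbaixo} (which is part of \ref{h_um}), \ref{f_zero} and the lower bound $K(x,s)\ge\hat\lambda_\ast|s|^{2^\ast}$ coming from \ref{k_AR}, the functional has the mountain pass geometry, so $c(I)\in(0,\infty)$ (Lemma~\ref{l_mpgeometry}, Lemma~\ref{l_minimax}); and by \ref{f_tang} together with \ref{k_AR}, every Palais--Smale sequence at level $c(I)$ and every family of critical points of bounded energy is bounded in $H^1(\mathbb{R}^N)$ (the Tang-type boundedness lemma). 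Standard arguments then give $0<\mathcal{G}_S(I)\le c(I)$ once $\mathrm{Crit}(I)\ne\emptyset$ is known.

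The quantitative core is the strict inequality $c(I)<c(J_\nu)$ for every $\nu\in\mathbb{Z}^N\cup\{\infty\}$. This is obtained exactly as in the semi-autonomous Lemma~\ref{l_minimax}: from the self-similar upper barrier $\hat G$ with $k(x,s)s\le\hat G(s)$, the self-similar lower barrier $\underline G\le K(x,\cdot)$ with $\underline G(s)\ge\hat\lambda_\ast|s|^{2^\ast}$, the superquadratic bound \ref{f_porbaixo}, and the arithmetic condition $\hat\mu_\ast\ge 2N/(N-2\hat\kappa_\ast)$ of \ref{h_um}, a Br\'{e}zis--Nirenberg type test-function computation places $c(I)$ strictly below the mountain pass level of the self-similar limit problem $-\Delta u=g_\nu(u)$; since $\hat G$ and $\underline G$ also dominate, respectively bound below, the limit nonlinearities $g_\nu$, this estimate is uniform in $\nu$. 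I also use that, for the self-similar functionals $J_\nu$ on $D^{1,2}(\mathbb{R}^N)$, the mountain pass level $c(J_\nu)$ equals the least-energy level, so that every nontrivial critical point of $J_\nu$ has energy $\ge c(J_\nu)$; this rests on the Nehari-type description available for self-similar nonlinearities (Section~\ref{s_ss}, Proposition~\ref{p_jota}).

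Now the two profile-decomposition arguments. To exhibit a nontrivial critical point, take a bounded Palais--Smale sequence $(u_k)$ with $I(u_k)\to c(I)$, $I'(u_k)\to 0$, and apply Theorem~\ref{teo_tinta} with the $\gamma>1$ furnished by \ref{k_ss}. The $\mathbb{N}_-$ profiles vanish, since $j_k^{(n)}\to-\infty$ multiplies the $L^2$-mass of the rescaled functions by $\gamma^{2j_k^{(n)}}\to 0$, forcing $w^{(n)}=0$. Using $\mathbb{Z}^N$-periodicity of the coefficients, \eqref{tinta1} and the uniform-in-$k$ convergence of the series \eqref{tinta4}, the energy and the derivative split: $\lim I(u_k)\ge\sum_{n\in\mathbb{N}_0}I(w^{(n)})+\sum_{n\in\mathbb{N}_+}J_{\nu_n}(w^{(n)})$, with $I'(w^{(n)})=0$ for $n\in\mathbb{N}_0$ and $J_{\nu_n}'(w^{(n)})=0$ for $n\in\mathbb{N}_+$; along the dilating profiles the subcritical contribution vanishes since $F$ is subcritical, the potential contribution vanishes since $V_P\in L^\infty$, and the critical contribution converges to $\int_{\mathbb{R}^N}G_{\nu_n}(w^{(n)})\dx$ by \ref{k_ss}, while \ref{k_tec} supplies the $o(1)$ control of the nonlinear cross terms needed to sum over $n$. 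As every term above is $\ge 0$ (critical points of $I$, resp.\ $J_\nu$, have nonnegative energy) and each nontrivial $\mathbb{N}_+$ profile would force $c(I)\ge J_{\nu_n}(w^{(n)})\ge c(J_{\nu_n})>c(I)$, all of them vanish; and since $c(I)>0$, not all $\mathbb{N}_0$ profiles are trivial, so at least one $w^{(n)}$ ($n\in\mathbb{N}_0$) is a nontrivial critical point of $I$, of energy $\le c(I)$. Hence $\mathrm{Crit}(I)\ne\emptyset$ and $0<\mathcal{G}_S(I)\le c(I)$. To attain the ground state, take $(u_k)\subset\mathrm{Crit}(I)\setminus\{0\}$ with $I(u_k)\to\mathcal{G}_S(I)$, bounded by the above, and repeat the decomposition: again the $\mathbb{N}_-$ profiles vanish, and since $\mathcal{G}_S(I)\le c(I)<c(J_\nu)$ the $\mathbb{N}_+$ profiles vanish too, whence $\mathcal{G}_S(I)\ge\sum_{n\in\mathbb{N}_0}I(w^{(n)})$ with every $w^{(n)}$ a critical point of $I$ and every nontrivial one of energy $\ge\mathcal{G}_S(I)$; as $\mathcal{G}_S(I)>0$, exactly one profile $w$ is nontrivial and $I(w)=\mathcal{G}_S(I)$, so $w$ is a ground state (translating by the corresponding $y_k^{(n)}$ and comparing norms yields strong $H^1$-convergence, which also gives $w\ne 0$). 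If in addition \ref{h_minimax} holds, the strict monotonicity of $s\mapsto(f_P(x,s)+k_P(x,s))|s|^{-1}$ (or the autonomy of \eqref{PP}) gives the Nehari identity $c(I)=\inf_{\mathcal N}I$ with every nontrivial critical point lying on the Nehari manifold $\mathcal N$; combined with $\mathcal{G}_S(I)\le c(I)$ this forces $I(w)=\mathcal{G}_S(I)=c(I)$.

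The step I expect to be the main obstacle is the simultaneous splitting of the nonlinear energy and of the derivative along the profile decomposition in the presence of the general critical term $k$: one must control the near-orthogonality of the countably many profiles inside $\int_{\mathbb{R}^N}K(x,\cdot)\dx$ and inside the pairings $\langle I'(\cdot),\varphi\rangle$ — which is exactly what \ref{k_tec} and the uniform convergence in \eqref{tinta4} are designed to deliver — while, along the dilating profiles, identifying the limiting functional $J_{\nu_n}$ through the self-similar asymptotics \ref{k_ss}; keeping all error terms uniform in $k$ and summable in $n$ is the delicate bookkeeping, together with the identification of $c(J_\nu)$ with the least-energy level of $J_\nu$, which is what makes the dichotomy "$\mathbb{N}_+$ profiles are all trivial" usable.
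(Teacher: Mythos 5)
Your proposal follows, in substance, the same skeleton as the paper's proof: a bounded Palais--Smale sequence at level $c(I)$ and then a minimizing sequence of critical points are run through Theorem \ref{teo_tinta}; the energy and the derivative are split as in Proposition \ref{p_convfinal} (with \ref{k_tec} controlling the cross terms and \ref{k_ss} identifying the limit functionals $J_{\nu_n}$); the vanishing scenario is excluded through the nonautonomous version of Lemma \ref{l_minimax}; attainment of $\mathcal{G}_S(I)$ uses the bounded-away-from-zero property of critical points (Lemma \ref{l_salvou}/\ref{l_salvoufim}); and the identification $\mathcal{G}_S(I)=c(I)$ under \ref{h_minimax} comes from a Remark \ref{r_compar}-type comparison. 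The genuine difference is your treatment of the dilation profiles: you insist on proving $w^{(n)}=0$ for all $n\in\mathbb{N}_+$ via a strict gap $c(I)<c(J_\nu)$, whereas the paper's periodic-case proof never excludes these profiles at all --- it only uses that $J_{\nu_n}(w^{(n)})\ge 0$ at critical points (a consequence of the limit form of \ref{k_AR}) so that these terms can be dropped from the splitting, and it rules out the case where all translation profiles vanish by the Sobolev-constant computation of Step 2 in Section \ref{s_teo_periodic}, applied directly to the Palais--Smale sequence. Your heavier route is salvageable within the paper's toolkit: for a nontrivial critical point $w$ of $J_\nu$ one has $\|\nabla w\|_2^2=\int_{\mathbb{R}^N} g_\nu(w)w\dx\le\int_{\mathbb{R}^N}\hat G(w)\dx$, hence $\|\nabla w\|_2^2\ge\mathbb{S}^{N/2}_{\hat G}$ and $J_\nu(w)\ge\bigl(\tfrac{1}{2}-\tfrac{1}{\hat\mu_\ast}\bigr)\mathbb{S}^{N/2}_{\hat G}>c(I)$ by the adapted Lemma \ref{l_minimax}, which gives your contradiction directly without ever mentioning $c(J_\nu)$. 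What the paper's choice buys is that it avoids needing any least-energy characterization of the limit problems; what yours buys is stronger information (all concentration profiles vanish), which the theorem does not require.

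The one justification that is wrong as stated is the appeal to a ``Nehari-type description'': for self-similar oscillatory nonlinearities the quotient $g_\nu(s)/s$ need not be monotone (the paper emphasizes precisely this), so the claim that every nontrivial critical point of $J_\nu$ has energy $\ge c(J_\nu)$ cannot be derived from a Nehari manifold, nor is it what Proposition \ref{p_jota} says; the correct mechanism, used in Step 4 of Section \ref{s_comp} and in Remark \ref{r_compar}, is the Pohozaev identity together with the dilation path $\zeta(t)=w(\cdot/t)$. The same remark applies to your ``Nehari identity'' under \ref{h_minimax}: in case i) the assumed strict monotonicity of $s\mapsto (f_P(x,s)+k_P(x,s))|s|^{-1}$ makes the ray $t\mapsto tu$ work, but in the autonomous case ii) you must argue via Pohozaev, as in Remark \ref{r_compar}. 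With these substitutions (or by simply not excluding the $\mathbb{N}_+$ profiles, as the paper does), and noting that $\mathcal{G}_S(I)>0$ should be obtained a posteriori from $\mathcal{K}(x,s)\ge(\hat\mu_\ast/2-1)\hat\lambda_\ast|s|^{2^\ast}$ at the attained minimizer rather than asserted as standard beforehand, your argument is complete.
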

\begin{theorem}[Compactness]\label{teo_compact2}
	Under the assumption that either \ref{h_um} or \ref{h_dois} is satisfied, if \ref{h_minimax} and \ref{ce} hold, then any sequence $(u_k) \subset H^1 (\mathbb{R}^N)$ such that $I(u_k) \rightarrow c(I)$ and $I'(u_k) \rightarrow 0,$ has a convergent subsequence. In particular, Eq. \eqref{P} has a nontrivial solution.
\end{theorem}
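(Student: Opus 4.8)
The plan is to run a concentration-compactness argument built directly on the profile decomposition Theorem~\ref{teo_tinta}, following the blueprint of the semi-autonomous Theorem~\ref{teo_compact} while tracking the general critical term $k$ through \ref{k_ss}, \ref{k_tec} and \ref{k_AR}. First I would check that $(u_k)$ is bounded in $H^1(\mathbb{R}^N)$: estimating $I(u_k)-\tfrac1{\hat{\mu}_\ast}I'(u_k)u_k$ from below using \ref{f_tang} (to control $F(x,s)-\tfrac1{\hat{\mu}_\ast}f(x,s)s$, by the device of X.~H.~Tang) and \ref{k_AR} (which gives $\tfrac1{\hat{\mu}_\ast}k(x,s)s-K(x,s)\ge0$ and a coercive contribution of order $\|u_k\|_{2^\ast}^{2^\ast}$) yields $\|u_k\|_V^2\lesssim 1+\|u_k\|_V$. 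Then, regarding $(u_k)$ as bounded in $D^{1,2}(\mathbb{R}^N)$, apply Theorem~\ref{teo_tinta} to extract profiles $w^{(n)}$, lattice shifts $y_k^{(n)}\in\mathbb{Z}^N$ and dilation exponents $j_k^{(n)}\in\mathbb{Z}$, with the index splitting $\mathbb{N}_\ast=\mathbb{N}_0\cup\mathbb{N}_+\cup\mathbb{N}_-$; recall $1\in\mathbb{N}_0$, $y_k^{(1)}=0$, $j_k^{(1)}=0$, so $u_k\rightharpoonup w^{(1)}$ in $H^1(\mathbb{R}^N)$.

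The next step identifies the profiles by rescaling the relation $I'(u_k)\to0$ along each of them. For $n\in\mathbb{N}_0\setminus\{1\}$, orthogonality \eqref{tinta2} forces $|y_k^{(n)}|\to\infty$, so \ref{h_infinito} makes the shifted equation converge to the periodic one and $I_P'(w^{(n)})=0$. For $n\in\mathbb{N}_-$ the rescaled functions have $L^2$-mass $\gamma^{2j_k^{(n)}}\|u_k\|_2^2\to0$, hence $w^{(n)}=0$. For $n\in\mathbb{N}_+$ the coercive term $\gamma^{-2j_k^{(n)}}V(\gamma^{-j_k^{(n)}}\cdot+y_k^{(n)})$ and, because $p_\varepsilon<2^\ast$ in \ref{f_geral}, the rescaled $f$-term both vanish, whereas \ref{k_ss} identifies the limit of the rescaled $k$-term as $g_\nu$ with $\nu\in\mathbb{Z}^N$ (if $y_k^{(n)}$ stays bounded, hence is eventually constant) or $\nu=\infty$ (if $|y_k^{(n)}|\to\infty$); thus $J_\nu'(w^{(n)})=0$. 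Along the way I would record: $w^{(1)}$ solves \eqref{P}; every nontrivial critical point of $I$, $I_P$, or of a self-similar functional $J_\nu$ has nonnegative energy (again from \ref{f_tang} and \ref{k_AR}); by the mountain-pass/Nehari structure together with \ref{h_minimax} and Proposition~\ref{p_ccp}, $I_P(w^{(n)})\ge c(I_P)\ge c(I)$ whenever $w^{(n)}\ne0$; and $J_\nu(w^{(n)})\ge c(J_\nu)$ whenever $w^{(n)}\ne0$, because $g_\nu$ inherits self-similarity, the growth bound \ref{g_geral} and the Ambrosetti--Rabinowitz inequality from \ref{k_AR}, so that the ground-state level of $J_\nu$ equals $c(J_\nu)$.

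The crux is the asymptotic energy inequality
\begin{equation*}
	c(I)=\lim_k I(u_k)\ \ge\ I(w^{(1)})+\sum_{n\in\mathbb{N}_0\setminus\{1\}} I_P(w^{(n)})+\sum_{n\in\mathbb{N}_+} J_\nu(w^{(n)}).
\end{equation*}
To get it, the gradient part splits by refining \eqref{tinta3}--\eqref{tinta4} to an equality of $D^{1,2}$-norms; the $L^2$ and $V$-weighted parts localize onto $w^{(1)}$ and the periodic profiles (the $\mathbb{N}_+$ contributions carry a vanishing factor $\gamma^{-2j_k^{(n)}}$); the subcritical integrals $\int F(x,u_k)$ localize the same way ($\mathbb{N}_+$ contributing nothing, again by $p_\varepsilon<2^\ast$); and the critical integrals $\int K(x,u_k)$ are treated by a Br\'ezis--Lieb iteration over the profiles, with \ref{k_tec} bounding the cross terms and \ref{k_ss} identifying each limit $\int G_\nu(w^{(n)})$, the uniform-in-$k$ convergence of the series in \eqref{tinta4} making the tail estimates legitimate. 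Granting this, assume some $w^{(n)}\ne0$ with $n\ne1$. If $n\in\mathbb{N}_0\setminus\{1\}$, then $c(I)\ge I(w^{(1)})+c(I_P)\ge c(I_P)$, contradicting \ref{ce}. If $n\in\mathbb{N}_+$, then $c(I)\ge I(w^{(1)})+c(J_\nu)\ge c(J_\nu)$; under \ref{h_dois} this directly contradicts $c(I)<c(J_\nu)$, and under \ref{h_um} it still contradicts it, since \ref{h_um} forces $c(I)<c(J_\nu)$ for every admissible $\nu$ by the Br\'ezis--Nirenberg-type estimate of Lemma~\ref{l_minimax} (the condition $\hat\mu_\ast\ge2N/(N-2\hat\kappa_\ast)$ keeps $c(J_\nu)$ large while \ref{f_porbaixo} and \ref{h_um}(iii) push $c(I)$ strictly below it). Hence $w^{(n)}=0$ for every $n\ne1$.

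Consequently \eqref{tinta4} collapses to $u_k-w^{(1)}\to0$ in $L^{2^\ast}(\mathbb{R}^N)$; since the profile decomposition is exhausted, no mass of $u_k-w^{(1)}$ survives under translations, so also $u_k\to w^{(1)}$ in $L^q(\mathbb{R}^N)$ for every $q\in(2,2^\ast)$, by the cocompactness of $H^1(\mathbb{R}^N)\hookrightarrow L^q(\mathbb{R}^N)$. Using \ref{f_geral} and \ref{k_geral} this gives $\int f(x,u_k)u_k\to\int f(x,w^{(1)})w^{(1)}$ and $\int k(x,u_k)u_k\to\int k(x,w^{(1)})w^{(1)}$; inserting these into $I'(u_k)u_k\to0$ together with $I'(w^{(1)})w^{(1)}=0$ yields $\|u_k\|_V^2\to\|w^{(1)}\|_V^2$, and with $u_k\rightharpoonup w^{(1)}$ in the Hilbert space $H^1_V(\mathbb{R}^N)$ this gives $u_k\to w^{(1)}$ strongly. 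Applying this to a Palais--Smale sequence at level $c(I)>0$ furnished by the mountain-pass geometry (Lemma~\ref{l_mpgeometry}) yields $I(w^{(1)})=c(I)>0$, so $w^{(1)}$ is a nontrivial solution of \eqref{P}. I expect the main obstacle to be the energy inequality above: simultaneously (a) running the Br\'ezis--Lieb iteration for $\int K(x,u_k)$ across infinitely many profiles with cross-term control furnished only by \ref{k_tec}, and (b) in the \ref{h_um} branch, verifying the strict gap $c(I)<c(J_\nu)$ for \emph{every} admissible $\nu$ (including $\nu=\infty$) by balancing the self-similar Sobolev constant $\mathbb{S}_{\hat{G}}$ against the threshold $\hat\kappa_\ast<N/2$.
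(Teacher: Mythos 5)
Your overall architecture (profile decomposition of the Palais--Smale sequence, identification of each profile as a critical point of $I$, $I_P$ or $J_\nu$, an asymptotic energy splitting, elimination of all profiles but $w^{(1)}$, then strong convergence) is the paper's, but two of your load-bearing steps do not go through as written. First, the boundedness argument. Hypothesis \ref{f_tang} does not control $F(x,s)-\tfrac{1}{\hat\mu_\ast}f(x,s)s$: taking $\theta=0$ it only yields $F(x,s)\le \tfrac12 f(x,s)s$, and since $\hat\mu_\ast>2$ the quantity $\tfrac{1}{\hat\mu_\ast}f(x,s)s-F(x,s)$ can be as negative as $-(\tfrac12-\tfrac1{\hat\mu_\ast})f(x,s)s$, which under \ref{f_geral} alone grows like $\varepsilon|s|^{2^\ast}+C_\varepsilon|s|^{p_\varepsilon}$. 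After interpolating $\|u_k\|_{p_\varepsilon}^{p_\varepsilon}\le\delta\|u_k\|_{2^\ast}^{2^\ast}+C_{\delta,\varepsilon}\|u_k\|_2^2$, you are left with $-C_{\delta,\varepsilon}\|u_k\|_2^2$, and \ref{V_autovalor} only gives $\|u\|_2^2\le d_1^{-1}\|u\|_V^2$ with a fixed constant, so this cannot be absorbed into $(\tfrac12-\tfrac1{\hat\mu_\ast})\|u_k\|_V^2$; moreover, with the multiplier exactly $1/\hat\mu_\ast$, \ref{k_AR} gives only nonnegativity, not the coercive $\|u_k\|_{2^\ast}^{2^\ast}$ contribution you invoke. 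This failure is precisely why the paper avoids the Ambrosetti--Rabinowitz computation: boundedness is proved by contradiction, normalizing $v_k=u_k\|u_k\|_V^{-1}$, applying Theorem \ref{teo_tinta} to $(v_k)$, and using Lemma \ref{l_theta} (the actual content of Tang's device) together with a Fatou blow-up of the $F$- or $K$-integrals (Proposition \ref{p_psbounded}, adapted at the start of Section \ref{s_full}).

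Second, in the elimination of the dilation profiles under \ref{h_um} you assert that ``\ref{h_um} forces $c(I)<c(J_\nu)$ for every admissible $\nu$ by Lemma \ref{l_minimax}.'' The nonautonomous version of Lemma \ref{l_minimax} only gives $c(I)<\tfrac{\hat\mu_\ast-2}{2\hat\mu_\ast}\mathbb{S}_{\hat G}^{N/2}$; to deduce the strict gap you would still need $c(J_\nu)\ge\tfrac{\hat\mu_\ast-2}{2\hat\mu_\ast}\mathbb{S}_{\hat G}^{N/2}$, which requires passing the bounds of \ref{k_AR} and \ref{h_um} to the limit nonlinearity $g_\nu$, the attainment of $c(J_\nu)$ by a critical point, and a Sobolev-type lower bound for that critical point --- none of which appears in your sketch (you yourself flag it as the main obstacle). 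The paper does not prove, and does not need, this strict inequality under \ref{h_um}: it uses only the soft comparison $c(I)\le c(J_\nu)$ (Proposition \ref{p_jota}), first shows $w^{(1)}\neq0$ via the adapted Lemma \ref{l_minimax}, and then exploits the resulting strict positivity $I(w^{(1)})\ge\tfrac12(\hat\mu_\ast-2)\hat\lambda_\ast\|w^{(1)}\|_{2^\ast}^{2^\ast}>0$ inside the energy splitting of Proposition \ref{p_convfinal} to contradict $c(I)\le c(J_{\nu_{m_0}})$. Since your ordering eliminates the $\mathbb{N}_+$ profiles before knowing $w^{(1)}\neq0$, the unproved strict gap carries the full weight of that step, so as it stands this branch of your argument is incomplete; the remaining components (identification of profiles, the splitting via the analogues of Propositions \ref{p_bef}, \ref{p_convv}, \ref{p_convastgen}, the \ref{h_dois} branch, and the final strong convergence) do match the paper.
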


\begin{theorem}[Ground state]\label{teo_gs2}
	Assuming \ref{h_minimax}, Eq. \eqref{P} has a ground state solution in either of the following cases:
	\begin{enumerate}[label=\bf \roman*):]
		\item \ref{h_um} holds, and either \ref{ce} or $I \leq I_P$ is satisfied.
		\item \ref{h_dois} and \ref{ce}.
	\end{enumerate}
\end{theorem}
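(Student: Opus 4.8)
The plan is to adapt the scheme behind Theorem~\ref{teo_gs} to the fully nonautonomous equation, using the profile decomposition to reduce the existence of a ground state to the compactness machinery underlying Theorem~\ref{teo_compact2}. First I would record the mountain pass geometry of $I$ (Lemma~\ref{l_mpgeometry}): the growth bounds \ref{f_geral}, \ref{k_geral} and the superquadratic behavior \ref{f_zero} (and \ref{f_porbaixo} under \ref{h_um}) give $c(I)\in(0,\infty)$ and, via the mountain pass theorem, a bounded Palais--Smale sequence at level $c(I)$, boundedness coming from \ref{f_tang} together with \ref{k_AR}. I would also note, as in the preparatory lemmas, that every nontrivial critical point of $I$ is bounded away from $0$ in $H^1(\mathbb{R}^N)$ (from \ref{f_geral}, \ref{k_geral}, \ref{V_autovalor}) and has strictly positive energy (using the $\theta\to0^+$ instance of \ref{f_tang}, which gives $\tfrac{1}{2} f(x,s)s\ge F(x,s)$, and the Ambrosetti--Rabinowitz bound \ref{k_AR} on the critical part).

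Next I would check $\mathcal{G}_S(I)\le c(I)$. Whenever \ref{ce} holds --- hence in case (ii), and in case (i) if \ref{ce} is the alternative chosen --- Theorem~\ref{teo_compact2} makes the Palais--Smale sequence at level $c(I)$ convergent, producing a nontrivial solution $u_0$ of \eqref{P} with $I(u_0)=c(I)$. In case (i) with $I\le I_P$ (and \ref{ce} not assumed), let $w$ be the periodic ground state of \eqref{PP} at the mountain pass level supplied by Theorem~\ref{teo_periodic2}; by \ref{h_minimax}, the fibering map $t\mapsto I_P(tw)$ has $t=1$ as its unique maximum point, so $\max_{t\ge0}I_P(tw)=c(I_P)$. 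Since $I\le I_P$ pointwise, the ray $t\mapsto tw$ lies in $\Gamma_I$ and $c(I)\le\max_{t\ge0}I(tw)\le\max_{t\ge0}I_P(tw)=c(I_P)$. If any inequality here is strict then \ref{ce} holds and we are reduced to the previous case; otherwise $\max_{t\ge0}I(tw)=c(I_P)$, and picking $t_w$ that realizes this maximum collapses the chain $c(I_P)=I(t_w w)\le I_P(t_w w)\le\max_{t\ge0}I_P(tw)=c(I_P)$, so $I(t_w w)=I_P(t_w w)$ with $t_w w$ a maximizer of $t\mapsto I_P(tw)$, forcing $t_w=1$. Thus the nonnegative $C^1$ functional $D:=I_P-I$ attains its minimum value $0$ at $w$, whence $D'(w)=0$ and $I'(w)=I_P'(w)-D'(w)=0$: $w$ solves \eqref{P} and $I(w)=c(I_P)=c(I)$. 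In every case $\mathrm{Crit}(I)\neq\emptyset$ and $\mathcal{G}_S(I)\le c(I)$.

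The final step realizes $\mathcal{G}_S(I)$. Take $(u_k)\subset\mathrm{Crit}(I)$ with $I(u_k)\to\mathcal{G}_S(I)$; this is a bounded Palais--Smale sequence, and I would run on it the profile decomposition (Theorem~\ref{teo_tinta}) exactly as in the proof of Theorem~\ref{teo_compact2}, but at the level $\mathcal{G}_S(I)\le c(I)$. The concentrating ($\mathbb{N}_+$) profiles are, by \ref{k_ss}, nonzero critical points of some self-similar functional $J_\nu$, each contributing at least $c(J_\nu)$ to the limiting energy; this is impossible since $\mathcal{G}_S(I)\le c(I)<c(J_\nu)$ --- the strict inequality holding by \ref{h_dois} in case (ii) and by the Br\'{e}zis--Nirenberg type estimate of Lemma~\ref{l_minimax} under \ref{h_um} in case (i). The profiles carried off to infinity by translations ($\mathbb{N}_0$ with $y_k\to\infty$) are, by \ref{h_infinito}, nonzero solutions of \eqref{PP}, each contributing at least $c(I_P)$ (since $\mathcal{G}_S(I_P)=c(I_P)$ by \ref{h_minimax} and Theorem~\ref{teo_periodic2}); under \ref{ce} this contradicts $\mathcal{G}_S(I)\le c(I)<c(I_P)$, while under $I\le I_P$ it is ruled out whenever $\mathcal{G}_S(I)<c(I_P)$, and in the borderline case $\mathcal{G}_S(I)=c(I_P)$ the solution $w$ built in the previous step already satisfies $I(w)=c(I_P)=\mathcal{G}_S(I)$ and is itself a ground state. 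With all profiles suppressed, $u_k\to u$ strongly in $H^1(\mathbb{R}^N)$; since $\|u_k\|_V$ is bounded below by a positive constant, $u\neq0$, and $I'(u)=0$, $I(u)=\mathcal{G}_S(I)$, so $u$ is a ground state of \eqref{P}.

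The part I expect to be most delicate is the energy bookkeeping in the last step when $\mathcal{G}_S(I)$ is not known a priori to equal $c(I)$: one must combine the energy inequality \eqref{tinta3} with a Br\'{e}zis--Lieb type splitting of the nonlinear terms --- precisely where the technical hypothesis \ref{k_tec} and the self-similarity \ref{k_ss} enter --- to obtain an identity $\mathcal{G}_S(I)=I(u)+\sum_n(\text{profile energies})$ in which each profile energy is bounded below by $c(I_P)$ or by $c(J_\nu)$, and to verify that the rescaled Palais--Smale condition for $I$ passes to the limit equations defining the $J_\nu$. The other subtle point, isolated above, is the equality case $c(I)=c(I_P)$ under $I\le I_P$, where the minimizing sequence of critical points may genuinely fail to be compact and one must fall back on the periodic ground state as an explicit solution of \eqref{P}.
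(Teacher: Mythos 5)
Your overall scheme is the paper's: run the profile decomposition on a bounded minimizing sequence of critical points at level $\mathcal{G}_S(I)\le c(I)$, use the energy splitting (the analogue of Proposition \ref{p_convfinal}) to exclude the translated and dilated profiles by comparison with $c(I_P)$ and $c(J_\nu)$, and recover strong convergence; the equality case under $I\le I_P$ is treated separately. Your device for that equality case --- forcing $t_w=1$ and then minimizing $D:=I_P-I\ge0$ at the periodic ground state to get $D'(w)=0$, hence $I'(w)=I_P'(w)=0$ --- is a genuinely more elementary substitute for the paper's appeal to Lemma \ref{l_localmp}, and it is sound once the fibering step is justified. However, two steps as written do not hold in the stated generality. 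First, the claim ``by \ref{h_minimax}, the fibering map $t\mapsto I_P(tw)$ has $t=1$ as its unique maximum point'' uses only the \emph{first} alternative of \ref{h_minimax}. Under the second alternative (autonomous periodic problem, where $s\mapsto(f_P(s)+k_P(s))|s|^{-1}$ need not be monotone --- this is exactly the oscillatory regime the paper is designed for), the ray $t\mapsto tw$ may have several maxima and $\max_{t\ge0}I_P(tw)$ can exceed $c(I_P)$, so your chain $c(I)\le\max_t I(tw)\le\max_t I_P(tw)=c(I_P)$ and the forcing of $t_w=1$ both break down. The paper handles this alternative with the dilation path $t\mapsto w(\cdot/t)$, whose fibering map is of the form $at^{N-2}-bt^N$ with $b>0$ by the Pohozaev identity (Remark \ref{r_compar}, Proposition \ref{p_ccp}); your $D$-trick survives this replacement, but the case must be treated.

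Second, to exclude the $\mathbb{N}_+$ profiles in case (i) you assert the strict inequality $c(I)<c(J_\nu)$ ``by Lemma \ref{l_minimax}''. That lemma (in its nonautonomous form) only gives the upper bound $c(I)<\frac{\hat{\mu}_\ast-2}{2\hat{\mu}_\ast}\mathbb{S}_{\hat{G}}^{N/2}$ and says nothing about $c(J_\nu)$; to conclude you would also need the companion lower bound $c(J_\nu)\ge\frac{\hat{\mu}_\ast-2}{2\hat{\mu}_\ast}\mathbb{S}_{\hat{G}}^{N/2}$, which requires passing \ref{k_AR} to the limit $g_\nu$, observing $g_\nu(s)s\le\hat{G}(s)$ (hence $\mathbb{S}_{\bar{G}_\nu}\ge\mathbb{S}_{\hat{G}}$ by a scaling argument) and evaluating $J_\nu$ at its mountain-pass critical point. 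This is true but is an additional argument, not a citation. The paper avoids it altogether: it uses only the non-strict comparison $c(I)\le c(J_{\nu_n})$ (Proposition \ref{p_jota}) and derives the contradiction from $I(w^{(1)})>0$, after first proving $w^{(1)}\neq0$ via the nonautonomous minimax estimate and Lemma \ref{l_salvoufim}; note your ordering puts the exclusion of $\mathbb{N}_+$ before knowing $w^{(1)}\neq0$, so if you do not prove the strict gap you must reorganize as the paper does. Finally, a smaller point: in case (ii) you invoke Theorem \ref{teo_periodic2} for $\mathcal{G}_S(I_P)=c(I_P)$, but that theorem assumes \ref{h_um}, which is absent there; the inequality $I_P(w^{(n)})\ge c(I_P)$ for nonzero periodic critical points should instead be taken from Remark \ref{r_compar}, which only needs \ref{h_minimax} and suffices together with \ref{ce}.
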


Throughout the text, it will become clear that the proofs of these theorems follow by adapting the arguments presented for the first case, and as such, we only detail the necessary modifications. While the study of ground states for equations like \eqref{P} with autonomous nonlinearities is not new, see for instance \cite{zbMATH06723549,zbMATH06375868}, our approach is novel in several aspects. We establish our results under nearly optimal conditions and, crucially, provide a compactness analysis for the associated Palais-Smale sequences. Furthermore, we prove that the condition $I\leq I_P$ is also sufficient to guarantee a ground state for Eq. \eqref{P}. The main novelty, however, is our treatment for both nonautonomous cases, for which the methods used in \cite{zbMATH06723549,zbMATH06375868} do not seem to apply directly. In summary, we demonstrate that for non-coercive problems of the type studied herein, the profile decomposition theorem is the ideal tool to establish existence results under this high level of generality.

Although profile decomposition theorems provide a deep understanding of the lack of compactness in various functional settings, their direct application to establish existence results for nonlinear elliptic partial differential equations remains relatively scarce in the literature, particularly beyond the seminal works already cited. We conjecture that this tool is particularly well-suited for a broader class of problems than currently explored. Whenever a profile decomposition theorem is established for a general homogeneous Sobolev-type space associated with a differential operator, we believe the variational methods developed herein can be adapted to prove the existence of solutions for equations featuring nonlinearities with general oscillatory, self-similar growth compatible with the underlying symmetries revealed by the decomposition (such as those satisfying \ref{g_selfsimilar}).

\subsection{Remarks on the assumptions}\label{s_remark} Before we proceed, some comments on our hypotheses are necessary.
\begin{enumerate}[label=\bf \roman*):]
\item We first note that the inequality $c(I) \le c(I_P)$ is a natural consequence of our standing assumptions, a fact that we formally prove in Proposition \ref{p_ccp} (Appendix \ref{s_app_minimax}). To obtain the strict inequality required for the ``energy gap" condition \ref{ce}, we introduce the following sufficient condition (see Proposition \ref{A_estrito} for the proof):
\begin{enumerate}[label=($H_\ast$),ref=$(H_\ast)$]
	\item\label{Hast} The coefficients satisfy $V(x)\leq V_P(x),$ $F_P(x,s) \leq F(x,s),$ and $K_P(x,s) \leq K(x,s)$ for all $(x,s) \in \mathbb{R}^N \times \mathbb{R}$. Furthermore, at least one of the following holds:
	\begin{enumerate}[label=\alph*):]
		\item One of the three inequalities is strict on a set of positive measure.
		\item The inequalities for the primitives $F$ and $K$ hold strictly on an open interval containing the origin.
	\end{enumerate}
\end{enumerate}
\item Because $V$ and $V_P$ are continuous, condition \ref{h_infinito} implies $V-V_P \in L^\infty (\mathbb{R}^N).$ Thus, $V \in L^\infty (\mathbb{R}^N)$ if and only if $V_P \in L^\infty (\mathbb{R}^N).$
 \item In both studied cases we have the following superquadratic growth condition:
\begin{enumerate}[label=($h_1$),ref=$(h_1)$]
	\item \label{h_tang1} $\displaystyle \lim _{|s| \rightarrow \infty} ( F(x,s)+K(x,s) ) s^{-2} = +\infty,$ uniformly in $x \in \mathbb{R}^N.$
\end{enumerate}
\item Hypothesis \ref{f_geral} was recently utilized by M. Okumura in \cite[Theorem 4.3]{zbMATH07573810}. It is satisfied by a broad class of continuous functions that exhibit superlinear behavior at the origin and a general subcritical growth condition at infinity, which may involve a variable exponent. More precisely, hypothesis \ref{f_geral} holds if $f$ satisfies the following two conditions:
	\begin{itemize}
		\item[$(f'_1):$] $\lim _{s \rightarrow 0 }f(x,s)(|s| + |s|^{2^\ast -1})^{-1} = 0 ,$ uniformly in $x \in \mathbb{R}^N$;
		\item[$(f''_1):$] There exists $\varrho\in C(\mathbb{R})\cap L^\infty (\mathbb{R})$ with $\inf_{|s| \leq 1}\varrho(s)\geq \sup_{|s| \geq 1}\varrho(s)$, $2 < \inf_{s \in \mathbb{R}} \varrho(s) \leq \sup_{s \in \mathbb{R}} \varrho(s) < 2^\ast$ and $|f(x,s)| \leq C (1+|s|^{\varrho(s) -1}).$
    \end{itemize}
\item In Appendix \ref{s_app} we construct a suitable function $\varrho\in C^1(\mathbb{R})\cap L^\infty (\mathbb{R})$ for which
\begin{align}
    f (s) = \lambda p_0 |s|^{p_0 -2} s &+ \lambda \left( \varrho ' (s) s \ln |s|+  \varrho (s)  \right)|s|^{\varrho (s) - 2}s,\ \lambda >0,\label{exampleB}\\
     f(0) &:=0,\ f(1) = -f(-1) = \lambda( p_0 + \varrho(1)),\ p_0 \in (2,2^\ast),\nonumber
\end{align}
is a continuous nonlinearity satisfying \ref{f_geral}, \ref{f_tang} and \ref{f_porbaixo}.
\item Conditions \ref{g_selfsimilar}--\ref{g_AR} accommodate small oscillatory perturbations of the pure critical power $s \mapsto |s|^{2^\ast -2}s.$ For a given $b \in C(\mathbb{R}^N) \cap L^\infty (\mathbb{R}^N),$ a typical example is given by
\begin{equation}\label{example_ss}
	g(s) = ( \theta(s) + E) |s|^{2^\ast -2} s,\text{ with }\theta(s) = A + B \sin (\omega \ln |s| ),
\end{equation}
where $A,$ $B,$ $E >0$ are suitably chosen to be small and $\omega >0.$ Consequently, the hypotheses of Theorem \ref{teo_gs} are satisfied for $k(x,s) = b(x) g(s)$ provided that the periodic limit $b_P$ (satisfying \eqref{b_limite}) fulfills $0<\inf b_P\leq b_P(x) \leq b(x)$ with strict inequality on a set of positive measure. We demonstrate how this choice can be made in Appendix \ref{s_app}. More generally, applying the same ideas, the nonlinearity given by
\begin{equation*}
	k(x,s) = b(x)( D(x) \theta(s) + E) |s|^{2^\ast -2} s,
\end{equation*}
where $D \in C(\mathbb{R}^N) \cap L^\infty (\mathbb{R}^N)$ is $\mathbb{Z}^N$-periodic with $\inf D >0$, is a fully nonautonomous nonlinearity satisfying the conditions of Theorem \ref{teo_gs2}.

\item Hypothesis \ref{V_autovalor} is a general condition on the positivity of the Schr\"{o}dinger operator. A sufficient condition for \ref{V_autovalor} to hold, established in \cite{ferrazpenal}, is that the set where the potential is small is not too large, in the following sense. Let $V \in L ^1 _{\loca}(\mathbb{R}^N)$ be a function such that $V(x)\geq 0$ for a.e. $x\in \mathbb{R}^N$ and $V-\delta_0 \in L^q([V<\delta _0]),$ for some $q >N/2$, where $\delta_0 >0$ is such that
\begin{equation*}
\| V - \delta_0 \| _{L^{q} ([V<\delta_0 ])} < 2^{-1}(\min\{ 1,\delta_0  \})C_{q},
\end{equation*}
with
\begin{equation*}
0<C_{q} = \inf \left\lbrace  \| u \|^2 \| u \|^{-2} _{2q'} : u \in C^\infty _0 (\mathbb{R}^N) \right\rbrace <+\infty,
\end{equation*}
and $q ' = q/(q-1).$ Then $V$ satisfies \ref{V_autovalor}. In turn, this technical condition is satisfied by any nonnegative potential $V\in C(\mathbb{R}^N)$ such that 
\begin{equation*}
	\lim _{\delta \rightarrow 0_+} \frac{1}{\delta }\| V - \delta \| _{L^{q} ([V<\delta ])} =0,\quad \text{for some }q >1.
\end{equation*}
In particular, the non-coercive potential $V(x) = 1 - e^{-|x|^{2} },$ which has $V_P(x)\equiv 1$ as its periodic limit, satisfies \ref{V_autovalor}.
\item Functions satisfying condition \ref{g_selfsimilar} are called \textit{self-similar}. This class of functions, introduced in \cite{MR2409935} and further explored in \cite{tintabook,MR2465979,MR2409928} to investigate scalar field equations, can exhibit oscillatory behavior around the critical power $|t|^{2N/(N-2)}$. Notably, self-similar functions do not necessarily satisfy the condition that $s \mapsto |s|^{-1}g(s)$ is increasing, as demonstrated in the above examples.
\item Self-similar functions exhibit fractal-like behavior, that is, they are completely determined once their value is known on a single interval. For instance, let $I_j = [\gamma ^{\frac{N-2}{2} j}, \gamma ^{\frac{N-2}{2}(j+1)} ),$ $\gamma >1,$ $j \in \mathbb{Z}$ and $G_\ast: I_0 \rightarrow \mathbb{R}$ be a continuous function satisfying $\lim _{s \rightarrow \gamma ^{(N-2)/2 } }G_\ast (s) = \gamma ^N G_\ast (1)$. Defining $G(s) = \gamma ^{Nj} G_\ast(\gamma ^{-\frac{N-2}{2} j} s)$ for $s\in I_j$ and $j \in \mathbb{Z}$, and extending it to negative values by setting $G(s) := G(-s),$ for $s<0,$ we obtain a self-similar function.
\item Hypothesis \ref{f_tang}, introduced by X. H. Tang \cite{MR3194360}, is utilized for the subcritical term because it efficiently ensures the boundedness of Palais-Smale sequences. This choice allows us to concentrate our analysis on our main novelties: the generalization of the critical term to a highly general, nonautonomous class and the application of the profile decomposition result.
\item We employ Theorem \ref{teo_tinta}, a slight generalization of \cite[Theorem 5.1]{tintabook} where the specific base $2$ is replaced by an arbitrary base $\gamma>1$ and the translation group $\mathbb{R}^N$ is restricted to $\mathbb{Z}^N.$ The proof remains essentially the same. This formulation is necessary to align with the self-similarity conditions \ref{g_selfsimilar} and \ref{k_ss} within our asymptotically periodic setting.
\item A technical novelty of our work lies in the proof of the minimax estimate in Lemma \ref{l_minimax}. Classical arguments, such as those in \cite{MR709644, MR1455065,zbMATH01658785,MR2532816}, are tailored for homogeneous nonlinearities like the pure power $|s|^{2^\ast-2}s$.
\item We note that several choices in our presentation were made to favor clarity and a more direct line of argument. For instance, the minimax estimate in Lemma \ref{l_minimax} is proven for the general functional $I$. A similar result holds for $I_P$, but since $c(I) \le c(I_P)$, the former is sufficient for our purposes. Likewise, while some hypotheses could be further generalized, we have presented a version that avoids an overly technical or convoluted set of conditions, aiming to keep the framework as clear as possible while still accommodating a broad class of functions.
\item Consider 
\begin{equation}\label{sup_tinta}
	\mathbb{K} _{\bar{G}  } = \sup \left\lbrace \int _{\mathbb{R}^N} \bar{G}(u) \dx: u \in D^{1,2}(\mathbb{R}^N)\text{ and }\| \nabla u \|_2 = 1 \right\rbrace .
\end{equation}
This maximization problem is studied in \cite[Proposition 2.2]{MR2465979}, where the existence of a maximizer is established. Since $\mathbb{S}_{\bar{G}} = \mathbb{K}^{-2/2^\ast }_{\bar{G} }$ (see Appendix \ref{s_app} and \cite[Theorem 5.2]{tintabook}), it confirms that the existence of a maximizer for that problem implies the existence of a minimizer for ours.
\end{enumerate}

\subsection{Outline} The remainder of this paper is organized as follows. Our main strategy is to first develop the complete analysis for the semi-autonomous case (Eq. \eqref{Q}) and then demonstrate how these arguments can be adapted for the fully nonautonomous problem. Therefore, unless stated otherwise, we always assume the hypotheses \ref{V_autovalor}, \ref{f_geral}--\ref{f_porbaixo} and \ref{g_selfsimilar}--\ref{g_AR} with $k(x,s) = b(x) g(s)$. Section \ref{s_pre} establishes the variational framework and proves some preliminary results. In Section \ref{s_profile}, we describe additional properties of the profile decomposition theorem (Theorem \ref{teo_tinta}) and derive several of its key consequences for the nonlinear terms of our functional. The proofs of our main results for the semi-autonomous critical case, Theorems \ref{teo_periodic}, \ref{teo_compact}, and \ref{teo_gs}, are then presented in Sections \ref{s_teo_periodic}, \ref{s_comp}, and \ref{s_gsp}, respectively. Subsequently, in Section \ref{s_full}, we prove the results for the fully nonautonomous case by detailing the necessary adaptations to the preceding arguments. Finally, Appendices \ref{s_app} and \ref{s_app_minimax} are devoted to some complementary results, including the relation between the minimax levels and the construction of a nontrivial example for the nonlinearities $f$ and $k.$ For the reader's convenience, we have aimed to keep the text as self-contained as possible, recalling key definitions and results where necessary.\\

\noindent \textbf{Notation:} In this paper, we use the following notations:
	\begin{itemize}
		\item The usual norm in $L^{p}(\mathbb{R}^N)$ is denoted by $\|\cdot  \| _p;$
		\item $B_R(x_0)$ is the $N$-ball of radius $R$ and center $x_0;$ $B_R:=B_R(0);$
		\item  $C_i$ denotes (possibly different) any positive constant;
		\item $\mathcal{X}_A$ is the characteristic function of the set $A \subset \mathbb{R}^N;$
		\item $A^c =\mathbb{R}^N \setminus A,$ for $A \subset \mathbb{R}^N;$ 
		\item $|A|$ is the Lebesgue measure of the measurable set $A \subset \mathbb{R}^N;$
        \item $\| u \| = \left( \| \nabla u \| ^2 + \| u \|^2 \right)^{1/2},$ $u \in H^1 (\mathbb{R}^N);$
        \item $a_k = b_k + o_k (1)$ if and only if $\lim _{k \rightarrow \infty }(a_k - b_k) = 0.$
	\end{itemize}
		

\section{Preliminaries}\label{s_pre}

This section is devoted to establishing the variational framework for our problem. We begin by precisely defining the weighted Sobolev space $H_V^1 (\mathbb{R}^N)$ and proving its main properties.
\begin{proposition}\label{p_sobolev}
    The spaces $H_V^1 (\mathbb{R}^N)$ and $H_{V_P}^1 (\mathbb{R}^N)$ are well defined and are continuously embedded in $H^1 (\mathbb{R}^N).$ Moreover, $H_V^1 (\mathbb{R}^N) = H_{V_P}^1 (\mathbb{R}^N) = H^1 (\mathbb{R}^N)$ and the standard norms of these spaces are equivalent. 
\end{proposition}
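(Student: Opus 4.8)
The plan is to establish the characterization \eqref{charac} first, and then derive the norm equivalence as a consequence. Recall that $H^1_V(\mathbb{R}^N)$ is defined as the abstract completion of $C_0^\infty(\mathbb{R}^N)$ under $\|\cdot\|_V$, so the content of the proposition is twofold: (i) this completion embeds continuously into $H^1(\mathbb{R}^N)$ (so that its elements can be realized as genuine Sobolev functions rather than abstract equivalence classes of Cauchy sequences), and (ii) the realized space coincides with the weighted subspace on the right of \eqref{charac}.

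\medskip

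For step (i), the key observation is that \ref{V_autovalor} gives a Poincaré-type inequality: the constant $d_1>0$ means $\int_{\mathbb{R}^N} u^2 \le d_1^{-1}\|u\|_V^2$ for all $u\in C_0^\infty(\mathbb{R}^N)$, and trivially $\|\nabla u\|_2^2 \le \|u\|_V^2$ since $V\ge 0$. Hence $\|u\|^2 = \|\nabla u\|_2^2 + \|u\|_2^2 \le (1 + d_1^{-1})\|u\|_V^2$ on $C_0^\infty(\mathbb{R}^N)$. This single inequality does the heavy lifting: any $\|\cdot\|_V$-Cauchy sequence is $\|\cdot\|$-Cauchy, hence converges in $H^1(\mathbb{R}^N)$ to some $u$, and this gives a well-defined, continuous, injective linear map $H^1_V(\mathbb{R}^N)\hookrightarrow H^1(\mathbb{R}^N)$ with $\|u\|\le (1+d_1^{-1})^{1/2}\|u\|_V$. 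Injectivity follows because if the limit is $0$ in $H^1$ then the original Cauchy sequence represents the zero element of the completion; one should check that the $\|\cdot\|_V$-limit of $V^{1/2}u_k$ in $L^2$ is consistent with this (using that a subsequence converges a.e.), so that the $\|\cdot\|_V$-norm passes to the limit. The same argument applies verbatim to $V_P$ in place of $V$, using that \ref{h_infinito} plus continuity forces $V_P$ to also satisfy \ref{V_autovalor} (indeed $V_P\in L^\infty$ and one can cite or reprove the analogue of $d_1>0$ for $V_P$; alternatively, since $V-V_P\in L^\infty$, comparability of the two forms on bounded regions together with periodicity gives it).

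\medskip

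For step (ii), the inclusion of the realized $H^1_V(\mathbb{R}^N)$ into $\{u\in H^1 : \int V u^2 <\infty\}$ is automatic from the construction: if $u_k\to u$ in $\|\cdot\|_V$ then $(V^{1/2}u_k)$ is $L^2$-Cauchy, so converges in $L^2$, and passing to an a.e.-convergent subsequence identifies the limit as $V^{1/2}u$, whence $\int V u^2 <\infty$ and $\|u\|_V = \lim\|u_k\|_V$. For the reverse inclusion, given $u\in H^1(\mathbb{R}^N)$ with $\int V u^2 <\infty$, I would approximate $u$ by $C_0^\infty$ functions in the $\|\cdot\|_V$-norm in two stages: first truncate with a cutoff $\chi_R$ (so $\chi_R u \to u$ in $H^1$ and, by dominated convergence using $V u^2\in L^1$, also $\int V|\chi_R u - u|^2\to 0$), reducing to compactly supported $u$; then mollify, $\rho_\varepsilon * (\chi_R u)$, which converges in $H^1$ and whose weighted term converges because $V$ is continuous hence bounded on the (fixed compact) support. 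This shows density and hence that the realized space is exactly the weighted subspace. Finally, \eqref{charac} for $V_P$ and the fact that $V_P\in L^\infty$ give $H^1_{V_P}(\mathbb{R}^N)=H^1(\mathbb{R}^N)$ with equivalent norms (the bound $\|u\|_{V_P}^2\le(1+\|V_P\|_\infty)\|u\|^2$ is immediate, and the reverse is step (i)); since $V-V_P\in L^\infty$, the same conclusion transfers to $V$, giving $H^1_V(\mathbb{R}^N)=H^1(\mathbb{R}^N)$ with all three norms equivalent.

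\medskip

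The main obstacle is the identification in step (i)--(ii) that the abstract completion genuinely sits inside $H^1(\mathbb{R}^N)$ without losing the weighted information, i.e. that a sequence Cauchy in $\|\cdot\|_V$ but converging to $0$ in $H^1$ must be the zero element of the completion; this is exactly where one needs the a.e.-convergence-along-subsequences argument to conclude $V^{1/2}u_k\to 0$ in $L^2$ as well. Everything else is routine cutoff-and-mollify density together with the elementary norm comparisons above.
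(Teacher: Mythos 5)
Your proposal is correct and follows essentially the same route as the paper: the Poincar\'e-type inequality from \ref{V_autovalor} gives well-definedness and the continuous embedding into $H^1(\mathbb{R}^N)$, the characterization \eqref{charac} is obtained exactly as in the paper (identification of the $L^2$-limit of $V^{1/2}u_k$, then truncation and mollification for the reverse inclusion), and the final identification $H^1_V=H^1_{V_P}=H^1$ rests on the same ingredients, namely \ref{V_autovalor} for both potentials and $V,V_P\in L^\infty(\mathbb{R}^N)$. The only cosmetic difference is that you pass both weighted norms through $\|\cdot\|$ using boundedness of $V$ and $V_P$, whereas the paper first compares $\|\cdot\|_V$ and $\|\cdot\|_{V_P}$ directly via \ref{h_infinito} outside a large ball; this changes nothing of substance.
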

\begin{proof}
We prove the first part of the statement using an argument from \cite{sirakov2000}. Condition \ref{V_autovalor} clearly implies
$\| u \|_2 ^2 \leq d_1^{-1} \| u \|^2_V,$ for any $u \in C^\infty _0 (\mathbb{R}^N),$ and hence
\begin{equation}\label{embedi}
    \| u \| \leq (1+d_1^{-1})^{1/2}\| u \|_V,\quad \forall \, u \in C^\infty _0 (\mathbb{R}^N).
\end{equation}
In particular, any sequence $(\varphi _k)_k \subset C^\infty _0 (\mathbb{R}^N)$ satisfying $\| \varphi _k - \varphi _l \|_V \rightarrow 0,$ also satisfies $\| \varphi _k - \varphi _l \| \rightarrow 0,$ as $k,l \rightarrow \infty.$ This ensures that $H_V^1 (\mathbb{R}^N)$ is well defined. An application of Fatou's lemma then yields
\begin{equation}\label{inclusao}
    H^1 _V(\mathbb{R}^N) \subset \left\{ u \in H^1 (\mathbb{R}^N) : \int _{\mathbb{R}^N} V(x)u^2 \dx < + \infty \right\}.
\end{equation}
Furthermore, \eqref{embedi} shows that $H_V^1 (\mathbb{R}^N)$ is continuously embedded in $H^1 (\mathbb{R}^N).$ To prove the converse of \eqref{inclusao}, let $u \in H^1 (\mathbb{R}^N)$ with $\int _{\mathbb{R}^N} V(x)u^2 \dx < + \infty .$ We first assume that $\supp(u)$ is compact. Let $(\varrho _k)$ be the standard sequence of mollifiers with support in the unit ball, and consider $u_k = \varrho _k \ast u \in C^\infty _0 (\mathbb{R}^N).$ Let $K$ be a compact set containing $\supp(u)$ and the support of $u_k$ for all $k.$ By the classical Friedrichs theorem we have $u_k \rightarrow u$ and $\nabla u_k \rightarrow \nabla u $ in $L^2 (\mathbb{R}^N).$ Moreover,
\begin{equation*}
\int _{\mathbb{R}^N } V(x) |u_k - u|^2 \dx \leq \| V \|_{L ^\infty (K)} \int _{\mathbb{R}^N } |u_k - u|^2\dx\rightarrow 0,\quad \text{as }k \rightarrow \infty.
\end{equation*}
This shows that $u \in H^1 _V(\mathbb{R}^N).$ For the general case, where $\supp(u)$ may not be compact, consider a truncation function $\eta \in C^\infty _0 (\mathbb{R}^N)$ such that $\eta =1 $ in $B_1$ and $\eta =0$ in $B^c_2.$ Define $\eta _k (x) = \eta (x/k)$ and $u_k = \eta _k u \in H^1 (\mathbb{R}^N).$ Note that each $u_k$ has compact support. By the previous case, $(u_k) \subset H^1_V(\mathbb{R}^N).$ Since $|\eta _k|\leq 1$ and $V(x)u^2 \in L^1(\mathbb{R}^N),$ the Lebesgue convergence theorem implies $\| u_k - u \|_V \rightarrow 0.$ Since $H^1_V(\mathbb{R}^N)$ is a Banach space, we conclude that $u \in H^1 _V(\mathbb{R}^N).$ The same argument applies to $H^1_{V_P} (\mathbb{R}^N),$ by replacing $V$ with $V_P.$

To prove the second part, we choose $R>0$ such that $-1 + V_P(x) < V(x) < 1+V_P(x),$ for $ x \in B_R^c.$ Then, for any $u \in H^1 _{V_P} (\mathbb{R}^N),$
\begin{align*}
\int _{\mathbb{R}^N} V(x) u^2 \dx & \leq \int _{B_R} V(x) u^2 \dx + \int _{B_R^c} (V_P(x) + 1) u^2 \dx \\
&\leq  \int _{\mathbb{R}^N } V_P(x) u^2 \dx + \sup _{x \in B_R} |V(x) - V_P(x)| \int _{B_R} u^2 \dx + \int _{B^c_R} u^2 \dx \\
& \leq \int _{\mathbb{R}^N } V_P(x) u^2 \dx + M_0 \| u \|_2^2,
\end{align*}
where $M_0 =  \max \left\{     \sup _{x \in B_R} |V(x) - V_P(x)| , 1  \right\}.$ Using \ref{V_autovalor}, we obtain
\begin{equation*}
\| u \| ^2 _V \leq (1 + M_0 d_1 ^{-1}) \| u \|^2_{V_P}.
\end{equation*}
By \eqref{charac}, we have $H^1 _{V_P}(\mathbb{R}^N) \subset  H^1 _{V}(\mathbb{R}^N).$ Replacing $V$ with $V_P,$ we similarly obtain $H^1 _{V}(\mathbb{R}^N) \subset  H^1 _{V_P}(\mathbb{R}^N),$ with $\| u \| ^2 _{V_P} \leq (1 + M_0 d_1 ^{-1}) \| u \|^2_{V}.$ Finally, since $V \in L^\infty (\mathbb{R}^N)$ we have $\| u \|_V \leq C \| u \|,$ for any $u \in H^1 (\mathbb{R}^N),$ and some $C>0.$ The characterization \eqref{charac} then implies that $H^1 (\mathbb{R}^N) \hookrightarrow H_V^1 (\mathbb{R}^N).$
\end{proof}
Next we describe the minimax structure of the energy functional $I.$
\begin{lemma}\label{l_mpgeometry}
	$I$ has the mountain pass geometry. Precisely,
 \begin{enumerate}[label=\roman*):]
     \item There exist $r,$ $b>0$ such that $I(u) \geq b,$ whenever $\|u\|_V = r;$
     \item There is $e \in H^1 (\mathbb{R}^N)$ with $\| e \|_V > r$ and $I(e) < 0.$
 \end{enumerate}
\end{lemma}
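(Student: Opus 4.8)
The plan is to verify the two standard mountain pass conditions directly from the growth hypotheses, using the norm equivalence from Proposition \ref{p_sobolev} freely.

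\medskip

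\noindent\textbf{Part i): the local lower bound.} First I would estimate the nonlinear part of $I$ near the origin. Combining \ref{f_geral} with the growth control \ref{g_geral} on $g$ (equivalently \ref{k_geral} with $k(x,s)=b(x)g(s)$ and $b\in L^\infty$), one gets, for any $\varepsilon>0$, a constant $C_\varepsilon>0$ and an exponent $p_\varepsilon\in(2,2^\ast)$ such that
\begin{equation*}
	|F(x,s)+K(x,s)| \leq \varepsilon\bigl(|s|^2 + |s|^{2^\ast}\bigr) + C_\varepsilon |s|^{p_\varepsilon}.
\end{equation*}
Integrating and applying the Sobolev embedding $H^1(\mathbb{R}^N)\hookrightarrow L^q(\mathbb{R}^N)$ for $q\in\{2,p_\varepsilon,2^\ast\}$ (available since $H^1_V(\mathbb{R}^N)=H^1(\mathbb{R}^N)$ with equivalent norms), I would obtain
\begin{equation*}
	I(u) \geq \tfrac12\|u\|_V^2 - \varepsilon C_1\bigl(\|u\|_V^2 + \|u\|_V^{2^\ast}\bigr) - C_\varepsilon C_2 \|u\|_V^{p_\varepsilon}.
\end{equation*}
Fixing $\varepsilon$ small so that $\varepsilon C_1 \le \tfrac14$ absorbs the quadratic term, this gives $I(u)\ge \tfrac14\|u\|_V^2 - C_3(\|u\|_V^{p_\varepsilon}+\|u\|_V^{2^\ast})$; since $p_\varepsilon,2^\ast>2$, the right-hand side is bounded below by a positive constant $b$ on a small sphere $\|u\|_V=r$.

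\medskip

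\noindent\textbf{Part ii): the descending direction.} Here I would fix any $\varphi\in C_0^\infty(\mathbb{R}^N)\setminus\{0\}$, say with $\varphi\ge 0$, and examine $t\mapsto I(t\varphi)$ as $t\to\infty$. Using \ref{f_porbaixo}, which gives $F(x,s)\ge \lambda|s|^{p_0}$, and \ref{g_porbaixo}, which gives $G(s)\ge\lambda_\ast|s|^{2^\ast}$ hence $K(x,s)=b(x)G(s)\ge b_0\lambda_\ast|s|^{2^\ast}$, one bounds
\begin{equation*}
	I(t\varphi) \leq \tfrac{t^2}{2}\|\varphi\|_V^2 - \lambda t^{p_0}\|\varphi\|_{p_0}^{p_0} - b_0\lambda_\ast t^{2^\ast}\|\varphi\|_{2^\ast}^{2^\ast}.
\end{equation*}
Since $2^\ast>2$ (and also $p_0>2$), the dominant negative term forces $I(t\varphi)\to-\infty$ as $t\to\infty$. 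In particular there is $t_0>0$ large enough that $e:=t_0\varphi$ satisfies $\|e\|_V>r$ and $I(e)<0$. (Note that the extra case distinctions in \ref{f_porbaixo} on $N$ and $p_0$ are not needed for this lemma; they are only relevant for the finer minimax estimate of Lemma \ref{l_minimax}. Only $g(s)s>0$, $G\ge\lambda_\ast|s|^{2^\ast}$, and $2^\ast>2$ are used here.)

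\medskip

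\noindent I do not anticipate a serious obstacle: both parts are routine once the right growth inequalities are assembled, and the only mild point requiring care is to make sure the quadratic contributions of $F$ and $K$ near the origin are genuinely absorbed by $\tfrac12\|u\|_V^2$ — this is exactly why the $\varepsilon|s|^2$ term in \ref{f_geral} is written with an arbitrarily small coefficient. One should also confirm that $t_0$ can be chosen so that simultaneously $\|e\|_V>r$ and $I(e)<0$, which is immediate since $\|t\varphi\|_V=t\|\varphi\|_V\to\infty$ while $I(t\varphi)\to-\infty$.
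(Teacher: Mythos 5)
Your proof is correct. Part i) is essentially the paper's own argument: the growth bounds \ref{f_geral} and \ref{g_geral} plus Sobolev embeddings give $I(u) \geq \|u\|_V^2\bigl(\tfrac12 - \varepsilon C_1 - C_2(\varepsilon)\|u\|_V^{p_\varepsilon-2} - C_3\|u\|_V^{2^\ast-2}\bigr)$, positive on a small sphere. (One cosmetic slip: the coefficient of $|s|^{2^\ast}$ coming from the critical term $b(x)G(s)$ is the fixed constant $\|b\|_\infty a_\ast/2^\ast$, not an arbitrarily small $\varepsilon$; this is harmless since the $2^\ast$ power is superquadratic, and your final absorbed inequality already treats it as a fixed constant.) Part ii) is where you diverge from the paper: you invoke the explicit lower bounds \ref{f_porbaixo} and \ref{g_porbaixo} to get the quantitative estimate $I(t\varphi)\leq \tfrac{t^2}{2}\|\varphi\|_V^2 - \lambda t^{p_0}\|\varphi\|_{p_0}^{p_0} - b_0\lambda_\ast t^{2^\ast}\|\varphi\|_{2^\ast}^{2^\ast}\to-\infty$, whereas the paper uses only the general superquadratic condition \ref{h_tang1} together with Fatou's lemma to show $I(tu_0)/t^2\to-\infty$ for an arbitrary nonzero $u_0$. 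Your route is more elementary and perfectly legitimate under the standing hypotheses of the semi-autonomous case (which include \ref{f_porbaixo} and \ref{g_porbaixo}); the paper's Fatou argument buys uniformity of method, since \ref{h_tang1} is also available in the fully nonautonomous setting where the pointwise power lower bound on $F$ may be absent (only \ref{f_zero} and \ref{k_AR} are assumed there), so the same two-line proof of ii) transfers verbatim to Section \ref{s_full}. You are also right that the case distinctions on $N$ and $p_0$ in \ref{f_porbaixo} play no role here and only enter the minimax estimate of Lemma \ref{l_minimax}.
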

\begin{proof}
\textit{i)}: The first part is standard: Using \ref{f_geral} and \ref{g_selfsimilar} together with some Sobolev inequalities,
\begin{equation*}
    I(u) \geq \| u \|^2_V\left( \frac{1}{2} - \varepsilon C_1 - C_2 (\varepsilon) \| u \|^{p_\varepsilon - 2}_V - C_3 (\varepsilon) \| u \|_V^{2^\ast - 2}\right)>0,
\end{equation*}
for suitable $\varepsilon >0$ and $\| u \|_V>0$ small enough.

\textit{ii)}: Let $u_0 \in H^1(\mathbb{R}^N) \setminus \{ 0 \}.$ By \ref{h_tang1} and Fatou's lemma, we have
\begin{equation*}
\frac{I(t u_0)}{t^2} = \frac{1}{2}\| u_0 \|^2_V - \int _{\mathbb{R}^N} \frac{F(x,tu_0) + b(x) G(tu_0)}{(tu_0)^2}  u_0^2\dx\rightarrow - \infty,\quad \text{as }t \rightarrow \infty.
\end{equation*}
Hence $\lim _{t \rightarrow \infty }I(t u_0) = \lim _{t \rightarrow \infty }(I(t u_0)/t^2) t^2 = - \infty.$
\end{proof}
\begin{remark}
The minimax level defined above coincides with the usual minimax level of the mountain pass type geometry, i.e., define $\hat{c}(I) = \inf _{\hat{\xi} \in \hat{\Gamma} _I} \sup _{t \in [0,1]} I(\hat{\xi} (t)),$ where
\begin{equation*}
    \hat{ \Gamma } _I = \left\{ \hat{ \xi } \in C([0,1], H^1(\mathbb{R}^N) ): \hat{ \xi } (0)=0,\ \| \hat{ \xi }(1)\| >r\text{ and }I(\hat{ \xi }(1)) <0 \right\}.
\end{equation*}
Then, by \cite[Theorem 2.1]{MR2532816}, $\hat{c}(I)$ is well defined and $c(I) = \hat{c}(I).$ In fact, given $\xi \in \Gamma _I,$ we know of the existence of $t_0>0$ such that $\| \xi (t_0) \| > r$ and $I(\xi (t_0)) < 0.$ Define $\hat{\xi } (t) = \xi (t t_0),$ for $t \in [0,1].$ Then $\hat{\xi} \in \hat{\Gamma}_I$ and $\hat{c}(I) \leq \sup _{t \in [0,1]} I(\hat{\xi} (t))    = \sup _{t \geq 0} I(\xi (t)).$ Because $\xi \in \Gamma _I$ is arbitrary, we have $\hat{c}(I)\leq c(I).$ Conversely, taking $\hat{\xi } \in \hat{\Gamma} _I,$ the path defined by $\xi (t) = \hat{\xi }(t),$ for $t \in [0,1],$ and $\xi (t) = t \hat{\xi}(1),$ for $t \geq 1,$ belongs to $\Gamma _I.$ Moreover, $c(I) \leq \sup _{t \geq 0} I(\xi (t))    = \sup _{t \in [0,1]} I(\hat{\xi} (t)).$ Likewise, $c(I) \leq \hat{c}(I) .$
\end{remark}
\begin{remark}\label{r_compar}
Let $u_0 \in H^1 (\mathbb{R}^N)$ be a critical point of $I_P$ and suppose \ref{h_minimax}. Consider the first case of \ref{h_minimax} and take the path $\xi (t) = t u_0.$ It is standard to prove that $t=1$ is the unique critical point of $t \mapsto I_P(t u_0)$ and by the proof of Lemma \ref{l_mpgeometry}, $c(I_P) \leq \sup _{t \geq 0} I_P(\xi (t)) = I_P(u_0).$ For the second case of \ref{h_minimax}, we use the well-known fact \cite{MR1400007} that the following Pohozaev identity holds,
\begin{equation*}
\int _{\mathbb{R}^N} F_P(u_0)+b_P G_P(u_0) - \frac{V_P}{2} u^2_0\dx= \frac{N-2}{2N} \int _{\mathbb{R}^N} |\nabla u_0|^2 \dx,
\end{equation*}
to see that the path $\zeta(t) = u_0 (\cdot /t ),$ $\zeta(0) :=0,$ belongs to $\Gamma_{I_P}$ and possesses a unique critical point $t=1.$ In particular, $c(I_P) \leq \sup _{t \geq 0} I_P(\zeta (t)) = I_P(u_0).$
In summary, condition \ref{h_minimax} implies $c(I_P) \leq I_P(u_0).$
\end{remark}
We now establish our minimax estimate for $c(I),$ involving the general critical nonlinearity $b(x)g(s).$ For the following, we use the notation $\bar{G}_{b }(s) := \|b \|_\infty \bar{G}(s) = \|b \|_\infty g(s)s$ and
\begin{equation*}
	\mathbb{S}_{\bar{G}_{b } } := \inf \left\{ \|\nabla u\|_2^2 : u \in D^{1,2}(\mathbb{R}^N) \text{ and } \int_{\mathbb{R}^N} \bar{G}_{b }(u) \dx = 1 \right\}>0.
\end{equation*}
\begin{lemma}\label{l_minimax}
$c(I) < ((\mu_\ast -2)/(2\mu _\ast) ) \mathbb{S}^{N/2}_{\bar{G} _b}.$
\end{lemma}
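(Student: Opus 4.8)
The plan is to exhibit an explicit mountain-pass path whose maximal energy is strictly below the target value $((\mu_\ast-2)/(2\mu_\ast))\,\mathbb S^{N/2}_{\bar G_b}$. The natural candidates are dilated-and-truncated versions of the Aubin--Talenti-type extremals, but since $\bar G$ (hence $\bar G_b$) is only self-similar rather than a pure power, I would instead work directly with a minimizer $U$ of the constrained problem defining $\mathbb S_{\bar G_b}$, which exists and is positive by the cited results (\cite[Theorem 5.2]{tintabook}, \cite[Proposition 2.2]{MR2465979}). First I would record the key scaling identity: because $\bar G$ is self-similar with base $\gamma$, the family $U_j(x):=\gamma^{\frac{N-2}{2}j}U(\gamma^j x)$ has $\|\nabla U_j\|_2^2=\|\nabla U\|_2^2$ and $\int \bar G_b(U_j)\dx=\int\bar G_b(U)\dx$, so one has a whole discrete orbit of minimizers; more importantly, the functional $J_b(u):=\tfrac12\|\nabla u\|_2^2-\tfrac1{2^\ast}\int \bar G_b\text{-type primitive}$ attains its mountain-pass value exactly $((\mu_\ast-2)/(2\mu_\ast))\mathbb S^{N/2}_{\bar G_b}$ along $t\mapsto tU$ (using $\mu_\ast G\le g(s)s=\bar G$ and $G(s)\ge\lambda_\ast|s|^{2^\ast}$ from \ref{g_porbaixo}--\ref{g_AR} to pin the exponent governing the maximization). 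The point of the constant $\kappa_\ast$ and the condition $\mu_\ast\ge 2N/(N-2\kappa_\ast)$ in \ref{g_AR} is precisely to make this bookkeeping consistent.

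Next I would build the competitor path for $I$ itself. Take a ball $B_R$ on which $V$ is bounded, let $\eta$ be a cutoff supported in $B_{2R}$, and set $v_j=\eta\, U_j$ for large $j$ (concentration scale $\gamma^{-j}\to 0$). Standard estimates (as in Brézis--Nirenberg) give $\|\nabla v_j\|_2^2=\|\nabla U\|_2^2+o_j(1)$, $\int V(x)v_j^2\dx=o_j(1)$ because the $L^2$-mass of a concentrating profile vanishes (for $N\ge 4$ it is $O(\gamma^{-2j})$, for $N=3$ it is $O(\gamma^{-j})$), and $\int b(x)G(v_j)\dx\ge b_0\int G(U_j)\dx+o_j(1)$, while the subcritical term contributes, via \ref{f_porbaixo}, a genuinely negative lower-order correction $-\lambda\int|v_j|^{p_0}\dx$ of order $\gamma^{-\beta j}$ with $\beta=\beta(N,p_0)<$ the order of the potential error when the alternatives i)--iii) of \ref{f_porbaixo} hold. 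Then I would optimize the scalar function $t\mapsto I(tv_j)$: its maximum is attained at some $t_j$ bounded away from $0$ and $\infty$, and the value is $\le \sup_{t\ge0}\big[\tfrac{t^2}{2}\|\nabla U\|_2^2-\tfrac{t^{2^\ast}}{2^\ast}\cdot(\text{effective }\bar G_b\text{ mass})\big]$ minus the subcritical gain plus the potential loss; comparing orders, the subcritical gain dominates for large $j$, giving $\sup_t I(tv_j)<((\mu_\ast-2)/(2\mu_\ast))\mathbb S^{N/2}_{\bar G_b}$.

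The last step is to upgrade the segment $t\mapsto tv_j$ (which satisfies $I(tv_j)\to-\infty$ as $t\to\infty$ since $2^\ast>2$) to an admissible path in $\Gamma_I$, which is immediate, and to conclude $c(I)\le\sup_t I(tv_j)<((\mu_\ast-2)/(2\mu_\ast))\mathbb S^{N/2}_{\bar G_b}$.

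\medskip

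\noindent\textbf{Main obstacle.} I expect the principal difficulty to be the non-homogeneity of $\bar G$: because $G$ and $\bar G$ are only self-similar, the scalar maximization $\sup_{t\ge0}\big(\tfrac{t^2}{2}a-\int\!b(x)G(tU_j)\dx\big)$ is \emph{not} the clean power computation $\sup_t(\tfrac{t^2}{2}a-\tfrac{t^{2^\ast}}{2^\ast}c)$ and $t\mapsto\int b(x)G(tU_j)\dx$ need not be convex or monotone in any naive sense. The resolution is to trap it between pure powers: the sandwich $\lambda_\ast|s|^{2^\ast}\le G(s)$ and $\mu_\ast G(s)\le\bar G(s)$ forces $\int\bar G_b(tU_j)\dx\ge \mu_\ast\|b\|_\infty\int G(U_j)\dx$ scaled appropriately, and the self-similarity lets one pass from the arbitrary dilation $t$ to the discrete orbit where $\int\bar G_b$ is constant, so that the supremum is controlled by the value $((\mu_\ast-2)/(2\mu_\ast))\mathbb S^{N/2}_{\bar G_b}$ exactly, with the inequality made strict by the subcritical perturbation. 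Making this chain of comparisons rigorous — in particular verifying that the $\kappa_\ast$-threshold on $\mu_\ast$ is exactly what is needed for the error-term competition in all dimensions $N\ge3$ and all the sub-cases of \ref{f_porbaixo} — is the technical heart of the argument, and is presumably the ``technical novelty'' the authors flag in Remark (xii).
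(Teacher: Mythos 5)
Your overall strategy (explicit concentrating competitor path, pure-power sandwich for the non-homogeneous $G$, Br\'ezis--Nirenberg competition between the subcritical gain and the potential/gradient errors, with the sub-cases of \ref{f_porbaixo}) is in the right spirit, but two steps as written do not close. First, the choice of profile: you concentrate a minimizer $U$ of $\mathbb{S}_{\bar{G}_b}$, yet the constant $\kappa_\ast$ in \ref{g_AR} is calibrated to the Sobolev extremals. Since an arbitrary amplitude factor $t$ does not lie in the discrete self-similar orbit $\{\gamma^{\frac{N-2}{2}j}\}_{j\in\mathbb{Z}}$, the only way to reduce $\sup_{t\ge 0} I(tv_j)$ to a two-power computation is the pointwise bound $G(s)\ge\lambda_\ast|s|^{2^\ast}$ from \ref{g_porbaixo}; doing so, the leading term of your estimate is $\tfrac1N\,(2^\ast b_0\lambda_\ast)^{-\frac{N-2}{2}}\bigl(\|\nabla U\|_2^2/\|U\|_{2^\ast}^2\bigr)^{N/2}$, and $\bigl(\|\nabla U\|_2^2/\|U\|_{2^\ast}^2\bigr)^{N/2}\ge \mathbb{S}^{N/2}$ with equality only for Sobolev extremals. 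For the $\mathbb{S}_{\bar{G}_b}$-minimizer this quotient can be strictly larger, and then the hypothesis $\mu_\ast\ge 2N/(N-2\kappa_\ast)$ no longer delivers the target, because (using $\mathbb{S}_{\bar{G}_b}=\|b\|_\infty^{-2/2^\ast}\mathbb{S}_{\bar{G}}$) the threshold value $\tfrac{\kappa_\ast}{N}\mathbb{S}^{N/2}_{\bar{G}_b}$ equals exactly $\tfrac1N(2^\ast b_0\lambda_\ast)^{-\frac{N-2}{2}}\mathbb{S}^{N/2}$, i.e.\ the bookkeeping is tight precisely for Aubin--Talenti profiles normalized by $\|\cdot\|_{2^\ast}=1$. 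Second, your assertion that a limiting functional attains its mountain-pass value \emph{exactly} at $\tfrac{\mu_\ast-2}{2\mu_\ast}\mathbb{S}^{N/2}_{\bar{G}_b}$ along $t\mapsto tU$ is unjustified and false in general for non-homogeneous self-similar $G$: in the paper that number is only an a priori lower threshold for levels of non-compact Palais--Smale sequences (this is how it enters Step 2 of the proof of Theorem \ref{teo_periodic}), not the exact level of the limit problem, so it cannot be invoked as an identity in your argument; the vague appeal to ``passing to the discrete orbit'' does not repair this.

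For comparison, the paper proceeds with truncated Aubin--Talenti functions $w_\varepsilon$ normalized by $\|w_\varepsilon\|_{2^\ast}=1$, composed with a large discrete dilation $\gamma^{j_k}$ (which by \ref{g_selfsimilar} leaves the critical integral essentially invariant and makes the $V$-term and the $x$-dependence vanish), and uses the \emph{spatial} dilation path $t\mapsto \gamma^{\frac{N-2}{2}j_k}w_\varepsilon(\gamma^{j_k}\cdot/t)$, under which the energy is exactly of the form $\tfrac12 t^{N-2}\|\nabla w_\varepsilon\|_2^2 - t^N E$ regardless of the non-homogeneity of $G$; the bound $G\ge\lambda_\ast|\cdot|^{2^\ast}$, the relation $\mathbb{S}_{\bar{G}_b}=\|b\|_\infty^{-2/2^\ast}\mathbb{S}_{\bar{G}}$ and $\mu_\ast\ge 2N/(N-2\kappa_\ast)$ then give the leading estimate, while strictness comes from the competition $-\lambda\|w_\varepsilon\|_{p_0}^{p_0}$ versus $\mathcal{O}(\varepsilon^{(N-2)/2})+C\|w_\varepsilon\|_2^2$ in each case of \ref{f_porbaixo}. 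If you replace your profile $U$ by these extremals (your remaining steps, admissibility of the path, boundedness of the maximizing parameter, and the \ref{f_porbaixo} case analysis, are consistent with the paper's), the argument closes; as written, the profile choice and the unproved identification of the threshold constant are genuine gaps.
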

\begin{proof}
Our argument involves the use of the functions
\begin{equation*}
	U _\varepsilon (x) = C(N) \frac{\varepsilon ^{ (N-2)/ 4 }}{   \left( \varepsilon + |x|^{2}  \right) ^{(N-2) / 2}  }    ,\ \varepsilon >0, \text{ where }C(N) = \left(  N \left( N-2  \right)    \right)^{(N-2) / 4},
\end{equation*}
that attain the infimum of \eqref{constS}, see \cite{MR463908}. Let $\psi \in C_0 ^\infty(\mathbb{R}^N),$ with $0\leq \psi \leq 1 $ and such that $\psi(x) =1,$ if $|x| \leq  \varrho/2,$  and $\psi(x) =0,$ if $|x| \geq \varrho.$ For 
	\begin{equation*}
	W_\varepsilon (x) = (\varepsilon + |x|^{2}) ^{(2-N)/2}, \text{ denote } v  _\varepsilon := \psi W _\varepsilon \text{ and } w _\varepsilon =  v  _\varepsilon   \|  v  _\varepsilon  \|^{-1}_{2 ^\ast}.
	\end{equation*}
Following \cite{MR1009077,MR3834729}, we obtain the following asymptotic behavior of $w_\varepsilon$ for small values of  $\varepsilon\in (0,1)$:
	\begin{align}
	\| \nabla w  _\varepsilon \| _2 ^2 &=  \mathbb{S} + \mathcal{O}(\varepsilon ^{(N-2) / 2}),\label{ozao1}\\
	\|w  _\varepsilon  \| _q ^q
	&=\left\{
	\begin{aligned}
	&\mathcal{O} (\varepsilon^{(N/2) (1-q/2^\ast) } ) , &\text{ if }q>2^\ast/2,&\\
	&\mathcal{O}(\varepsilon ^{(N-2) (q/4)} |\ln \varepsilon|), &\text{ if }q=2^\ast/2,&\\
	&\mathcal{O}(\varepsilon ^{(N-2) (q/4) }), &\text{ if }q<2^\ast/2,&
	\end{aligned}
	\right.\\
	\|w  _\varepsilon\| _2 ^2 & = \left\{
	\begin{aligned}
	&\mathcal{O} (\varepsilon  ) , &\text{ if }N>4,&\\
	&\mathcal{O} ( \varepsilon |\ln \varepsilon| ), &\text{ if }N=4,&\\
	&\mathcal{O} ( \varepsilon ^{ (N-2) / 2} ), &\text{ if }N=3,&
	\end{aligned}
	\right.\label{ozao3}
	\end{align}
	where the Bachmann–Landau notation $\beta (\varepsilon)= \mathcal{O} (\alpha (\varepsilon))$ stands for the existence of $c_1, c_2> 0$ such that $c_1 \leq \beta (\varepsilon)/\alpha (\varepsilon) \leq c_2.$ In particular $\lim _{\varepsilon \rightarrow 0 }\| w _\varepsilon \|_q^q = 0 $ for all $2\leq q<2^\ast.$ On the other hand, let us consider the path $\zeta ^{(\varepsilon)} _k (t)(x)= \gamma ^{ \frac{N-2}{2} j_k} w_\varepsilon (\gamma ^{j_k} (x/t) ),$ $\zeta ^{(\varepsilon)} _k (0):=0,$ where $j_k \rightarrow +\infty ,$ $k \in \mathbb{N}.$ Then, $	I (  \zeta ^{(\varepsilon)} _k (t)  ) = (1/2)t^{N-2}  \| \nabla w_\varepsilon \|_2^2 - t^N E_k^{(\varepsilon)},$ and by \ref{g_selfsimilar},
	\begin{equation*}
		E_k^{(\varepsilon)} = \int _{\mathbb{R}^N } \gamma ^{-Nj_k} F(\gamma ^{-j_k} t x , \gamma ^{ \frac{N-2}{2} j_k}  w_\varepsilon  ) \dx  + \int _{\mathbb{R}^N} b(\gamma ^{-j_k} t x ) G(w_\varepsilon ) \dx  - \int _{\mathbb{R}^N }\gamma ^{-2j_k} V(\gamma ^{-j_k} t x ) w_\varepsilon ^2\dx.
	\end{equation*}
	Notice that hypotheses \ref{f_porbaixo} and \ref{g_porbaixo}, together with \eqref{ozao1}--\eqref{ozao3} imply the existence of $C_0>0$ such that $E_k^{(\varepsilon)} > C_0,$ for any $\varepsilon \in (0,\varepsilon_0),$ $\varepsilon_0$ small enough and $k$ sufficiently large. Consequently, for this choice of $k$ (fixed) and $\varepsilon_0,$ the path $\zeta ^{(\varepsilon)} _k  $ belongs to $\Gamma _{I},$ for any $\varepsilon \in (0, \varepsilon _0).$ Thus it suffices to prove that $\max _{t \geq 0} I (\zeta ^{(\varepsilon)} _k (t)) < ((\mu_\ast -2)/(2\mu _\ast) ) \mathbb{S}^{N/2}_{\bar{G}_b}.$ Let
	\begin{align*}
		\psi _\varepsilon (t) &= \frac{1}{2}  t^{N-2} \| \nabla w_\varepsilon \|_2^2 + \frac{B_k}{2} t^N \| w_\varepsilon \|_2 ^2 - \lambda A_k t^N \| w_\varepsilon \|^{p_0}_{p_0} - t^N \int _{\mathbb{R}^N} b_0 G(w_\varepsilon ) \dx \\
		&=\frac{1}{2} \| \nabla w_\varepsilon \|_2^2 t^{N-2} - C_k^{(\varepsilon)} t^N ,\ t \geq 0,
	\end{align*}
	where $A_k = \gamma ^{j_k (\frac{N-2}{2}p_0 - N  )},$ $B_k = \gamma ^{-2j_k} \| V \| _\infty $ and $C_k^{(\varepsilon)} = \lambda A_k \| w_\varepsilon \|^{p_0}_{p_0}+\int _{ \mathbb{R}^N } b_0G(w_\varepsilon ) \dx -(B_k /2) \| w_\varepsilon \|^2_2>0.$ Then, $\max _{t \geq 0} I (\zeta ^{(\varepsilon)} _k (t)) \leq \max _{t \geq 0} \psi _\varepsilon (t) = \psi _\varepsilon (t_\varepsilon), $ for $t _\varepsilon = ( \|\nabla w_\varepsilon \|_2^2/(2^\ast C_k^{(\varepsilon)})  )^{1/2}.$ In particular, we see that there are $a_1,$ $a_2>0$ with $a_1< t_\varepsilon <a_2,$ for any $\varepsilon \in (0,\varepsilon_0).$ This yields the estimate,
	\begin{equation*}
		\psi_\varepsilon (t_\varepsilon) \leq \max _{t \geq 0} \varphi _{\varepsilon }( t) + \frac{B_k}{2} a_2^N \| w_\varepsilon \|_2 ^2- \lambda A_k a_1^N \| w_\varepsilon \|^{p_0}_{p_0},
	\end{equation*}
	where $\varphi _\varepsilon (t) = ((\| \nabla w_\varepsilon \|_2^2)/2) t^{N-2} - \left(  \int _{ \mathbb{R}^N } b_0G(w_\varepsilon ) \dx \right) t^N.$ Furthermore, computing the explicit maximum of $\varphi _{\varepsilon },$ we have
	\begin{equation*}
		\psi_\varepsilon (t_\varepsilon) \leq \frac{1}{N} \left( 2^\ast \int _{\mathbb{R}^N }b_0G(w_\varepsilon )  \dx  \right)^{- \frac{N-2}{2}}  \| \nabla w_\varepsilon \|^N_2+ \frac{B_k}{2} a_2^N \| w_\varepsilon \|_2 ^2- \lambda A_k a_1^N \| w_\varepsilon \|^{p_0}_{p_0},
	\end{equation*}
Next, using the elementary inequality 
 \begin{equation*}
     (a+b)^{\alpha } \leq a^{\alpha } + \alpha (a+b)^{\alpha -1}b,\ \alpha \geq 1,\ a,\, b>0,
 \end{equation*}
and \eqref{ozao1} we have $\| \nabla w_\varepsilon \|_2^N \leq \mathbb{S}^{N/2} + \mathcal{O}(\varepsilon ^{(N-2)/2}).$ Summing up,
 \begin{equation*}
     c(I) \leq \frac{1}{N}\left( 2^\ast \int _{\mathbb{R}^N }b_0G(w_\varepsilon )  \dx  \right)^{- \frac{N-2}{2}} \mathbb{S}^{N/2} + \mathcal{O}(\varepsilon^{(N-2)/2}) + \frac{B_k}{2} a_2^N \| w_\varepsilon \|_2 ^2- \lambda A_k a_1^N \| w_\varepsilon \|^{p_0}_{p_0}.
 \end{equation*}
Nevertheless, from \ref{g_porbaixo} and \ref{g_AR}, the following inequality holds
\begin{equation*}
	\frac{1}{N}\left( 2^\ast \int _{\mathbb{R}^N }b_0G(w_\varepsilon )  \dx  \right)^{- \frac{N-2}{2}} \mathbb{S}^{N/2}  \leq \frac{\mu_\ast - 2 }{2\mu_\ast } \mathbb{S}^{N/2}_{\bar{G}_b}.
\end{equation*}
Using that $\mathbb{S}_{\bar{G}_{b } }  = \| b \|^{-2/2^\ast }_\infty \mathbb{S}_{\bar{G}  }$ and considering each case of \ref{f_porbaixo}--i), \ref{f_porbaixo}--ii) and \ref{f_porbaixo}--iii), together with \eqref{ozao1}--\eqref{ozao3}, we can deduce that for $\varepsilon >0$ sufficiently small or for sufficiently large $\lambda$, $\mathcal{O}(\varepsilon^{(N-2)/2})  + (B_k/2) a_2^N \| w_\varepsilon \|_2 ^2- \lambda A_k a_1^N \| w_\varepsilon \|^{p_0}_{p_0}<0.$ The lemma is proved.
\end{proof}
Notice that if $b(x)\equiv 1$ and $g(s) = |s|^{2^\ast -2}s,$ then $\mu_\ast = 2^\ast$ and $\mathbb{S}_{\bar{G}_b} = \mathbb{S}.$ Consequently, we recover the well established estimate $c(I) <(1/N)\mathbb{S}^{N/2}.$
\section{Profile decomposition for bounded sequences}\label{s_profile}
The profile decomposition given by Theorem \ref{teo_tinta} stems from the general theory developed by K. Tintarev and F. Fieseler in \cite{tintabook}.  In their work, the authors establish an abstract framework for profile decomposition in separable Hilbert spaces by introducing the concepts of $D$-weak convergence and dislocation spaces relative to a suitable group of unitary operators $D$. Theorem \ref{teo_tinta}, proven in \cite[Chapter 5]{tintabook}, is the application of this framework to the specific case of $H=D^{1,2}(\mathbb{R}^N)$, where the group $D$ is generated by translations in $\mathbb{Z}^N$ and dilations by integer powers of $\gamma,$
\begin{equation*}
    D = \left\{ d_{y,j} : D^{1,2}(\mathbb{R}^N) \rightarrow D^{1,2} (\mathbb{R}^N):   d_{y,j}\varphi  = \gamma ^{\frac{N-2}{2}} \varphi (\gamma ^{j} (\cdot - y ) ),\ y \in \mathbb{Z}^N,\ j \in \mathbb{Z}   \right\}.
\end{equation*}
For the remainder of the paper, it is convenient to introduce a notation for the action of the group elements appearing in the decomposition. For a given $(y_k^{(n)}, j_k^{(n)}) \in \mathbb{Z}^N\times \mathbb{Z}$, we define the operator $d_{k,n}$ and its inverse $d_{k,n}^{-1}$ by
\begin{equation}\label{notacao}
	d_{k,n} \varphi = \gamma^{\frac{N-2}{2}j^{(n)}_k} \varphi(\gamma^{j_k^{(n)}}(\cdot - y^{(n)}_k)) \quad \text{and} \quad d^{-1}_{k,n} \varphi= \gamma^{-\frac{N-2}{2}j_k^{(n)}} \varphi(\gamma^{-j_k^{(n)}}\cdot + y_k^{(n)}).
\end{equation}
Using this notation, we can express \eqref{tinta1} and \eqref{tinta4} as $d^{-1}_{k,n} u_k \rightharpoonup w^{(n)}$ and
\begin{equation*}
    u_k - w^{(1)} - \sum _{n \in \mathbb{N}_\ast \setminus \{ 1 \}} d_{k,n}w^{(n)}  \rightarrow 0,\text{ when }k \rightarrow \infty,\text{ in }L^{2^\ast} (\mathbb{R}^N),
\end{equation*}
respectively. This convergence illustrates that if $w^{(n)} = 0 $ for all $n \in \mathbb{N}_\ast\setminus\{ 1 \},$ then $u_k \rightarrow w^{(1)}$ in $L^{2^\ast }(\mathbb{R}^N).$ Consequently, the term $\sum _{n \in \mathbb{N}\ast \setminus \{ 1 \}} d_{k,n}w^{(n)} $ can be interpreted as capturing the lack of compactness in the convergence of $(u_k)$ to its weak limit $w^{(1)}$ in $L^{2^\ast}(\mathbb{R}^N).$
\begin{remark}\cite[Lemma 5.4]{tintabook}\label{rem_tinta}
Under the conditions of Theorem \ref{teo_tinta}, if in addition $(u_k) \subset H^1 (\mathbb{R}^N)$ and $(\| u_k\|_2)$ is bounded, then $w^{(n)} = 0 $ for any $n \in \mathbb{N}_{-}.$ Thus, one can take $ \mathbb{N}_{-}= \emptyset.$ Moreover, for any $p \in (2,2^\ast),$
\begin{equation}\label{er_tinta}
    u_k - \sum _{n \in \mathbb{N} _0} w^{(n)} (\cdot + y_k^{(n)}) \rightarrow 0,\quad \text{as }k \rightarrow \infty,\ \text{in }L^p(\mathbb{R}^N),
\end{equation}
and the series in \eqref{er_tinta} converges absolutely in $H^1(\mathbb{R}^N)$ and uniformly in $k.$ Additionally, for $m,n \in \mathbb{N}_0,$
\begin{align}
	&  u_k (\cdot + y_k ^{(n)}   ) \rightharpoonup  w^{(n)},\quad \text{as }k\rightarrow \infty,\ \text{in }D^{1,2}(\mathbb{R}^N),\label{ehesse}& \\
	& |y_k ^{(n)}  -y_k ^{(m)} | \rightarrow +\infty,\quad \text{as }k \rightarrow \infty,\ \text{for }m \neq n. \nonumber &
  \end{align}
\end{remark}

\begin{remark}\label{r_salve}
	The translation sequence $(y^{(n)}_k)_k$ can be redefined as $0$ provided $(| \gamma ^{j_k^{(n)}} y_k^{(n)} |)_k,$ $n \in \mathbb{N}_\ast,$ is bounded. This assertion is established in the third part of the proof of \cite[Theorem 5.1]{tintabook}. 
\end{remark}

\subsection{Existence of a bounded Palais-Smale sequence}\label{s_bseq} Lemma \ref{l_mpgeometry} guarantees the existence of a Cerami sequence $(u_k) \subset H^1(\mathbb{R}^N),$ that is, $I(u_k) \rightarrow c(I)$ and $(1+ \| u_k \|_V)\| I'(u_k)\|_\ast \rightarrow 0.$ (see \cite[Theorem 6, p. 140]{MR1051888}). The main goal of this section is to prove that this sequence is bounded. To this end, we first establish the following technical lemma.
\begin{lemma}\label{l_theta}$  I(u) \geq I(tu) + 2^{-1}(1-t^2)I'(u) \cdot u,$ for all $ u \in H^1(\mathbb{R}^N)$ and $t \in [0,\theta_0].$
\end{lemma}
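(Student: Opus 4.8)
The plan is to reduce the claimed inequality to the single-variable hypothesis \ref{f_tang} by writing out all the terms of $I$ explicitly and tracking how each behaves under the scaling $u \mapsto tu$. Recall that
\[
I(u) = \frac{1}{2}\|u\|_V^2 - \int_{\mathbb{R}^N} F(x,u)\dx - \int_{\mathbb{R}^N} b(x)G(u)\dx,
\]
and $I'(u)\cdot u = \|u\|_V^2 - \int_{\mathbb{R}^N} f(x,u)u\dx - \int_{\mathbb{R}^N} b(x)g(u)u\dx$. Substituting these, the desired inequality $I(u) - I(tu) - \tfrac{1}{2}(1-t^2)I'(u)\cdot u \ge 0$ collapses in the quadratic part: $\tfrac{1}{2}\|u\|_V^2 - \tfrac{t^2}{2}\|u\|_V^2 - \tfrac{1}{2}(1-t^2)\|u\|_V^2 = 0$. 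So the quadratic terms cancel identically, and it remains to show that the \emph{nonlinear} contribution is nonnegative, i.e. that for all $x$ and $t\in[0,\theta_0]$,
\[
-\tfrac{1-t^2}{2}\bigl(f(x,u)u + b(x)g(u)u\bigr) \ge -F(x,u) - b(x)G(u) + F(x,tu) + b(x)G(tu),
\]
after integrating in $x$. Thus it suffices to establish the pointwise inequality $\tfrac{1-t^2}{2} f(x,s)s \ge F(x,s) - F(x,ts)$ together with its analogue $\tfrac{1-t^2}{2} g(s)s \ge G(s) - G(ts)$ (multiplied by $b(x)\ge b_0>0$), for each fixed $x$ and each $s\in\mathbb{R}$.

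The first of these two pointwise inequalities is \emph{precisely} hypothesis \ref{f_tang}, applied with $\theta = t \in [0,\theta_0]$. So no work is needed there. The remaining task is the $g$-part: I must verify $\tfrac{1-t^2}{2} g(s)s \ge G(s) - G(ts)$ for $t\in[0,\theta_0]$. Here I would use the Ambrosetti--Rabinowitz type condition \ref{g_AR}, which gives $\mu_\ast G(s) \le g(s)s$ with $\mu_\ast > 2$, together with \ref{g_porbaixo}, which ensures $g(s)s>0$ (and hence $G(s)\ge 0$ and $G$ is nondecreasing in $|s|$ along rays). From $G(s) - G(ts) \le G(s)$ (since $0\le ts$ lies between $0$ and $s$ in absolute value, using positivity of $G$) and $G(s)\le \tfrac{1}{\mu_\ast} g(s)s \le \tfrac{1}{2} g(s)s$, one would like to conclude; but the factor on the right should be $\tfrac{1-t^2}{2}$, not $\tfrac12$, so a slightly sharper argument is needed.

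The cleanest route for the $g$-part: write $g(s)s - \mu_\ast G(s) \ge 0$ and integrate the identity $\frac{d}{d\tau}\bigl(\tau^{-\mu_\ast} G(\tau s)\bigr) = \tau^{-\mu_\ast}\bigl(g(\tau s)s\cdot \tau^{-1}\cdot \tau - \mu_\ast \tau^{-1} G(\tau s)\bigr)\cdot\tau = \tau^{-\mu_\ast-1}\bigl(g(\tau s)(\tau s) - \mu_\ast G(\tau s)\bigr) \ge 0$ over $\tau\in[t,1]$ to get $G(ts) \le t^{\mu_\ast} G(s)$, hence $G(s) - G(ts) \ge (1-t^{\mu_\ast})G(s)$ — wait, that is the wrong direction; what I actually want is an \emph{upper} bound on $G(s)-G(ts)$. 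So instead I use $G(s) - G(ts) = \int_{ts}^{s} g(\sigma)\,\mathrm{d}\sigma$ is bounded above by monotonicity considerations, or more simply: since $G(ts)\ge 0$, we have $G(s) - G(ts) \le G(s) \le \tfrac{1}{\mu_\ast} g(s)s$. The point is then that $\tfrac{1}{\mu_\ast} \le \tfrac{1-\theta_0^2}{2}$ need not hold for all $\theta_0\in(0,1)$ — but it is \emph{not} required that the lemma hold for a prescribed $\theta_0$; rather $\theta_0\in(0,1)$ is the constant furnished by \ref{f_tang}, and the factor $\tfrac{1-t^2}{2}$ only needs to dominate for $t\in[0,\theta_0]$. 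In fact the inequality $\tfrac{1-t^2}{2} g(s)s \ge G(s)-G(ts)$ follows once we know it for the worst case; and since $G$ is self-similar and positive, a scaling-invariance argument (the same one underlying \ref{g_AR}) reduces it to $t$ in a bounded range — I expect the authors to simply invoke \ref{g_AR} directly with $\theta = t$, exactly paralleling \ref{f_tang}, because \ref{g_AR} as stated already encodes the stronger $\mu_\ast G(s) \le g(s)s$ which implies $G(s) - G(ts) = \int_{ts}^s g \le G(s)\bigl(1 - (ts/s)^{\mu_\ast}\bigr)^{+}\cdots$, and $(1 - t^{\mu_\ast}) \le \tfrac{\mu_\ast(1-t^2)}{2}$ for $t\in[0,1]$ when $\mu_\ast\ge 2$, giving $G(s)-G(ts) \le \tfrac{1-t^2}{2}\mu_\ast G(s) \le \tfrac{1-t^2}{2} g(s)s$. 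The main obstacle is therefore just bookkeeping: assembling the pointwise estimates for the $f$-part (immediate from \ref{f_tang}) and the $g$-part (from \ref{g_AR} plus the elementary inequality $1 - t^{\mu_\ast} \le \tfrac{\mu_\ast}{2}(1-t^2)$ on $[0,1]$), multiplying the $g$-estimate by $b(x)>0$, integrating over $\mathbb{R}^N$, and observing the quadratic terms cancel exactly. I would finish by noting that all integrals are finite by the growth bounds \ref{f_geral} and \ref{g_geral} together with the Sobolev embedding, so the term-by-term manipulation is justified.
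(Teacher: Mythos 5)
Your overall route coincides with the paper's: expand $I(u)-I(tu)-\tfrac{1-t^2}{2}I'(u)\cdot u$, observe that the quadratic terms cancel identically, and reduce the claim to two pointwise inequalities, the $f$-part being exactly \ref{f_tang} and the $g$-part being extracted from \ref{g_AR} (the paper's proof is precisely this identity followed by an appeal to \ref{f_tang} and \ref{g_AR}).

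However, the justification you finally assemble for the $g$-part is not valid. The Ambrosetti--Rabinowitz inequality $\mu_\ast G(s)\le g(s)s$ makes $\tau\mapsto\tau^{-\mu_\ast}G(\tau s)$ nondecreasing on $(0,\infty)$, so for $t\in[0,1]$ it yields $G(ts)\le t^{\mu_\ast}G(s)$ and hence $G(s)-G(ts)\ge(1-t^{\mu_\ast})G(s)$ --- the \emph{reverse} of the bound $G(s)-G(ts)\le(1-t^{\mu_\ast})G(s)$ on which your closing chain (combined with $1-t^{\mu_\ast}\le\tfrac{\mu_\ast}{2}(1-t^2)$) rests. You noticed this reversal mid-argument but then reinstated the false direction at the end; an upper bound of that type would require an upper condition $g(s)s\le\nu G(s)$, which is not among the hypotheses (and self-similar oscillatory nonlinearities need not satisfy the monotonicity of $s\mapsto g(s)/s$ that would otherwise save the estimate for all $t\in[0,1]$). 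What the hypotheses do give is the bound you discarded: since $G\ge0$ by \ref{g_porbaixo}, $G(s)-G(ts)\le G(s)\le\mu_\ast^{-1}g(s)s$, which proves $\tfrac{1-t^2}{2}\,g(s)s\ge G(s)-G(ts)$ whenever $t^2\le 1-2/\mu_\ast$. Because \ref{f_tang} remains valid after decreasing $\theta_0$, and the lemma is only invoked with $t=t_k=R\|u_k\|_V^{-1}\rightarrow 0$ in the proof of Proposition \ref{p_psbounded}, this restriction on the range of $t$ costs nothing; but as written your argument does not establish the inequality on all of $[0,\theta_0]$ for an arbitrary $\theta_0$ furnished by \ref{f_tang}, so the $g$-part needs to be repaired along these lines.
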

\begin{proof}
After some computations, we obtain
    \begin{multline*}
        I(u) - I(tu) - \frac{1-t^2}{2}I'(u) \cdot u   = \int_{\mathbb{R}^N} \frac{1-t^2}{2} f(x,u)u  - \left( F(x,u)- F(x,tu)  \right)\dx \\+  \int_{\mathbb{R}^N} \frac{1-t^2}{2} b(x)g(u)u  - b(x)\left( G(u)- G(tu) \right)\dx,\quad \forall \, t \in [0,\theta_0].
    \end{multline*}
The proof now follows by using \ref{f_tang} and \ref{g_AR}.
\end{proof}
We use the weak convergence decomposition given in Theorem \ref{teo_tinta} to prove that $(u_k)$ is a bounded sequence even in the presence of the critical term $g.$
\begin{proposition}\label{p_psbounded}
$(u_k)$ is a bounded sequence.
\end{proposition}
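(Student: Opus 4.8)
The plan is to argue by contradiction: suppose, along a subsequence, that $\|u_k\|_V \to \infty$, and set $v_k := u_k/\|u_k\|_V$, so that $\|v_k\|_V = 1$ and $(v_k)$ is bounded in $H^1(\mathbb{R}^N)$. We wish to extract the impossible conclusion from this. The idea is to feed $(v_k)$ (not $(u_k)$) into the profile decomposition Theorem \ref{teo_tinta}, obtaining profiles $w^{(n)}$, group elements $d_{k,n}$, and index sets $\mathbb{N}_0, \mathbb{N}_+$ (with $\mathbb{N}_- = \emptyset$ by Remark \ref{rem_tinta}, since $\|v_k\|_2$ is bounded). Two cases then naturally arise: either \emph{all} profiles vanish, $w^{(n)} = 0$ for every $n \in \mathbb{N}_\ast$, or at least one profile is nontrivial.

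In the vanishing case, by \eqref{tinta4} we get $v_k \to 0$ in $L^{2^\ast}(\mathbb{R}^N)$, and by \eqref{er_tinta} of Remark \ref{rem_tinta}, $v_k \to 0$ in $L^p(\mathbb{R}^N)$ for every $p \in (2,2^\ast)$ — hence $v_k \to 0$ in $L^q$ for all $q \in (2, 2^\ast]$. Now I would exploit Lemma \ref{l_theta} with $u = u_k$ and a well-chosen $t = t_k \in [0,\theta_0]$; the standard choice is $t_k \|u_k\|_V = $ (fixed constant), i.e., $u_k t_k = R v_k$ for a large fixed $R$. Lemma \ref{l_theta} gives
$I(u_k) \geq I(t_k u_k) + \tfrac{1}{2}(1 - t_k^2) I'(u_k)\cdot u_k$. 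Since $I(u_k) \to c(I)$ and $(1+\|u_k\|_V)\|I'(u_k)\|_\ast \to 0$, the term $\tfrac12(1-t_k^2)I'(u_k)\cdot u_k = o_k(1)$. On the other hand, $I(t_k u_k) = I(R v_k) = \tfrac{R^2}{2}\|v_k\|_V^2 - \int_{\mathbb{R}^N} F(x, R v_k) + b(x) G(R v_k)\dx$; using \ref{f_geral} and \ref{g_geral} to bound the nonlinear integrals by $C(\varepsilon)(\|v_k\|_2^2 + \|v_k\|_{p_\varepsilon}^{p_\varepsilon} + \|v_k\|_{2^\ast}^{2^\ast})$ plus an $\varepsilon R^2$ term, and recalling $\|v_k\|_q \to 0$ for $q \in (2, 2^\ast]$ while $\|v_k\|_2^2 \leq d_1^{-1}$, we obtain $\liminf_k I(R v_k) \geq (R^2/2 - \varepsilon C_1 R^2 - o(1))$, which can be made arbitrarily large by first fixing $\varepsilon$ small and then $R$ large — contradicting $I(u_k) \to c(I) < \infty$.

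In the non-vanishing case, pick $n$ with $w^{(n)} \neq 0$. If $n \in \mathbb{N}_0$ then $v_k(\cdot + y_k^{(n)}) \rightharpoonup w^{(n)} \neq 0$ in $D^{1,2}$ (and in $H^1$), while if $n \in \mathbb{N}_+$ then $d_{k,n}^{-1} v_k \rightharpoonup w^{(n)} \neq 0$. In either situation I would translate/rescale the equation $I'(u_k) = o_k(1)$ (dividing by $\|u_k\|_V$) and pass to the weak limit: after the change of variables the rescaled sequence $\tilde u_k := d_{k,n}^{-1} u_k$ satisfies $\tilde u_k / \|u_k\|_V \rightharpoonup w^{(n)} \neq 0$, and testing the (translated/rescaled) equation against a fixed $\varphi \in C_0^\infty$ and using \ref{h_tang1} — which forces $(f(x,u_k)+b(x)G(u_k)\text{-type terms})/\|u_k\|_V \to \pm\infty$ on the region where $|u_k| \to \infty$ — produces a contradiction. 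Concretely, one shows that on the set where $w^{(n)} \neq 0$ we have $|u_k| \to \infty$ a.e., so by \ref{h_tang1} and Fatou $\int (F(x,u_k)+K(x,u_k))/\|u_k\|_V^2 \to +\infty$; but from $I(u_k)/\|u_k\|_V^2 = \tfrac12 - \int (F(x,u_k)+K(x,u_k))/\|u_k\|_V^2 \to 0$ one gets $\int (F(x,u_k)+K(x,u_k))/\|u_k\|_V^2 \to \tfrac12$, a contradiction.

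\textbf{Main obstacle.} The delicate point is the non-vanishing case: making rigorous the claim ``$|u_k(x)| \to \infty$ on the set $\{w^{(n)} \neq 0\}$'' requires care, because $v_k = u_k/\|u_k\|_V$ only converges \emph{weakly}, not a.e., so one must first pass to a further subsequence with a.e. convergence of the translated/rescaled $v_k$ to $w^{(n)}$, invoke that $\|u_k\|_V \to \infty$, and then localize the superquadraticity condition \ref{h_tang1} via Fatou's lemma on that set of positive measure — all while keeping track of the scaling factors $\gamma^{j_k^{(n)}}$ when $n \in \mathbb{N}_+$ (here the self-similarity \ref{g_selfsimilar} of $G$ is what keeps the critical integral invariant under the rescaling, and one must check that the potential and subcritical contributions, carrying extra powers $\gamma^{-2j_k^{(n)}}$ etc., do not spoil the argument). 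The vanishing case is comparatively routine once Lemma \ref{l_theta} and the improved convergence \eqref{er_tinta} are in hand.
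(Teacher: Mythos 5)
Your proposal follows essentially the same route as the paper: contradiction with $v_k = u_k\|u_k\|_V^{-1}$, profile decomposition applied to $(v_k)$, the vanishing case handled via Lemma \ref{l_theta} with $t_k u_k = R v_k$ and the improved $L^p$ convergence from Remark \ref{rem_tinta}, and the non-vanishing case via a.e.\ convergence of the translated/rescaled $v_k$, Fatou's lemma and superquadraticity. The only refinement in the paper's write-up is that the non-vanishing case is split: for $n \in \mathbb{N}_+$ the self-similarity \ref{g_selfsimilar} transfers the dilation onto the coefficient $b$ and Fatou is applied to the critical term alone (with $F \geq 0$ from \ref{f_porbaixo} discarding the subcritical part), while for $n \in \mathbb{N}_0$ Fatou is applied to $F$ via \ref{f_porbaixo} — exactly the scaling bookkeeping you identify as the main obstacle.
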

\begin{proof}
We proceed by contradiction and assume that $\| u_k \|_V \rightarrow \infty$ (up to a subsequence). Define the normalized sequence $v_k = u_k\| u_k \|^{-1}_V.$ Applying Theorem \ref{teo_tinta} and Remark \ref{rem_tinta} to $(v_k),$ we obtain a profile decomposition $(w^{(n)})_{n \in \mathbb{N} _\ast}$ as described in \eqref{tinta1}, \eqref{tinta4}, and \eqref{er_tinta}. Suppose first that $w^{(n)} = 0,$ for any $n \in \mathbb{N}_\ast.$ Then, by \eqref{tinta4} and \eqref{er_tinta}, we have $v_k \rightarrow 0,$ in $L^p(\mathbb{R}^N)$ for $p\in (2,2^\ast].$ Next, we fix $R>0$ and use \ref{f_geral} to get the estimate $\limsup _{k \rightarrow \infty} \int _{\mathbb{R}^N } F(x,Rv_k) \dx \leq \varepsilon c_2 R^2 ,$
where $c_2 = \limsup_{k \rightarrow \infty} \|v_k \|^2_2.$ Choosing $\varepsilon = 1/(4 c_2),$ we get
\begin{equation}\label{limsup}
\limsup _{k \rightarrow \infty} \int _{\mathbb{R}^N } F(x,Rv_k) \dx \leq \frac{R^2}{4}\quad\text{and}\quad \limsup _{k \rightarrow \infty} \int _{\mathbb{R}^N } b(x)G(Rv_k) \dx = 0.
\end{equation}
Now, define $t_k = R\| u_k \|_V^{-1}.$ For sufficiently large $k$, we have $t_k \in (0,\theta_0].$ By Lemma \ref{l_theta},
\begin{equation}\label{climsup}
    c(I) + o_k(1) = I(u_k) \geq I(t_k u_k) + \frac{1-t_k^2}{2}I'(u_k) \cdot u_k = I(Rv_k) + o_k(1).
\end{equation}
Furthermore,
\begin{equation*}
    I(R v_k) = \frac{R^2}{2} - \int_{\mathbb{R}^N} F(x,Rv_k) \dx - \int _{\mathbb{R}^N } b(x)G(R v_k) \dx,
\end{equation*}
Combining this with \eqref{limsup}, we obtain $\limsup_{k \rightarrow \infty} I(R v_k) \geq R^2/4.$ Taking the $\limsup_{k\rightarrow\infty }$ in \eqref{climsup} we arrive at the contradiction: $c(I) \geq R^2/4,$ for any $R>0.$ Therefore, there exists a nonzero $w^{(n)},$ for some $n \in \mathbb{N}_\ast = \mathbb{N}_0 \cup \mathbb{N}_{+}.$ In particular, up to a subsequence, one can find a set $U$ with positive Lebesgue measure such that
\begin{equation}\label{positivem}
    0< |w^{(n)}(x)| = \lim_{k \rightarrow \infty} |d^{-1}_{k,n} v_k (x)| = \lim_{k \rightarrow \infty}\frac{|d^{-1}_{k,n}u_k (x) |}{\|u_k \|_V},\quad \forall \, x \in U,
\end{equation}
where $d_{k,n} ^{-1}$ is given by \eqref{notacao}. This implies $|d^{-1}_{k,n}u_k (x)| \rightarrow + \infty, $ for all $x \in U.$ If $n \in \mathbb{N}_+,$ then \ref{g_selfsimilar} yields
\begin{equation*}
    \int _{\mathbb{R}^N} b(x)G(d^{-1}_{k,n}u_k) \dx  = \int _{\mathbb{R}^N}b(\gamma ^{j_k ^{(n)}}   (x-y_k^{(n)})) G(u_k) \dx.
\end{equation*}
By Fatou's Lemma, together with \ref{g_porbaixo} (or \ref{g_AR}) and $b_0>0$, there exists a subsequence, which we still denote by $(u_k),$ such that
\begin{equation}\label{gen_um}
\lim _{k \rightarrow \infty}\int _{\mathbb{R}^N }  \frac{b(x) G(u_k)}{\| u_k \|^2_V} \dx = \lim _{k \rightarrow \infty} \int _{U } b(\gamma ^{j_k ^{(n)}}   (x-y_k^{(n)}))  \frac{G(d^{-1}_{k,n} u_k )}{(d^{-1}_{k,n} u_k)^2} (d_{k,n} ^{-1} v_k)^2   \dx = + \infty.
\end{equation}
Consequently, we obtain the contradiction
\begin{equation}\label{boundedPSc}
    0 = \lim _{k \rightarrow \infty} \frac{c(I)}{\| u_k \|^2_V} = \lim _{k \rightarrow \infty} \frac{I(u_k)}{\| u_k \|^2_V}= \lim _{k \rightarrow \infty} \left[ \frac{1}{2} - \int _{\mathbb{R}^N } \frac{F(x,u_k)}{\| u_k \| ^2 _V} \dx - \int _{\mathbb{R}^N }\frac{b(x) G(u_k)}{\| u_k \|^2_V} \dx\right] = -\infty.
\end{equation}
The only remaining possibility is the existence of $w^{(n)} \neq 0,$ with $n \in \mathbb{N}_0.$ In this case, Fatou's Lemma, \ref{f_porbaixo}, and \eqref{positivem} imply
\begin{equation*}
    \lim _{k \rightarrow \infty}\int _{\mathbb{R}^N} \frac{F(x,u_k)}{\| u_k \|^2 _V}\dx = \lim _{k \rightarrow \infty } \int _{\mathbb{R}^N } \frac{F(\cdot + y_k^{(n)}, u_k (\cdot + y_k ^{(n)}  ))}{(u_k (\cdot + y_k ^{(n)}))^2} (v_k (\cdot + y_k ^{(n)}) )^2\dx = + \infty,
\end{equation*}
up to a subsequence. This leads to the same contradiction as in \eqref{boundedPSc}. In conclusion, the sequence $(u_k)$ cannot be unbounded.
\end{proof}

\subsection{Behavior of the functional under profile decomposition}\label{s_ss}
We now analyze the behavior of the energy functional $I$ with respect to the profile decomposition from Theorem \ref{teo_tinta}. Our analysis adapts the concentration-compactness arguments of \cite{MR2465979} and \cite[Lemmas 1.5, 1.7, 3.4, 5.5 and Corollary 5.2]{tintabook} to describe the asymptotic behavior of each component of the functional. For the reader's convenience and to make the exposition self-contained, we detail the key arguments. We note that a related analysis for a different setting was recently performed by M. Okumura in \cite{zbMATH07573810}.
\begin{lemma}\label{l_converge}
If $(u_k)$ and $(v_k)$ are bounded sequences in $H^1(\mathbb{R} ^N)$ such that $u_k - v_k \rightarrow 0$ in $L^p (\mathbb{R}^N)$ for some $p \in (2,2^\ast),$ then
\begin{equation}\label{Fconverge}
	\lim _ {k\rightarrow \infty} \int _{\mathbb{R}^N} F(x,u_k) - F(x,v_k) \dx=0.
\end{equation}
In particular, if $v_k = u,$ then
\begin{equation*}
\lim _{k \rightarrow \infty}\int _{\mathbb{R}^N} f(x,u_k) u_k \dx = \int _{\mathbb{R}^N} f(x,u)u \dx.
\end{equation*}
\end{lemma}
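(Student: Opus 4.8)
The plan is to reduce everything to the quantified growth bound \ref{f_geral} by means of the fundamental theorem of calculus. Writing
\[
	F(x,u_k) - F(x,v_k) = \int_0^1 f\big(x,\, v_k + t(u_k - v_k)\big)\,(u_k - v_k)\,\dt,
\]
and noting that $v_k + t(u_k - v_k) = (1-t)v_k + t u_k$ is a convex combination, so that $|v_k + t(u_k-v_k)| \le |u_k| + |v_k|$ pointwise, hypothesis \ref{f_geral} yields, for every $\varepsilon>0$,
\[
	\big|F(x,u_k) - F(x,v_k)\big| \le \Big[\varepsilon\big(|u_k| + |v_k| + (|u_k|+|v_k|)^{2^\ast-1}\big) + C_\varepsilon(|u_k|+|v_k|)^{p_\varepsilon-1}\Big]\,|u_k - v_k|.
\]

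Then I would integrate over $\mathbb{R}^N$ and treat the two groups of terms separately. For the $\varepsilon$-part, bounding $|u_k - v_k| \le |u_k| + |v_k|$ and using that $(u_k),(v_k)$ are bounded in $H^1(\mathbb{R}^N) \hookrightarrow L^2(\mathbb{R}^N) \cap L^{2^\ast}(\mathbb{R}^N)$, this contributes at most $C\varepsilon$, with $C$ depending only on the $H^1$-bounds and hence uniform in both $k$ and $\varepsilon$. For the $C_\varepsilon$-part I would apply Hölder with exponents $p_\varepsilon/(p_\varepsilon-1)$ and $p_\varepsilon$, getting the bound $C_\varepsilon\,\big\||u_k|+|v_k|\big\|_{p_\varepsilon}^{p_\varepsilon-1}\,\|u_k - v_k\|_{p_\varepsilon}$; the first factor is bounded since $H^1(\mathbb{R}^N)\hookrightarrow L^{p_\varepsilon}(\mathbb{R}^N)$, and the second tends to $0$ because $u_k - v_k$ is bounded in $L^2$ and $L^{2^\ast}$ and converges to $0$ in $L^p$, so by interpolation it converges to $0$ in $L^{p_\varepsilon}(\mathbb{R}^N)$ as well ($p_\varepsilon$ being a fixed number in $(2,2^\ast)$ once $\varepsilon$ is fixed). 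Combining the two estimates gives $\limsup_k \int_{\mathbb{R}^N}|F(x,u_k)-F(x,v_k)|\,\dx \le C\varepsilon$ for every $\varepsilon>0$, and letting $\varepsilon\to0$ proves \eqref{Fconverge}.

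For the final assertion I would take $v_k \equiv u$ and first observe that the uniform $H^1$-bound together with $u_k \to u$ in $L^p$ forces $u_k \rightharpoonup u$ in $H^1(\mathbb{R}^N)$ and $u_k \to u$ a.e. along a subsequence. Then I would run the same $\varepsilon$-splitting directly on the product $f(x,u_k)u_k$, using that $|f(x,s)s| \le \varepsilon(|s|^2 + |s|^{2^\ast}) + C_\varepsilon|s|^{p_\varepsilon}$: the $\varepsilon$-controlled part contributes at most $C\varepsilon$ uniformly in $k$ by the $L^2$ and $L^{2^\ast}$ bounds, while the $C_\varepsilon|u_k|^{p_\varepsilon}$-controlled part is equi-integrable because $u_k\to u$ in $L^{p_\varepsilon}(\mathbb{R}^N)$, so a Vitali-type argument, together with $f(x,u_k)u_k \to f(x,u)u$ a.e., yields $\int_{\mathbb{R}^N}f(x,u_k)u_k\,\dx \to \int_{\mathbb{R}^N}f(x,u)u\,\dx$. (Alternatively one may split $f(x,u_k)u_k - f(x,u)u = f(x,u_k)(u_k - u) + [f(x,u_k)-f(x,u)]u$, estimate the first summand exactly as above, and handle the second by dominated convergence after decomposing $f(x,u)$ into pieces in $L^2$, $L^{(2^\ast)'}$ and $L^{(p_\varepsilon)'}$.)

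The delicate point I expect to be the main obstacle is the linear growth term $|s|$ in \ref{f_geral}: since we assume $u_k - v_k \to 0$ only in $L^p$ for a single exponent $p>2$, that term cannot be made small directly from the convergence and must instead be absorbed entirely into the $\varepsilon$-error using nothing but the uniform $H^1$-bounds. This is exactly why the quantified formulation of \ref{f_geral} (rather than a fixed subcritical exponent) is needed, and why one must invoke interpolation to upgrade $L^p$-convergence to $L^{p_\varepsilon}$-convergence.
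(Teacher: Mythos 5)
Your proof of \eqref{Fconverge} is correct and follows essentially the same route as the paper: interpolation upgrades the $L^p$ convergence of $u_k-v_k$ to $L^{p_\varepsilon}$, the growth condition \ref{f_geral} splits the difference into an $\varepsilon$-part controlled uniformly by the $L^2$ and $L^{2^\ast}$ bounds and a $C_\varepsilon$-part killed by H\"older; your fundamental-theorem representation along the segment $(1-t)v_k+tu_k$ is just a cosmetic variant of the paper's bound $|F(x,u_k)-F(x,v_k)|\le\int_{v_k}^{u_k}|f|$ combined with the elementary inequality \eqref{ineq}. The only genuine divergence is the final assertion: the paper disposes of it in one line by noting that $\hat F(x,s)=f(x,s)s$ obeys the same growth and re-running the first argument with $v_k=u$, whereas you invoke a.e. convergence plus a Vitali/equi-integrability argument (the $\varepsilon$-part giving uniform smallness, the $|u_k|^{p_\varepsilon}$-part being uniformly integrable and tight since $u_k\to u$ in $L^{p_\varepsilon}$). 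Your route is valid and needs continuity of $f$ in $s$ together with the routine subsequence-uniqueness step to pass from a.e.-convergent subsequences to the full sequence, but it buys nothing extra here; the paper's reduction is shorter and stays within the already proven estimate.
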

\begin{proof}
By an interpolation inequality, if $q<p$ then $\|u_k -v_k \| _q \leq \|u_k - v_k\|_2 ^{\alpha} \|u_k - v_k\| _{p} ^{1- \alpha}$ where $1/q = \alpha / 2  + (1-\alpha ) / p,$ and if $q>p$ then $\|u_k -v_k \| _q \leq \|u_k - v_k\|_p ^{\alpha} \|u_k - v_k\| _{2 ^\ast} ^{1- \alpha} $ for $1/q = \alpha / p  + (1-\alpha ) / 2 ^\ast.$ Summing up, $u_k - v_k \rightarrow 0$ in $L ^q (\mathbb{R}^N)$ for all $q \in (2,2 ^\ast).$ We use \ref{f_geral} as follows,
\begin{multline}
    \left| \int _{\mathbb{R}^N}  F(x,u_k) - F(x,v_k) \dx\right|  \leq \int _{\mathbb{R}^N} \int _{v_k} ^{u_k}  |f(s)| \ds   \dx\\
     \leq \int _{\mathbb{R}^N} \varepsilon \left( \frac{1}{2} \big| |u_k|u_k - |v_k| v_k \big| + \frac{1}{2^\ast} \big||u_k|^{2^\ast -1 }u_k - |v_k|^{2^\ast -1} v_k \big| \right) + \frac{C_\varepsilon}{p_\varepsilon}  \big| |u_k|^{p_\varepsilon -1} u_k - |v_k|^{p_\varepsilon - 1}v_k \big| \dx. \nonumber
\end{multline}
Applying the following inequality
\begin{equation}\label{ineq}
    \left| |a|^{q-1}a - |b|^{q-1} b \right| \leq q 2^{q-2}(|a|^{q-1} + |a-b|^{q-1})| a- b |,\quad q\geq 2,
\end{equation}
with $q=p_\varepsilon$ and using Hölder's inequality with exponents $p_\varepsilon$ and $p_\varepsilon / (p_\varepsilon - 1),$ we obtain
\begin{equation*}
  \int _{\mathbb{R}^N} \left| |u_k|^{p_\varepsilon -1} u_k - |v_k|^{p_\varepsilon - 1}v_k  \right| \dx \leq p_\varepsilon 2^{p_\varepsilon - 2} \left( \| u_k \|_{p_\varepsilon} ^{p_\varepsilon - 1} \| u_k - v_k \|_{p_\varepsilon} + \| u_k - v_k \|_{p_\varepsilon }^{p_\varepsilon}  \right).
\end{equation*}
Since $u_k -v_k \rightarrow 0$ in $L^{p_\varepsilon}(\mathbb{R}^N),$ the right-hand side of the above inequality converges to zero as $k \rightarrow \infty .$ Therefore,
\begin{equation*}
   \limsup_{k \rightarrow \infty} \left| \int _{\mathbb{R}^N}  F(x,u_k) - F(x,v_k) \dx\right|  \leq \varepsilon C ,\quad \forall \, \varepsilon >0,
\end{equation*}
for suitable $C>0$ depending only on the $\limsup $ of $\| u _k \|_q$ and $\| v_k \|_q,$ for $q=2$ or $q=2^\ast.$ Now, observe that $\hat{F}(x,s) := f(x,s) s $ satisfies \ref{f_geral}. Hence, the last statement is obtained by an application of the first part \eqref{Fconverge}. This finishes the proof.
\end{proof}
The following result describes the asymptotic behavior of the subcritical energy term. It shows that the limit of $\int_{\mathbb{R}^N} F(x, u_k) \dx$ splits into a sum that distinguishes between the weak limit $w^{(1)},$ which interacts with $F,$ and the other profiles, which interact with the periodic potential $F_P.$
\begin{proposition}\label{p_bef}
Let $(u_k)\subset H^1 (\mathbb{R}^N) $ be a bounded sequence and $(w ^{(n)}) _{n \in \mathbb{N}_{0} }$ the profiles given by Theorem \ref{teo_tinta} (cf. Remark \ref{rem_tinta}). Then
\begin{equation*}
\lim _{k \rightarrow \infty}\int _{\mathbb{R}^N} F(x, u_k) \dx =\int _{\mathbb{R}^N}F(x,w^{(1)}) \dx + \sum _{n \in \mathbb{N}_0\setminus \{ 1\} } \int _{\mathbb{R} ^N} F _P (x,w^{(n)}) \dx.
\end{equation*}
\end{proposition}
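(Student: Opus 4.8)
The plan is to combine a Brezis--Lieb type splitting of the nonlinear functional (in the spirit of \cite[Lemma 1.7]{tintabook}) with the asymptotic periodicity \ref{h_infinito}, applied to one profile at a time.

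\emph{Two preliminary facts.} I would first isolate a Brezis--Lieb lemma stable under translation: if $(F_k)$ satisfy \ref{f_geral} with constants independent of $k$ and $F_k(z,\cdot)\to\widehat{F}(z,\cdot)$ locally uniformly for a.e.\ $z$, and $(g_k)\subset H^1(\mathbb{R}^N)$ is bounded with $g_k\to w$ a.e., then
\[
\int_{\mathbb{R}^N}\big[F_k(z,g_k)-F_k(z,g_k-w)-F_k(z,w)\big]\dz\longrightarrow 0.
\]
The point is that \ref{f_geral} yields, for every $\delta>0$ (distinguishing $|A|\ge 2|B|$ from $|A|<2|B|$), the estimate $|F_k(x,A+B)-F_k(x,A)-F_k(x,B)|\le \delta C(|A|^2+|A|^{2^\ast})+C_\delta(|B|^2+|B|^{2^\ast}+|B|^{p_\delta})$; applied with $A=g_k-w$ (which tends to $0$ a.e.\ and is $H^1$--bounded) and $B=w$ (fixed), it gives $|\Phi_k|\le W_k+H$ for the integrand $\Phi_k$, with $\int_{\mathbb{R}^N}W_k\le \delta C'$ uniformly in $k$ and $H\in L^1(\mathbb{R}^N)$ independent of $k$; since $\Phi_k\to 0$ a.e.\ (here one uses $F_k\to\widehat{F}$ locally uniformly), dominated convergence applied to $(|\Phi_k|-W_k)^+\le H$ and then $\delta\to0$ conclude. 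Second, if $y_k\in\mathbb{Z}^N$ with $|y_k|\to\infty$, then by \ref{h_infinito} and the $\mathbb{Z}^N$--periodicity of $F_P$ one has $F(\cdot+y_k,s)\to F_P(\cdot,s)$ uniformly on compact subsets of $\mathbb{R}^N\times\mathbb{R}$, whence $\int_{\mathbb{R}^N}F(z+y_k,\varphi)\dz\to\int_{\mathbb{R}^N}F_P(z,\varphi)\dz$ for each fixed $\varphi\in H^1(\mathbb{R}^N)$ (dominated convergence, the domination coming again from \ref{f_geral}).

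\emph{Peeling off the profiles.} Enumerate $\mathbb{N}_0=\{1=n_1<n_2<\cdots\}$, set $r_k^{(0)}=u_k$ and $r_k^{(j)}=u_k-\sum_{i\le j}w^{(n_i)}(\cdot+y_k^{(n_i)})$. For a fixed $j$, translating so that the $n_j$--th profile is centered at the origin sends $u_k$, by \eqref{tinta1} (cf.\ \eqref{ehesse}), to a sequence converging weakly in $H^1(\mathbb{R}^N)$ to $w^{(n_j)}$, and sends each profile with $i<j$, by \eqref{tinta2}, to a fixed $H^1$--function translated to infinity (hence converging weakly to $0$); therefore the translate $g_k$ of $r_k^{(j-1)}$ satisfies $g_k\rightharpoonup w^{(n_j)}$ and, along a subsequence, $g_k\to w^{(n_j)}$ a.e., while the translate of $r_k^{(j)}$ is $g_k-w^{(n_j)}$. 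Writing the integral in the new variable, with $F_k(z,s)=F(z+y_k^{(n_j)},s)$ — for which $\widehat{F}=F$ when $n_j=1$ (so $y_k^{(1)}=0$) and $\widehat{F}=F_P$ otherwise — the Brezis--Lieb lemma and the second preliminary fact (applied to the profile term) give
\[
\int_{\mathbb{R}^N}F(x,r_k^{(j-1)})\dx=\int_{\mathbb{R}^N}F(x,r_k^{(j)})\dx+\Lambda_{n_j}+o_k(1),
\]
with $\Lambda_1=\int_{\mathbb{R}^N}F(x,w^{(1)})\dx$ and $\Lambda_n=\int_{\mathbb{R}^N}F_P(x,w^{(n)})\dx$ for $n\neq1$. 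Summing over $i=1,\dots,m$ gives, for each fixed $m$,
\[
\int_{\mathbb{R}^N}F(x,u_k)\dx=\sum_{i\le m}\Lambda_{n_i}+\int_{\mathbb{R}^N}F(x,r_k^{(m)})\dx+o_k(1).
\]

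\emph{Tail estimate and conclusion.} By \eqref{er_tinta} the series $\sum_{n\in\mathbb{N}_0}w^{(n)}(\cdot+y_k^{(n)})$ converges to $u_k$ in $L^p(\mathbb{R}^N)$ and absolutely in $H^1(\mathbb{R}^N)$ uniformly in $k$; hence $(r_k^{(m)})$ is bounded in $H^1$ and $\limsup_k\|r_k^{(m)}\|_p\le C\sum_{n>m}\|w^{(n)}\|_{H^1}\to 0$, so, interpolating between $L^p$ (where it is small) and $H^1$ (where it is bounded), $\lim_m\limsup_k\|r_k^{(m)}\|_{p_\delta}=0$. Then \ref{f_geral} gives $|\int_{\mathbb{R}^N}F(x,r_k^{(m)})\dx|\le \delta C+C_\delta\|r_k^{(m)}\|_{p_\delta}^{p_\delta}$, so $\lim_m\limsup_k|\int_{\mathbb{R}^N}F(x,r_k^{(m)})\dx|\le\delta C$ for all $\delta>0$, i.e.\ this double limit is $0$. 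Since also $\sum_{n\in\mathbb{N}_0}|\Lambda_n|<\infty$ (from \ref{f_geral}, the Sobolev embedding and $\sum_n\|w^{(n)}\|_{H^1}<\infty$), the partial sums $\sum_{i\le m}\Lambda_{n_i}$ converge to $\int_{\mathbb{R}^N}F(x,w^{(1)})\dx+\sum_{n\in\mathbb{N}_0\setminus\{1\}}\int_{\mathbb{R}^N}F_P(x,w^{(n)})\dx$. Letting $k\to\infty$ and then $m\to\infty$ in the last displayed identity proves the proposition. The main obstacle is the Brezis--Lieb splitting in the presence of running translations: because the profiles do not decay there is no common dominating function, so one must exploit exactly the $\delta$--$C_\delta$ form of \ref{f_geral} (the $O(\delta)$ smallness of the $|s|^2$ and $|s|^{2^\ast}$ terms) to run the dominated convergence argument, and one must check that translating $F$ along $|y_k^{(n)}|\to\infty$ reproduces precisely the periodic limit $F_P$.
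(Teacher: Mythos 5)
Your proof is correct, but it follows a genuinely different route from the paper. The paper first invokes its Lemma \ref{l_converge} (the $L^p$-perturbation lemma) together with \eqref{er_tinta} to replace $u_k$ by the full sum $\sum_{n\in\mathbb{N}_0}w^{(n)}(\cdot-y_k^{(n)})$, then splits the functional over this sum all at once: after reducing to finitely many profiles via the uniform convergence of the series, it approximates each $w^{(n)}$ by a function in $C_0^\infty(\mathbb{R}^N)$, so that by \eqref{tinta2} the translated supports are eventually disjoint and the splitting is exact; finally \ref{h_infinito} and dominated convergence convert each translated profile term into $\int F_P(x,w^{(n)})\dx$ (or $\int F(x,w^{(1)})\dx$). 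You instead peel off one profile at a time via a translation-stable Brezis--Lieb lemma, whose splitting inequality you extract directly from the $\varepsilon$--$C_\varepsilon$ structure of \ref{f_geral}, and you control the infinite sum by a quantitative tail estimate on $r_k^{(m)}$ (uniform absolute convergence of the series in \eqref{er_tinta}, interpolation, and again \ref{f_geral}), handling \ref{h_infinito} inside each peeling step exactly as the paper does in \eqref{rod2}. Your version avoids the density/disjoint-support argument entirely and produces a reusable splitting lemma, at the cost of proving that lemma and running the iteration; the paper's version is shorter given Lemma \ref{l_converge} and the disjoint-support trick, but relies on the smooth approximation of the profiles. The points you leave implicit (passing to a.e.\ convergent subsequences in the peeling and recovering the full-sequence limit by a sub-subsequence argument, and the interchange of the limits $k\to\infty$, $m\to\infty$ via the double-limit vanishing of the remainder plus absolute summability of the $\Lambda_n$) are routine and consistent with the facts recorded in Remark \ref{rem_tinta}.
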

\begin{proof}
Let us denote $\Phi(u) = \int_{\mathbb{R}^N} F(x,u) \dx,$ $u \in H^1 (\mathbb{R}^N).$ We first prove that
\begin{equation}\label{remains}
\lim _{k \rightarrow \infty} \left[ \Phi \Big(\sum _{n \in \mathbb{N} _0} w^{(n)} (\cdot - y_k ^{(n)} ) \Big) -\sum _{n \in \mathbb{N} _0} \Phi \left(w^{(n)} (\cdot - y_k ^{(n)}) \right)  \right] = 0.
\end{equation}
Because the convergence in \eqref{er_tinta} is uniform in $k$ and $\Phi \in C^1,$ the series $\sum _{n \in \mathbb{N} _0} \Phi (w^{(n)} (\cdot - y_k ^{(n)}) )$ is also uniformly convergent in $k$ and we can reduce the proof of \eqref{remains} to the case where $\mathbb{N}_0$ is finite. Moreover, using the density of $C^\infty _0 (\mathbb{R}^N)$ in $H^1 _V(\mathbb{R}^N)$ we can assume that $w^{(n)} \in C^\infty _0 (\mathbb{R}^N)$ in \eqref{remains}. Consequently, by \eqref{tinta2} we have,
\begin{equation*}
\supp (w^{(n)} (\cdot - y_k ^{(n)})) \cap  \supp (w^{(m)} (\cdot - y_k ^{(m)}) ) = \emptyset,\quad \text{ for }m \neq n\text{ and }k\text{ large enough.}
\end{equation*}
Convergence \eqref{remains} follows by taking $k$ sufficiently large and denoting $W_k =  \bigcup _{n\in \mathbb{N}_0} \supp (w^{(n)} (\cdot - y_k ^{(n)}) ),$ to obtain
\begin{align*}
\int _{\mathbb{R} ^N} F\Big(x, \sum _{n \in \mathbb{N} _0} w^{(n)} (x - y_k ^{(n)} ) \Big) \dx &=  \int _{W_k} F \Big(x, \sum _{m \in \mathbb{N}_0}  w^{(m)}(\cdot - y_k ^{(m)}) \Big) \dx\\
&=\sum _{n \in \mathbb{N}_0} \int _{\supp (w^{(n)} )} F(x+y_k ^{(n)}, w^{(n)} ) \dx.
\end{align*}
We now proceed with the proof by noticing that Remark \ref{rem_tinta} and Lemma \ref{l_converge} yield
\begin{equation*}
\lim _{k \rightarrow \infty} \left[ \Phi(u_k ) - \Phi \Big(\sum _{n \in \mathbb{N} _0} w^{(n)} (\cdot - y_k ^{(n)} ) \Big) \right] = 0.
\end{equation*}
Next, following the same reasoning above, the series $\sum _{n \in \mathbb{N} _0} \Phi (w^{(n)} (\cdot - y_k ^{(n)} ) )$ converges uniformly in $k$ and by taking $\Phi_P (u) := \int_{\mathbb{R}^N} F _P(x,u)\dx,$ $u \in H^1_V (\mathbb{R}^N),$ one can use \ref{f_geral}, \ref{h_infinito} and the dominated convergence theorem to get
\begin{equation}\label{rod2}
\lim_{k \rightarrow \infty} \left[ \sum _{n \in \mathbb{N} _0} \Phi \left(w^{(n)} (\cdot - y_k ^{(n)} ) \right)-\Phi (w^{(1)}) - \sum _{\mathbb{N}_0\setminus \{ 1\} } \Phi_P (w^{(n)}) \right] = 0.
\end{equation}
The proof follows by combining \eqref{remains}--\eqref{rod2}.
\end{proof}
An immediate consequence of Proposition \ref{p_bef} is the next result for the periodic potential.
\begin{corollary}
$ \displaystyle
\lim _{k \rightarrow \infty}\int _{\mathbb{R}^N} F_P(x, u_k) \dx = \sum _{n \in \mathbb{N}_0} \int _{\mathbb{R} ^N} F_P (x,w^{(n)}) \dx.
$
\end{corollary}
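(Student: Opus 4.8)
The plan is to apply Proposition~\ref{p_bef} with the subcritical nonlinearity taken to be its own $\mathbb{Z}^N$--periodic limit $F_P$, i.e., to the pair $(F_P,F_P)$ in place of $(F,F_P)$. To legitimize this I first observe that $F_P$ (equivalently $f_P$) satisfies the standing hypotheses used in the proof of Proposition~\ref{p_bef}: condition \ref{f_geral} is assumed to hold for the periodic counterpart $f_P$, so $\Phi_P(u):=\int_{\mathbb{R}^N}F_P(x,u)\dx$ enjoys the same $C^1$ and Sobolev-type bounds as $\Phi$ and, in particular, Lemma~\ref{l_converge} remains valid with $F$ replaced by $F_P$; and condition \ref{h_infinito} holds for $f_P$ trivially, since being $\mathbb{Z}^N$--periodic it is its own periodic limit, so $|f_P(x,s)-f_P(x,s)|\equiv 0$.

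With these remarks, the argument of Proposition~\ref{p_bef} goes through verbatim for $(F_P,F_P)$; the only structural difference is that the weak limit $w^{(1)}$ is no longer singled out. Indeed, Remark~\ref{rem_tinta} and Lemma~\ref{l_converge} give $\Phi_P(u_k)-\Phi_P\big(\sum_{n\in\mathbb{N}_0}w^{(n)}(\cdot-y_k^{(n)})\big)\rightarrow 0$; the disjoint-support argument based on \eqref{er_tinta} and \eqref{tinta2} splits $\Phi_P$ over the profiles exactly as in \eqref{remains}; and in the final step (cf. \eqref{rod2}) each translated profile satisfies $\Phi_P\big(w^{(n)}(\cdot-y_k^{(n)})\big)=\int_{\mathbb{R}^N}F_P(x+y_k^{(n)},w^{(n)}(x))\dx\rightarrow\int_{\mathbb{R}^N}F_P(x,w^{(n)})\dx$ for \emph{every} $n\in\mathbb{N}_0$ — for $n=1$ because $y_k^{(1)}=0$, and for $n\neq1$ because $F_P$ is $\mathbb{Z}^N$--periodic and $y_k^{(n)}\in\mathbb{Z}^N$ (so this is in fact an exact identity), or equivalently by the degenerate form of \ref{h_infinito}. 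Summing over $n\in\mathbb{N}_0$ gives
\begin{equation*}
\lim_{k\rightarrow\infty}\int_{\mathbb{R}^N}F_P(x,u_k)\dx=\sum_{n\in\mathbb{N}_0}\int_{\mathbb{R}^N}F_P(x,w^{(n)})\dx,
\end{equation*}
which is the assertion.

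There is no real obstacle here; the only point needing a moment's attention is the verification, made above, that the proof of Proposition~\ref{p_bef} uses nothing about $F$ beyond \ref{f_geral} and the uniform-on-compacts convergence of $f$ towards $f_P$ at infinity — both of which hold, degenerately, for the pair $(F_P,F_P)$. As an alternative route, one could instead establish the single limit $\int_{\mathbb{R}^N}(F-F_P)(x,u_k)\dx\rightarrow\int_{\mathbb{R}^N}(F-F_P)(x,w^{(1)})\dx$ directly (this is straightforward since $F-F_P$ vanishes at infinity uniformly on compacts, $u_k\rightharpoonup w^{(1)}$, and the profiles with $n\in\mathbb{N}_0\setminus\{1\}$ escape to infinity) and subtract it from the identity of Proposition~\ref{p_bef}, which yields the statement at once.
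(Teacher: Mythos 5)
Your proposal is correct and matches the paper's intent: the paper presents this corollary as an immediate consequence of Proposition \ref{p_bef} applied with $F$ replaced by $F_P$, which is exactly what you do, including the (correct) observations that $f_P$ satisfies \ref{f_geral} by the standing assumptions and is trivially its own periodic limit, so the $w^{(1)}$-term merges into the periodic sum.
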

Next we prove that the functional $u \mapsto \int_{\mathbb{R}^N}V(x)u^2\dx$ is sequentially weakly lower semicontinuous with respect to the profile decomposition of Theorem \ref{teo_tinta}.
\begin{proposition}\label{p_convv}
Let $(u_k)$ be a bounded sequence in $H^1 (\mathbb{R} ^N)$ and $(w ^{(n)}) _{n \in \mathbb{N}_{0} }$ given by Theorem \ref{teo_tinta}. Then
\begin{equation*}
\liminf _{k\rightarrow \infty}\int_{\mathbb{R}^N}V(x)u_k^2\dx \geq \int_{\mathbb{R}^N}V(x)|w^{(1)}|^2\dx+\sum _{n \in \mathbb{N}_0\setminus \{ 1\} }\int _{\mathbb{R}^N} V_P(x) |w^{(n)}|^2\dx.
\end{equation*}
\end{proposition}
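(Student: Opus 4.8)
Here $(u_k)$ already denotes the renumbered subsequence for which the decomposition of Theorem~\ref{teo_tinta} holds, and we work throughout in $H^1(\mathbb{R}^N)=H^1_V(\mathbb{R}^N)=H^1_{V_P}(\mathbb{R}^N)$ (equivalent norms, Proposition~\ref{p_sobolev}), noting $V,V_P\in L^\infty(\mathbb{R}^N)$ by \ref{h_infinito}, so $Vw\in L^2(\mathbb{R}^N)$ for every $w\in H^1(\mathbb{R}^N)$.

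The plan is to isolate the profiles one at a time in the quadratic functional $u\mapsto\int_{\mathbb{R}^N}V(x)u^2\dx$, using nothing more than the elementary identity
\begin{equation*}
\int_{\mathbb{R}^N} V w_1^2\dx = \int_{\mathbb{R}^N} V(w_1-w_2)^2\dx + 2\int_{\mathbb{R}^N} V w_1 w_2\dx - \int_{\mathbb{R}^N} V w_2^2\dx
\end{equation*}
together with weak--strong pairings in $L^2(\mathbb{R}^N)$. For the first profile, \eqref{ehesse} with $n=1$ (where $y_k^{(1)}=0$) and the $H^1$-boundedness of $(u_k)$ give $u_k\rightharpoonup w^{(1)}$ weakly in $H^1(\mathbb{R}^N)$, hence weakly in $L^2(\mathbb{R}^N)$; since $Vw^{(1)}\in L^2(\mathbb{R}^N)$ this yields $\int V u_k w^{(1)}\dx\to\int V|w^{(1)}|^2\dx$, and the identity above produces the exact splitting $\int V u_k^2\dx=\int V|w^{(1)}|^2\dx+\int V(u_k-w^{(1)})^2\dx+o_k(1)$.

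Next I would iterate this on the remainders. Enumerate $\mathbb{N}_0\setminus\{1\}$ as $n_2,n_3,\dots$ and set $\tilde{u}_k^{(1)}:=u_k-w^{(1)}$, $\tilde{u}_k^{(j)}:=\tilde{u}_k^{(j-1)}-w^{(n_j)}(\cdot-y_k^{(n_j)})$. The induction rests on two facts. First, applying \eqref{tinta2} to pairs in $\mathbb{N}_0$ (where all the $j_k^{(\,\cdot)}$ vanish) gives $|y_k^{(n)}-y_k^{(m)}|\to\infty$ for $m\ne n$, so in particular $|y_k^{(n)}|\to\infty$ for every $n\in\mathbb{N}_0\setminus\{1\}$; consequently $\tilde{u}_k^{(j-1)}(\cdot+y_k^{(n_j)})\rightharpoonup w^{(n_j)}$ weakly in $H^1(\mathbb{R}^N)$, because $u_k(\cdot+y_k^{(n_j)})\rightharpoonup w^{(n_j)}$ by \eqref{ehesse}, while each previously extracted bump $w^{(n_i)}(\cdot+y_k^{(n_j)}-y_k^{(n_i)})$, $i<j$, as well as $w^{(1)}(\cdot+y_k^{(n_j)})$, drifts to spatial infinity and hence converges weakly to $0$ in $H^1(\mathbb{R}^N)$. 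Second, writing $\int V(\tilde{u}_k^{(j-1)})^2\dx=\int V(\cdot+y_k^{(n_j)})\,\bigl(\tilde{u}_k^{(j-1)}(\cdot+y_k^{(n_j)})\bigr)^2\dx$, the translated potential $V(\cdot+y_k^{(n_j)})$ converges to $V_P$ pointwise and satisfies $0\le V(\cdot+y_k^{(n_j)})\le\|V\|_\infty$ --- here \ref{h_infinito} and the $\mathbb{Z}^N$-periodicity of $V_P$ combine, since $V_P(x+y_k^{(n_j)})=V_P(x)$ for $y_k^{(n_j)}\in\mathbb{Z}^N$ and $|V(x+y_k^{(n_j)})-V_P(x+y_k^{(n_j)})|\to0$. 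Dominated convergence then gives $V(\cdot+y_k^{(n_j)})w^{(n_j)}\to V_P w^{(n_j)}$ in $L^2(\mathbb{R}^N)$ and $\int V(\cdot+y_k^{(n_j)})|w^{(n_j)}|^2\dx\to\int V_P|w^{(n_j)}|^2\dx$; feeding these, together with the weak-$H^1$ (hence weak-$L^2$) convergence just established, into the elementary identity yields
\begin{equation*}
\int_{\mathbb{R}^N} V(\tilde{u}_k^{(j-1)})^2\dx = \int_{\mathbb{R}^N} V_P|w^{(n_j)}|^2\dx + \int_{\mathbb{R}^N} V(\tilde{u}_k^{(j)})^2\dx + o_k(1).
\end{equation*}
Telescoping the first-profile splitting with these identities for $j=2,\dots,m$ gives, for each fixed $m$,
\begin{equation*}
\int_{\mathbb{R}^N} V u_k^2\dx = \int_{\mathbb{R}^N} V|w^{(1)}|^2\dx + \sum_{j=2}^m \int_{\mathbb{R}^N} V_P|w^{(n_j)}|^2\dx + \int_{\mathbb{R}^N} V(\tilde{u}_k^{(m)})^2\dx + o_k(1).
\end{equation*}

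To finish, since $V\ge0$ the last integral is nonnegative; discarding it and taking $\liminf_{k\to\infty}$ gives $\liminf_k\int V u_k^2\dx\ge\int V|w^{(1)}|^2\dx+\sum_{j=2}^m\int V_P|w^{(n_j)}|^2\dx$ for every $m$, and letting $m\to\infty$ (the partial sums are nondecreasing and bounded above by the finite quantity on the left, so the series converges) yields the claim. I expect the only genuinely delicate point to be the step involving the translated potentials: unlike in Proposition~\ref{p_bef}, the quadratic term does not see the subcritical decay that annihilates the remainder, so a naive weak lower semicontinuity argument recovers only the $w^{(1)}$ contribution; capturing the escaping profiles forces one to re-enter the frame of each profile and exploit the locally uniform convergence $V(\cdot+y_k^{(n)})\to V_P$, which is exactly the role of \ref{h_infinito} and the periodicity of $V_P$, and is also the one place where it matters that the translations $y_k^{(n)}$ lie in $\mathbb{Z}^N$ rather than $\mathbb{R}^N$.
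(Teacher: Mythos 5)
Your proof is correct and follows essentially the same route as the paper: an iterative Brezis--Lieb-type splitting in translated frames, using weak--strong $L^2$ pairing, the pointwise convergence $V(\cdot+y_k^{(n)})\to V_P$ forced by \ref{h_infinito} and the $\mathbb{Z}^N$-periodicity, nonnegativity of the remainder, and a final passage $m\to\infty$. The only cosmetic difference is that your remainder keeps the weight $V$ on the successively stripped sequence $\tilde{u}_k^{(m)}$, whereas the paper writes it with the mixed weights $V^{1/2}$ and $V_P^{1/2}$; both serve the same purpose.
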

\begin{proof}
Up to a suitable renumbering of $\mathbb{N}_0,$ and using successively the classical Brezis-Lieb lemma \cite{MR699419}, it suffices to prove that
\begin{multline}\label{comp_l2}
\int_{\mathbb{R}^N} V(x) u^2_k\dx = \int _{\mathbb{R}^N} \big((V(x))^{1/2} (u_k -w^{(1)} ) - (V_P(x))^{1/2} \sum _{n=2} ^m w^{(n)} (\cdot - y_k ^{(n)}) \big)^2\dx\\
+ \int_{\mathbb{R}^N}V(x)|w^{(1)}|^2\dx+\sum _{n=2}^m\int _{\mathbb{R}^N} V_P(x) |w^{(n)}|^2\dx+o_k(1).
\quad \forall \, m \in \mathbb{N}_0,
\end{multline}
We start by checking that \eqref{comp_l2} holds for $m=2.$ Indeed, because $u_k \rightharpoonup w^{(1)}$ in $H^1 (\mathbb{R}^N)$ (see \eqref{tinta1}) we have
\begin{equation}\label{comp_l2_part1}
\int _{\mathbb{R}^N} V(x) u^2_k\dx =\int_{\mathbb{R}^N} V(x) |w^{(1)}|^2\dx +  \int_{\mathbb{R}^N}V(x) (u_k - w^{(1)})^2 \dx+o_k(1).
\end{equation}
By Remark \ref{rem_tinta} and the fact that $u_k (\cdot +y_k ^{(2)} )\rightharpoonup w^{(2)}$ in $H^1 (\mathbb{R}^N),$
\begin{align}
    \int_{\mathbb{R}^N} V(x) (u_k - w^{(1)})^2 \dx& =  \int _{\mathbb{R}^N}  V(x+y_k ^{(2)}) (u_k (\cdot + y_k ^{(2)}) - w^{(1)} (\cdot + y_k ^{(2)}) ) ^2 \dx\nonumber \\
    & =\int _{\mathbb{R}^N} \big( (V(x+y_k ^{(2)}))^{1/2} (u_k (\cdot + y_k ^{(2)}) - w^{(1)}(\cdot + y_k ^{(2)}) ) - (V_P(x))^{1/2} w^{(2)}  \big)^2\dx \nonumber\\
    &\qquad+\int _{\mathbb{R}^N} V_P(x) |w^{(2)}|^2 \dx +o_k(1).\label{comp_l2_part2}
\end{align}
Substituting \eqref{comp_l2_part2} into \eqref{comp_l2_part1}, we obtain \eqref{comp_l2} for $m=2.$ We now prove that \eqref{comp_l2} holds for $m+1$ provided that it is true for $m.$ To do this, we argue as in \eqref{comp_l2_part2}, replacing $y_k ^{(2)}$ by $y_k ^{(m+1)},$
\begin{multline}\label{l_melhor}
    \int _{\mathbb{R}^N} \big((V(x))^{1/2} (u_k -w^{(1)} ) - (V_P(x))^{1/2} \sum _{n=2} ^m w^{(n)} (\cdot - y_k ^{(n)}) \big)^2\dx =\\ \int _{\mathbb{R}^N} \big((V(x))^{1/2} (u_k -w^{(1)} ) - (V_P(x))^{1/2} \sum _{n=2} ^{m+1} w^{(n)} (\cdot - y_k ^{(n)}) \big)^2\dx \\ +\int _{\mathbb{R}^N} V_P(x) |w^{(m+1)}|^2\dx + o_k(1).
\end{multline}
Next we use the induction hypothesis \eqref{comp_l2} again to obtain
\begin{multline}\label{comp_l2_part3}
\int _{\mathbb{R}^N}\big((V(x))^{1/2}(u_k - w^{(1)}) - (V_P(x))^{1/2}\sum _{n=2} ^m w^{(n)} (\cdot - y_k ^{(n)})\big)^2 \dx -\int _{\mathbb{R}^N} V_P(x) |w^{(m+1)}| ^2 \dx\\= \int _{\mathbb{R}^N} V(x) u^2_k \dx   - \int _{\mathbb{R} ^ N } V(x) |w^{(1)}|^2 \dx  -  \sum _{n=2} ^{m+1} \int _{\mathbb{R}^N}V_P(x) |w^{(n)}|^2 \dx  + o_k(1).
\end{multline}
Substituting \eqref{l_melhor} into \eqref{comp_l2_part3} we conclude that  \eqref{comp_l2} holds for $m+1.$
\end{proof}
We now collect some key properties of the self-similar nonlinearity $g$ that are essential for our analysis. These results are established in \cite[Section 5.2]{tintabook} and \cite{MR2409928,MR2409935}.
\begin{lemma}\label{l_geprop}
    If $g \in C(\mathbb{R})$ is a self-similar function \ref{g_selfsimilar}, then
    \begin{enumerate}[label=\roman*):]
        \item $g(s) = \gamma ^{-\frac{N+2}{2} j   }  g(\gamma ^{ \frac{N-2}{2}j  }   s )$ and condition \ref{g_geral} holds.
        \item For a given $M \in \mathbb{N},$ there exists $C=C(M)>0$ such that for any $a_1, \ldots, a_M \in \mathbb{R}$ we have
\begin{equation*}
	\left|  G\left( \sum _{m=1}^M a_m \right)  - \sum _{m=1}^M G(a_m)\right| \leq C \sum^M _{m \neq n}|a_m|^{2^\ast -1}|a_n|.
\end{equation*}
    \end{enumerate}
\end{lemma}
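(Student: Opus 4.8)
The plan is to prove the two assertions separately. For (i), I would start from the self-similarity identity $G(s)=\gamma^{-Nj}G(\gamma^{\frac{N-2}{2}j}s)$ and differentiate in $s$; since $g$ is continuous, $G\in C^1$ with $G'=g$, and simplifying the exponent $-Nj+\tfrac{N-2}{2}j=-\tfrac{N+2}{2}j$ immediately yields $g(s)=\gamma^{-\frac{N+2}{2}j}g(\gamma^{\frac{N-2}{2}j}s)$ for all $j\in\mathbb{Z}$, $s\in\mathbb{R}$. To deduce the growth bound \ref{g_geral}, I would use a fundamental-domain argument: for $s\neq0$ there is a unique $j=j(s)\in\mathbb{Z}$ such that $t:=\gamma^{\frac{N-2}{2}j}s$ lies in the compact range $|t|\in[1,\gamma^{(N-2)/2})$, which is possible since $\gamma>1$. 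Setting $a_\ast:=\max_{|\tau|\le\gamma^{(N-2)/2}}|g(\tau)|<\infty$, we get $|g(s)|=\gamma^{-\frac{N+2}{2}j}|g(t)|\le a_\ast\gamma^{-\frac{N+2}{2}j}$, and from $\gamma^{\frac{N-2}{2}j}\ge|s|^{-1}$ raised to the power $(N+2)/(N-2)$ we obtain $\gamma^{-\frac{N+2}{2}j}\le|s|^{(N+2)/(N-2)}=|s|^{2^\ast-1}$, hence $|g(s)|\le a_\ast|s|^{2^\ast-1}$. The case $s=0$ is trivial once one notes $g(0)=0$, which follows from the displayed identity by letting $j\to+\infty$.

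For (ii), I would argue by induction on $M$, the base case $M=2$ relying only on \ref{g_geral}. Writing $G(a+b)-G(a)=\int_0^1 g(a+tb)\,b\dt$ and $G(b)=\int_0^1 g(tb)\,b\dt$, the growth bound gives $|G(a+b)-G(a)|\le a_\ast|b|(|a|+|b|)^{2^\ast-1}$ and $|G(b)|\le(a_\ast/2^\ast)|b|^{2^\ast}$; since the desired inequality is symmetric in $a$ and $b$, I may assume $|b|\le|a|$, in which case both right-hand sides are dominated by a multiple of $|a|^{2^\ast-1}|b|$, so $|G(a+b)-G(a)-G(b)|\le C_2\big(|a|^{2^\ast-1}|b|+|b|^{2^\ast-1}|a|\big)$. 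For the inductive step I would set $A=\sum_{m=1}^M a_m$ and decompose $G(\sum_{m=1}^{M+1}a_m)-\sum_{m=1}^{M+1}G(a_m)$ as $\big(G(A+a_{M+1})-G(A)-G(a_{M+1})\big)+\big(G(A)-\sum_{m=1}^M G(a_m)\big)$; the second term is handled by the inductive hypothesis, and the first by the $M=2$ estimate together with the elementary bounds $|A|\le\sum_{m\le M}|a_m|$ and $|A|^{2^\ast-1}\le C_M\sum_{m\le M}|a_m|^{2^\ast-1}$ (convexity of $t\mapsto t^{2^\ast-1}$), which turn every resulting term into one of the form $|a_m|^{2^\ast-1}|a_n|$ with $m\neq n$. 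Collecting constants produces the required $C=C(M+1)$.

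The argument is essentially routine; the one genuinely substantive step is the fundamental-domain choice of $j$ in part (i), which converts the pointwise self-similarity of $g$ into a uniform power-type growth estimate. I do not expect any real obstacle beyond careful bookkeeping of the constants in the induction of part (ii), and I note that self-similarity is in fact used only to establish \ref{g_geral}: once that bound is available, part (ii) holds for any continuous $g$ with critical growth.
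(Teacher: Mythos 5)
Your proof is correct, and for part (ii) it follows a genuinely different route from the paper. In part (i) both arguments are the same in spirit (a fundamental-domain/rescaling argument reducing to a compact set of values of $|s|$); your handling of $g(0)=0$ and the exponent bookkeeping are fine. The real divergence is in the two-term estimate $|G(a_1+a_2)-G(a_1)-G(a_2)|\leq C(|a_1|^{2^\ast-1}|a_2|+|a_2|^{2^\ast-1}|a_1|)$: the paper proves it by a four-case analysis on the scaled intervals $L_k$, using the local Lipschitz continuity of $G$ together with repeated self-similar rescalings of $a_1,a_2$, whereas you obtain it in a few lines from the representation $G(a+b)-G(a)=\int_0^1 g(a+tb)\,b\,\mathrm{d}t$, the growth bound $(\hat g_1)$, and a symmetry reduction to $|b|\le|a|$ (note $|b|^{2^\ast}\le |a|^{2^\ast-1}|b|$ and $(|a|+|b|)^{2^\ast-1}\le 2^{2^\ast-1}|a|^{2^\ast-1}$). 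This is more elementary and also slightly more general: as you observe, once $(\hat g_1)$ is available, part (ii) holds for any continuous $g$ with critical growth, so self-similarity enters only through part (i) — a fact not visible in the paper's argument, which invokes the scaling structure inside the case analysis. Your inductive step (peeling off $a_{M+1}$, applying the two-term case to $A=\sum_{m\le M}a_m$ and $a_{M+1}$, and using $|A|\le\sum_{m\le M}|a_m|$ together with $|A|^{2^\ast-1}\le M^{2^\ast-2}\sum_{m\le M}|a_m|^{2^\ast-1}$) is exactly the paper's induction, so the two proofs agree there.
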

\begin{proof}
    i): Take the interval $L=[\gamma ^{-\frac{N-2}{2}}, \gamma ^{\frac{N-2}{2}}].$ By continuity, there exists $C=C(L)>0$ such that $|g(s)| \leq C |s|^{2 ^\ast -1 },$ for all $s \in L.$ Let $0<s<\gamma ^{-\frac{N-2}{2}}$ or $s>\gamma ^{\frac{N-2}{2}}.$ Then, in either case, there exists $j \in \mathbb{Z}$ such that $\gamma ^{\frac{N-2}{2} j}s \in L.$ Consequently, 
	\begin{equation*}
	\gamma ^{\frac{N+2}{2} j}|g(s)| = |g(\gamma ^{\frac{N-2}{2}j}s)|\leq \gamma ^{\frac{N+2}{2} j} C |s|^{2^\ast -1}.
	\end{equation*}
	The case where $s<0$ is analogous.\\
	ii): We start by proving the existence of $C>0$ such that
	\begin{equation}\label{case_um}
	\left|G(a_1 + a_2) - G(a_1) - G(a_2)\right| \leq C \left( |a_1| |a_2| ^{2^\ast  -1} + |a_1| ^{2^\ast  -1} |a_2| \right).
	\end{equation}
	To do this, we consider the class of intervals $L_k = [-\gamma ^{\frac{N-2}{2} k}, \gamma ^{\frac{N-2}{2} k}],$ and take $k,$ $k_0 \in \mathbb{N}$ in such a way that $\gamma ^{\frac{N-2}{2} k_0} > 2$ and $\gamma ^{\frac{N-2}{2} (k-k_0)} >1.$ With this choice, one can see that if $a_1$ and $a_2 \in L_{k-k_0},$ then $a_1 + a_2 \in L_{k}.$ 	Because $G$ is a locally Lipschitz function, there is $C >0$ such that $|G(s) - G(t)| \leq C |s-t|,$ for any $s,t \in L_k.$ The proof follows by considering several cases.\\
	\textit{Case 1:} Assume that $a_1$ and $a_2 \in L_{k-k_0}$ with $|a_1| \leq 1 \leq |a_2|.$ Thus
	\begin{equation*}
	\left|G(a_1 + a_2) - G(a_1) - G(a_2)\right| \leq C (|a_1| + |G(a_1)| ).
	\end{equation*}
	By condition \ref{g_geral} we can estimate
	\begin{equation*}
	|a_1| + |G(a_1)| \leq C(|a_1| |a_2| ^{2^\ast  -1} + |a_1| ^{2^\ast  -1} |a_2| ).
	\end{equation*}
	\textit{Case 2:} Assume that $a_1$ and $a_2 \in L_{k-k_0}$ with $|a_1| ,$ $ |a_2| \geq 1.$ Then, there exists $j_1 \in \mathbb{Z},$ $j_1 \leq 0,$ such that for $b_1 := \gamma ^{\frac{N-2}{2}j_1} a_1,$ one has $|b_1| \leq 1.$ Because $b_1  \in L_{k-k_0},$ we know that $b_1 + a_2 \in I,$ hence by the first case, we have the following estimate
	\begin{align*}
	|G(b_1 + a_2) - G(b_1) - G(a_2)| &\leq \gamma ^{\frac{N-2}{2}j_1} C(|a_1| ^{2^\ast  -1} |a_2| + |a_1| |a_2| ^{2^\ast  -1} )\\
	&\leq C(|a_1| ^{2^\ast  -1} |a_2| + |a_1| |a_2| ^{2^\ast  -1} ).
	\end{align*}
	This allows us to obtain the following inequality
	\begin{multline*}
	\left|G(a_1 + a_2) - G(a_1) - G(a_2)\right| \leq \\ \left|G(b_1 + a_2) - G(b_1) - G(a_2)\right| + \left|G(a_1 + a_2) - G(b_1 + a_2) + G(b_1) - G(a_1) \right|,
	\end{multline*}
	with
	\begin{equation*}
	|G(a_1 + a_2) - G(a_1) - G(b_1 + a_2) + G(b_1)| \leq 2C |a_2| \leq C |a_1| ^{2 ^\ast } |a_2|.
	\end{equation*}
	\textit{Case 3:} Assume that $a_1$ and $a_2 \in L_{k-k_0}$ with $|a_1|,$ $|a_2| \leq 1.$ Let $b_1  = \gamma ^{\frac{N-2}{2}j_0} a_1$ and $b_2 = \gamma ^{\frac{N-2}{2}j_0} a_2.$ Take $j_0 \in \mathbb{Z}$ such that $|b_1| \geq 1 $ or $|b_2| \geq 1.$ Consequently we can use the first or the second case to get that
	\begin{align*}
	\gamma ^{Nj_0} |G(a_1 + a_2) - G(a_1) - G(a_2)| &= |G(b_1+b_2) - G(b_1) - G(b_2)|\\
	& \leq \gamma ^{Nj_0} C (|a_1| ^{2^\ast  -1} |a_2| + |a_1| |a_2| ^{2^\ast  -1} ).
	\end{align*}
	\textit{Case 4:} We suppose that $a_1\not \in L_{k-k_0}$ or $a_2 \not \in L_{k-k_0}.$ In this case we can argue as before, taking $j_\ast \in \mathbb{Z}$ such that $b_1  = \gamma ^{\frac{N-2}{2}j_\ast} a_1,$ $b_2 = \gamma ^{\frac{N-2}{2}j_\ast } a_2 \in L_{k - k_0},$ and applying one of the previous cases. This proves \eqref{case_um}.
	\textit{The general case:} Next, we proceed using an induction argument. Suppose that the following inequality holds for $l \in \mathbb{N},$
	\begin{equation*}
		\left|  G\left( \sum _{m=1}^l a_m \right)  - \sum _{m=1}^l G(a_m)\right| \leq C \sum _{m \neq n}^l|a_m|^{2^\ast -1}|a_n|,
	\end{equation*}
	for some $C = C(l).$ Given $a_{l+1} \in \mathbb{R},$ one can use the induction hypothesis together with \eqref{case_um} to get
		\begin{align*}
		\left|  G\left( \sum _{m=1}^{l+1} a_m \right)  - \sum _{m=1}^{l+1} G(a_m)\right| & \leq \left|  G\left( \sum _{m=1}^{l} a_m  + a_{l+1}  \right)  - G\left( \sum _{m=1}^{l} a_m   \right) -  G(a_{l+1})  \right| \\ &\qquad +\left|  G\left( \sum _{m=1}^l a_m \right)  - \sum _{m=1}^l G(a_m)\right| \leq C \sum _{m \neq n}^{l+1}|a_m|^{2^\ast -1}|a_n|. \qedhere
	\end{align*}
\end{proof}
We now establish the counterpart to Lemma \ref{l_converge} for the critical nonlinearity $g.$
\begin{lemma}\label{l_Gconverge}
	Let $(u_k)$ and $(v_k)$ be bounded sequences in $D^{1,2}(\mathbb{R}^N),$ such that $u_k - v_k \rightarrow 0$ in $L^{2^\ast }(\mathbb{R}^N).$ Then
\begin{equation*}
	\lim _ {k\rightarrow \infty} \int _{\mathbb{R}^N} b(x)(G(u_k) - G(v_k) )\dx=0.
\end{equation*}
Particularly, if $v_k = u,$ then
\begin{equation*}
	\lim _ {k\rightarrow \infty} \int _{\mathbb{R}^N} b(x)g(u_k)u_k  \dx=  \int _{\mathbb{R}^N}  b(x)g(u) u \dx.
\end{equation*}
\end{lemma}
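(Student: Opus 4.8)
The plan is to mirror the structure of the proof of Lemma \ref{l_converge}, but using the single growth bound \ref{g_geral}, $|g(s)| \le a_\ast |s|^{2^\ast - 1}$, in place of \ref{f_geral}, together with the fact that $b \in L^\infty(\mathbb{R}^N)$. First I would write, via the fundamental theorem of calculus and the bound on $b$,
\begin{equation*}
	\left| \int_{\mathbb{R}^N} b(x)\big(G(u_k) - G(v_k)\big) \dx \right| \le \|b\|_\infty \int_{\mathbb{R}^N} \left| \int_{v_k}^{u_k} |g(s)| \ds \right| \dx \le \|b\|_\infty \, a_\ast \int_{\mathbb{R}^N} \big| |u_k|^{2^\ast - 1} u_k - |v_k|^{2^\ast - 1} v_k \big| \dx,
\end{equation*}
where I have absorbed the constant from integrating $|s|^{2^\ast - 1}$ between $v_k$ and $u_k$ into a suitable form; more precisely one estimates $\int_{v_k}^{u_k}|g(s)|\ds \le (a_\ast/2^\ast)\big||u_k|^{2^\ast-1}u_k - |v_k|^{2^\ast-1}v_k\big|$ when $u_k,v_k$ have the same sign, and splits into the regions where they differ in sign in the standard way (or simply invokes \eqref{ineq} directly).

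Next I would apply the elementary inequality \eqref{ineq} with $q = 2^\ast$ (note $2^\ast \ge 2$ since $N \ge 3$, so the inequality applies), obtaining
\begin{equation*}
	\big| |u_k|^{2^\ast - 1} u_k - |v_k|^{2^\ast - 1} v_k \big| \le 2^\ast 2^{2^\ast - 2}\big( |u_k|^{2^\ast - 1} + |u_k - v_k|^{2^\ast - 1} \big)|u_k - v_k|.
\end{equation*}
Then Hölder's inequality with exponents $2^\ast$ and $2^\ast/(2^\ast - 1)$ gives
\begin{equation*}
	\int_{\mathbb{R}^N} \big| |u_k|^{2^\ast - 1} u_k - |v_k|^{2^\ast - 1} v_k \big| \dx \le 2^\ast 2^{2^\ast - 2}\big( \|u_k\|_{2^\ast}^{2^\ast - 1}\|u_k - v_k\|_{2^\ast} + \|u_k - v_k\|_{2^\ast}^{2^\ast} \big).
\end{equation*}
Since $(u_k)$ is bounded in $D^{1,2}(\mathbb{R}^N)$, it is bounded in $L^{2^\ast}(\mathbb{R}^N)$ by the Sobolev embedding, so $\|u_k\|_{2^\ast}^{2^\ast - 1}$ is bounded; and $u_k - v_k \to 0$ in $L^{2^\ast}(\mathbb{R}^N)$ by hypothesis, so both terms on the right tend to $0$. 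This proves the first assertion.

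For the last statement, I would set $\bar G(x,s) := g(s)s$ (the function $\bar{G}$ of the paper, here with $b$ still factored out) and observe that $|\bar{G}(s)| = |g(s)s| \le a_\ast |s|^{2^\ast}$, so $\bar{g}(s) := \bar{G}'(s) = g'(s)s + g(s)$ — or rather, to avoid differentiability issues, I would note directly that the map $s \mapsto g(s)s$ satisfies a bound $|g(s)s| \le a_\ast|s|^{2^\ast}$ of the same homogeneity, hence running the first part of the argument with $G$ replaced by the primitive of $s \mapsto 2 g(s)s$ (or more cleanly, applying the first part to $\tilde{G}(s) := \int_0^s 2g(t)t\,dt$, which also has critical growth) and then taking $v_k = u$ yields the claim, since $u_k - u \to 0$ in $L^{2^\ast}$ whenever $(u_k)$ is bounded in $D^{1,2}$ and $u_k \to u$ a.e. — though here one only needs the hypothesis $u_k - u \to 0$ in $L^{2^\ast}$ as already assumed. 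The only mild subtlety, and the step I expect to require the most care, is justifying the reduction for $g(s)s$: unlike in Lemma \ref{l_converge} where $\hat{F}(x,s) = f(x,s)s$ visibly inherits \ref{f_geral}, here one must check that the primitive of (a constant multiple of) $g(s)s$ is again controlled by $|s|^{2^\ast}$, which follows immediately from \ref{g_geral} by integration, so the first part applies verbatim to that primitive and specializing $v_k \equiv u$ gives the stated convergence of $\int b(x) g(u_k)u_k\,dx$.
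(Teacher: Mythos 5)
Your proof of the first assertion is correct and is essentially the paper's own argument: pull out $\|b\|_\infty$, write $G(u_k)-G(v_k)=\int_{v_k}^{u_k}g(s)\ds$, use \ref{g_geral} (note that $\frac{1}{2^\ast}|s|^{2^\ast-1}s$ is a global antiderivative of $|s|^{2^\ast-1}$, so no sign-splitting is actually needed), then \eqref{ineq} with $q=2^\ast$, H\"older with exponents $2^\ast$ and $2^\ast/(2^\ast-1)$, and the Sobolev embedding together with $\|u_k-v_k\|_{2^\ast}\rightarrow 0$.

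The second assertion is where there is a genuine gap. You propose to apply the first part to $\tilde G(s):=\int_0^s 2g(t)t\dt$ and then take $v_k=u$. Even if this worked, it would prove $\int_{\mathbb{R}^N} b\,\tilde G(u_k)\dx\rightarrow\int_{\mathbb{R}^N} b\,\tilde G(u)\dx$, a statement about $\tilde G$, not about $\bar G(s):=g(s)s$; the two functions differ in general (already for $g(s)=|s|^{2^\ast-2}s$ one has $\bar G(s)=|s|^{2^\ast}$ while $\tilde G(s)=\frac{2}{2^\ast+1}|s|^{2^\ast}s$), so the stated convergence of $\int b\,g(u_k)u_k\dx$ does not follow from it. Moreover, your claim that this primitive is "again controlled by $|s|^{2^\ast}$" is false: from $|2g(t)t|\leq 2a_\ast|t|^{2^\ast}$ integration only gives $|\tilde G(s)|\leq C|s|^{2^\ast+1}$, and the integrand $2g(s)s$ has supercritical growth $|s|^{2^\ast}$, so the first part does not apply "verbatim": repeating its H\"older step with this growth would require a uniform bound on $\|u_k\|_q$ for some $q>2^\ast$, which boundedness in $D^{1,2}(\mathbb{R}^N)$ does not provide. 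The intended route (and the paper's) is to work with $\bar G(s)=g(s)s$ itself, which obeys the critical bound $|\bar G(s)|\leq a_\ast|s|^{2^\ast}$ — this is what the paper means when it says that $\bar G$ "also satisfies \ref{g_geral}" — and to estimate $\int_{\mathbb{R}^N} b\,(\bar G(u_k)-\bar G(u))\dx$ directly, without passing to any primitive: for instance, since $u_k\rightarrow u$ in $L^{2^\ast}(\mathbb{R}^N)$, extract a subsequence converging a.e.\ and dominated by a fixed $L^{2^\ast}$ function, apply the dominated convergence theorem to $b\,\bar G(u_k)$, and conclude for the whole sequence by the subsequence principle; alternatively, use a splitting estimate of the type in Lemma \ref{l_geprop} for $\bar G$, which is again self-similar with critical growth. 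As it stands, your reduction both targets the wrong functional and relies on a growth estimate that fails, so this step needs to be redone.
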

\begin{proof}
The proof is quite similar to that of Lemma \ref{l_converge}. By \eqref{ineq} we have,
\begin{align*}
	\left| \int _{\mathbb{R}^N}  b(x)(G(u_k) - G(v_k)) \dx\right|  & \leq \| b \| _\infty \int _{\mathbb{R}^N} \int _{u_k} ^{v_k}  |g(s)| \ds   \dx \\
	&\leq \| b \| _\infty\int _{\mathbb{R}^N}  \frac{a_\ast}{2^\ast} \big|  |u_k|^{2^\ast -1 }u_k - |v_k|^{2^\ast -1} v_k \big| \dx\\
	 & \leq \| b \| _\infty a_\ast  2^{2^\ast - 2} \left( \| u_k \|_{2^\ast} ^{2^\ast - 1} \| u_k - v_k \|_{2^\ast} + \| u_k - v_k \|_{2^\ast }^{2^\ast}  \right)\rightarrow 0,\quad \text{as }k \rightarrow \infty.
\end{align*}
The last statement follows from the fact that $\bar{G}(s)=g(s)s$ also satisfies \ref{g_geral}.
\end{proof}
We now provide the counterpart to Proposition \ref{p_bef} for the critical energy term.
\begin{proposition}\label{p_convast}
	Let $(u_k)\subset D^{1,2}(\mathbb{R}^N)$ be a bounded sequence and $(w ^{(n)}) _{n \in \mathbb{N}_{\ast} }$ be the profiles given by Theorem \ref{teo_tinta}. Then
	\begin{equation*}
		\lim _{k \rightarrow \infty} \int _{\mathbb{R}^N } b(x) G(u_k) \dx =  \int _{\mathbb{R}^N} b(x)G(w^{(1)}) \dx  + \sum _{n \in \mathbb{N} _{ 0} \setminus \{ 1 \}} \int _{\mathbb{R}^N} b_P(x)G(w^{(n)}) \dx + \sum _{n \in \mathbb{N} _{ +}} \int _{\mathbb{R}^N} b^{(n)}  G(w^{(n)}) \dx,
	\end{equation*}
	where $b^{(n)} = b(z^{(n)})$ if $(y_k^{(n)})_k$ is bounded, with $z^{(n)} = \lim_{k \rightarrow \infty } y_k^{(n)}$, and $b^{(n)} = b_P(0)$ if $|y_k^{(n)}| \to \infty,$ as $k \rightarrow \infty$ (up to a subsequence).
\end{proposition}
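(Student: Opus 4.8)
The plan is to follow the strategy of Proposition \ref{p_bef}, but working in $L^{2^\ast}(\mathbb{R}^N)$ and carefully tracking the dilated profiles $n\in\mathbb{N}_+$. Write $\Psi(u):=\int_{\mathbb{R}^N}b(x)G(u)\dx$. Since \ref{g_geral} gives $|G(s)|\le(a_\ast/2^\ast)|s|^{2^\ast}$, scale invariance of the $L^{2^\ast}$-norm yields $|\Psi(d_{k,n}w^{(n)})|\le(\|b\|_\infty a_\ast/2^\ast)\|w^{(n)}\|_{2^\ast}^{2^\ast}$; combined with the Sobolev inequality and \eqref{tinta3} this shows $\sum_n\|w^{(n)}\|_{2^\ast}^{2^\ast}<\infty$, so all the series on the right-hand side converge absolutely. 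Using Remark \ref{rem_tinta} I would take $\mathbb{N}_-=\emptyset$. Then, exactly as in Proposition \ref{p_bef}, the uniform-in-$k$ convergence of the series in \eqref{tinta4}, the $L^{2^\ast}$-Lipschitz bound for $\Psi$ on bounded sets (contained in the proof of Lemma \ref{l_Gconverge}), and the density of $C_0^\infty(\mathbb{R}^N)$ in $D^{1,2}(\mathbb{R}^N)$ allow me to reduce to the case where $\mathbb{N}_\ast$ is finite and every $w^{(n)}\in C_0^\infty(\mathbb{R}^N)$. Under these reductions, Lemma \ref{l_Gconverge} applied to $u_k$ and $v_k:=\sum_n d_{k,n}w^{(n)}$ (via \eqref{tinta4}) gives $\Psi(u_k)-\Psi(v_k)\to 0$, while Lemma \ref{l_geprop}(ii) yields
\[
\Big|\,G\big({\textstyle\sum_n d_{k,n}w^{(n)}}\big)-\sum_n G(d_{k,n}w^{(n)})\,\Big|\le C\sum_{m\neq n}|d_{k,m}w^{(m)}|^{2^\ast-1}|d_{k,n}w^{(n)}|,
\]
so it remains to show the cross terms vanish and to identify each diagonal limit $\lim_k\Psi(d_{k,n}w^{(n)})$.

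For the cross terms I would show that $\int_{\mathbb{R}^N}|d_{k,m}w^{(m)}|^{2^\ast-1}|d_{k,n}w^{(n)}|\dx\to 0$ for $m\neq n$. Because the exponents $2^\ast-1$ and $1$ add up to $2^\ast$, this integral is invariant under applying $d_{k,m}^{-1}$, hence equals $\int_{\mathbb{R}^N}|w^{(m)}|^{2^\ast-1}|d_{k,m}^{-1}d_{k,n}w^{(n)}|\dx$, where $d_{k,m}^{-1}d_{k,n}$ has dilation exponent $j_k^{(n)}-j_k^{(m)}$ and a translation controlled by $\gamma^{j_k^{(n)}}(y_k^{(n)}-y_k^{(m)})$. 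By \eqref{tinta2} at least one of these quantities diverges: if $j_k^{(n)}-j_k^{(m)}$ stays bounded then the translation escapes to infinity and, since $w^{(m)},w^{(n)}\in C_0^\infty(\mathbb{R}^N)$, the supports become disjoint for $k$ large; if $|j_k^{(n)}-j_k^{(m)}|\to\infty$ then $d_{k,m}^{-1}d_{k,n}w^{(n)}$ concentrates or flattens, but in either case tends weakly to $0$ in $L^{2^\ast}(\mathbb{R}^N)$, which paired against the fixed function $|w^{(m)}|^{2^\ast-1}\in L^{(2^\ast)'}(\mathbb{R}^N)$ gives the claim (cf. \cite[Lemma 5.5 and Corollary 5.2]{tintabook}). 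This is the step I expect to demand the most care, since both regimes of \eqref{tinta2} have to be treated.

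For the diagonal limits: if $n=1$ then $d_{k,1}=\mathrm{id}$, so $\Psi(d_{k,1}w^{(1)})=\int_{\mathbb{R}^N}b(x)G(w^{(1)})\dx$. For $n\neq1$ I would combine the self-similarity $G(\gamma^{\frac{N-2}{2}j}s)=\gamma^{Nj}G(s)$ from \ref{g_selfsimilar} with the substitution $z=\gamma^{j_k^{(n)}}(x-y_k^{(n)})$ to get $\Psi(d_{k,n}w^{(n)})=\int_{\mathbb{R}^N}b(\gamma^{-j_k^{(n)}}z+y_k^{(n)})\,G(w^{(n)}(z))\dz$. If $n\in\mathbb{N}_0\setminus\{1\}$, then $j_k^{(n)}=0$ and, by \eqref{tinta2} with $m=1$, $|y_k^{(n)}|\to\infty$; since $y_k^{(n)}\in\mathbb{Z}^N$, the $\mathbb{Z}^N$-periodicity of $b_P$ gives $b_P(z+y_k^{(n)})=b_P(z)$, while \eqref{b_limite} forces $b(z+y_k^{(n)})-b_P(z)\to0$ uniformly on $\supp w^{(n)}$, so the limit is $\int_{\mathbb{R}^N}b_P(z)G(w^{(n)}(z))\dz$. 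If $n\in\mathbb{N}_+$, then $j_k^{(n)}\to+\infty$, so $\gamma^{-j_k^{(n)}}z\to0$ uniformly on $\supp w^{(n)}$; after passing to a subsequence, either $y_k^{(n)}$ is eventually a constant $z^{(n)}\in\mathbb{Z}^N$ and continuity of $b$ gives the limit $b(z^{(n)})\int_{\mathbb{R}^N}G(w^{(n)})\dz$, or $|y_k^{(n)}|\to\infty$ and then $b(\gamma^{-j_k^{(n)}}z+y_k^{(n)})-b_P(\gamma^{-j_k^{(n)}}z+y_k^{(n)})\to0$ by \eqref{b_limite}, while $b_P(\gamma^{-j_k^{(n)}}z+y_k^{(n)})=b_P(\gamma^{-j_k^{(n)}}z)\to b_P(0)$ uniformly on $\supp w^{(n)}$, giving the limit $b_P(0)\int_{\mathbb{R}^N}G(w^{(n)})\dz$. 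Summing the finitely many diagonal contributions and then removing the truncation of $\mathbb{N}_\ast$ (using the absolute convergence noted above) produces the asserted identity. Apart from the cross-term estimate, the argument is bookkeeping of the group action, the periodicity of $b_P$, and the self-similarity of $G$.
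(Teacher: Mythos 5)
Your proposal is correct and follows essentially the same route as the paper's proof: reduction via \eqref{tinta4} and Lemma \ref{l_Gconverge} to the sum of dilated/translated profiles, truncation to finitely many smooth compactly supported profiles, separation of diagonal and cross terms through Lemma \ref{l_geprop}, vanishing of the cross terms via the asymptotic orthogonality \eqref{tinta2}, and identification of the diagonal limits using the self-similarity of $G$, \eqref{b_limite}, the continuity of $b$ and the $\mathbb{Z}^N$-periodicity of $b_P$, exactly as in the paper's case analysis for $n\in\mathbb{N}_0\setminus\{1\}$ and $n\in\mathbb{N}_+$. The only cosmetic difference is in the cross-term step, where the paper pairs the rescaled profile, which converges weakly to zero in $D^{1,2}(\mathbb{R}^N)$ by \eqref{tinta2} and \cite[Lemma 5.1]{tintabook}, against the continuous linear functional $v \mapsto \int_{\mathbb{R}^N}|w^{(n)}|^{2^\ast-1}v\dx$, while you argue directly through disjoint supports and concentration/flattening in $L^{2^\ast}(\mathbb{R}^N)$; both implement the same mechanism.
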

\begin{proof}
    The idea is similar to the one used in the proof of Proposition \ref{p_bef}. Considering $\Phi_{\ast}(u)=\int_{\mathbb{R}^N}b(x) G (u) \dx, $ $u \in D^{1,2}(\mathbb{R}^N),$ by Lemma \ref{l_Gconverge} and \eqref{tinta4}, we have 
	\begin{equation}\label{gen_abacaxi}
	\lim _{k \rightarrow \infty}\left[\Phi _\ast (u_k) - \Phi _\ast\left( \sum _{n \in \mathbb{N}_{\ast }} d_{k,n}  w^{(n)}\right) \right] =0,
	\end{equation}
 where $d_{k, n} w^{(n)}$ is given by \eqref{notacao}. The uniform convergence of the series in \eqref{tinta4} allows us to reduce to the case where $\mathbb{N}_\ast$ is finite. If $(| \gamma ^{j_k^{(n)}} y_k^{(n)} |)_k,$ $n \in \mathbb{N}_+,$ is bounded, we have $z^{(n)} = 0$ (see Remark \ref{r_salve}). If not, $b^{(n)} = b(z^{(n)}),$ for $z^{(n)} = \lim_{k \rightarrow \infty } y_k^{(n)}$ and $b^{(n)} = b_P(0),$ for $|y_k^{(n)}| \to \infty,$ up to a subsequence. Using this, defining $\Phi _{\ast,P}(u) = \int _{\mathbb{R}^N }  b_P(x) G(u)\dx$ and $\Phi_{\ast , n } (u)   = \int _{\mathbb{R}^N} b^{(n)} G(u) \dx,$ $u \in D^{1,2}(\mathbb{R}^N),$ we are going to prove that
	\begin{equation}\label{gen_banana}
	\lim _{k \rightarrow \infty}\left[ \sum _{n \in \mathbb{N}_{\ast }}  \Phi_\ast( d_{k, n} w^{(n)}) - \Phi _\ast (w ^{(1)}) -  \sum _{n \in \mathbb{N}_{ 0 }   \setminus \{ 1  \}}  \Phi _{\ast,P} (w ^{(n)}) - \sum _{n \in \mathbb{N}_{ + }}  \Phi _{\ast,n } (w ^{(n)})  \right]=0.
	\end{equation}
	We make use of the expression
	\begin{equation*}
		\Phi_\ast( d_{k, n} w^{(n)}) = \int b(\gamma ^{-j_k^{(n)}}x  + y_k^{(n)}    ) G(w^{(n)}) \dx.
	\end{equation*}
	Let $n \in \mathbb{N}_+. $ Our analysis is split into two cases: $\lim_{k \rightarrow \infty } y_k^{(n)} = z^{(n)}$ or $|y_k^{(n)}| \rightarrow \infty.$ For the first case, by \ref{g_selfsimilar} and the Lebesgue theorem, we have $\Phi_\ast( d_{k, n} w^{(n)}) \rightarrow \Phi _{\ast,n } (w ^{(n)}).$ For the second case,
	\begin{align*}
			\Phi_\ast( d_{k, n} w^{(n)}) &= \int \left( b(\gamma ^{-j_k^{(n)}}x  + y_k^{(n)}    ) - b_P  (\gamma ^{-j_k^{(n)}}x  + y_k^{(n)}    ) \right) G(w^{(n)}) \dx  + \int _{\mathbb{R}^N} b_P  (\gamma ^{-j_k^{(n)}}x    ) G(w^{(n)}) \dx \\
			&\quad \rightarrow  \int _{\mathbb{R}^N} b_P  ( 0     ) G(w^{(n)}) \dx = \Phi _{\ast,n } (w ^{(n)}),\quad \text{as }k \rightarrow \infty.
	\end{align*}
	The case where $n \in \mathbb{N}_0$ and $\Phi_\ast( d_{k, n} w^{(n)}) \rightarrow \Phi _{\ast,P} (w^{(n)}),$ $k \rightarrow \infty,$ follows as above considering $j_k ^{(n)} = 0 .$ It remains to prove that
	\begin{equation}\label{giant1}
	\lim _{k\rightarrow\infty}\left[ \Phi _\ast \left( \sum _{n \in \mathbb{N}_{\ast }} d_{k, n} w^{(n)}\right)-  \sum _{n \in \mathbb{N}_{\ast }} \Phi _\ast ( d_{k, n} w^{(n)} ) \right]=0.
	\end{equation}
	Indeed, by Lemma \ref{l_geprop} we have
	\begin{equation*}
	\left| \Phi _\ast \left( \sum _{n \in \mathbb{N}_{\ast }} d_{k, n} w ^{(n)}\right)-  \sum _{n \in \mathbb{N}_{\ast }} \Phi _\ast (d_{k, n} w ^{(n)} ) \right| \leq C\sum _{_{\substack{m \neq n\\ m,n  \in \mathbb{N} _{\ast }}} } \int _{\mathbb{R}^N} |d_{k, n} |^{2  ^\ast -1}  | d_{k, m} | \dx.
	\end{equation*}
	On the other hand, by a change of variable it holds
	\begin{equation*}
	\int _{\mathbb{R}^N} | d_{k, n}  |^{2  ^\ast -1}  | d_{k, m} | \dx = \int _{\mathbb{R}^N} |w^{(n)}| ^{2  ^\ast -1} g_k(|w^{(m)}|) \dx,
	\end{equation*}
	where 
	\begin{equation*}
	g_k(|w^{(m)}|) = \gamma ^{\frac{N-2}{2} (j _k ^{(m)} - j _k ^{(n)} )}  w^{(m)} \left( \gamma ^{j _k ^{(m)} - j _k ^{(n)} } (\cdot-\gamma ^{j_k ^{(n)}} (y_k ^{(m)} - y _k ^{(n)})  )\right) \rightharpoonup 0\text{ in } D^{1,2} (\mathbb{R}^N),
	\end{equation*}
	due to \eqref{tinta2} and \cite[Lemma 5.1]{tintabook}. Since $v \mapsto \int _{\mathbb{R}^N} |w^{(n)}| ^{2 ^\ast -1} v \dx $ is a continuous linear functional in $D^{1,2} (\mathbb{R}^N)$ we conclude \eqref{giant1}.
\end{proof}

\begin{corollary}\label{c_convbe}
	$ \displaystyle
	\lim _{k \rightarrow \infty}\int _{\mathbb{R}^N} b_P(x)G( u_k) \dx = \sum _{n \in \mathbb{N}_0 } \int _{\mathbb{R} ^N} b_P(x) G (w^{(n)}) \dx+ \sum _{n \in \mathbb{N} _{ +}} \int _{\mathbb{R}^N} b_P(0 )G(w^{(n)}) \dx.
	$
\end{corollary}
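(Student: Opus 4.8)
The plan is to invoke Proposition \ref{p_convast} verbatim, but with the periodic coefficient $b_P$ playing the role of $b$. This is legitimate: $b_P \in C(\mathbb{R}^N) \cap L^\infty(\mathbb{R}^N)$ is positive with $\inf_{x} b_P(x) \geq b_0 > 0$, and its $\mathbb{Z}^N$--periodic limit in the sense of \eqref{b_limite} is $b_P$ itself (since $|b_P(x) - b_P(x)| \equiv 0$). Hence $b_P$ meets all the standing hypotheses imposed on the semi-autonomous critical coefficient, and applying Proposition \ref{p_convast} to the same bounded sequence $(u_k)$ and the same profiles $(w^{(n)})_{n \in \mathbb{N}_\ast}$, $(y_k^{(n)})_k$, $(j_k^{(n)})_k$ furnished by Theorem \ref{teo_tinta} gives
\begin{equation*}
\lim_{k \to \infty} \int_{\mathbb{R}^N} b_P(x) G(u_k) \dx = \int_{\mathbb{R}^N} b_P(x) G(w^{(1)}) \dx + \sum_{n \in \mathbb{N}_0 \setminus \{1\}} \int_{\mathbb{R}^N} b_P(x) G(w^{(n)}) \dx + \sum_{n \in \mathbb{N}_+} \int_{\mathbb{R}^N} b_P^{(n)} G(w^{(n)}) \dx,
\end{equation*}
where $b_P^{(n)} = b_P(z^{(n)})$ with $z^{(n)} = \lim_{k\to\infty} y_k^{(n)}$ when $(y_k^{(n)})_k$ is bounded, and $b_P^{(n)} = b_P(0)$ when $|y_k^{(n)}| \to \infty$ (up to a subsequence).

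Since $1 \in \mathbb{N}_0$, the weak-limit term $\int_{\mathbb{R}^N} b_P(x) G(w^{(1)}) \dx$ and the sum over $\mathbb{N}_0 \setminus \{1\}$ combine into $\sum_{n \in \mathbb{N}_0} \int_{\mathbb{R}^N} b_P(x) G(w^{(n)}) \dx$, which is the first sum in the statement. It remains to identify $b_P^{(n)}$ for $n \in \mathbb{N}_+$. If $|y_k^{(n)}| \to \infty$, then $b_P^{(n)} = b_P(0)$ by definition. If instead $(y_k^{(n)})_k$ is bounded, then, being a sequence in $\mathbb{Z}^N$, it takes finitely many values and is eventually constant along a subsequence, so $z^{(n)} = \lim_{k\to\infty} y_k^{(n)} \in \mathbb{Z}^N$; the $\mathbb{Z}^N$--periodicity of $b_P$ then yields $b_P^{(n)} = b_P(z^{(n)}) = b_P(0)$. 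In either case $b_P^{(n)} = b_P(0)$, so the last sum equals $\sum_{n \in \mathbb{N}_+} \int_{\mathbb{R}^N} b_P(0) G(w^{(n)}) \dx$, and the claimed identity follows.

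Every ingredient is either Proposition \ref{p_convast} applied unchanged or the elementary fact that a bounded integer-valued sequence is eventually constant along a subsequence, so there is no genuine obstacle; the only point requiring a moment's care is checking that $b_P$ itself satisfies the running hypotheses on $b$ so that the proposition may indeed be invoked with $b$ replaced by $b_P$.
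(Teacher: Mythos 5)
Your proof is correct and is exactly the paper's intended (unwritten) derivation: the corollary is an immediate consequence of Proposition \ref{p_convast} applied with $b$ replaced by $b_P$, which is its own $\mathbb{Z}^N$--periodic limit in the sense of \eqref{b_limite}, and the periodicity of $b_P$ together with $y_k^{(n)} \in \mathbb{Z}^N$ collapses $b_P(z^{(n)})$ to $b_P(0)$ for the profiles indexed by $\mathbb{N}_+$. No gaps.
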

\section{$\mathbb{Z}^N$--Periodic case: Proof of Theorem \ref{teo_periodic}}\label{s_teo_periodic}
Our proof of the existence of ground states for \eqref{QP} relies on the well-known fact that nontrivial critical points are bounded away from zero in norm.
\begin{lemma}\label{l_salvou}
	There exists $C>0$ such that $\|u\|_{V_P} \geq C,$ for all $u \in \mathrm{Crit}(I_P).$
\end{lemma}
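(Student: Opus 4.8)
The plan is the standard argument that nontrivial critical points are uniformly bounded away from the origin: one tests the Euler--Lagrange equation against the critical point itself and absorbs the nonlinear terms using the subcritical/critical growth bounds and the Sobolev embedding. Concretely, for $u \in \mathrm{Crit}(I_P)$ one has $I_P'(u)\cdot u = 0$, that is,
$$\|u\|_{V_P}^2 = \int_{\mathbb{R}^N} f_P(x,u)\,u \dx + \int_{\mathbb{R}^N} b_P(x)\,g(u)\,u \dx .$$

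First I would bound the right-hand side from above. By hypothesis \ref{f_geral} applied to $f_P$ (which, as stated, the periodic nonlinearity also satisfies), for any $\varepsilon>0$ there are $C_\varepsilon>0$ and $p_\varepsilon\in(2,2^\ast)$ with
$$\left|\int_{\mathbb{R}^N} f_P(x,u)\,u \dx\right| \le \varepsilon\big(\|u\|_2^2 + \|u\|_{2^\ast}^{2^\ast}\big) + C_\varepsilon\|u\|_{p_\varepsilon}^{p_\varepsilon},$$
while \ref{g_geral} and $b_P\in L^\infty(\mathbb{R}^N)$ give $\left|\int_{\mathbb{R}^N} b_P(x)\,g(u)\,u \dx\right| \le \|b_P\|_\infty\, a_\ast\,\|u\|_{2^\ast}^{2^\ast}$. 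Using the continuous embeddings $H^1_{V_P}(\mathbb{R}^N)=H^1(\mathbb{R}^N)\hookrightarrow L^q(\mathbb{R}^N)$ for $q\in[2,2^\ast]$ (Proposition \ref{p_sobolev} and the norm equivalence), there are constants $C_1>0$ and $C_2,C_3>0$ (the latter two depending on $\varepsilon$) such that
$$\|u\|_{V_P}^2 \le \varepsilon C_1 \|u\|_{V_P}^2 + C_2\|u\|_{V_P}^{p_\varepsilon} + C_3\|u\|_{V_P}^{2^\ast}.$$

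Next I would fix $\varepsilon$ so small that $\varepsilon C_1\le 1/2$, obtaining $\tfrac12\|u\|_{V_P}^2 \le C_2\|u\|_{V_P}^{p_\varepsilon} + C_3\|u\|_{V_P}^{2^\ast}$. Since $u\ne 0$ we may divide by $\|u\|_{V_P}^2>0$ to get $\tfrac12 \le C_2\|u\|_{V_P}^{p_\varepsilon-2} + C_3\|u\|_{V_P}^{2^\ast-2}$, and because $p_\varepsilon-2>0$ and $2^\ast-2>0$ the right-hand side tends to $0$ as $\|u\|_{V_P}\to 0^+$; hence there is $C>0$, independent of $u$, with $\|u\|_{V_P}\ge C$ for every $u\in\mathrm{Crit}(I_P)$. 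I do not anticipate a genuine obstacle here: the only points needing mild care are invoking the growth hypotheses \ref{f_geral} and \ref{g_geral} for the periodic data $f_P$, $g$ (covered by the standing assumptions) and using that $\|\cdot\|_{V_P}$ is equivalent to the usual $H^1$-norm, so that the Sobolev inequalities apply with constants uniform in $u$.
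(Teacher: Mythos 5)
Your argument is correct and is essentially the paper's own proof: the paper also tests $I_P'(u)\cdot u=0$, applies \ref{f_geral} and \ref{g_geral} to obtain $\| u \|^2_{V_P} \leq \varepsilon  \| u \|_2^2 +C_\varepsilon \| u \|_{p_\varepsilon } ^{p_\varepsilon} + (\varepsilon +  a_\ast \|b_P\|_\infty ) \|u\|^{2^\ast }_{2^\ast }$, and concludes via the Sobolev inequalities exactly as you do. Your write-up only spells out the absorption of the $\varepsilon$-term and the division by $\|u\|_{V_P}^2$, which the paper leaves implicit.
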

\begin{proof}
By \ref{f_geral} and \ref{g_geral}, for $u \in \mathrm{Crit}(I_P),$ we have $\| u \|^2_{V_P} \leq \varepsilon  \| u \|_2^2 +C_\varepsilon \| u \|_{p_\varepsilon } ^{p_\varepsilon} + (\varepsilon +  a_\ast \|b_P\|_\infty ) \|u\|^{2^\ast }_{2^\ast }.$ The conclusion follows by the appropriate use of Sobolev inequalities.
\end{proof}
Next we consider $\mathcal{G}_S(I_P),$ the ground state level associated with $I_P,$ which is defined as in \eqref{defgs}, replacing $I$ by $I_P.$
\begin{proof}[Proof of Theorem \ref{teo_periodic} completed] By the results of Section \ref{s_bseq}, there is a bounded sequence $(u_k) \subset H^1 _{V_P}(\mathbb{R}^N)$ such that $I_P(u_k) \rightarrow c(I_P)$ and $\| I'_P(u_k) \|_\ast \rightarrow 0.$ Let $(w^{(n)})_{n \in \mathbb{N}_\ast},$ $(y_k^{(n)})_k$ and $(j_k^{(n)}),$ $n \in \mathbb{N}_\ast,$ be the profiles given by Theorem \ref{teo_tinta} and Remark \ref{rem_tinta}. The proof is divided into several steps.

\noindent \textit{Step 1)}: Each $w^{(n)},$ $n \in \mathbb{N}_0,$ is a critical point of $I_P.$ In order to prove this we use a density argument, taking $\varphi \in C^{\infty}_0(\mathbb{R}^N).$ By \eqref{tinta1} and \eqref{tinta2} together with \ref{f_geral} and \ref{g_geral}, Lebesgue convergence theorem yields
\begin{multline*}
    \lim _{k \rightarrow \infty }(u_k, \varphi (\cdot - y_k^{(n)}))_{V_P} = (w^{(n)} , \varphi)_{V_P} \text{ and }\\ \lim _{k \rightarrow \infty} \int _{\mathbb{R}^N} \left(f_P(x , u_k    ) + b_P(x)g(u_k  ) \right)\varphi (\cdot - y_k^{(n)})  \dx = \int _{\mathbb{R}^N}\left( f_P(x,w^{(n)}) + b_P(x)g(w^{(n)} ) \right) \varphi\dx.
\end{multline*}
 up to a subsequence. In particular, $0 = \lim _{k \rightarrow \infty }\left( I'_P (u_k) \cdot (\varphi (\cdot - y_k ^{(n)})) \right) = I'_P(w^{(n)}) \cdot \varphi.$ Because $C^\infty _0 (\mathbb{R}^N)$ is dense in $H^1 _{V_P}(\mathbb{R}^N),$ we have $I_P'(w^{(n)}) =0,$ for any $n \in \mathbb{N}_0.$

\noindent  \textit{Step 2)}: There is $n_0 \in \mathbb{N}_0$ with $w^{(n_0)} \neq 0.$ Suppose, contrary to our claim, that $w^{(n)} = 0 $ for all $n \in \mathbb{N}_0.$ By Remark \ref{rem_tinta}, one has $u_k \rightarrow 0$ in $L^p (\mathbb{R}^N),$ for any $p \in (2,2^\ast).$ Consequently, we can apply Lemma \ref{l_converge} to get
\begin{equation}\label{eq_minimax}
c(I_P) = \frac{1}{2} \| u_k \|^2_{V_P} - \int _{\mathbb{R}^N} b_P(x) G(u_k)\dx + o_k(1)\quad  \text{and}\quad 0 = \| u_k \|^2_{V_P} - \int _{\mathbb{R}^N} b_P(x)g(u_k) u_k \dx + o_k(1).
\end{equation}
Let us denote
\begin{equation*}
    l_\ast  := \limsup _{k \rightarrow \infty } \| u_k \|^2_{V_P} = \limsup _{k \rightarrow \infty } \int _{\mathbb{R}^N} b_P(x) g(u_k) u_k \dx\leq \limsup _{k \rightarrow \infty } \int _{\mathbb{R}^N} \bar{G} _{b_P} (u_k) \dx.
\end{equation*}
On the other hand, since 
\begin{equation*}
	\left(  \int _{\mathbb{R}^N}  \bar{G}_{b_P} (u)  \dx \right) ^{ 2/2^\ast}\mathbb{S}_{\bar{G} _{b_P}}   \leq \| \nabla u\|_2 ^2 ,\  u \in D^{1,2} (\mathbb{R}^N),
\end{equation*}
one can use the fact that $V_P \geq 0,$ to obtain the inequality 
\begin{equation*}
l_\ast \leq \limsup _{k \rightarrow \infty } \int _{\mathbb{R}^N} \bar{G} _{b_P} (u_k) \dx \leq \left( \mathbb{S}^{-1}_{\bar{G}_{b_P}}l _\ast \right) ^{2^\ast /2}.
\end{equation*}
Because $c(I_P)>0,$ $l_\ast >0$ and we have $l_\ast \geq \mathbb{S}_{\bar{G}_{b_P}}^{N/2}.$ Substituting this inequality into the first equation of \eqref{eq_minimax} and using \ref{g_AR}, one has
\begin{equation*}
    c(I_P)\geq \left( (\mu _\ast -2) /(2 \mu _\ast ) \right)\mathbb{S}_{\bar{G}_{b_P}}^{N/2},
\end{equation*}
which is a contradiction with Lemma \ref{l_minimax}.

\noindent  \textit{Step 3)}: $\mathcal{G}_S(I_P)\leq I_P(w^{(n_0)}) \leq c(I_P).$ Indeed, by Propositions \ref{p_bef} and \ref{p_convv} together with Corollary \ref{c_convbe}, we have
\begin{align}
    c(I_P) &= \lim _{k \rightarrow \infty }\left[ \frac{1}{2} \| u_k \|^2_{V_P} - \int _{\mathbb{R}^N} F_P(x,u_k) \dx - \int _{\mathbb{R}^N}  b_P(x)G(u_k)\dx \right]\nonumber\\
    &\geq \sum _{n \in \mathbb{N}_0} I_P(w^{(n)}) + \sum _{n \in \mathbb{N}_+} J(w^{(n)}),\label{c_ddoisP}
\end{align}
where $J$ is the $C^1$--functional given by
\begin{equation*}
    J(u) =\frac{1}{2}\int _{\mathbb{R}^N} |\nabla u |^2 \dx - \int _{\mathbb{R}^N} b_P(0)G(u) \dx,\quad u \in D^{1,2}(\mathbb{R}^N).
\end{equation*}
By an argument similar to that of Step 2), $ 0= \lim _{k \rightarrow \infty }\left( I'_P (u_k) \cdot (d_{k,n} \varphi ) \right) = J (w^{(n)}) \cdot \varphi =0,$ for any $\varphi \in C^\infty _0 (\mathbb{R}^N)$ and $n \in \mathbb{N}_+.$ Because of this, $J' (w^{(n)}) = 0,$ $n \in \mathbb{N}_+.$ Moreover, from \ref{g_AR}, 
 \begin{equation}\label{jotamais}
      J(w^{(n)}) = \frac{1}{2} \int _{\mathbb{R}^N} g(w^{(n)})w^{(n)} - 2 G(w^{(n)}) \dx \geq 0, \quad \forall \, n \in \mathbb{N}_+.
 \end{equation}
Using this fact in \eqref{c_ddoisP}, we obtain $c(I_P) \geq I_P(w^{(n_0)}).$

\noindent\textit{Step 4)}: $\mathcal{G}_S(I_P) >0$ and it is attained. Let $(v_k) \subset \mathrm{Crit}(I_P)$ be a minimizing sequence for $\mathcal{G}_S(I_P),$ i.e., $I_P(v_k) \rightarrow \mathcal{G}_S(I_P)$ and $I'_P(v_k) = 0.$ Hypotheses \ref{f_tang} and \ref{g_AR} guarantee that the functions 
\begin{equation*}
	\mathcal{F}_P(x,s) := \frac{1}{2}f_P(x,s)s - F_P(x,s)\quad\text{and}\quad\mathcal{K}_P(x,s) := \frac{1}{2}b_P(x)g(s)s -b_P(x) G(s),
\end{equation*}
are nonnegative. Furthermore, $I_P(u) = \int _{\mathbb{R}^N }\mathcal{F}_P(x,u) + \mathcal{K}_P(x,u) \dx,$ for any $u \in \mathrm{Crit}(I_P).$ Thus $\mathcal{G}_S(I_P)\geq 0$ and we can use the same argument from Proposition \ref{p_psbounded}, to see that $(v_k)$ is bounded in $H^1_{V_P} (\mathbb{R}^N).$ To simplify notation, we use the same letters $(w^{(n)})_{n \in \mathbb{N}_\ast},$ $(y_k ^{(n)})_k$ and $(j_k ^{(n)})_k,$ $n \in \mathbb{N}_\ast$ to indicate the profiles given in Theorem \ref{teo_tinta} for the sequence $(v_k).$ If $w^{(n)} = 0 $ for any $n \in \mathbb{N}_0,$ we can use Lemma \ref{l_salvou} and proceed as in Step 2) to obtain the estimate
\begin{equation*}
    c(I_P)\geq \mathcal{G}_S(I_P)\geq \left( (\mu _\ast -2) /(2 \mu _\ast ) \right)\mathbb{S}_{\bar{G}_{b_P}}^{N/2}.
\end{equation*}
Likewise, this is a contradiction with Lemma \ref{l_minimax}. Thus there is $w^{(m_0)} \neq 0,$ for some $m_0 \in \mathbb{N}_0.$ On the other hand, since $0=I'_P(v_k) \cdot \varphi (\cdot - y_k^{(m_0)}) = I'_P(v_k(\cdot + y_k ^{(m_0)})) \cdot \varphi,$ for any $\varphi \in H^1_{V_P} (\mathbb{R}^N),$ we can argue as in Step 1) to conclude that $I'_P(w^{(m_0)}) = 0.$ Moreover, the convergence in \eqref{ehesse} leads to $v_k(x + y_k ^{(m_0)}) \rightarrow w^{(m_0)}(x)$ a.e. in $\mathbb{R}^N,$ up to a subsequence. Because $(v_k(\cdot + y_k ^{(m_0)}) ) \subset \mathrm{Crit}(I_P),$ we can apply Fatou's lemma as follows
\begin{align*}
    \mathcal{G}_S(I_P) &= \lim _{k \rightarrow \infty} I_P (v_k(\cdot + y_k ^{(m_0)}) )\\
    &= \liminf_{k \rightarrow \infty}  \int _{\mathbb{R}^N}  \mathcal{F}_P(x, v_k(\cdot + y_k ^{(m_0)}) ) + \mathcal{K}_P(x,v_k(\cdot + y_k ^{(m_0)})) \dx \\
    & \geq \int _{\mathbb{R}^N} \mathcal{F}_P(x,w^{(m_0)}) + \mathcal{K} _P(x,w^{(m_0)}) \dx = I_P(w^{(m_0)} ) \geq \mathcal{G}_S(I_P).
\end{align*}

\noindent \textit{Step 5)}: Assuming \ref{h_infinito}, one has $c(I_P) = \mathcal{G}_S(I_P).$ Indeed, we can use Remark \ref{r_compar} to obtain $c(I_P)\leq I_P (w^{(m_0)} ) = \mathcal{G}_S(I_P).$ The conclusion follows from Step 3).
\end{proof}

\section{Asymptotic $\mathbb{Z}^N$--periodic case: Proof of Theorem \ref{teo_compact}}\label{s_comp}
\begin{proof}
The idea of the proof is similar to the one used to prove Theorem \ref{teo_gs} and once more we divide it into some steps. Let $(u_k) \subset H^1 _{V}(\mathbb{R}^N)$ be a bounded sequence such that $I(u_k) \rightarrow c(I)$ and $\| I'(u_k) \|_\ast \rightarrow 0,$ together with its profiles $(w^{(n)})_{n \in \mathbb{N}_\ast},$ $(y_k^{(n)})_k$ and $(j_k^{(n)})_k,$ $n \in \mathbb{N}_\ast,$ given by Theorem \ref{teo_tinta} and Remark \ref{rem_tinta}. We compare the minimax level $c(I)$ with the level of the associated critical limit problem, which is generated by the dilation profiles ($n \in \mathbb{N}_{+}$). More precisely, consider the functional $J_n: D^{1,2}(\mathbb{R}^N) \rightarrow \mathbb{R}$ for this limit problem, given by
\begin{equation*}
	J_n(u) = \frac{1}{2} \int _{\mathbb{R}^N} |\nabla u |^2 \dx-\int _{\mathbb{R}^N} b^{(n)}G (u) \dx,
\end{equation*}
where $b^{(n)} = b(z^{(n)})$ if $(y_k^{(n)})_k$ is bounded, with $z^{(n)} = \lim_{k \rightarrow \infty } y_k^{(n)}$, and $b^{(n)} = b_P(0)$ if $|y_k^{(n)}| \to \infty,$ as $k \rightarrow \infty,$ up to a subsequence. Here the corresponding minimax level is given by
\begin{equation*}
	c(J_n ) : = \inf _{\xi \in \Gamma _{J_{ n } }} \sup_{t \geq 0} J_{ n } ( \xi (t)),\quad  n \in \mathbb{N}_+,
\end{equation*}
where $ \Gamma _{J_n } = \left\{  \xi \in C([0,\infty), D^{1,2}(\mathbb{R}^N) ) : \xi (0)=0\text{ and }\lim _{t \rightarrow \infty} J _n (\xi (t)) = - \infty \right\}.$ The following lemma, established in Proposition \ref{p_jota} (Appendix \ref{s_app_minimax}), provides the crucial energy comparison.
\begin{lemma}\label{l_jota}
	$c(I) \leq c(J_n).$
\end{lemma}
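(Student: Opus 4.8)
The plan is to transplant a near-optimal mountain-pass path for $J_n$ into a path for $I$ by composing it with a strong dilation (an integer power of $\gamma$) and a suitable translation, and then letting the dilation parameter tend to infinity. Write $d_{j,y}\varphi:=\gamma^{\frac{N-2}{2}j}\varphi(\gamma^{j}(\cdot-y))$ as in \eqref{notacao}. Under this action the Dirichlet energy is invariant, the self-similarity \ref{g_selfsimilar} gives $\int_{\mathbb{R}^N}G(d_{j,y}\varphi)\dx=\int_{\mathbb{R}^N}G(\varphi)\dx$ and $\int_{\mathbb{R}^N}b(x)G(d_{j,y}\varphi)\dx=\int_{\mathbb{R}^N}b(\gamma^{-j}z+y)G(\varphi(z))\dz\to b(y)\int_{\mathbb{R}^N}G(\varphi)\dz$ as $j\to\infty$, while the lower-order pieces vanish: $\int_{\mathbb{R}^N}V(x)(d_{j,y}\varphi)^2\dx\le\gamma^{-2j}\|V\|_\infty\|\varphi\|_2^2\to0$, and by \ref{f_geral} together with $\|d_{j,y}\varphi\|_q^q=\gamma^{(\frac{N-2}{2}q-N)j}\|\varphi\|_q^q\to0$ for $q\in\{2,p_\varepsilon\}$ and $\|d_{j,y}\varphi\|_{2^\ast}=\|\varphi\|_{2^\ast}$, also $\int_{\mathbb{R}^N}F(x,d_{j,y}\varphi)\dx\to0$. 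Hence $I(d_{j,y}\varphi)\to\frac12\|\nabla\varphi\|_2^2-b(y)\int_{\mathbb{R}^N}G(\varphi)\dx$. We choose $y$ so that $b(y)=b^{(n)}$: take $y=z^{(n)}$ when $(y_k^{(n)})_k$ is bounded, and, when $|y_k^{(n)}|\to\infty$ (so $b^{(n)}=b_P(0)$), take $y\in\mathbb{Z}^N$ with $|y|$ large, so that $b(y)$ is as close as we wish to $b_P(0)$ by the $\mathbb{Z}^N$-periodicity of $b_P$ and \eqref{b_limite}; the resulting limit is then $J_n(\varphi)$, exactly in the first case and up to an arbitrarily small error (controlled by $|b(y)-b^{(n)}|\,\|\varphi\|_{2^\ast}^{2^\ast}$) in the second.

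Given $\delta>0$, fix $\xi\in\Gamma_{J_n}$ with $\sup_{t\ge0}J_n(\xi(t))<c(J_n)+\delta$. By a standard cut-off and mollification argument, uniform in $t$ because $\xi([0,T])$ is compact (hence tight in $D^{1,2}(\mathbb{R}^N)\hookrightarrow L^{2^\ast}(\mathbb{R}^N)$), we may replace $\xi$ by a path defined on a finite interval $[0,T]$, with values in $C_0^\infty(\mathbb{R}^N)$, with supports in a fixed ball, with $\xi(0)=0$, $J_n(\xi(T))<0$, and $\sup_{t\in[0,T]}J_n(\xi(t))<c(J_n)+2\delta$; then all the limits of the previous paragraph hold uniformly for $t\in[0,T]$. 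Define $\Xi_j\in C([0,\infty),H^1(\mathbb{R}^N))$ by $\Xi_j(t)=d_{j,y}\xi(t)$ for $t\in[0,T]$ and $\Xi_j(t)=(t-T+1)\,d_{j,y}\xi(T)$ for $t\ge T$. The extending ray sends $I$ to $-\infty$ (by \ref{f_geral} for the subcritical part and \ref{g_porbaixo} together with $b_0>0$ for the critical part, cf. Lemma \ref{l_mpgeometry}), so $\Xi_j\in\Gamma_I$, and $c(I)\le\sup_{t\ge0}I(\Xi_j(t))=\max\big(\sup_{t\in[0,T]}I(d_{j,y}\xi(t)),\ \sup_{s\ge1}I(s\,d_{j,y}\xi(T))\big)$.

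For the first term, uniform convergence gives $\sup_{t\in[0,T]}I(d_{j,y}\xi(t))\to\sup_{t\in[0,T]}\big(\tfrac12\|\nabla\xi(t)\|_2^2-b(y)\int_{\mathbb{R}^N}G(\xi(t))\dx\big)\le\sup_{t\in[0,T]}J_n(\xi(t))+o_\delta\le c(J_n)+2\delta+o_\delta$, where $o_\delta$ is the (controllable) translation error of the first paragraph. For the second term, set $\phi(s)=\tfrac{s^2}{2}\|\nabla\xi(T)\|_2^2-b(y)\int_{\mathbb{R}^N}G(s\,\xi(T))\dx$; differentiating, $2\phi(s)-s\phi'(s)=b(y)\int_{\mathbb{R}^N}\big(g(s\xi(T))\,s\xi(T)-2G(s\xi(T))\big)\dx\ge b(y)(\mu_\ast-2)\lambda_\ast\,\|s\,\xi(T)\|_{2^\ast}^{2^\ast}\ge0$ by \ref{g_AR} and \ref{g_porbaixo}, so $\phi'(s)<0$ at every $s>0$ with $\phi(s)<0$; since $\phi(1)<0$ (as $J_n(\xi(T))<0$, choosing $T$ and $y$ so that the translation error does not spoil this), an elementary comparison shows $\phi<0$ on $[1,\infty)$, whence (again by uniform convergence, plus a $j$-independent coercive upper bound for large $s$) $\sup_{s\ge1}I(s\,d_{j,y}\xi(T))\to\sup_{s\ge1}\phi(s)=\phi(1)<0$. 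Combining and letting $j\to\infty$ yields $c(I)\le c(J_n)+2\delta+o_\delta$; letting $\delta\to0$ and the translation error to $0$ gives $c(I)\le c(J_n)$.

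The main obstacle is the uniformity in the path parameter: each asymptotic $I(d_{j,y}\xi(t))\to\cdots$ must hold uniformly over the compact range of $t$, which is precisely what forces the preliminary reduction to a path with uniformly bounded support—needed, e.g., to dominate $\int_{\mathbb{R}^N}V(x)(d_{j,y}\xi(t))^2\dx$ by $\gamma^{-2j}\|V\|_\infty\sup_t\|\xi(t)\|_2^2$ with a finite supremum—and the uniform tightness of $\{|\xi(t)|^{2^\ast}\}_{t\in[0,T]}$ to justify $\int_{\mathbb{R}^N}b(x)G(d_{j,y}\xi(t))\dx-b(y)\int_{\mathbb{R}^N}G(\xi(t))\dx\to0$ uniformly. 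Everything else—invariance of the Dirichlet energy and of $\int G(\cdot)$ under $d_{j,y}$, decay of the $L^q$-norms for $q<2^\ast$, realizing $b^{(n)}=b_P(0)$ as a limit of values $b(y)$ along integer translations, and coercivity of the extending ray—is routine given \ref{g_selfsimilar}, \ref{f_geral}, \ref{g_porbaixo} and \ref{g_AR}.
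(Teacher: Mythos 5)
Your proof is correct, but it takes a genuinely different route from the paper's. The paper's argument (Proposition \ref{p_jota}) starts from the fact, quoted from \cite[Proposition 2.4]{MR2465979}, that the level $c(J_n)$ is attained by a solution $u_\nu$ of the limit equation; it truncates $u_\nu$ (elliptic regularity gives continuity, so the cut-off is harmless), transplants the dilation path $t\mapsto u_n(\cdot/t)$ by the operators $d_{j_k,y_k}$ with $j_k\to+\infty$, and identifies the limiting maximum with $J_\nu(u_\nu)=c(J_\nu)$ through the Pohozaev identity and Remark \ref{r_compar}. You instead transplant an arbitrary almost-optimal path of $\Gamma_{J_n}$, after a continuous-in-$t$ truncation/mollification making the supports uniformly compact (which is exactly what is needed for the $V$-, $F$- and $b$-localization limits to be uniform along the path), and you control the appended ray through the differential inequality $s\phi'(s)\le 2\phi(s)$ coming from \ref{g_AR} and \ref{g_porbaixo}. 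Your treatment of the coefficient is also sound: $b(y)=b^{(n)}$ exactly when $(y^{(n)}_k)$ is bounded, and $b(y)\to b_P(0)$ along far integer translations via \eqref{b_limite} and periodicity, with the resulting error absorbed before letting $j\to\infty$ and $\delta\to 0$. What each approach buys: yours avoids the attainment of $c(J_n)$, the regularity of the optimizer, and the Pohozaev identity altogether, so it is more self-contained and would survive in settings where the limit level is not known to be achieved; the paper's is shorter because those structural facts are available (and are used elsewhere in the paper anyway), and it sidesteps your path-approximation and ray-extension steps.
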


\noindent\textit{Step 1)}: $w^{(1)}$ is a critical point of $I,$ $w^{(n)}$ is a critical point of $I_P,$ for $n \in \mathbb{N}_0,$ and $w^{(n)}$ is a critical point of $J,$ for $n \in \mathbb{N}_+.$ To show this, one can use the Lebesgue convergence theorem together with the density of $C_0^\infty (\mathbb{R}^N)$ in $H^1 _V(\mathbb{R}^N)$ and $D^{1,2}(\mathbb{R}^N)$ to obtain, up to a subsequence: $I'(w^{(1)})\cdot \varphi = \lim _{k \rightarrow \infty} I'(u_k) \cdot \varphi =0;$ $I'_P(w^{(n)}) \cdot \varphi = \lim _{k \rightarrow \infty} I'(u_k)\cdot (\varphi (\cdot - y_k ^{(n)})) = 0,$ $n \in \mathbb{N}_0,$ and $J_n'(w^{(n)}) \cdot \varphi= \lim _{k \rightarrow \infty }I'(u_k) \cdot d_{k,n} \varphi =0,$ $n \in \mathbb{N}_+,$ where $d_{k,n}\varphi$ is given by \eqref{notacao} and $\varphi \in C^\infty _0 (\mathbb{R}^N).$ In particular, by \ref{f_tang} and \ref{g_AR}, we have \eqref{jotamais},
\begin{equation*}
    I(w^{(1)}) =  \int _{\mathbb{R}^N }\mathcal{F}(x,w^{(1)}) +\mathcal{K}(x,w^{(1)})\dx\geq 0 \quad\text{and}\quad  I_P(w^{(n)}) =  \int _{\mathbb{R}^N }\mathcal{F}_P(x,w^{(n)})+  \mathcal{K}_P(x,w^{(n)})\dx\geq 0, 
\end{equation*}
for $n \in \mathbb{N}_0,$ where $\mathcal{F}(x,s) := (1/2)f(x,s)s - F(x,s)$ and $\mathcal{K}(x,s) := (1/2)b(x)g(s)s - b(x)G(s).$

\noindent\textit{Step 2)}: $w^{(n)} = 0,$ for any $n \in \mathbb{N}_0 \setminus \{ 1 \}.$ Indeed, assume the existence of $w^{(n_0)} \neq 0,$ for some $n_0 \in \mathbb{N}_0\setminus \{ 1 \}.$ As proven in Step 3) of Section \ref{s_teo_periodic} (see \eqref{jotamais}),
\begin{align}
    c(I) &= \lim _{k \rightarrow \infty }\left[ \frac{1}{2} \| u_k \|_{V} - \int _{\mathbb{R}^N} F(x,u_k) \dx - \int _{\mathbb{R}^N}  b(x)G(u_k)\dx \right]\nonumber\\
    &\geq I(w^{(1)}) + \sum _{n \in \mathbb{N}_0 \setminus \{ 1 \}} I_P(w^{(n)}) + \sum _{n \in \mathbb{N}_+} J_n(w^{(n)}).\label{c_ddois}
\end{align}
Thus, we can use Remark \ref{r_compar} to conclude that $c(I) \geq I_P(w^{(n_0)}) \geq c(I_P).$ This is a contradiction with \ref{ce}.

\noindent\textit{Step 3)}: $w^{(1)} \neq 0.$ If $w^{(1)} =0,$ then $w^{(n)} = 0,$ for all $n \in \mathbb{N}_0.$ Hence, we can use the same argument as above in Step 2) of Section \ref{s_teo_periodic} (because $c(I)>0$), that Lemma \ref{l_converge} implies $u_k \rightarrow 0$ in $L^p (\mathbb{R}^N),$ for any $p \in (2,2^\ast)$ and
\begin{equation*}
    c(I)\geq \left( (\mu _\ast -2) /(2 \mu _\ast ) \right) \mathbb{S}^{N/2}_{\bar{G} _b},
\end{equation*}
which is a contradiction with Lemma \ref{l_minimax}.

\noindent\textit{Step 4)}: $w^{(n)} = 0,$ for any $n \in \mathbb{N}_+.$ Suppose, contrary to our claim, that there is $w^{(m_0)} \neq  0,$ for some $m_0 \in \mathbb{N}_+.$ Because $J_{m_0}'(w^{(m_0)})=0,$ a Pohozaev type identity holds, 
\begin{equation*}
\frac{N-2}{2N} \int _{\mathbb{R}^N} | \nabla w^{(m_0)}|^2 \dx = \int _{\mathbb{R}^N} b^{(m_0)}G(w^{(m_0)})\dx.
\end{equation*}
In particular, the path $\zeta (t) = w^{(m_0)} (\cdot / t),$ $\zeta (0) := 0,$ belongs to $\Gamma _{J_{m_0}}$ and $c(J_{m_0}) \leq \sup _{t\geq 0}J_{m_0}(\zeta (t)) = J_{m_0}(w^{(m_0)}).$ Therefore, by Lemma \ref{l_jota} and \eqref{c_ddois} we get
\begin{equation*}
    c(J_{m_0}) = c(I) = I(w^{(1)})  + \sum _{n \in \mathbb{N}_+} J_n(w^{(n)}).
\end{equation*}
On the other hand, by \ref{f_tang}, \ref{g_porbaixo} and \ref{g_AR},
\begin{equation}\label{w1dif}
    I(w^{(1)}) \geq   \int _{\mathbb{R}^N } \mathcal{K}(x,w^{(1)})\dx \geq \frac{1}{2}(\mu _\ast -2)\lambda _\ast b_0 \int _{\mathbb{R}^N}  |w^{(1)}|^{2^\ast}\dx>0,
\end{equation}
which leads to the contradiction: $c(J_{m_0}) > J_{m_0}(w^{(m_0)}).$

\noindent\textit{Step 5)}: $(u_k)$ has a convergent subsequence in $H^1_V(\mathbb{R}^N).$ Indeed, the convergence in \eqref{tinta4} ensures $u_k \rightarrow w^{(1)}$ in $L^p (\mathbb{R}^N),$ for $p \in (2,2^\ast],$ and by Lemmas \ref{l_converge} and \ref{l_Gconverge}, we have
\begin{align*}
	\| u_k \|_V^2 + o_k(1)  & = \int_{\mathbb{R}^N} f(x,u_k ) u_k \dx + \int _{\mathbb{R}^N}b(x) g(u_k ) u_k \dx   \\ 
	& =\int_{\mathbb{R}^N} f(x,w^{(1)} ) w^{(1)} \dx + \int _{\mathbb{R}^N}b(x) g(w^{(1)} ) w^{(1)} \dx + o_k(1) = \|w^{(1)}\|_V^2+ o_k(1)
\end{align*}
We conclude that $\lim _{k \rightarrow \infty}\|u_k \|^2_V = \|w^{(1)}\|^2_V,$ and this completes the proof.
\end{proof}
\section{Existence of ground states: Proof of Theorem \ref{teo_gs}}\label{s_gsp}
We first show that \ref{ce} implies a ``ground state gap" between the original and the limit problems.
\begin{lemma}\label{lgs}
	$\mathcal{G}_S(I) < \mathcal{G}_S(I_P),$ provided that \ref{ce} holds.
\end{lemma}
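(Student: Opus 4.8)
The plan is to reduce the claimed strict ground-state gap to the strict minimax gap \ref{ce}, exploiting that both ground-state levels are attained at the respective mountain pass levels. Throughout, \ref{h_minimax} is assumed, as in Theorem \ref{teo_gs}.

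First I would record that the periodic ground state sits exactly at $c(I_P)$: applying Theorem \ref{teo_periodic} to the periodic functional $I_P$ (that is, with $V=V_P$, $f=f_P$, $b=b_P$) and invoking \ref{h_minimax}, the periodic problem \eqref{QP} admits a ground state at the mountain pass level, i.e.\ $\mathcal{G}_S(I_P)=c(I_P)$. Next I would show that the asymptotically periodic problem has a critical point precisely at level $c(I)$. By Lemma \ref{l_mpgeometry} and Section \ref{s_bseq} there is a Cerami sequence $(u_k)\subset H^1(\mathbb{R}^N)$ with $I(u_k)\to c(I)$ and $\|I'(u_k)\|_\ast\to 0$, which is bounded by Proposition \ref{p_psbounded}. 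Since \ref{h_minimax} and \ref{ce} hold, Theorem \ref{teo_compact} provides a subsequence converging in $H^1(\mathbb{R}^N)$ to some $u$; because $I$ is of class $C^1$, letting $k\to\infty$ gives $I'(u)=0$ and $I(u)=c(I)$, and $c(I)>0$ forces $u\neq 0$, so $u\in\mathrm{Crit}(I)$ and hence $\mathcal{G}_S(I)\leq I(u)=c(I)$.

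Combining these two facts with \ref{ce}, which is precisely $c(I)<c(I_P)$, I would then conclude
\[
	\mathcal{G}_S(I)\;\leq\; c(I)\;<\;c(I_P)\;=\;\mathcal{G}_S(I_P).
\]
I do not expect any genuine obstacle here, since the substantive work is already contained in Theorems \ref{teo_periodic} and \ref{teo_compact}; the only point requiring a little care is that the critical point supplied by Theorem \ref{teo_compact} lies \emph{exactly} at the level $c(I)$, which is immediate from $I(u_k)\to c(I)$ and the continuity of $I$.
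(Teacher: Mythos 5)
Your proposal is correct and follows essentially the same route as the paper: the paper's proof likewise invokes Theorems \ref{teo_periodic} and \ref{teo_compact} to produce critical points of $I_P$ and $I$ at the levels $c(I_P)=\mathcal{G}_S(I_P)$ and $c(I)$, respectively, and then concludes via $\mathcal{G}_S(I)\leq c(I)<c(I_P)=\mathcal{G}_S(I_P)$. Your extra remarks on extracting the critical point at level exactly $c(I)$ from the Cerami sequence are a harmless elaboration of the same argument.
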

\begin{proof}
	Theorems \ref{teo_periodic} and \ref{teo_compact} imply the existence of $u_P \in \mathrm{Crit}(I_P)$ and $u_0 \in \mathrm{Crit}(I)$ such that $I_P(u_P) = c(I_P)=\mathcal{G}_S(I_P)$ and $I(u_0) = c(I).$ Hence
	\begin{equation*}
		\mathcal{G}_S(I) \leq c(I) < c(I_P) =\mathcal{G}_S(I_P).\qedhere
	\end{equation*}
\end{proof}
Next, we state the analogous version of Lemma \ref{l_salvou} for the functional $I$. The proof is identical.
\begin{lemma}\label{l_salvoufim}
	There is $C>0$ such that $\|u\|_{V} \geq C,$ for all $u \in \mathrm{Crit}(I).$
\end{lemma}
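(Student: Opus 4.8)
The plan is to follow the proof of Lemma \ref{l_salvou} verbatim, with $I_P, V_P, f_P, b_P$ replaced by $I, V, f, b$. First I would fix $u \in \mathrm{Crit}(I)$, so that $u \ne 0$ and $I'(u) = 0$; testing the latter against $u$ itself gives $\|u\|_V^2 = \int_{\mathbb{R}^N}\big(f(x,u) + b(x)g(u)\big)u\dx$. Applying the growth bound \ref{f_geral} to $f$ and \ref{g_geral} to $g$ (recall $g$ satisfies \ref{g_geral} as a consequence of \ref{g_selfsimilar}, and $0 < b \le \|b\|_\infty$), for an arbitrary $\varepsilon > 0$ I obtain
\begin{equation*}
\|u\|_V^2 \le \varepsilon\big(\|u\|_2^2 + \|u\|_{2^\ast}^{2^\ast}\big) + C_\varepsilon \|u\|_{p_\varepsilon}^{p_\varepsilon} + a_\ast\|b\|_\infty \|u\|_{2^\ast}^{2^\ast},
\end{equation*}
where $p_\varepsilon \in (2,2^\ast)$.

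Next I would use Proposition \ref{p_sobolev} — which gives that $\|\cdot\|_V$ is equivalent to the standard norm of $H^1(\mathbb{R}^N)$ — together with the continuous embeddings $H^1(\mathbb{R}^N)\hookrightarrow L^q(\mathbb{R}^N)$ for $q\in[2,2^\ast]$, to estimate every $L^q$-norm on the right by a constant multiple of $\|u\|_V$. Choosing $\varepsilon$ small enough that the coefficient of $\|u\|_V^2$ thus produced does not exceed $1/2$ and dividing by $\|u\|_V^2$ (which is positive since $u\ne 0$, again by \ref{V_autovalor} and Proposition \ref{p_sobolev}), I am left with
\begin{equation*}
\frac{1}{2} \le C_1\|u\|_V^{p_\varepsilon-2} + C_2\|u\|_V^{2^\ast-2},
\end{equation*}
with $C_1, C_2$ depending only on $N$, $d_1$, $a_\ast$, $\|b\|_\infty$ and the constants of \ref{f_geral}. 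As both exponents $p_\varepsilon-2$ and $2^\ast-2$ are strictly positive, the right-hand side tends to $0$ as $\|u\|_V\to 0^+$; hence there is $C>0$ below which this inequality fails, and therefore $\|u\|_V\ge C$ for every $u\in\mathrm{Crit}(I)$.

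I expect no genuine obstacle: the estimate is a routine combination of the growth hypotheses and Sobolev embeddings, and the critical term $b(x)g(u)$ is controlled exactly like the pure power $|u|^{2^\ast-2}u$ thanks to \ref{g_geral}. The only point requiring a little care — the same one hidden in the phrase ``appropriate use of Sobolev inequalities'' in the proof of Lemma \ref{l_salvou} — is to choose $\varepsilon$ only after the Sobolev constants are fixed, so that the term $\varepsilon C\|u\|_V^2$ can be absorbed into the left-hand side.
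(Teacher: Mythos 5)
Your proposal is correct and is essentially the paper's own argument: the paper proves Lemma \ref{l_salvoufim} by declaring the proof identical to that of Lemma \ref{l_salvou}, i.e., testing $I'(u)=0$ against $u$, invoking \ref{f_geral} and \ref{g_geral} with $\|b\|_\infty$, and absorbing the $\varepsilon$-terms via the Sobolev embeddings controlled by $\|\cdot\|_V$ (using \ref{V_autovalor} and Proposition \ref{p_sobolev}). Your care about fixing the Sobolev constants before choosing $\varepsilon$ is exactly the ``appropriate use of Sobolev inequalities'' the paper alludes to.
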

To address the case where the energy levels may coincide ($I \leq I_P$), we rely on the following lemma, which guarantees the existence of a critical point when the minimax level is attained along a chosen path.
\begin{lemmaletter}\cite[Theorem~2.3]{MR2532816}\label{l_localmp}
	Let $E$ be a real Banach space. Suppose that $I \in C^1(E)$ satisfies
	\begin{enumerate}[label=\roman*):]
		\item $I(0)=0;$
		\item There exist $r,\ b>0$ such that $I(u) \geq b,$ whenever $\|u\| = r; $
		\item There is $e\in E$ with $\|e\| > r$ and $I(e)<0;$ 
	\end{enumerate}
	Define 
	\begin{equation*}
		\hat{\Gamma}_I = \left\lbrace \xi \in C([0,1], E) : \xi (0) = 0,\ \|\xi(1)\| > r,\ I(\xi (1))<0 \right\rbrace\quad\text{and}\quad \hat{c}(I) = \inf_{\xi \in \hat{\Gamma}_I} \sup_{t\in [0,1]} I(\xi (t)).
	\end{equation*}
	If there exists $\xi _0 \in \hat{\Gamma}_I$ such that $\hat{c}(I) = \max _{t \in [0,1]} I(\xi _0 (t) ),$ then $I$ possesses a nontrivial critical point $u\in \xi _0 ([0,1]) $ such that $I(u) = \hat{c}(I).$
\end{lemmaletter}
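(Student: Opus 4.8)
The plan is to prove this abstract ``local mountain pass'' statement by contradiction, deforming the distinguished path $\xi_0$ downward in energy with a localized pseudo-gradient flow; crucially, \emph{no} compactness condition is invoked, since the only possible obstruction to the deformation is a compact set of regular points. Write $c := \hat c(I) = \max_{t\in[0,1]} I(\xi_0(t))$, and note $c \geq b > 0$: indeed $\|\xi_0(0)\| = 0 < r$ while $\|\xi_0(1)\| > r$, so by the intermediate value theorem $\|\xi_0(t_1)\| = r$ for some $t_1$, whence $I(\xi_0(t_1)) \geq b$. Assume, for contradiction, that $\xi_0([0,1])$ contains no critical point at level $c$. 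Set $A := \{ t \in [0,1] : I(\xi_0(t)) = c \}$, a nonempty compact set; since $I(\xi_0(0)) = I(0) = 0 < c$ and $I(\xi_0(1)) = I(e) < 0 < c$, we have $A \subset (0,1)$. Then $S := \xi_0(A)$ is a compact subset of the level set $I^{-1}(c)$ on which $I'$ never vanishes, so $\delta_0 := \inf_{u \in S} \|I'(u)\| > 0$ by compactness, and by continuity of $I'$ there is an open set $\mathcal O \supset S$ on which $\|I'\| \geq \delta_0/2$; we shrink it so that $\overline{B_\rho(S)} \subset \mathcal O$ for some $\rho > 0$ and $0, e \notin \overline{\mathcal O}$.

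Next I would build a cut-off pseudo-gradient flow supported near $S$. For each $u \in S$ pick $w_u \in E$ with $\|w_u\| \leq 1$ and $\langle I'(u), w_u \rangle > \tfrac12 \|I'(u)\| \geq \delta_0/2$; by continuity this persists on a ball around $u$. Covering the compact set $S$ by finitely many such balls and using a locally Lipschitz partition of unity subordinate to them, one obtains a locally Lipschitz field $W$ on a neighborhood of $S$ with $\|W\| \leq 1$ and $\langle I'(\cdot), W \rangle \geq \sigma := \delta_0/4$ there. Multiplying $-W$ by a Lipschitz spatial cut-off equal to $1$ on $B_{\rho'}(S)$ and $0$ outside $B_\rho(S)$ (with $\rho' < \rho$), and by a cut-off $\chi(I(\cdot))$ equal to $1$ for $I$-values $\geq c - \varepsilon/2$ and $0$ for $I$-values $\leq c - \varepsilon$, yields a globally defined, bounded, locally Lipschitz vector field $V$; its flow $\eta \colon [0,\infty) \times E \to E$ is well defined, satisfies $\frac{d}{ds} I(\eta(s,u)) \leq 0$ always and $\leq -\sigma$ while $\eta(s,u) \in B_{\rho'}(S)$ with $I(\eta(s,u)) \geq c - \varepsilon/2$, and fixes every point with current $I$-value $\leq c-\varepsilon$. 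The parameters are chosen in this order: first $\rho, \rho'$ and the cut-offs; then $\varepsilon > 0$ small enough that (i) $c - \varepsilon > 0$; (ii) $\varepsilon$ is small relative to $\sigma(\rho - \rho')$, so that a trajectory starting in $B_{\rho'}(S)$ has its $I$-value descend below $c - \varepsilon/2$ (the field then tapering off) before it could travel the distance $\rho - \rho'$ needed to exit $B_\rho(S)$; and (iii) every path point $\xi_0(t)$ with $I(\xi_0(t)) \geq c - \varepsilon$ lies in $B_{\rho'}(S)$, which is possible because continuity of $t \mapsto I(\xi_0(t))$ on the compact interval $[0,1]$ forces the set $\{ t : I(\xi_0(t)) \geq c - \varepsilon \}$ to shrink into any prescribed neighborhood of $A$ as $\varepsilon \downarrow 0$.

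With this in hand, define $\widetilde\xi(t) := \eta(s_\ast, \xi_0(t))$ for $s_\ast$ large (e.g.\ $s_\ast = \varepsilon/\sigma$). Then $\widetilde\xi(0) = 0$ and $\widetilde\xi(1) = e$, since their $I$-values lie below $c - \varepsilon$ and are hence fixed by the flow, so $\widetilde\xi \in \hat\Gamma_I$; and for every $t$, either $I(\xi_0(t)) \leq c - \varepsilon$, giving $I(\widetilde\xi(t)) \leq c - \varepsilon < c$, or $\xi_0(t)$ starts in $B_{\rho'}(S)$ at level $\leq c$ and, by the no-escape estimate in (ii), the flow pushes its $I$-value strictly below $c$ (indeed below $c - \varepsilon/2$) without ever leaving $B_\rho(S)$. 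Hence $\max_{t \in [0,1]} I(\widetilde\xi(t)) < c = \hat c(I)$, contradicting the definition of $\hat c(I)$ as the infimum over $\hat\Gamma_I$. Therefore $\xi_0([0,1])$ contains a critical point $u$ with $I(u) = c = \hat c(I)$, and $u \neq 0$ since $I(u) = c \geq b > 0 = I(0)$. I expect the genuine difficulty to be precisely the bookkeeping of the three cut-off scales in the previous paragraph — that is, establishing this localized deformation lemma so that the flow provably lowers the top of the path below level $c$ while all relevant trajectories stay in the region where $\|I'\|$ is uniformly controlled; everything else is routine. As an alternative to the deformation argument, one may instead apply Ekeland's variational principle to the functional $\Psi(\xi) = \max_{t \in [0,1]} I(\xi(t))$ on the complete metric space $\hat\Gamma_I$, for which $\xi_0$ is an exact minimizer, to extract a critical point of $I$ on $\xi_0(A)$ directly.
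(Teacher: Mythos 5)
The paper does not prove Lemma \ref{l_localmp} at all: it is quoted directly from Lins and Silva \cite{MR2532816} (Theorem 2.3 there), so there is no internal proof to compare against; in effect you have reproved the cited result. Your deformation argument is correct in its essentials and is the standard route: if the conclusion fails, the compact set $S=\xi_0(A)$ of maximum points of $I$ along the optimal path consists of regular points, so $\inf_S\|I'\|>0$; a pseudo-gradient field, cut off spatially near $S$ and in energy above the level $c-\varepsilon$, generates a flow along which $I$ is nonincreasing, fixes the endpoints (their $I$-values lie below $c-\varepsilon$, hence they are rest points of the locally Lipschitz field), keeps the deformed path in $\hat\Gamma_I$, and lowers its maximum strictly below $c=\hat c(I)$, contradicting the infimum definition. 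The preliminary observations ($c\ge b>0$ by the intermediate value theorem, $A\subset(0,1)$, nontriviality of the critical point since $I(u)=c>0=I(0)$) are all fine.

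Two caveats. First, your ``no-escape estimate (ii)'' is misstated: a trajectory starting near the boundary of $B_{\rho'}(S)$ can leave $B_{\rho'}(S)$ immediately, and in the annulus the decay rate is only $\le 0$, so there is no uniform guarantee that its $I$-value drops below $c-\varepsilon/2$ before exiting $B_\rho(S)$. Fortunately the claim is not needed: for any $t$ with $I(\xi_0(t))<c$, monotonicity of $I$ along the flow already gives $I(\widetilde\xi(t))\le I(\xi_0(t))<c$, and the only points requiring genuine decrease are those with $I(\xi_0(t))=c$, which lie on $S$ itself; from a point of $S$ the exit time from $B_{\rho'}(S)$ is at least $\rho'$ (speed $\le 1$), so either $I$ falls below $c-\varepsilon/2$ or it decreases by at least $\sigma\rho'$, and in both cases $I(\widetilde\xi(t))<c$. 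Pointwise strictness, continuity and compactness of $[0,1]$ then give $\max_t I(\widetilde\xi(t))<c$. (Also fix the radius $\rho$ only after the finite covering that produces $W$, so the pseudo-gradient estimate holds on all of $B_\rho(S)$ where the cut-offs are active.) Second, the closing alternative via Ekeland's principle applied to $\Psi(\xi)=\max_{t}I(\xi(t))$ on $\hat\Gamma_I$ should be dropped or substantially elaborated: $\hat\Gamma_I$ is not complete (its defining conditions at $\xi(1)$ are open), and even on a complete class Ekeland's principle only produces almost-critical points, so without a compactness condition it does not ``directly'' yield a critical point on $\xi_0([0,1])$ --- which is exactly the content your deformation argument correctly supplies.
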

	\begin{proof}[Proof of Theorem \ref{teo_gs} completed]
		Let $(u_k) \subset H^1_V(\mathbb{R}^N)$ such that $I(u_k) \rightarrow \mathcal{G}_S(I)$ with $I'(u_k) = 0.$ Similarly as above (Step 4) in Section \ref{s_teo_periodic}) we have $\mathcal{G}_S(I) \geq 0$ and following the proof of Proposition \ref{p_psbounded}, the sequence $(u_k)$ is bounded. Consider the profiles $(w^{(n)})_{n \in \mathbb{N}_\ast},$ $(y_k^{(n)})_k$ and $(j_k^{(n)})_k,$ $n \in \mathbb{N}_\ast,$ described in Theorem \ref{teo_tinta} and Remark \ref{rem_tinta}. We proceed using the same ideas above dividing the proof into some steps. We first consider the case where \ref{ce} holds.
		
		\noindent\textit{Step 1): } $w^{(1)}$ is a critical point of $I,$ $w^{(n)}$ is a critical point of $I_P,$ for $n \in \mathbb{N}_0,$ and $w^{(n)}$ is a critical point of $J_n,$ for $n \in \mathbb{N}_+.$ In particular, $I(w^{(1)} )\geq0,$ $I_P(w^{(n)} ) \geq 0,$ for $n \in \mathbb{N}_0,$ and $J_n (  w^{(n)}   ) \geq 0,$ for $n \in \mathbb{N}_+.$	
			
		\noindent \textit{Step 2)}: $w^{(n)} = 0,$ for any $n \in \mathbb{N}_0 \setminus \{ 1 \}.$ Suppose the existence of $n_0 \in \mathbb{N}_0$ with $w^{(n_0)} \neq 0.$ Similarly as above,
		\begin{align}
			\mathcal{G}_S(I) &= \lim _{k \rightarrow \infty }\left[ \frac{1}{2} \| u_k \|_{V} - \int _{\mathbb{R}^N} F(x,u_k) \dx - \int _{\mathbb{R}^N} b(x) G(u_k)\dx \right] \nonumber\\
			&\geq I(w^{(1)}) + \sum _{n \in \mathbb{N}_0 \setminus \{ 1 \}} I_P(w^{(n)}) + \sum _{n \in \mathbb{N}_+} J_n(w^{(n)}).\label{gsfim}
		\end{align}
		Thus $\mathcal{G}_S(I) \geq I_P(w^{(n_0)}) \geq \mathcal{G}_S(I_P).$ This is a contradiction with Lemma \ref{lgs}.
		
		\noindent\textit{Step 3)}: $w^{(1)} \neq 0.$ If $w^{(1)} =0,$ then Lemma \ref{l_converge} implies $u_k \rightarrow 0$ in $L^p (\mathbb{R}^N),$ for any $p \in (2,2^\ast).$ Therefore, applying Theorem \ref{teo_compact} and adapting the argument from Step 2) in Section \ref{s_teo_periodic} with the help of Lemma \ref{l_salvoufim},
		\begin{equation*}
			c(I)\geq \mathcal{G}_S(I)\geq  \left( (\mu _\ast -2) /(2 \mu _\ast ) \right)\mathbb{S}_{\bar{G}_{b}}^{N/2},
		\end{equation*}
		which is a contradiction with Lemma \ref{l_minimax}.
		
		\noindent\textit{Step 4)}: $w^{(n)} = 0,$ for any $n \in \mathbb{N}_+.$ We argue as in Section \ref{s_comp}. If there is $m_0 \in \mathbb{N}_+$ such that $w^{(m_0)}\neq 0,$ then $c(J_{m_0}) \leq J_{m_0}(w^{(m_0)}).$ On the other hand, since $\mathcal{G}_S(I) \leq c(I),$ by Lemma \ref{l_jota} and \eqref{gsfim} we have 
		\begin{equation*}
			c(J_{m_0}) = \mathcal{G}_S(I) = I(w^{(1)})  + \sum _{n \in \mathbb{N}_+} J_n(w^{(n)}).
		\end{equation*}
		Furthermore, because $w^{(1)}\neq 0$ we have $I(w^{(1)}) > 0 $ (see \eqref{w1dif}) and we obtain the contradiction that $c(J_{m_0}) > J_{m_0}(w^{(m_0)}).$
		
		\noindent\textit{Step 5)}: $(u_k)$ has a convergent subsequence in $H^1_V(\mathbb{R}^N).$ Since $u_k \rightarrow w^{(1)}$ in $L^p (\mathbb{R}^N),$ for $p \in (2,2^\ast],$ we can follow the same lines in \textit{Step 5)} of Section \ref{s_comp} to obtain $\lim _{k \rightarrow \infty}\| u_k \|^2_V = \| w^{ (1) }  \|^2_V,$ up to a subsequence. Hence $I (w^{ (1) } ) = \mathcal{G}_S(I).$
		
		\noindent\textit{Step 6)}: Now we suppose $I \leq I_P$ instead of \ref{ce}. We call attention to the fact that Lemma \ref{lgs} is only used in the conclusion of Step 2) to ensure that $w^{(n)} = 0,$ for any $n \in \mathbb{N}_0\setminus \{ 1\}.$ At this point we proceed differently. Let us assume the existence of $w^{(n_0)} \neq 0,$ for some $n_0 \in \mathbb{N}_0\setminus \{1 \}.$ Suppose that the first part of \ref{h_minimax} holds. Condition $I \leq I_P$ implies that the path $\xi(t) = t w^{(n_0)},$ $t \geq 0,$ belongs to $\Gamma _I.$ Thus, by Proposition \ref{p_ccp} (Appendix \ref{s_app_minimax}), \eqref{gsfim} and Remark \ref{r_compar} we have
		\begin{equation*}
			\mathcal{G}_S(I) \leq c(I) \leq \max_{t \geq 0} I(\xi(t)) \leq \max_{t \geq 0 } I_P(\xi(t)) = c(I_P)\leq \mathcal{G}_S(I).
		\end{equation*}
		In particular, $\mathcal{G}_S(I) = c(I)=\max_{t \geq 0} I(\xi (t)).$ Applying Lemma \ref{l_localmp}, the existence of $\hat{w} \in \mathrm{Crit}(I)$ such that $I(\hat{w})=  c(I) =\mathcal{G}_S(I) $ is guaranteed. By replacing $\xi(t)$ with the path $\zeta (t) = w^{(n_0)} (\cdot /t)$ and assuming the second part of \ref{h_minimax}, the same conclusion remains true. On the other hand, if $w^{(n)} = 0,$ for any $n \in \mathbb{N}_0 \setminus \{1\},$ we can follow the same lines of Steps 3)--5) to conclude that $I (w^{ (1) } ) = \mathcal{G}_S(I).$
	\end{proof}

\section{Existence of ground states for the fully nonautonomous case: Proof of Theorems \ref{teo_periodic2}--\ref{teo_gs2}}\label{s_full}

In this section, we prove the existence results for the fully nonautonomous problem \eqref{P}. The arguments closely follow those developed for the semi-autonomous case of Sections \ref{s_teo_periodic}, \ref{s_comp}, and \ref{s_gsp}. We verify that the core idea of each proof remains valid. Instead of repeating each step in detail, we focus on highlighting the essential modifications.

Throughout this section, we always suppose that conditions \ref{k_ss}, \ref{k_tec}, \ref{f_zero}, \ref{k_AR} hold, assuming that either \ref{h_um} or \ref{h_dois} holds. The energy functional $I$ is the one associated with \eqref{P}, defined as
\begin{equation*}
	I(u) = \frac{1}{2}\int _{\mathbb{R}^N} |\nabla u |^2 + V(x) u^2 \dx - \int _{\mathbb{R}^N} F(x,u) + K(x,u) \dx,\quad u \in H^1(\mathbb{R}^N),
\end{equation*}
where $K(x,s) = \int_0^s k(x,t) \dt$. The periodic functional $I_P$ is defined analogously, with $V_P$, $F_P$, and $K_P$. We observe that the preliminary results and the profile decomposition analysis from Section \ref{s_profile} adapt to our current setting with minor modifications.
\subsection{Boundedness of Palais-Smale sequences} The proof of Proposition \ref{p_psbounded}, which ensures that Palais-Smale (Cerami) sequences are bounded, extends directly to the nonautonomous case. The lower growth \ref{k_AR} is sufficient to replicate the contradiction argument, replacing $b(x)G(u)$ with $K(x,u).$ In fact, in that proof we replace \eqref{gen_um} by the estimate
\begin{equation*}
	\int _{\mathbb{R}^N} \frac{K(x,u_k)}{\| u_k \|_V^2} \dx \geq \hat{\lambda} _\ast \int _{U} |d^{-1}_{k,n} u_k|^{2^\ast -2} |d^{-1}_{k,n} v_k|^2 \dx,
\end{equation*}
and the right-hand side goes to $+\infty,$ as $k \rightarrow \infty.$
\subsection{Asymptotic behavior of the critical term} The most crucial result to adapt is Proposition \ref{p_convast}, which describes the behavior of the critical energy term under the profile decomposition. The original proof for $b(x)G(s)$ relied on Lemma \ref{l_geprop}, where the key inequality is derived from the self-similarity condition \ref{g_selfsimilar}. For the nonautonomous term $K(x,s)$, hypothesis \ref{k_tec} provides precisely this required inequality. Furthermore, condition \ref{k_ss} shows that $k(x,s)$ has a self-similar behavior at infinity.
\begin{proposition}\label{p_convastgen}Let $(u_k)\subset H^1(\mathbb{R}^N)  $ be a bounded sequence and $(w ^{(n)}) _{n \in \mathbb{N}_{\ast} }$ be the profiles given by Theorem \ref{teo_tinta}. Denote, for $n \in \mathbb{N}_+,$ $g_n = g _{z^{(n)}}$ if $(y_k^{(n)})_k$ is bounded, with $z^{(n)} = \lim_{k \rightarrow \infty } y_k^{(n)}$, and $g_n = g _{\infty },$ if $|y_k^{(n)}| \to \infty,$ as $k \rightarrow \infty$ (see \ref{k_ss}). For $G_n(s) = \int_0^s g_n(t)\dt,$ we have
	\begin{equation*}
		\lim _{k \rightarrow \infty} \int _{\mathbb{R}^N} K(x, u_k) \dx = \int_{\mathbb{R}^N}K(x,w^{(1)}) \dx + \sum _{n \in \mathbb{N}_0\setminus \{ 1\} } \int _{\mathbb{R} ^N} K _P (x,w^{(n)}) \dx + \sum _{n \in \mathbb{N} _{+ }} \int _{\mathbb{R}^N} G_n(w^{(n)}) \dx,
	\end{equation*}
\end{proposition}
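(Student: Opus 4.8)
The plan is to mirror the proof of Proposition \ref{p_convast} step by step, substituting the structural inputs \ref{k_tec} and \ref{k_ss} for the self-similarity of $g$ and Lemma \ref{l_geprop}. Write $\Psi(u) = \int_{\mathbb{R}^N} K(x,u)\dx$ for $u \in H^1(\mathbb{R}^N)$. The first step is to show, exactly as in \eqref{gen_abacaxi}, that
\begin{equation*}
	\lim_{k\to\infty}\left[\Psi(u_k) - \Psi\Big(\sum_{n\in\mathbb{N}_\ast} d_{k,n} w^{(n)}\Big)\right] = 0,
\end{equation*}
which follows from \eqref{tinta4} and the analogue of Lemma \ref{l_Gconverge} for $K$; the latter is obtained from the growth bound \ref{k_geral} and the elementary inequality \eqref{ineq} with $q = 2^\ast$, reasoning verbatim as in the proof of Lemma \ref{l_Gconverge} with $\|b\|_\infty a_\ast$ replaced by $\hat a_\ast/2^\ast$ (times a combinatorial constant). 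The uniform convergence of the series in \eqref{tinta4} again lets us reduce to $\mathbb{N}_\ast$ finite.

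The second step is the splitting of $\Psi$ over the translated/dilated profiles:
\begin{equation*}
	\lim_{k\to\infty}\left[\Psi\Big(\sum_{n\in\mathbb{N}_\ast} d_{k,n} w^{(n)}\Big) - \sum_{n\in\mathbb{N}_\ast}\Psi(d_{k,n} w^{(n)})\right] = 0.
\end{equation*}
Here is where \ref{k_tec} replaces Lemma \ref{l_geprop}(ii): pointwise in $x$, it bounds $|K(x,\sum a_m) - \sum K(x,a_m)|$ by $C\sum_{m\neq n}|a_m|^{2^\ast-1}|a_n|$, so after integration the error is controlled by $\sum_{m\neq n}\int_{\mathbb{R}^N}|d_{k,n}w^{(n)}|^{2^\ast-1}|d_{k,m}w^{(m)}|\dx$. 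Each such cross term vanishes as $k\to\infty$ by the same change of variables and weak-vanishing argument used at the end of the proof of Proposition \ref{p_convast}, invoking \eqref{tinta2} and \cite[Lemma 5.1]{tintabook}.

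The third and genuinely new step is identifying the limit of each single-profile term $\Psi(d_{k,n} w^{(n)})$. For $n=1$ this is $\int K(x,w^{(1)})\dx$ by dominated convergence (here $j_k^{(1)}=0$, $y_k^{(1)}=0$). For $n\in\mathbb{N}_0\setminus\{1\}$ one has $j_k^{(n)}=0$, so $\Psi(d_{k,n}w^{(n)}) = \int K(x+y_k^{(n)},w^{(n)})\dx$, and since $|y_k^{(n)}|\to\infty$, hypothesis \ref{h_infinito} (together with \ref{k_geral} to dominate) gives the limit $\int K_P(x,w^{(n)})\dx$, exactly as in \eqref{rod2}. The main point is $n\in\mathbb{N}_+$, where $j_k^{(n)}\to+\infty$: writing
\begin{equation*}
	\Psi(d_{k,n}w^{(n)}) = \int_{\mathbb{R}^N} K\big(\gamma^{-j_k^{(n)}}x + y_k^{(n)},\, w^{(n)}\big)\dx
\end{equation*}
and using the change of variables hidden in $d_{k,n}$ to convert this into $\int \gamma^{-Nj_k^{(n)}} K(\gamma^{-j_k^{(n)}}x + y_k^{(n)}, \gamma^{\frac{N-2}{2}j_k^{(n)}}w^{(n)})\dx$, I expect the integrand to converge pointwise — by differentiating the self-similar limit \ref{k_ss} in its scalar variable, or directly by the primitive version of \ref{k_ss} — to $G_n(w^{(n)})$, where $g_n = g_{z^{(n)}}$ if $(y_k^{(n)})_k$ is bounded (so $z^{(n)}=0$ after the reduction in Remark \ref{r_salve}, giving $g_n=g_0$) and $g_n = g_\infty$ if $|y_k^{(n)}|\to\infty$; the independence of the sequences in \ref{k_ss}(ii) is what makes the latter limit well-defined. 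Domination comes again from \ref{k_geral}, which is scale-invariant under the self-similar rescaling, so Lebesgue applies. Combining the three steps yields the claimed formula.

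The step I expect to be the main obstacle is the third one, specifically making rigorous that the scalar self-similar convergence in \ref{k_ss} — which is stated for $k(x,s)$ and uniform on compact sets of $\mathbb{R}^N\times\mathbb{R}$ — transfers to the primitives $K$ and survives integration against the fixed profile $w^{(n)}$. One must take care that $w^{(n)}\in D^{1,2}(\mathbb{R}^N)$ only, so the pointwise values $\gamma^{\frac{N-2}{2}j_k^{(n)}}w^{(n)}(x)$ range over all of $\mathbb{R}$ and can be large; the uniform-on-compacts hypothesis in \ref{k_ss} combined with the uniform bound $\gamma^{-Nj}|K(y,\gamma^{\frac{N-2}{2}j}s)|\le \frac{\hat a_\ast}{2^\ast}|s|^{2^\ast}$ (a consequence of \ref{k_geral}, which is exactly the scale-invariant majorant needed) is what closes this gap, by a standard density/truncation splitting of the domain into a large ball where $w^{(n)}$ is essentially bounded and an exterior region controlled in $L^{2^\ast}$.
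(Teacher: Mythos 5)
Your plan follows essentially the same route as the paper's proof: the remainder is discarded via a $K$-analogue of Lemma \ref{l_Gconverge} based on \ref{k_geral} (the analogue of \eqref{gen_abacaxi}), the splitting over profiles uses \ref{k_tec} in place of Lemma \ref{l_geprop} with the cross terms killed by \eqref{tinta2} and the weak-vanishing argument (the analogue of \eqref{giant1}), and the single-profile limits are identified through \ref{h_infinito} for the translation profiles and the primitive form of \ref{k_ss}, dominated by \ref{k_geral}, for the dilation profiles (the analogue of \eqref{gen_banana}). Your extra care in transferring the uniform-on-compacts limit of \ref{k_ss} from $k$ to $K$ and integrating it against an unbounded $D^{1,2}$ profile is exactly the detail the paper leaves implicit, so the proposal is correct and matches the paper's argument.
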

\begin{proof}
Let us denote $ \hat{ \Phi } _\ast (u) = \int _{\mathbb{R}^N} K(x,u) \dx,$ $\hat{ \Phi } _{\ast,P} (u) = \int _{\mathbb{R}^N} K_P(x,u) \dx $ and $\hat{\Phi}_{\ast,n }(u) = \int _{\mathbb{R}^N} G_n (u) \dx ,$ $u \in D^{1,2}(\mathbb{R}^N).$ Following the proof of Proposition \ref{p_convast}, we have that \eqref{gen_abacaxi} still holds with $\hat{\Phi}_\ast$ instead of $\Phi_\ast.$ Moreover, using \ref{k_ss} the same argument applies and
\begin{align*}
	&\lim _{k \rightarrow \infty} \left[\sum _{n \in \mathbb{N}_0} \hat{\Phi }_\ast (w^{(n)}  ) -  \hat{\Phi }_\ast (w^{(1)})  - \sum _{n \in \mathbb{N}_0 \setminus \{  1 \} } \hat{\Phi }_{\ast,P}(w^{(n)}) \right] =0,\\ &\lim _{k \rightarrow \infty} \left[ \sum _{n \in \mathbb{N}_+} \hat{\Phi }_\ast ( w^{(n)} )   - \sum _{n \in \mathbb{N}_+} \hat{\Phi }_{\ast,n} (w^{(n)}) \right]=0,
\end{align*}
substitute \eqref{gen_banana}. We use \ref{k_tec} to prove the analogous convergence of \eqref{giant1}, where $\hat{\Phi}_\ast $ replaces $\Phi _\ast$.
\end{proof}
\subsection{Behavior of the energy levels at infinity} It is evident that in all of our argument to prove existence of solutions for Eq. \eqref{P} it is crucial to control the energy levels of the profile decomposition by the associated minimax level in the sense of \eqref{c_ddois}. Here we consider the critical energy functional $J_n$ in its general form given by \eqref{jotaast} together with the notation described in Proposition \ref{p_convastgen}. Therefore we propose to present a more general version of this fact that can be applied in all different settings in the proofs of Theorems \ref{teo_periodic2}--\ref{teo_gs2}.
\begin{proposition}\label{p_convfinal}
	Let $(u_k) \subset H^1(\mathbb{R}^N)$ be a bounded sequence such that $I(u_k) \rightarrow d $ and $\|I'(u_k) \|_\ast \rightarrow 0.$ Let $(w ^{(n)}) _{n \in \mathbb{N}_{\ast} }$ be the profiles given by Theorem \ref{teo_tinta}. Then,
	\begin{equation*}
		d \geq I(w^{(1)}) + \sum _{n \in \mathbb{N}_0 \setminus \{ 1 \}} I_P(w^{(n)}) + \sum _{n \in \mathbb{N}_+} J_{\nu _n}(w^{(n)}),
	\end{equation*}
	where $J_{\nu _n}(u) = (1/2) \int _{\mathbb{R}^N} |\nabla u |^2 \dx-\int _{\mathbb{R}^N} G_n (u) \dx.$
\end{proposition}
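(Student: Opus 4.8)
The plan is to re-run the energy-splitting estimate already carried out in the semi-autonomous case (compare \eqref{c_ddoisP} and \eqref{c_ddois}), now feeding in the nonautonomous analogues of the three relevant asymptotic results. First I would invoke Remark \ref{rem_tinta} to take $\mathbb{N}_- = \emptyset$, so that $\mathbb{N}_\ast = \mathbb{N}_0 \cup \mathbb{N}_+$ and every series below is indexed over $\mathbb{N}_0$ and $\mathbb{N}_+$. Writing $I(u_k) = \tfrac12\|u_k\|_V^2 - \int_{\mathbb{R}^N} F(x,u_k)\dx - \int_{\mathbb{R}^N} K(x,u_k)\dx$, I would apply Proposition \ref{p_bef} to $\int_{\mathbb{R}^N} F(x,u_k)\dx$, Proposition \ref{p_convastgen} to $\int_{\mathbb{R}^N} K(x,u_k)\dx$ (the only step genuinely using the new structural hypotheses \ref{k_ss} and \ref{k_tec}), Proposition \ref{p_convv} to $\int_{\mathbb{R}^N} V(x)u_k^2\dx$, and the norm inequality \eqref{tinta3} to $\|\nabla u_k\|_2^2$. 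Propositions \ref{p_bef} and \ref{p_convv} carry over verbatim, since they do not see the structure of the critical term.

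For the assembly, note that $I(u_k)\to d$ together with the existence of $\lim_k\int_{\mathbb{R}^N} F(x,u_k)\dx$ and $\lim_k\int_{\mathbb{R}^N} K(x,u_k)\dx$ from Propositions \ref{p_bef} and \ref{p_convastgen} forces $\lim_k\|u_k\|_V^2$ to exist. The one point requiring care is that Proposition \ref{p_convv} provides only a $\liminf$ for $\int_{\mathbb{R}^N} V(x)u_k^2\dx$, while \eqref{tinta3} provides only a $\limsup$ for $\|\nabla u_k\|_2^2$; these reconcile through the elementary fact that whenever $a_k+b_k$ converges one has $\lim_k(a_k+b_k) = \limsup_k a_k + \liminf_k b_k$. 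Applying this with $a_k = \|\nabla u_k\|_2^2$ and $b_k = \int_{\mathbb{R}^N} V(x)u_k^2\dx$, and then using $\limsup_k\|\nabla u_k\|_2^2 \ge \sum_{n\in\mathbb{N}_\ast}\|\nabla w^{(n)}\|_2^2$ from \eqref{tinta3} and $\liminf_k\int_{\mathbb{R}^N} V(x)u_k^2\dx \ge \int_{\mathbb{R}^N} V(x)|w^{(1)}|^2\dx + \sum_{n\in\mathbb{N}_0\setminus\{1\}}\int_{\mathbb{R}^N} V_P(x)|w^{(n)}|^2\dx$ from Proposition \ref{p_convv}, I would reorganize $\mathbb{N}_\ast = \mathbb{N}_0 \sqcup \mathbb{N}_+$ (concentrating profiles carry only Dirichlet energy, which matches the absence of a potential term for $n\in\mathbb{N}_+$ in Proposition \ref{p_convv}) to obtain
\[
\lim_k \tfrac12\|u_k\|_V^2 \;\ge\; \tfrac12\|w^{(1)}\|_V^2 + \tfrac12\sum_{n\in\mathbb{N}_0\setminus\{1\}}\|w^{(n)}\|_{V_P}^2 + \tfrac12\sum_{n\in\mathbb{N}_+}\|\nabla w^{(n)}\|_2^2 .
\]

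Finally, subtracting the limits of the nonlinear terms supplied by Propositions \ref{p_bef} and \ref{p_convastgen} and regrouping the $n$-th summands into $I(w^{(1)})$, into $I_P(w^{(n)})$ for $n\in\mathbb{N}_0\setminus\{1\}$, and into $J_{\nu_n}(w^{(n)})$ for $n\in\mathbb{N}_+$ (with $\nu_n = z^{(n)} = \lim_k y_k^{(n)}$ when $(y_k^{(n)})_k$ is bounded and $\nu_n = \infty$ otherwise, so that $G_n = G_{\nu_n}$ in the notation of \ref{k_ss}) yields exactly the claimed inequality. Absolute convergence of the two series is inherited from Propositions \ref{p_bef} and \ref{p_convastgen} together with $\sum_n\|\nabla w^{(n)}\|_2^2 < \infty$, bounding $\int_{\mathbb{R}^N} G_n(w^{(n)})\dx$ by $C\|\nabla w^{(n)}\|_2^{2^\ast}$ via the uniform growth control \ref{k_geral}. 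I expect no new analytic difficulty here: the genuine content has already been absorbed into Proposition \ref{p_convastgen}, and the residual work is bookkeeping, above all keeping the three modes of convergence ($\lim$, $\liminf$, $\limsup$) correctly aligned.
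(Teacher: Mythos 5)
Your argument is correct and is exactly the paper's proof, which simply combines \eqref{tinta3} with Propositions \ref{p_bef}, \ref{p_convv} and \ref{p_convastgen}; you have merely spelled out the bookkeeping (dropping $\mathbb{N}_-$ via Remark \ref{rem_tinta}, the existence of $\lim_k\|u_k\|_V^2$, and the $\limsup$/$\liminf$ reconciliation), all of which is sound.
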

\begin{proof}
It follows by \eqref{tinta3} together with Propositions \ref{p_bef}, \ref{p_convv} and \ref{p_convastgen}.
\end{proof}
\subsection{Proof of the main results} Equipped with these observations, we can now prove our main theorems.
\begin{proof}[Proof of Theorem \ref{teo_periodic2}]
	The proof follows the same five-step structure as the proof of Theorem \ref{teo_periodic}. Here we consider $J_{\nu _n}$ instead of $J_n.$
	\begin{itemize}
		\item[]\textit{Step 1)}:  The proof that each profile $w^{(n)}$, with $n \in \mathbb{N}_0$, is a critical point of $I_P$ is identical, using the Lebesgue theorem, which applies due to \ref{f_geral} and \ref{k_geral}.
		\item[] \textit{Step 2)}: The argument that there exists at least one non-zero profile $w^{(n_0)}$ is the same in Step 2) of the proof of Theorem \ref{teo_periodic} and follows by contradiction. In fact, the nonautonomous version of the minimax level estimate (Lemma \ref{l_minimax}) still holds, replacing $\bar{G},$ $\lambda_\ast$ and $\mu_\ast$ by $\hat{G},$ $\hat{\lambda}_\ast$ and $\hat{\mu}_\ast,$ respectively.
		\item[] \textit{Steps 3), 4), and 5)}: The proof that the ground state level $\mathcal{G}_S(I_P)$ is achieved, and its identification with the minimax level under hypothesis \ref{h_minimax} follow a similar line of reasoning, replacing \eqref{c_ddoisP} with Proposition \ref{p_convfinal}, $b_P(x)g(s)$ with $k_P(x,s)$ and $b_P(x)G(s)$ with $K_P(x,s)$. \qedhere
	\end{itemize}
\end{proof}

\begin{proof}[Proof of Theorem \ref{teo_compact2}]
\noindent\textit{Part I: Case \ref{h_um}.} The proof mirrors that of Theorem \ref{teo_compact}.
		\begin{itemize}
		\item[]\textit{Step 1)}: The profiles $w^{(1)}$, $w^{(n)}$ ($n \in \mathbb{N}_0 \setminus \{1\}$), and $w^{(m)}$ ($m \in \mathbb{N}_+$) are critical points of $I$, $I_P$, and $J_{\nu _m}$, respectively. In fact, by \ref{k_ss},
		\begin{align*}
		o_k(1) &= I'(u_k) \cdot d_{k,m} \varphi	\\
		&= (d^{-1}_{k,m} u_k , \varphi )_{D^{1,2} (\mathbb{R} ^N)} + \int _{\mathbb{R}^N } V(x) u_k \,  d_{k,m} \varphi \dx \\
		&\qquad- \int _{\mathbb{R}^N } f(x,u_k) d_{k,m} \varphi \dx  -  \int _{\mathbb{R}^N} \gamma ^{-N j_k^{(m)}} k(\gamma ^{-j_k^{(m)}} \cdot   + y_k^{(m)} , \gamma ^{\frac{N-2}{2} j_k^{(m)} }   d^{-1}_{k,m} u_k) \varphi \dx\\
		&= \int _{\mathbb{R}^N}  \nabla w^{(m)}\cdot \nabla \varphi  \dx - \int _{\mathbb{R}^N} g_m (w^{( m )}) \varphi \dx + o_k(1),\quad \forall \varphi \in C^\infty _0(\mathbb{R}^N).
		\end{align*}
		The proof is the same.
		\item[]\textit{Step 2)}: The non-existence of profiles $w^{(n)} \neq 0$ for $n \in \mathbb{N}_0 \setminus \{1\}$ is proven by contradiction. The logic is identical: if such a profile existed, we would have $c(I) \geq I_P(w^{(n)}) \geq c(I_P)$, contradicting \ref{ce}.
		\item[]\textit{Step 3)}: The proof that $w^{(1)} \neq 0$ is the same, using the contradiction with the minimax level estimate .
		\item[]\textit{Step 4)}: To show that $w^{(m)} = 0$ for all $m \in \mathbb{N}_+$, we use the inequality $c(I) \leq c(J_{\nu _{m_0}})$. If a profile $w^{(m_0)} \neq 0$ existed, we would have $c(J_{\nu _{m_0}}) \leq J_{\nu _{m_0} } (w^{(m_0)})$. The energy decomposition given by Proposition \ref{p_convfinal} would lead to $c(I) \geq I(w^{(1)}) + J_{\nu _{m_0}}(w^{(m_0)})$. Since $I(w^{(1)}) > 0$, we would obtain $c(I) > J_{\nu _{m_0}}(w^{(m_0)}) \geq c(J_{\nu _{m_0} } )$, which, combined with $c(I) \leq c(J _{\nu _{m_0} })$, leads to a contradiction.
		\item[]\textit{Step 5)}: With all profiles being zero, the strong convergence $u_k \rightarrow w^{(1)}$ in $H^1_V(\mathbb{R}^N)$ is guaranteed.  
	\end{itemize}
\noindent\textit{Part II: Case \ref{h_dois}.} The proof for this case primarily differs from the previous one in the adaptation needed for Step 3. Steps 1 and 2 can be followed as before, yielding the energy decomposition
\begin{equation}\label{finalum}
	c(I) \geq I(w^{(1)}) + \sum _{n \in \mathbb{N}_0 \setminus \{ 1 \}} I_P(w^{(n)}) + \sum _{n \in \mathbb{N}_+} J_{\nu _{n} }(w^{(n)}).
\end{equation}
Next we assume the existence of $m_0 \in \mathbb{N}_+$ with $w^{(m_0)} \neq 0.$ As shown previously, inequality \eqref{finalum} implies $c(I) \geq J_{\nu _{m_0} } (w^{(m_0)}) \geq c(J_{\nu _{m_0} } ),$ which contradicts \ref{h_dois}. Thus, we must have $w^{(n)} = 0$ for all $n \in \mathbb{N}_\ast \setminus \{ 1 \}.$ By \eqref{tinta4}, this leads to $u_k \rightarrow w^{(1)}$ in $L^p(\mathbb{R}^N)$ for every $p \in (2,2^\ast ].$ Since $c(I)>0,$ this convergence also ensures that $w^{(1)} \neq 0.$ Finally, repeating the argument from the previous case, the strong convergence $u_k \rightarrow w^{(1)}$ in $H^1_V(\mathbb{R}^N)$ follows.\end{proof}

\begin{proof}[Proof of Theorem \ref{teo_gs2}]\noindent\textit{Part I: \ref{h_um} holds, and either \ref{ce} or $I \leq I_P$ is satisfied.} At this point it becomes clear that the proof of Theorem \ref{teo_gs2} directly adapts the proof of Theorem \ref{teo_gs}, with similar modifications as above and simply by replacing $J_n$ with $J_{\nu _n}$.
	
	\begin{enumerate}[label= \roman*):]
		\item Let $(u_k)$ be a minimizing sequence for $\mathcal{G}_S(I)$ and assume \ref{ce}. Because $\mathcal{G}_S(I) \geq 0,$ the sequence $(u_k)$ is bounded. The proof that $(u_k)$ converges to a ground state $w^{(1)}$ follows the steps of the proof of Theorem \ref{teo_gs}. In fact, by Proposition \ref{p_convfinal} we have
		\begin{equation}\label{setefim}
			\mathcal{G}_S(I) \geq I(w^{(1)}) + \sum _{n \in \mathbb{N}_0 \setminus \{ 1 \}} I_P(w^{(n)}) + \sum _{n \in \mathbb{N}_+} J_{\nu _n}(w^{(n)}),
		\end{equation}	
		and Lemma \ref{l_salvoufim} still holds. From this point on, the main tool is Lemma \ref{lgs} (the proof is the same), which establishes $\mathcal{G}_S(I) < \mathcal{G}_S(I_P)$ and implies that $w^{(n)} = 0,$ for all $n \in \mathbb{N}_\ast  \setminus \{ 1 \}.$ Consequently, $u_k \rightarrow w^{(1)}$ in $H^1_V(\mathbb{R}^N).$
		
		\item If we assume $I \leq I_P$, the argument from Step 6) in the proof of Theorem \ref{teo_gs} is applied. The existence of a nontrivial profile $w^{(n_0)},$ $n_0 \in \mathbb{N}_0 \setminus \{1\}$ would imply
		\begin{equation*}
			\mathcal{G}_S(I) \leq c(I) \leq \max_{t \geq 0} I(\chi(t)) \leq \max_{t \geq 0} I_P(\chi(t)) = c(I_P)\leq \mathcal{G}_S(I),
		\end{equation*}
		for a suitable path $\chi(t)$ depending on $w^{(n_0)},$ which is based on \ref{h_minimax}. Particularly, $\mathcal{G}_S(I) = c(I) = \max_{t \geq 0} I( \chi(t))$. Lemma \ref{l_localmp} then guarantees the existence of a critical point at the level $\mathcal{G}_S(I)$, which is, by definition, a ground state. If $w^{(n)} = 0, $ $n \in \mathbb{N}_0\setminus\{1 \}$, convergence is guaranteed as in case i).
	\end{enumerate}
\noindent\textit{Part II: Case \ref{h_dois} and \ref{ce}.} Let $(u_k)$ be a minimizing sequence for $\mathcal{G}_S(I),$ which is bounded. As established in the proof of Theorem \ref{teo_gs}, Steps 1) and 2) remain valid under \ref{ce} (via Lemma \ref{lgs} ), ensuring $w^{(n)}=0$ for $n \in \mathbb{N}_0 \setminus \{1\}$. We also have \eqref{setefim} which allows the usual argument: if there is $m_0 \in \mathbb{N}_+$ with $w^{(m_0)} \neq 0,$ then $c(I)\geq \mathcal{G}_S(I) \geq J_{\nu _{m_0} }(w^{(m_0)}) \geq c(J_{ \nu _{m_0} }),$ contradicting \ref{h_dois}. Consequently, we must have $w^{(n)} = 0$ for all $n \in \mathbb{N}_+$. Combining these results, we find that $w^{(n)}=0$ for all $n \in \mathbb{N}_\ast \setminus \{1\}$. By \eqref{tinta4}, this implies $u_k \rightarrow w^{(1)}$ in $L^p(\mathbb{R}^N)$ for $p \in (2, 2^\ast]$. The standard arguments then show that $u_k \rightarrow w^{(1)}$ in $H^1_V(\mathbb{R}^N).$ Moreover, the suitable version of Lemma \ref{l_salvoufim} yields $w^{(1)} \neq 0.$ By the continuity of $I$, we have $I(w^{(1)}) = \mathcal{G}_S(I)$.
\end{proof}

\appendix
\section{Additional remarks}\label{s_app}
\subsection{Construction of an oscillatory subcritical nonlinearity} Here we complete the construction of the nontrivial example $f $ given in \eqref{exampleB}. Let us take $q_0,$ $\alpha,$ $\beta >0,$ $p_0 \in (2,2^\ast).$ We define a $C^\infty  (\mathbb{R})$ function $\eta$ such that $ \beta + \alpha \leq \eta (s) \leq q_0$ and
\begin{equation*}
\eta (s)=
\left\{ 
\begin{aligned}
&q_0,&&\quad \text{if }|s| \leq 1/4,\\
&\beta + \alpha, &&\quad \text{if } |s| \geq 1/2,
\end{aligned}
\right.
\end{equation*}
where the parameters are chosen to satisfy
\begin{equation*}
    p_0\leq \beta - \alpha \leq \beta + \alpha \leq q_0 <2^\ast,\ 4\left[q_0 - (\beta + \alpha ) \right] <  e \left[  (\beta - \alpha ) - p_0 \right] \text{ and }\sup _{ \frac{1}{4} \leq |s| \leq \frac{1}{2}} |\eta' (s)| \leq e \left[  (\beta - \alpha ) - p_0\right].
\end{equation*}
Next, we consider $L(s) = C_0 L_0(s)$ with $C_0 = (\pi /2)(1/(\ln(\ln (2) +1)   ))$ and $L_0(s) = \ln ( \ln (|s|+1)  +1).$ We define $\varrho \in C^1(\mathbb{R})\cap L^\infty (\mathbb{R})$ by 
\begin{equation*}
\varrho (s)=
\left\{ 
\begin{aligned}
&\eta (s),&&\quad \text{if }|s| \leq 1,\\
&\beta + \alpha \sin (L(s)), &&\quad \text{if } |s| \geq 1.
\end{aligned}
\right.
\end{equation*}
The continuous function $f$ given by \eqref{exampleB} satisfies \ref{f_geral}, \ref{f_tang}, and \ref{f_porbaixo}. Indeed,
\begin{equation*}
    F (s) := \int ^s _0 f(t)\dt =\lambda (1+|s|^{\varrho(s) - p_0}) |s|^{p_0}.
\end{equation*}
By construction, we have $F(s) \ge \lambda |s|^{p_0}$, satisfying \ref{f_porbaixo}. If we also choose $2<\beta - (C_0+1)\alpha <p_0$, the function $F$ satisfies the Ambrosetti-Rabinowitz condition \eqref{AR} for $2 < \mu \leq \beta -(C_0 +1) \alpha < p_0$, which in turn implies \ref{f_tang}. On the other hand, because $s \mapsto \varrho ' (s) s \ln (|s|)+ \varrho (s) $ is uniformly bounded, we have {\color{blue}$(f'_1)$} and {\color{blue}$(f''_1)$}, thus satisfying hypothesis \ref{f_geral}.
\subsection{Example of a self-similar oscillatory nonlinearity with critical growth} In this section, we detail the choice of parameters $A, B, E > 0$ for which the function $g$ given in \eqref{example_ss} satisfies hypotheses \ref{g_selfsimilar}--\ref{g_AR}, accounting for the coefficient $b \in C(\mathbb{R}^N) \cap L^\infty (\mathbb{R}^N)$ from \ref{g_AR}. The self-similarity \ref{g_selfsimilar} is readily satisfied by choosing $\gamma = \exp \left\{ 4\pi /(\omega (N-2) )\right\}.$ To verify the remaining conditions, we analyze the associated Sobolev-type constants. Let $G_E(s) = g(s)s = (\theta(s) + E) |s|^{2^\ast }$, $G_0(s) = \theta(s) |s|^{2^\ast }$, and $G_\ast (s) = |s|^{2^\ast}.$ We denote their respective infima by $\mathbb{S}_E,$ $\mathbb{S}_0,$ and $\mathbb{S}$ and their respective suprema by $\mathbb{K}_E,$ $\mathbb{K}_0,$ and $\mathbb{K}$ (see \eqref{sup_tinta}). Since $\sup \theta = A+B > 0$, the results of \cite[Proposition 2.2]{MR2465979} and \cite[Theorem 5.2]{tintabook} ensure these constants are well-defined, positive, and attained. Our strategy is to relate $\mathbb{S}_E$ to $\mathbb{S}_0$ and $\mathbb{S}$ and study its behavior as $E \to 0^+$. We begin with some key lemmas for this approach.
\begin{lemma}\label{l_lemaum}
	$(\mathbb{S}/\mathbb{S}_0) ^{2^\ast /2} \leq  \sup \theta .$
\end{lemma}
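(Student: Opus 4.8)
\textbf{Proof proposal for Lemma~\ref{l_lemaum}.} The plan is to exploit the definitions of the Sobolev-type constants via their supremum counterparts and the pointwise bound $\theta(s) \le \sup\theta$. Recall that for a positive self-similar function $\bar{G}$ one has the identity $\mathbb{S}_{\bar G} = \mathbb{K}_{\bar G}^{-2/2^\ast}$ relating the infimum problem and the supremum problem \eqref{sup_tinta} (this is \cite[Theorem 5.2]{tintabook}, also recalled in the remarks). So it suffices to prove the equivalent statement $\mathbb{K}_0 \le (\sup\theta)\,\mathbb{K}$, i.e. $\sup_{\|\nabla u\|_2 = 1}\int_{\mathbb{R}^N}\theta(u)|u|^{2^\ast}\dx \le (\sup\theta)\sup_{\|\nabla u\|_2 = 1}\int_{\mathbb{R}^N}|u|^{2^\ast}\dx$.

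First I would unwind the translation: since $G_0(s) = \theta(s)|s|^{2^\ast}$ and $0 < \theta(s) \le A+B = \sup\theta$ for every $s$, one has $G_0(s) \le (\sup\theta)\,|s|^{2^\ast}$ pointwise. Integrating against any $u \in D^{1,2}(\mathbb{R}^N)$ with $\|\nabla u\|_2 = 1$ gives $\int_{\mathbb{R}^N} G_0(u)\dx \le (\sup\theta)\int_{\mathbb{R}^N}|u|^{2^\ast}\dx \le (\sup\theta)\,\mathbb{K}$. Taking the supremum over all such $u$ yields $\mathbb{K}_0 \le (\sup\theta)\,\mathbb{K}$. Then I would convert back: using $\mathbb{S}_0 = \mathbb{K}_0^{-2/2^\ast}$ and $\mathbb{S} = \mathbb{K}^{-2/2^\ast}$ (the classical Sobolev constant corresponds to $\bar G = |s|^{2^\ast}$), the inequality $\mathbb{K}_0 \le (\sup\theta)\mathbb{K}$ becomes $\mathbb{S}_0^{-2^\ast/2} \le (\sup\theta)\,\mathbb{S}^{-2^\ast/2}$, which after rearranging is exactly $(\mathbb{S}/\mathbb{S}_0)^{2^\ast/2} \le \sup\theta$.

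Alternatively, and perhaps more directly without invoking the $\mathbb{S}_{\bar G} = \mathbb{K}_{\bar G}^{-2/2^\ast}$ identity, one can argue at the level of $\mathbb{S}_0$ itself. Fix $u$ admissible for $\mathbb{S}$, i.e. $\|u\|_{2^\ast} = 1$. Set $v = c\,u$ where $c > 0$ is chosen so that $\int_{\mathbb{R}^N}G_0(v)\dx = 1$; since $\int_{\mathbb{R}^N}G_0(v)\dx = \int_{\mathbb{R}^N}\theta(cu)|cu|^{2^\ast}\dx \ge (\inf\theta)\,c^{2^\ast}$ and $\le (\sup\theta)\,c^{2^\ast}$, such a $c$ exists and satisfies $c^{2^\ast} \le (\inf\theta)^{-1}$; but more usefully, $1 = \int G_0(v)\dx \le (\sup\theta)c^{2^\ast}$, so $c^{2^\ast} \ge (\sup\theta)^{-1}$. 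Then $v$ is admissible for $\mathbb{S}_0$, so $\mathbb{S}_0 \le \|\nabla v\|_2^2 = c^2\|\nabla u\|_2^2$; infimizing over $u$ gives $\mathbb{S}_0 \le c^2 \mathbb{S}$ with $c^2 = (c^{2^\ast})^{2/2^\ast} \le (\sup\theta)^{-2/2^\ast}$... wait, this gives $\mathbb{S}_0 \le (\sup\theta)^{-2/2^\ast}\mathbb{S}$, i.e. $(\mathbb{S}/\mathbb{S}_0)^{2^\ast/2} \ge \sup\theta$, the wrong direction — so one must be careful to scale in the opposite way, choosing $c$ via the \emph{infimum} over admissible functions for $\mathbb{S}_0$ and comparing to a near-optimal $u$ for $\mathbb{S}$; the clean route is the supremum-constant argument in the second paragraph.

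\textbf{Main obstacle.} The only subtlety is bookkeeping the direction of the scaling inequalities correctly, since $\mathbb{S}$ and $\mathbb{S}_0$ are infima while $\mathbb{K}$ and $\mathbb{K}_0$ are suprema, and the exponent $2^\ast/2 > 1$ means the conversion $\mathbb{S}_{\bar G} = \mathbb{K}_{\bar G}^{-2/2^\ast}$ reverses inequalities. The cleanest and least error-prone path is to prove $\mathbb{K}_0 \le (\sup\theta)\mathbb{K}$ by the trivial pointwise bound $\theta \le \sup\theta$, then quote \cite[Theorem 5.2]{tintabook} for the duality, and rearrange. There is no hard analytic content here; the existence and attainment of the constants are already guaranteed by \cite[Proposition 2.2]{MR2465979}.
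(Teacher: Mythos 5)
Your recommended route (the second paragraph) is correct, but it is genuinely different from the paper's argument. You pass to the dual maximization problems \eqref{sup_tinta}, prove $\mathbb{K}_0 \leq (\sup\theta)\,\mathbb{K}$ from the pointwise bound $G_0(s) \leq (\sup\theta)|s|^{2^\ast}$, and then convert via the identity $\mathbb{S}_{\bar G} = \mathbb{K}_{\bar G}^{-2/2^\ast}$ — which in this paper is exactly Lemma \ref{l_lemadois}, stated and proved \emph{after} Lemma \ref{l_lemaum} (its proof is independent, and the identity is also available from \cite[Theorem 5.2]{tintabook}, so there is no circularity, though the ordering means you are leaning on a tool the paper only introduces next). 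The paper instead argues directly at the level of the infima: it takes the minimizer $u_0$ of $\mathbb{S}_0$, normalized by $\int_{\mathbb{R}^N} G_0(u_0)\dx = 1$, applies the Sobolev inequality \eqref{constS} together with $\int_{\mathbb{R}^N}|u_0|^{2^\ast}\dx \geq (\sup\theta)^{-1}\int_{\mathbb{R}^N}G_0(u_0)\dx$, and reads off $\mathbb{S}(\sup\theta)^{-2/2^\ast} \leq \|\nabla u_0\|_2^2 = \mathbb{S}_0$. This is precisely the ``direct'' argument you attempted in your third paragraph but abandoned: the fix for your wrong-direction scaling is simply to normalize on the $\mathbb{S}_0$ side rather than on the $\mathbb{S}$ side, after which no scaling parameter $c$ and no duality identity are needed (indeed one does not even need the minimizer — an infimizing sequence suffices). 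What each route buys: yours is a clean one-line inequality between suprema plus a quoted duality (so it needs the well-posedness/attainment results of \cite[Proposition 2.2]{MR2465979} and \cite[Theorem 5.2]{tintabook}), while the paper's is self-contained and uses only the definition of $\mathbb{S}$. One small inaccuracy to drop from your write-up: the claim $0 < \theta(s)$ is neither guaranteed at this stage (it requires $A>B$, which is only arranged later) nor needed — only $\theta(s)\leq \sup\theta$ and the positivity of the constants, which you correctly quote, enter the argument.
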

\begin{proof}
We are going to prove that $\mathbb{S} \leq (\sup \theta ) ^{2/2^\ast} \mathbb{S}_0.$ We start from the definition of $\mathbb{S}$  and the fact that $G_0(u) \leq (\sup \theta) |u|^{2^\ast}$, which gives $\mathbb{S} \left((1/\sup\theta )   \int _{\mathbb{R}^N} G_0 (u) \dx \right)^{2/2^\ast} \leq \mathbb{S} \left( \int _{\mathbb{R}^N}  |u|^{2^\ast} \dx \right)^{2/2^\ast} \leq \| \nabla u \|_2 ^2,$ where $u \in D^{1,2}(\mathbb{R}^N)$ and $\int _{\mathbb{R}^N } G_0(u) \dx >0.$ Rearranging the terms yields $\mathbb{S} (\sup \theta )^{-2/2^\ast } \leq \left(  \int _{\mathbb{R}^N} G_0 (u) \dx \right) ^{-2/2^\ast } \| \nabla u \|_2 ^2 .$ In particular, taking $u_0$ as the minimizer of $\mathbb{S}_0,$ the inequality yields $\mathbb{S} (\sup \theta )^{-2/2^\ast } \leq \mathbb{S}_0.$
\end{proof}
\begin{lemma}\label{l_lemadois}
	$\mathbb{S} = \mathbb{K} ^{-2 /2^\ast} , $ $\mathbb{S}_0 = \mathbb{K}_0 ^{-2 /2^\ast}$ and $\mathbb{S}_E = \mathbb{K}_E ^{-2 /2^\ast}.$
\end{lemma}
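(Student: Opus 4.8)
\textit{Proof plan.} The three identities are instances of a single scaling duality: for any Borel function $G$ with $|G(s)|\le C|s|^{2^\ast}$ such that the admissible sets below are nonempty, the infimum defining $\mathbb{S}_G$ and the supremum defining $\mathbb{K}_G$ are related by $\mathbb{S}_G=\mathbb{K}_G^{-2/2^\ast}$. I would isolate this as the content of the proof, applied to $G\in\{G_\ast,G_0,G_E\}$; self-similarity \ref{g_selfsimilar} and the hypothesis $\sup\theta=A+B>0$ are used only to guarantee that all four constants are finite, positive and attained, which is recorded just above via \cite[Proposition 2.2]{MR2465979} and \cite[Theorem 5.2]{tintabook}. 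In particular, for $G_0$ the set $\{u:\int_{\mathbb{R}^N}G_0(u)\dx>0\}$ is nonempty precisely because $\sup\theta>0$; for $G_\ast$ and $G_E$ (with $E>0$) this is trivial. Recall also that $2/2^\ast=(N-2)/N$ and $2^\ast/2=N/(N-2)$, and that $N\ge 3$ makes all the exponents below well defined.

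First I would introduce the dilation $u_t(x)=u(x/t)$, $t>0$, and record its effect on the two functionals by the change of variables $x=ty$:
\begin{equation*}
\int_{\mathbb{R}^N}|\nabla u_t|^2\dx=t^{\,N-2}\int_{\mathbb{R}^N}|\nabla u|^2\dx,\qquad
\int_{\mathbb{R}^N}G(u_t)\dx=t^{\,N}\int_{\mathbb{R}^N}G(u)\dx.
\end{equation*}
These hold for any $G$ as above, so the argument is uniform in $G\in\{G_\ast,G_0,G_E\}$.

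Next, to obtain $\mathbb{S}_G\le\mathbb{K}_G^{-2/2^\ast}$, I would take any $u\in D^{1,2}(\mathbb{R}^N)$ with $\|\nabla u\|_2=1$ and $\int_{\mathbb{R}^N}G(u)\dx>0$, and choose $t=\big(\int_{\mathbb{R}^N}G(u)\dx\big)^{-1/N}$, so that $\int_{\mathbb{R}^N}G(u_t)\dx=1$. Then $u_t$ is admissible for $\mathbb{S}_G$ and $\|\nabla u_t\|_2^2=t^{\,N-2}=\big(\int_{\mathbb{R}^N}G(u)\dx\big)^{-(N-2)/N}$, whence $\mathbb{S}_G\le\big(\int_{\mathbb{R}^N}G(u)\dx\big)^{-2/2^\ast}$; taking the infimum over such $u$ (equivalently, the supremum of $\int_{\mathbb{R}^N}G(u)\dx$ over $\|\nabla u\|_2=1$) gives $\mathbb{S}_G\le\mathbb{K}_G^{-2/2^\ast}$. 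For the reverse inequality I would take $u$ with $\int_{\mathbb{R}^N}G(u)\dx=1$ (so $u\ne0$ and $\|\nabla u\|_2>0$) and choose $t=\|\nabla u\|_2^{-2/(N-2)}$, so that $\|\nabla u_t\|_2=1$; then $\int_{\mathbb{R}^N}G(u_t)\dx=t^{\,N}=\big(\|\nabla u\|_2^2\big)^{-N/(N-2)}=\big(\|\nabla u\|_2^2\big)^{-2^\ast/2}$, hence $\mathbb{K}_G\ge\big(\|\nabla u\|_2^2\big)^{-2^\ast/2}$. Taking the supremum over such $u$ (equivalently, the infimum of $\|\nabla u\|_2^2$ over $\int_{\mathbb{R}^N}G(u)\dx=1$) yields $\mathbb{K}_G\ge\mathbb{S}_G^{-2^\ast/2}$, i.e. $\mathbb{S}_G\ge\mathbb{K}_G^{-2/2^\ast}$. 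Combining the two inequalities gives $\mathbb{S}_G=\mathbb{K}_G^{-2/2^\ast}$, and specializing to $G_\ast$, $G_0$, $G_E$ yields $\mathbb{S}=\mathbb{K}^{-2/2^\ast}$, $\mathbb{S}_0=\mathbb{K}_0^{-2/2^\ast}$ and $\mathbb{S}_E=\mathbb{K}_E^{-2/2^\ast}$.

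There is no serious obstacle here; the only points requiring care are bookkeeping of the exponents (using $2N/(N-2)=2^\ast$) and checking that the normalizations divide by genuinely positive quantities — which is exactly where the nonemptiness of the constraint sets, and in the case of $G_0$ the condition $\sup\theta=A+B>0$, enters. Finiteness and positivity of all four constants are already granted by the cited results, so the scaling bijection between the two admissible classes immediately transfers infimum to supremum as above.
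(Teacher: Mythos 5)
Your proof is correct and rests on the same dilation-scaling duality the paper uses: the paper's two displayed inequalities are exactly the scaling-invariant forms you derive by normalizing with $u_t(x)=u(x/t)$. The only (harmless) difference is that the paper plugs the attained minimizer of $\mathbb{S}_0$ and maximizer of $\mathbb{K}_0$ into those inequalities, whereas your normalization argument needs only positivity and finiteness of the constants, not their attainment.
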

\begin{proof}
It suffices to prove that $\mathbb{S}_0 = \mathbb{K}_0 ^{-2 /2^\ast},$ as the remaining identities follow the same argument. In fact, one has $\mathbb{S}_0 \left( \int _{\mathbb{R}^N } G_0(u) \dx \right)^{2/2^\ast } \leq \| \nabla u \|_2^2,$ for any $u \in D^{1,2}(\mathbb{R}^N),$ with $\int _{\mathbb{R}^N } G_0(u) \dx >0.$ Taking $\bar{u}_0$ as the maximizer of $\mathbb{K}_0,$ we conclude that $\mathbb{S}_0 \mathbb{K}_0 ^{2/2^\ast } \leq 1.$ On the other hand, $\left( \int _{\mathbb{R}^N} G_0(u) \dx \right) ^{2/2^ \ast }\leq \mathbb{K}_0^{2/2^\ast }\|\nabla u \|_2 ^2 ,$ $u \in D^{1,2}(\mathbb{R}^N)$ and $\int _{\mathbb{R}^N } G_0(u) \dx >0.$ Considering $u_0$ as the minimizer of $\mathbb{S}_0,$ we have $1\leq \mathbb{K}_0^{2/2^\ast } \mathbb{S}_0. $
\end{proof}
Next, since $G_E(s) = G_0(s) + EG_{\ast}(s),$ we have the estimate $\mathbb{K}_E \leq \mathbb{K}_0 + E \mathbb{K}.$ By Lemma \ref{l_lemadois}, we can write this in the equivalent form $\mathbb{S}_E^{-2^\ast /2} \leq \mathbb{S}_0^{-2^\ast /2} + E \mathbb{S}^{-2^\ast /2}.$ Consequently, $\mathbb{S}/\mathbb{S}_E \leq \mathbb{S} (  \mathbb{S}_0^{-2^\ast /2} + E \mathbb{S}^{-2^\ast /2} ) ^{2/2^\ast } \rightarrow \mathbb{S}/\mathbb{S}_0,$ as $E \rightarrow 0^+.$ In particular, there exists $E_0>0$ such that,
\begin{equation}\label{apen_des1}
	\frac{\mathbb{S}}{\mathbb{S}_E}\leq \mathbb{S} \left(  \mathbb{S}_0^{-2^\ast /2} + E \mathbb{S}^{-2^\ast /2} \right) ^{2/2^\ast } \leq 2 \frac{\mathbb{S}}{\mathbb{S}_0} ,\quad \text{for }0<E<E_0.
\end{equation}
From this point on, we fix $0<E<E_0$ such that \eqref{apen_des1} holds. Next we consider the increasing function $\phi (s) = 2N/(N-2s),$ $s \in (0,N/2).$ We will show that for any given $\varepsilon \in (0,1),$ there are $A,$ $B>0$ such that, for a suitable $\lambda _\ast,$ we have $\kappa_\ast <\varepsilon$ and $2=\phi (0) < \phi (\varepsilon ) < \mu _\ast.$ If this holds, then $\phi (\kappa _\ast ) < \phi(\varepsilon) < \mu _\ast, $ which proves \ref{g_AR}. We begin by choosing $\lambda_\ast$ to satisfy \ref{g_porbaixo}. Note that $G_E(s) \geq (1/2^\ast ) (\inf \theta +E)|s|^{2^\ast} = (1/2^\ast )(A-B+E)|s|^{2^\ast}.$ Thus, considering $B/(A+E) < 1/2,$ we can set $\lambda _\ast = (1/2^\ast )(A-B+E),$ which also implies $g(s)s>0.$ Since $\sup \theta = A+B,$ now we estimate $\mathbb{S}/\mathbb{S}_0$ using Lemma \ref{l_lemaum} and our choice of $\lambda _\ast,$ 
\begin{equation}\label{estimativa_S0}
	\frac{(\mathbb{S}/\mathbb{S}_{0})^{ 2^\ast /2}}{ \lambda_\ast} \leq \left( \frac{1 + \frac{B}{A+E}}{1-\frac{B}{A+E}} \right) \frac{A+B}{A+B+E}.
\end{equation}
The right-hand side of \eqref{estimativa_S0} can be made arbitrarily small, provided that $A+B$ is sufficiently small (while keeping $A-B+E > 0$). By combining \eqref{apen_des1} and \eqref{estimativa_S0}, we can estimate $\kappa_\ast.$ Indeed, given $\varepsilon \in (0,1),$ if $A+B$ is small enough, then
\begin{equation*}
	\kappa_\ast = \left( \frac{(\mathbb{S}/\mathbb{S}_{E})^{N/(N-2)}}{2^\ast \lambda_\ast} \frac{\| b \|_\infty }{ b_0} \right)^{\frac{N-2}{2}} \leq \left( \frac{(2 \mathbb{S}/\mathbb{S}_{0})^{N/(N-2)}}{2^\ast \lambda_\ast} \frac{\| b \|_\infty }{ b_0} \right)^{\frac{N-2}{2}} < \varepsilon.
\end{equation*}
It remains to show that $\phi(\varepsilon) < \mu_\ast.$ We can choose $\mu_\ast$ as the lower bound of the ratio $g(s)s / G(s),$ i.e., taking $\mu _\ast  =  2^\ast (A-B+E) / (A+B+E),$ it is clear that $g(s)s/G(s) \geq \mu _\ast.$ Therefore, the condition $\phi (\varepsilon ) < \mu _\ast$ is equivalent to
\begin{equation}\label{fffinal}
	 \frac{1 + \frac{B}{A+E}}{1-\frac{B}{A+E}} < \frac{N-2\varepsilon}{N-2}.
\end{equation}
Because $(N-2\varepsilon)/(N-2) > 1,$ we can take $B/(A+E)$ sufficiently small for which \eqref{fffinal} holds. Summing up, for these $A$ and $B,$ we have $2N/(N-2\kappa_\ast)= \phi (\kappa_\ast) < \phi (\varepsilon) <\mu _\ast,$ implying the validity of \ref{g_selfsimilar}--\ref{g_AR} for $g(s) =  (\theta(s) + E) |s|^{2^\ast -2}s.$
\section{Relation between the minimax levels}\label{s_app_minimax} We dedicate this appendix to establishing the previously mentioned relation between the minimax level $c(I)$ and the minimax levels related to the limits originating from the lack of compactness. Here we assume the full set of hypotheses \ref{V_autovalor}, \ref{f_geral}--\ref{f_tang}, \ref{k_geral}, \ref{k_ss}, \ref{k_tec}, \ref{f_zero}, \ref{k_AR} and \ref{h_minimax} for the nonautonomous case. Let us consider the functional $J_\nu : D^{1,2}(\mathbb{R}^N) \rightarrow \mathbb{R}$ given by \eqref{jotaast}. Condition \ref{k_AR} ensures the autonomous limit problem has a well-posed variational structure and allows us to apply the following existence result from \cite{MR2465979}.
\begin{lemmaletter}\cite[Proposition 2.4]{MR2465979} There exists a weak solution $u_\nu \in D^{1,2}(\mathbb{R}^N)$ of $-\Delta u = g_\nu (u)$ in $\mathbb{R}^N,$ such that $J_\nu (u_\nu) = c(J_\nu ).$
\end{lemmaletter}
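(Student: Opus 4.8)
The plan is to realize $u_\nu$ as a nonvanishing profile extracted, via Theorem~\ref{teo_tinta} applied in $D^{1,2}(\mathbb{R}^N)$, from a bounded Cerami sequence for $J_\nu$ at its mountain pass level $c(J_\nu)$. First observe that $g_\nu$ inherits from \ref{k_geral} and \ref{k_AR} the properties $|g_\nu(s)|\le \hat{a}_\ast|s|^{2^\ast-1}$, $\hat{\mu}_\ast G_\nu(s)\le g_\nu(s)s$ with $\hat{\mu}_\ast>2$, and $G_\nu(s)\ge\hat{\lambda}_\ast|s|^{2^\ast}$: these pass to the limit $j\to\infty$ in \ref{k_ss} after the substitution $\sigma=\gamma^{\frac{N-2}{2}j}s$, using the identity $\gamma^{-Nj}|\sigma|^{2^\ast}=|s|^{2^\ast}$. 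Consequently $J_\nu$ has the mountain pass geometry on $D^{1,2}(\mathbb{R}^N)$: it is coercive on small spheres by the Sobolev inequality \eqref{constS}, and $J_\nu(tu)\le \tfrac{t^2}{2}\|\nabla u\|_2^2-\hat\lambda_\ast t^{2^\ast}\|u\|_{2^\ast}^{2^\ast}\to-\infty$ as $t\to\infty$ for any $u\neq 0$. Thus there is a Cerami sequence $(u_k)\subset D^{1,2}(\mathbb{R}^N)$ with $J_\nu(u_k)\to c(J_\nu)$ and $(1+\|u_k\|_{D^{1,2}})\|J_\nu'(u_k)\|_\ast\to 0$, and the Ambrosetti--Rabinowitz type inequality yields, exactly as in Proposition~\ref{p_psbounded},
\[
	\Big(\tfrac12-\tfrac1{\hat\mu_\ast}\Big)\|\nabla u_k\|_2^2\le J_\nu(u_k)-\tfrac1{\hat\mu_\ast}J_\nu'(u_k)\cdot u_k=c(J_\nu)+o_k(1),
\]
so $(u_k)$ is bounded in $D^{1,2}(\mathbb{R}^N)$.

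\textbf{Profile decomposition and the limit equation.} Apply Theorem~\ref{teo_tinta} to $(u_k)$, obtaining profiles $(w^{(n)})_{n\in\mathbb{N}_\ast}$ and the associated sequences $(y_k^{(n)})$, $(j_k^{(n)})$. Testing $J_\nu'(u_k)$ against $d_{k,n}\varphi$ (notation \eqref{notacao}), $\varphi\in C^\infty_0(\mathbb{R}^N)$, and using that $g_\nu$ is autonomous and self-similar — so that the change of variables $x\mapsto\gamma^{-j_k^{(n)}}x+y_k^{(n)}$ together with $g_\nu(\gamma^{\frac{N-2}{2}j}s)=\gamma^{\frac{N+2}{2}j}g_\nu(s)$ transforms the equation into itself — a dominated convergence argument as in Step~1 of Section~\ref{s_comp} shows that \emph{every} $w^{(n)}$ is a weak solution of $-\Delta w=g_\nu(w)$ in $\mathbb{R}^N$, hence a critical point of $J_\nu$, regardless of whether $n\in\mathbb{N}_0$, $\mathbb{N}_+$ or $\mathbb{N}_-$. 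In particular, from
\[
	J_\nu(w)=\Big(\tfrac12-\tfrac1{\hat\mu_\ast}\Big)\|\nabla w\|_2^2+\tfrac1{\hat\mu_\ast}\int_{\mathbb{R}^N}\big(g_\nu(w)w-\hat\mu_\ast G_\nu(w)\big)\dx\ \ge\ 0,
\]
each profile carries nonnegative energy, and a nonzero profile carries strictly positive energy.

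\textbf{Energy splitting and a single nonzero profile.} Passing to a subsequence so that $\|\nabla u_k\|_2^2$ converges, the lower bound \eqref{tinta3} and the $L^{2^\ast}$-convergence \eqref{tinta4}, together with the self-similarity estimate of Lemma~\ref{l_geprop}\,ii) applied to $G_\nu$ (which gives $\int_{\mathbb{R}^N}G_\nu(u_k)\dx\to\sum_{n}\int_{\mathbb{R}^N}G_\nu(w^{(n)})\dx$, exactly as in Proposition~\ref{p_convast} with $b\equiv1$), yield $c(J_\nu)=\lim_k J_\nu(u_k)\ge\sum_{n\in\mathbb{N}_\ast}J_\nu(w^{(n)})$. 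If all $w^{(n)}$ vanished, then $u_k\to 0$ in $L^{2^\ast}(\mathbb{R}^N)$, hence $\int G_\nu(u_k)\to 0$ and, since $J_\nu'(u_k)\cdot u_k\to 0$, also $\|\nabla u_k\|_2\to 0$, forcing $c(J_\nu)=0$, a contradiction. So $w^{(n_0)}\neq 0$ for some $n_0$. For this nonzero critical point the Pohozaev identity $\int_{\mathbb{R}^N}G_\nu(w^{(n_0)})\dx=\tfrac{N-2}{2N}\|\nabla w^{(n_0)}\|_2^2$ (valid by elliptic regularity for the critical-growth equation, as used in Remark~\ref{r_compar} and Step~4 of Section~\ref{s_comp}) shows that the path $\zeta(t)=w^{(n_0)}(\cdot/t)$, $\zeta(0):=0$, belongs to $\Gamma_{J_\nu}$ with $\max_{t\ge0}J_\nu(\zeta(t))=J_\nu(w^{(n_0)})$; hence $J_\nu(w^{(n_0)})\ge c(J_\nu)$. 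Combining with the energy splitting and the nonnegativity of the remaining terms,
\[
	c(J_\nu)\ \ge\ \sum_{n\in\mathbb{N}_\ast}J_\nu(w^{(n)})\ \ge\ J_\nu(w^{(n_0)})\ \ge\ c(J_\nu),
\]
so every other profile is zero and $J_\nu(w^{(n_0)})=c(J_\nu)$. Setting $u_\nu:=w^{(n_0)}$ completes the proof.

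\textbf{Main obstacle.} The crux is the double loss of compactness intrinsic to a pure critical problem posed on all of $\mathbb{R}^N$, where Cerami sequences may disperse under both translations and dilations, so that $J_\nu$ fails the Palais--Smale condition. The self-similarity hypothesis \ref{g_selfsimilar} is precisely what tames this: it forces every asymptotic problem produced by Theorem~\ref{teo_tinta} to coincide with the original one, whence the energy-splitting inequality, in conjunction with the minimax lower bound $J_\nu(w)\ge c(J_\nu)$ valid for each nonzero critical point, leaves room for exactly one nonzero profile, which must therefore realize $c(J_\nu)$.
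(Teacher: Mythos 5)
Your argument is correct, but it follows a different route from the one the paper takes: the paper does not prove this lemma at all, it imports it from K.~Tintarev \cite{MR2465979} (Proposition 2.4), where the solution is obtained from the attainment of the constrained problem \eqref{sup_tinta} (Proposition 2.2 of \cite{MR2465979}, equivalently the attainment of $\mathbb{S}_{\bar G}$, cf. \cite[Theorem 5.2]{tintabook}), after which a rescaled maximizer is identified with a mountain pass critical point. You instead run a direct minimax argument: Cerami sequence for $J_\nu$, boundedness from the Ambrosetti--Rabinowitz inequality inherited by $g_\nu$ through \ref{k_ss}, profile decomposition via Theorem \ref{teo_tinta}, the energy splitting from \eqref{tinta3}--\eqref{tinta4} and the autonomous analogue of Proposition \ref{p_convast} (Lemma \ref{l_geprop}\,ii) with $b\equiv 1$), and finally the Pohozaev/dilation-path comparison of Remark \ref{r_compar} and Step 4 of Section \ref{s_comp} to force $J_\nu(w^{(n_0)})=c(J_\nu)$. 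This is essentially the paper's own machinery for Theorems \ref{teo_periodic} and \ref{teo_compact} specialized to the autonomous, fully scale-invariant functional, and it is sound; two points you handle correctly and should keep explicit are (i) that in $D^{1,2}(\mathbb{R}^N)$ Remark \ref{rem_tinta} is unavailable, so the profiles with $n\in\mathbb{N}_-$ must be retained in the splitting, which is harmless precisely because $J_\nu$ is invariant under the whole dislocation group, and (ii) that the Pohozaev identity for a $D^{1,2}$ critical point of a merely continuous, critical self-similar $g_\nu$ requires the Brezis--Kato regularity step, the same fact the paper already invokes in Step 4 of Section \ref{s_comp} and in Proposition \ref{p_jota}. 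What your approach buys is a self-contained proof inside the paper's framework; what the cited approach buys is independence from minimax technology, resting instead on the compactness of the maximization problem for self-similar integrands.
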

The following energy estimate is crucial for controlling the behavior of the profiles $w^{(n)},$ $n \in \mathbb{N}_{+}.$
\begin{proposition}\label{p_jota}
	$c(I) \leq c(J_\nu ).$
\end{proposition}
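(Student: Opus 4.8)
The plan is to exhibit an explicit path in $\Gamma_I$ built from the minimizer $u_\nu$ of $J_\nu$ (provided by the preceding lemma) and to estimate $\sup_{t\ge0} I$ along it by $c(J_\nu)$. First I would recall that $u_\nu$ solves $-\Delta u = g_\nu(u)$ and attains $J_\nu(u_\nu)=c(J_\nu)$; since $g_\nu$ is self-similar, the Pohozaev identity (as invoked in Remark~\ref{r_compar} and in Step~4 of Section~\ref{s_comp}) gives $\tfrac{N-2}{2N}\int_{\mathbb R^N}|\nabla u_\nu|^2\dx = \int_{\mathbb R^N}G_\nu(u_\nu)\dx$, so that the dilation path $\zeta(t)(x)=u_\nu(x/t)$, $\zeta(0):=0$, satisfies $J_\nu(\zeta(t)) = \tfrac12 t^{N-2}\|\nabla u_\nu\|_2^2 - t^N\int G_\nu(u_\nu)\dx$, which attains its maximum exactly at $t=1$ with value $c(J_\nu)$, and tends to $-\infty$ as $t\to\infty$.

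Next I would transport this path into $H^1(\mathbb R^N)$ via the group action of Theorem~\ref{teo_tinta}: for a sequence $j_k\to+\infty$ set $\xi_k(t)(x) = \gamma^{\frac{N-2}{2}j_k} \zeta(t)(\gamma^{j_k}x) = d_{k}\,\zeta(t)$ (translation part zero, dilation part $j_k$), exactly as in the path $\zeta_k^{(\varepsilon)}$ used in the proof of Lemma~\ref{l_minimax}. Then $\int|\nabla \xi_k(t)|^2 = t^{N-2}\|\nabla u_\nu\|_2^2$ is unchanged, while by the self-similarity \ref{k_ss}/\ref{g_selfsimilar} and the change of variables, $\int_{\mathbb R^N} K(x,\xi_k(t))\dx = \int_{\mathbb R^N}\gamma^{-Nj_k}K(\gamma^{-j_k}x,\gamma^{\frac{N-2}{2}j_k}\zeta(t))\dx$; using \ref{h_infinito} together with \ref{k_ss} one shows this converges, as $k\to\infty$, to $\int_{\mathbb R^N}G_\nu(\zeta(t))\dx$ uniformly for $t$ in compact sets. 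Similarly the potential term $\int\gamma^{-2j_k}V(\gamma^{-j_k}x\,)\,\zeta(t)^2\dx\to 0$ (here one uses $V\in L^\infty$ together with $L^2$-integrability, or \ref{V_autovalor}, and $j_k\to+\infty$), and by \ref{f_geral}, since the rescaled $\zeta(t)$ vanishes in every $L^q$, $2<q<2^\ast$, the subcritical term $\int F(x,\xi_k(t))\dx\to 0$ as well (using $F(x,s)\ge 0$ from \ref{f_zero} or \ref{f_porbaixo}, so it only helps the inequality). Hence $I(\xi_k(t)) \to J_\nu(\zeta(t))$ uniformly on compact $t$-intervals, and moreover $I(\xi_k(t))\le \tfrac12 t^{N-2}\|\nabla u_\nu\|_2^2 - t^N\big(\int G_\nu(\zeta(t))\dx + o_k(1)\big)$ for large $t$, so that for $k$ large $\xi_k\in\Gamma_I$.

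Finally, since the map $t\mapsto\sup$ is controlled on a fixed compact interval $[0,T]$ containing the maximizer (the maximizer of each $\psi_k(t):=\tfrac12 t^{N-2}\|\nabla u_\nu\|_2^2 - t^N C_k$ stays in a fixed interval $[a_1,a_2]$ because $C_k\to\int G_\nu(u_\nu)\dx>0$), we get $\sup_{t\ge0}I(\xi_k(t)) \le \sup_{t\ge0}\psi_k(t) \to \sup_{t\ge0}J_\nu(\zeta(t)) = c(J_\nu)$ as $k\to\infty$. Therefore $c(I)\le \sup_{t\ge0}I(\xi_k(t)) \le c(J_\nu) + o_k(1)$, and letting $k\to\infty$ yields $c(I)\le c(J_\nu)$. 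The main obstacle is the uniform-in-$t$ control of the nonlinear terms under the rescaling: one must check that the convergence $\int K(\gamma^{-j_k}x,\dots)\dx\to\int G_\nu\dx$ coming from \ref{k_ss} and \ref{h_infinito} is uniform for $t$ ranging over a compact set (equivalently, uniform over the corresponding compact family of profiles $\zeta(t)$), and that the maximizing $t$ cannot escape to $0$ or $\infty$ along the sequence; both are handled exactly as in the proof of Lemma~\ref{l_minimax}, using the a priori bounds $0<a_1<t_k<a_2$ and the uniform-on-compacts hypotheses.
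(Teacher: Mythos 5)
Your overall strategy is the same as the paper's (rescale a dilation path built from the minimizer $u_\nu$ of $J_\nu$, use the Pohozaev identity to control the maximizing parameter, and pass to the limit $j_k\to+\infty$), but as written it has a genuine gap: you work directly with $\zeta(t)=u_\nu(\cdot/t)$ and its rescalings $\xi_k(t)$, yet $u_\nu$ is only known to lie in $D^{1,2}(\mathbb{R}^N)$, not in $H^1(\mathbb{R}^N)$. Since $u_\nu$ has bubble-type decay $|x|^{2-N}$, it fails to be in $L^2(\mathbb{R}^N)$ for $N=3,4$, so $\xi_k(t)\notin H^1(\mathbb{R}^N)$ in general and your path is not admissible in $\Gamma_I$ (which consists of curves in $C([0,\infty),H^1(\mathbb{R}^N))$); moreover the term $\int_{\mathbb{R}^N}V(x)\,\xi_k(t)^2\dx$ need not even be finite, because $V$ is merely bounded (e.g.\ $V\equiv 1$ at infinity), so the step ``the potential term $\to 0$ using $V\in L^\infty$ together with $L^2$-integrability'' begs precisely the integrability that may fail. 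The paper's proof avoids this by first truncating, $u_n=\eta_n u_\nu$ with compact support (so $u_n\in H^1$ and all terms are finite), running the rescaling argument for each fixed $n$, and only at the end letting $n\to\infty$, using the Pohozaev identity and Remark \ref{r_compar} to conclude $c(I)\le J_\nu(u_\nu(\cdot/\bar t))\le J_\nu(u_\nu)=c(J_\nu)$. Your argument needs this truncation-and-double-limit structure (or some substitute) to be complete.

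A second, smaller issue: you take the translation part of the group element to be zero, i.e.\ $\xi_k(t)=\gamma^{\frac{N-2}{2}j_k}\zeta(t)(\gamma^{j_k}\cdot)$. With that choice, the rescaled argument of $K$ is $\gamma^{-j_k}x$, which stays near the origin, so hypothesis \ref{k_ss} only yields convergence to $G_0$, not to $G_\nu$ for a general $\nu\in\mathbb{Z}^N$ or $\nu=\infty$. To obtain $c(I)\le c(J_\nu)$ for every $\nu$ you must insert the translations prescribed by \ref{k_ss}, namely centers $y_k=\nu$ (for $\nu\in\mathbb{Z}^N$) or $|y_k|\to\infty$ (for $\nu=\infty$), exactly as the paper does with $u_n^{(k)}=\gamma^{\frac{N-2}{2}j_k}u_n(\gamma^{j_k}(\cdot-y_k))$. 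This is an easy fix, but as stated your construction only proves the inequality for $\nu=0$.
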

\begin{proof}By standard elliptic regularity theory \cite{brezis-kato,struwebook}, it is known that $u_\nu \in C(\mathbb{R}^N).$ Take any $\eta \in C^\infty(\mathbb{R},[0,1])$ such that $\eta (s) = 1,$ if $|s|\leq 1,$ and $\eta (s)=0,$ if $|s|>2.$ Define $\eta _n (x) = \eta (|x|^2/n^2).$ Clearly $\eta _n \in C^\infty _0(\mathbb{R}^N),$ $\eta_n \leq 1,$ $\| \nabla \eta _n\|_\infty \leq C/n $ and $|x||\nabla \eta _n(x)| \leq C,$ for some $C>0.$  Define $u_n = \eta _n u_\nu \in H^1(\mathbb{R}^N),$ which has compact support. Also define $\zeta ^{(n)}_k (t) = u_n ^{(k)} (\cdot /t),$ where $u_n^{(k)} = \gamma ^{\frac{N-2}{2} j_k} u_n (\gamma ^{j_k}( \cdot - y_k) ),$ $(j_k)\subset \mathbb{Z}$ is such that $j_k \rightarrow + \infty$ and $(y_k)$ is given in the definition of \ref{k_ss} ($y_k = \nu$ or $|y_k| \rightarrow \infty$). We prove that $\zeta ^{(n)}_k  \in \Gamma _I,$ for $k$ and $n$ sufficiently large. We start by noticing that a change of variable, together with the fact that $g_\nu$ is self-similar, leads to
	\begin{equation}\label{conv_g}
		\lim _{k \rightarrow \infty }\int _{\mathbb{R}^N} K(x,\zeta ^{(n)}_k (t))\dx = \lim _{k \rightarrow \infty } \int _{\mathbb{R}^N} \gamma ^{-Nj_k} K( \gamma^{-j_k} x +y_k ,\gamma ^{\frac{N-2}{2} j_k} u_n (\cdot / t)) \dx = t^N\int _{ \mathbb{R}^N }  G _\nu  (  u_n )\dx.
	\end{equation}
	Moreover, by \ref{f_geral} and Lebesgue convergence theorem, we have
	\begin{equation*}
		\lim _{k \rightarrow \infty } \int _{\mathbb{R}^N}F(x, \zeta ^{(n)}_k (t) ) \dx =t^N\lim _{k \rightarrow \infty } \int _{\mathbb{R}^N} \gamma ^{-N j_k} F( (\gamma ^{-j_k} x -y_k )t, \gamma ^{\frac{N-2}{2} j_k } u_n  ) \dx=0.
	\end{equation*}
	Similarly, since $V \in L^\infty (\mathbb{R}^N),$
	\begin{equation}\label{conv_ve}
		\lim _{k \rightarrow \infty }\int _{\mathbb{R}^N} V(x) (\zeta ^{(n)}_k (t))^2  \dx = t^N \lim _{k \rightarrow \infty } \int _{\mathbb{R}^N} \gamma ^{-2j_k} V((\gamma ^{-j_k} x -y_k )t ) u_n^2 \dx = 0.
	\end{equation}
	Nevertheless, because $u_\nu$ is a critical point of $J_\nu ,$ the following Pohozaev identity holds,
	\begin{equation}\label{poho_ge}
		\int _{\mathbb{R}^N } G_\nu (u _\nu ) \dx = \frac{N-2}{2N} \int_{\mathbb{R}^N } |\nabla u_\nu  |^2 \dx >0.
	\end{equation}
	Furthermore, \ref{k_geral} and the Lebesgue convergence theorem yield
	\begin{equation*}
		\lim _{n \rightarrow \infty } \int _{\mathbb{R}^N } G_\nu (u_n) \dx = \int _{\mathbb{R}^N } G_\nu (u_\nu)\dx.
	\end{equation*}
	Thus, there is $n_0\in \mathbb{N}$ such that for $n \geq n_0,$ the right-hand side of \eqref{conv_g} is positive. On the other hand, since $F(x,s) \geq 0,$ for a given $n\geq n_0,$ there is $k_n$ such that, for $k \geq k_n,$ we have
	\begin{multline}\label{eq_gostei}
		I(\zeta ^{(n)}_k (t)) \leq \frac{t^{N-2}}{2}  \int _{\mathbb{R}^N} |\nabla u_n |^2\dx +  \frac{t^N}{2}  \int _{\mathbb{R}^N} \gamma ^{-2j_k} V(  (\gamma ^{-j_k} x -y_k )t) u_n^2 \dx  \\ - t^{N} \int _{ \mathbb{R}^N }  \gamma ^{-Nj_k} K(  (\gamma ^{-j_k} x -y_k )t ,\gamma ^{\frac{N-2}{2} j_k} u_n  ) \dx \rightarrow -\infty,\quad \text{as }t \rightarrow \infty.
	\end{multline}
	That is, $\zeta ^{(n)}_k \in \Gamma _I,$ for $n \geq n_0$ and $k \geq k_n.$ In this case, we infer the existence of $t_k^{(n)} > 0 $ such that $\max _{t \geq 0} I(\zeta _k ^{(n)} (t)) = I(\zeta _k ^{(n)} (t_k^{(n)}))$ and replacing $t$ by $t_k^{(n)}$ in \eqref{eq_gostei}, we can see that $(t_k^{(n)})_{k \geq k_n}$ is bounded. Up to a subsequence, $t_k^{(n)} \rightarrow \tau ^{(n)} \geq 0$ and we have $\lim_{k \rightarrow \infty} I(\zeta ^{(n)}_k (t)) = J_\nu (u_n(\cdot / \tau ^{(n)})),$ because the convergences in \eqref{conv_g}--\eqref{conv_ve} are uniformly on compact sets of $\mathbb{R}.$ Summing up, 
	\begin{equation*}
		c(I) \leq \lim _{k \rightarrow \infty} \max_{t \geq 0} I(\zeta ^{(n)}_k (t) )  = J_\nu (u_n(\cdot / \tau ^{(n)})),\quad \text{for }n \geq n_0.
	\end{equation*}
	By a change of variables and \eqref{poho_ge} it is clear that $(\tau ^{(n)})_{n \geq n_0}$ is bounded and $\tau ^{(n)} \rightarrow \bar{t} \geq 0,$ up to a subsequence. Thus, by Lebesgue convergence theorem, \eqref{poho_ge} and Remark \ref{r_compar} we have
	\begin{equation*}
		c(I) \leq \lim _{n \rightarrow \infty }J _\nu (u_n(\cdot / \tau ^{(n)})) = J_\nu (u_\nu(\cdot / \bar{t} ) \leq \max _{t \geq 0 }J_\nu (u_\nu(\cdot / t  ) ) = J _\nu (u_\nu ) = c(J_\nu). \qedhere
	\end{equation*}
\end{proof}
We now establish the natural ordering between the minimax levels of the original problem \eqref{P} and its periodic counterpart \eqref{PP}. 
\begin{proposition}\label{p_ccp}
	$c(I) \leq c(I_P).$ 
\end{proposition}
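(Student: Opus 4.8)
The plan is to exploit the asymptotically periodic structure of the problem by pushing a nearly optimal path for $I_P$ far from the origin. Two ingredients drive the argument. First, since $V_P,f_P,k_P$ are $\mathbb{Z}^N$-periodic, the change of variables $x\mapsto x+y$ shows $I_P(u(\cdot-y))=I_P(u)$ for all $u\in H^1(\mathbb{R}^N)$ and all $y\in\mathbb{Z}^N$. Second, and this is the technical heart, for every compact set $\mathcal K\subset H^1(\mathbb{R}^N)$ one has
\[
\sup_{u\in\mathcal K}\bigl|I(u(\cdot-y))-I_P(u(\cdot-y))\bigr|\longrightarrow 0,\qquad |y|\to\infty,\ y\in\mathbb{Z}^N .
\]
I would prove the latter by writing, after translating, $I(u(\cdot-y))-I_P(u(\cdot-y))=\tfrac12\int_{\mathbb{R}^N}(V-V_P)(x+y)u^2\dx+\int_{\mathbb{R}^N}(F_P-F)(x+y,u)\dx+\int_{\mathbb{R}^N}(K_P-K)(x+y,u)\dx$ and estimating each term over $\mathbb{R}^N=B_\rho\cup B_\rho^c$. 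On $B_\rho^c$, the bounds $|V-V_P|\in L^\infty$, \ref{f_geral} (for $f$ and $f_P$) and \ref{k_geral} (for $k$ and $k_P$) reduce the integral to the $L^2$-, $L^{2^\ast}$- and $L^{p_\varepsilon}$-tails of $\mathcal K$, uniformly small by compactness. Inside $B_\rho$ one first discards the set $\{|u|>R\}$, which has uniformly small measure and, by the same growth bounds together with the uniform integrability of $\mathcal K$, uniformly small contribution; on $\{x\in B_\rho:|u(x)|\le R\}$ the integrand is dominated by $R\sup_{|s|\le R}\bigl(|V-V_P|+|f-f_P|+|k-k_P|\bigr)(x+y,\,\cdot)$, which by \ref{h_infinito} tends to $0$ uniformly for $x\in B_\rho$ as $|y|\to\infty$ (then $|x+y|\to\infty$). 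Choosing $\rho$, then $R$, then $|y|$ large gives the claim.

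Granting this, fix $\delta>0$ and choose, from the definition of $c(I_P)$, a path $\xi^0\in\Gamma_{I_P}$ with $\sup_{t\ge0}I_P(\xi^0(t))\le c(I_P)+\delta$; since $I_P(\xi^0(t))\to-\infty$, pick $T_1>0$ with $I_P(\xi^0(T_1))<-1$ and set $e:=\xi^0(T_1)\neq0$. A key observation is that \ref{f_tang} at $\theta=0$ gives $f(x,s)s\ge2F(x,s)$ (and likewise for $f_P$), while \ref{g_AR}/\ref{k_AR} give $g(s)s\ge2G(s)$, resp.\ $k(x,s)s\ge2K(x,s)$ (the primitives being nonnegative); hence, for each $x$, the maps $s\mapsto s^{-2}(F+K)(x,se)$ and $s\mapsto s^{-2}(F_P+K_P)(x,se)$ are nondecreasing on $(0,\infty)$. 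Consequently $s\mapsto I_P(se)$ is, on $[1,\infty)$, bounded above by $I_P(e)<-1<0$, so the path $\xi$ defined by $\xi(t)=\xi^0(t)$ on $[0,T_1]$ and $\xi(t)=(t/T_1)e$ for $t\ge T_1$ belongs to $\Gamma_{I_P}$ with $\sup_{t\ge0}I_P(\xi(t))=\max\{\sup_{[0,T_1]}I_P(\xi^0),\,\sup_{s\ge1}I_P(se)\}\le c(I_P)+\delta$. Now put $\xi_y(t):=\xi(t)(\cdot-y)$ for $y\in\mathbb{Z}^N$; then $\xi_y(0)=0$, and since $e(\cdot-y)\neq0$ the argument from the proof of Lemma \ref{l_mpgeometry}(ii) (based on \ref{h_tang1}) yields $I(\xi_y(t))\to-\infty$ as $t\to\infty$, so $\xi_y\in\Gamma_I$.

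Finally I would estimate $\sup_{t\ge0}I(\xi_y(t))=\max\{\sup_{t\in[0,T_1]}I(\xi_y(t)),\ \sup_{s\ge1}I(se(\cdot-y))\}$. The set $\{\xi^0(t):t\in[0,T_1]\}$ is compact in $H^1(\mathbb{R}^N)$, so by the auxiliary lemma and the $\mathbb{Z}^N$-invariance of $I_P$, $\sup_{t\in[0,T_1]}I(\xi_y(t))\le\sup_{t\in[0,T_1]}I_P(\xi^0(t))+o_{|y|}(1)\le c(I_P)+\delta+o_{|y|}(1)$. For the second term, applying the auxiliary lemma to $\mathcal K=\{e\}$ gives $I(e(\cdot-y))=I_P(e)+o_{|y|}(1)<0$ for $|y|$ large, and then the monotonicity above (now for $I$, evaluated along $s\mapsto s^{-2}(F+K)(x,se(\cdot-y))$) forces $\sup_{s\ge1}I(se(\cdot-y))\le I(e(\cdot-y))\le c(I_P)$. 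Combining, $c(I)\le\sup_{t\ge0}I(\xi_y(t))\le c(I_P)+\delta+o_{|y|}(1)$; letting $|y|\to\infty$ and then $\delta\to0^+$ yields $c(I)\le c(I_P)$. The main obstacle is the auxiliary uniform-convergence lemma: the delicate point is the ordering of the quantifiers (a large spatial ball first, to control the $L^p$-tails uniformly over the compact set; then a large amplitude cut-off; then $|y|\to\infty$), together with the use of compactness to upgrade the pointwise convergence supplied by \ref{h_infinito} to control that is uniform along the whole path.
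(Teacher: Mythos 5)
Your argument is correct, but it is genuinely different from the paper's proof. The paper proves Proposition \ref{p_ccp} by taking a periodic ground state $u_P$ with $I_P(u_P)=c(I_P)$ (whose existence is Theorem \ref{teo_periodic}/\ref{teo_periodic2}), truncating it ($u_n=\eta_n u_P$, using elliptic regularity to have $u_P\in C(\mathbb{R}^N)$), translating by $|y_k|\to\infty$, and then invoking \ref{h_minimax} through Remark \ref{r_compar}: in case i) the fiber $t\mapsto I_P(tu_P)$ has its maximum at $t=1$ by strict monotonicity of $(f_P+k_P)/|s|$, and in case ii) the dilation path together with the Pohozaev identity plays the same role, so that the limiting maxima identify with $I_P(u_P)=c(I_P)$. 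You instead translate an \emph{arbitrary} almost-optimal path of $I_P$, append a ray $s\mapsto se$ controlled by the weak Ambrosetti--Rabinowitz inequalities $f(x,s)s\ge 2F(x,s)$ (from \ref{f_tang} at $\theta=0$) and $k(x,s)s\ge 2K(x,s)$ (from \ref{g_AR}/\ref{k_AR} and $K\ge0$), and upgrade the pointwise convergence in \ref{h_infinito} to a uniform comparison $|I(u(\cdot-y))-I_P(u(\cdot-y))|\to0$ over compact subsets of $H^1(\mathbb{R}^N)$ by the ball/amplitude truncation with the correct order of quantifiers. This buys something real: you never use \ref{h_minimax}, nor the existence and continuity of a periodic ground state, nor the Pohozaev identity, so your proof is both more elementary in its inputs and formally more general; on the other hand, the paper's route is shorter given the machinery it has already built (Theorem \ref{teo_periodic}, Remark \ref{r_compar}), and the very same translated-truncated-path construction is recycled for Proposition \ref{A_estrito} and is parallel to Proposition \ref{p_jota}, which is why the paper organizes it that way. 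The only place in your sketch that deserves an explicit line is the truncation step inside $B_\rho$: the uniform smallness of $\int_{\{|u|>R\}}(|u|^2+|u|^{2^\ast}+|u|^{p_\varepsilon})\dx$ over $u\in\mathcal K$ follows because the images of the compact set $\mathcal K$ under $u\mapsto|u|^q$ are compact in $L^1(\mathbb{R}^N)$, hence uniformly integrable, while $|\{|u|>R\}|\le \sup_{\mathcal K}\|u\|_2^2/R^2$ is uniformly small; with that remark added, the argument is complete.
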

\begin{proof}
	Let $u_P \in H^1 (\mathbb{R}^N)$ be a nontrivial solution of \eqref{PP} such that $I_P(u)=c(I_P).$ Define $u_n = \eta _n u_P,$ which has a compact support $U_n,$ where $\eta _n \in C^\infty _0 (\mathbb{R}^N)$ is given in the proof of Proposition \ref{p_jota}. Moreover, by elliptic regularity theory \cite{brezis-kato,struwebook}, $u_n \in C(\mathbb{R}^N).$ Next, we take $(y_k) \subset \mathbb{Z}^N$ with $|y_k|\rightarrow \infty.$\\
	\noindent \textit{Case i)}: $ s \mapsto \left(f_P(x,s)+k_P(x,s) \right)|s|^{-1}$ is strictly increasing, for all $x \in \mathbb{R}^N.$ Define $\xi^{(n)} _k(t) = t u_n (\cdot -y_k),$ $k,$ $n\in \mathbb{N}.$ Following the same argument used to prove Lemma \ref{l_mpgeometry}, we have
	\begin{equation}\label{lim_te}
		\frac{I(\xi^{(n)} _k(t)) }{t^2} = \frac{1}{2}\| \nabla u_n \|^2_2 + \frac{1}{2}\int _{\mathbb{R}^N} V(x+y_k) u_n^2\dx - \int _{\mathbb{R}^N} \frac{H(x+y_k , t u_n)}{t^2} \dx\rightarrow - \infty, \text{ as }t \rightarrow \infty,
	\end{equation}
	where $H(x,s) = F(x,s) + K(x,s).$ Thus $\xi ^{(n)}_k \in \Gamma _I.$ In particular, there is $t_k^{(n)} > 0$ such that
	\begin{equation}\label{lim_tete}
		c(I) \leq \sup _{t \geq 0}  I(\xi_k ^{(n)} (t) ) = I(\xi_k ^{(n)} (t_k^{(n)})).
	\end{equation}
	If, up to a subsequence, $t_k^{(n)} \rightarrow \infty,$ as $k\rightarrow \infty, $ then we can use \eqref{lim_te} again (because \ref{h_tang1} is uniform in $x$), replacing $t$ by $t_k^{(n)}$ and obtain the following contradiction: $0<c(I) \leq \lim _{k \rightarrow \infty} I(\xi^{(n)} _k(t)) =-\infty.$ Consequently, $\lim _{k \rightarrow \infty}t_k^{(n)} = \tau _{n} \geq 0,$ up to a subsequence. Now we define
	\begin{align*}
		&X (u) = \int _{\mathbb{R}^N} H(x,u) \dx,\ X _P(u) = \int _{\mathbb{R}^N} H_P(x,u) \dx,\\
		&Y (u) = \int _{\mathbb{R}^N}V(x) u^2 \dx \ \text{and} \ Y_P (u) = \int _{\mathbb{R}^N}V_P(x) u^2 \dx,\ \text{for } u \in H^1 (\mathbb{R}^N),
	\end{align*}
	where $H_P(x,s) = F_P(x,s) + K_P(x,s).$ We use \ref{h_infinito} to see that $\lim _{k \rightarrow \infty}X (\xi_k ^{(n)} (t) ) = X_P(t u_n)$ and $\lim _{k \rightarrow \infty} Y (\xi_k ^{(n)} (t) ) = Y_P( t u_n),$ uniformly in $t$ on compact sets of $\mathbb{R}.$ This guarantees that $\lim _{k \rightarrow \infty}I (\xi_k ^{(n)} (t) ) = I_P(t u_n),$ in the same sense, and one can take the limit as $k \rightarrow \infty$ in \eqref{lim_tete}, to obtain
	\begin{equation}\label{lim_tetete}
		c(I) \leq I_P( \tau _{n} u_n )= \frac{\tau _{n}^2}{2}\| \nabla u_n \|^2_2 + \frac{\tau _{n}^2}{2}\int _{\mathbb{R}^N} V_P(x) u_n^2\dx - \int _{\mathbb{R}^N} H_P (x , \tau _{n} u_n) \dx.
	\end{equation}
	Using the argument of \eqref{lim_te} in \eqref{lim_tetete}, we see that $(\tau _{n})$ is bounded and $\tau _{n} \rightarrow \bar{t}>0,$ up to a subsequence. Therefore, we can apply the Lebesgue convergence theorem, \ref{h_minimax} and Remark \ref{r_compar} to conclude
	\begin{equation*}
		c(I) \leq \lim _{n \rightarrow \infty}I_P( \tau _{n} u_n ) = I_P( \bar{t} u_P ) \leq \sup _{t \geq 0} I_P (t u_P) = I_P (u_P) = c(I_P).
	\end{equation*}
	
	\noindent\textit{Case ii)}: Eq. \eqref{PP} is independent of $x,$ that is, $V_P(x) = V_P >0,$ $f_P(x,s) = f_P(s)$ and $k_P(x,s) = k_P(s).$ In this case, we consider $\zeta _k^{(n)} (t) =u_n ( (\cdot -y_k)/t),$ $k,$ $n \in \mathbb{N},$ with $\zeta _k^{(n)} (0):=0.$ We prove that there are $n \geq n_0$ and $k \geq k_n$ such that $\zeta _k^{(n)} \in \Gamma _I,$ for $n_0$ and $k_n$ large enough. To do this, we start by pointing out that $u_P$ satisfies the Pohozaev identity
	\begin{equation}\label{pohoci}
		\int _{\mathbb{R}^N} H_P(u_P) - \frac{V_P}{2} u^2_P\dx= \frac{N-2}{2N} \int _{\mathbb{R}^N} |\nabla u_P|^2 \dx,
	\end{equation}
	which implies that the left-hand side of \eqref{pohoci} is positive. Since
	\begin{equation}\label{limitepoho}
		\lim _{n \rightarrow \infty }\int _{\mathbb{R}^N} H_P(u_n) -\frac{V_P}{2}u^2_n \dx = \int _{\mathbb{R}^N} H_P(u_P) -\frac{V_P}{2}u^2_P   \dx,
	\end{equation}
	there is $n_0$ such that 
	\begin{equation*}
		\int _{\mathbb{R}^N} H_P(u_n) -\frac{V_P}{2}u^2_n \dx >0,\quad \text{for }n \geq n_0.
	\end{equation*}
	Let us consider, by a contradiction argument, the existence of $k_\ast,$ $n_\ast\geq n_0$ such that $\zeta _{k_\ast} ^{(n_\ast)} \not \in \Gamma _I.$ In this case, there is $E_0>0$ and $t_m \rightarrow \infty$ with $I(\zeta_{k_\ast}^{(n_\ast)}(t_m) ) \geq -E_0,$ for any $m \in \mathbb{N}.$ By \ref{f_geral}, \ref{g_geral} and \ref{h_infinito}, Lebesgue convergence theorem implies
	\begin{equation*}
		\lim _{m \rightarrow \infty }\int _{\mathbb{R}^N} H(t_m x +y_{k_\ast}, u_{n_\ast})  - \frac{V(t_mx + y_{k_\ast})}{2} u^2_{n_\ast} \dx = \int _{\mathbb{R}^N}H_P (u_{n_\ast}) - \frac{V_P}{2}u^2_{n_\ast} \dx.
	\end{equation*}
	This leads to the contradiction that
	\begin{align}\label{ezero0}
		-E_0 & \leq I(\zeta_{k_\ast}^{(n_\ast)}(t_m) ) \nonumber \\
		&= \frac{t^{N-2}_m}{2}\| \nabla u_{n_\ast} \|^2_2 - t_m^N \left[\int _{\mathbb{R}^N} H(t_m x +y_{k_\ast}, u_{n_\ast})  - \frac{V(t_mx + y_{k_\ast})}{2} u^2_{n_\ast} \dx   \right]\rightarrow - \infty,\ \text{as }m \rightarrow \infty.
	\end{align}
	Thus, there is $t_k^{(n)}>0,$ $k,$ $n\geq n_0$ such that
	\begin{equation}\label{lim_tetegamma}
		c(I) \leq \sup _{t \geq 0}  I(\zeta_k ^{(n)} (t) ) = I(\zeta_k ^{(n)} (t_k^{(n)})).
	\end{equation}
	The sequence $(t_k^{(n)})_k$ is bounded, because if it were not, we could argue as in \eqref{ezero0} to obtain the same contradiction by considering that $t_k >|y_k|^2,$ up to a subsequence, allowing us to have $|t_k x + y_k| \rightarrow \infty.$ Likewise, $\lim _{k \rightarrow \infty }t_k^{(n)} =  \tau_{n}\geq 0,$ up to a subsequence.
	Arguing as before, under the same notation as above, $\lim _{k \rightarrow \infty}X (\zeta_k ^{(n)} (t) ) = X_P( u_n(\cdot /t))$ and $\lim _{k \rightarrow \infty} Y (\zeta_k ^{(n)} (t) ) = Y_P( u_n(\cdot /t)),$ uniformly in $t$ on compact sets of $\mathbb{R},$ which leads to $\lim _{k \rightarrow \infty}I (\zeta_k ^{(n)} (t) ) = I_P(u_n(\cdot /t)),$ in the same sense. Taking the limit as $k \rightarrow \infty$ in \eqref{lim_tetegamma}, we have
	\begin{equation*}
		c(I) \leq I_P (u_n (\cdot / \tau_n ))= \frac{\tau^{N-2}_n }{2}\| \nabla u_n \|^2_2 - \tau_n ^N \left[\int _{\mathbb{R}^N} H_P(u_n)  - \frac{V_P}{2} u^2_n \dx   \right].
	\end{equation*}
	By \eqref{limitepoho}, the sequence $(\tau _n)$ is bounded and $\lim_{n \rightarrow \infty } \tau _n= \bar{t}>0,$ up to a subsequence. Since $n \geq n_0$ is arbitrary, one can also take the limit as $n \rightarrow \infty$ and use \ref{h_minimax} together with Remark \ref{r_compar} to get
	\begin{equation*}
		c(I) \leq \lim _{n \rightarrow \infty} I_P (u_n (\cdot / \tau_n )) = I_P (u_P (\cdot / \bar {t} )) \leq \sup _{t \geq 0} I_P (u_P (\cdot /  t )) = I_P (u_P )= c(I_P). \qedhere
	\end{equation*}
\end{proof}
\begin{corollary}
The periodic problem \eqref{PP} cannot admit a nontrivial solution $u_P$ at the mountain pass level with an energy satisfying $I_P (u_P) <c(I).$
\end{corollary}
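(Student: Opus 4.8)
This corollary is an immediate consequence of the energy ordering $c(I)\le c(I_P)$ established in Proposition~\ref{p_ccp}, so the proof will amount to spelling out the contrapositive. The plan is as follows.

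First I would recall that, by the remark following Lemma~\ref{l_mpgeometry}, the minimax level $c(I_P)$ coincides with the usual mountain pass level for the periodic functional $I_P$; thus ``a nontrivial solution $u_P$ of \eqref{PP} at the mountain pass level'' means a nontrivial critical point of $I_P$ with $I_P(u_P)=c(I_P)$. Even without presupposing this identity, Remark~\ref{r_compar} shows that under \ref{h_minimax} \emph{every} nontrivial critical point $u_P$ of $I_P$ satisfies $c(I_P)\le I_P(u_P)$: in the first case of \ref{h_minimax} one tests $\Gamma_{I_P}$ with the dilation path $t\mapsto tu_P$, and in the autonomous case with the scaling path $t\mapsto u_P(\cdot/t)$, using the corresponding Pohozaev identity to locate its unique maximum at $t=1$.

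Combining this with Proposition~\ref{p_ccp}, any such $u_P$ satisfies
\[
I_P(u_P)\;\ge\; c(I_P)\;\ge\; c(I),
\]
and when $u_P$ is at the mountain pass level the first inequality is an equality. In either reading one gets $I_P(u_P)\ge c(I)$, which contradicts the hypothesized strict inequality $I_P(u_P)<c(I)$. Hence no such solution $u_P$ can exist, which is exactly the assertion of the corollary.

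There is essentially no obstacle to this argument; the only points that require care are that $c(I_P)$ is well defined, positive and finite — guaranteed by Lemma~\ref{l_mpgeometry} and Remark~\ref{r_compar} — and that Proposition~\ref{p_ccp} is invoked under precisely the standing hypotheses, which it is.
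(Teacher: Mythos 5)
Your proposal is correct and follows exactly the route the paper intends: the corollary is stated as an immediate consequence of Proposition \ref{p_ccp}, since a solution at the mountain pass level has $I_P(u_P)=c(I_P)\ge c(I)$, which rules out $I_P(u_P)<c(I)$. Your additional use of Remark \ref{r_compar} to cover any nontrivial critical point with $I_P(u_P)\ge c(I_P)$ is consistent with the paper's framework and adds nothing that conflicts with it.
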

\begin{proposition}\label{A_estrito}
	Suppose in addition that \ref{Hast} holds. Then $c(I) < c(I_P).$
\end{proposition}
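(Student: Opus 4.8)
The plan is to adapt the two constructions already used in the proof of Proposition \ref{p_ccp}, but now \emph{without} sending the translation sequence $(y_k)$ off to infinity: since \ref{Hast} orders the coefficients pointwise, we can work with a genuine solution $u_P$ of \eqref{PP} realizing $I_P(u_P)=c(I_P)$ (its existence comes from Theorem \ref{teo_periodic} or \ref{teo_periodic2}, together with Remark \ref{r_compar} which gives $c(I_P)=\mathcal{G}_S(I_P)$) and estimate $I$ along the natural path through $u_P$ itself. Under the first case of \ref{h_minimax}, take the ray $\xi(t)=tu_P$; it lies in $\Gamma_I$ by the argument of Lemma \ref{l_mpgeometry} (using \ref{h_tang1}, which holds by the remarks after the hypotheses). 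Since $V\le V_P$ and $F\ge F_P$, $K\ge K_P$ pointwise, we get
\begin{equation*}
	I(tu_P)=\frac{t^2}{2}\|\nabla u_P\|_2^2+\frac{t^2}{2}\int_{\mathbb{R}^N}V u_P^2\dx-\int_{\mathbb{R}^N}F(x,tu_P)+K(x,tu_P)\dx\le I_P(tu_P),
\end{equation*}
hence $c(I)\le\sup_{t\ge0}I(tu_P)\le\sup_{t\ge0}I_P(tu_P)=I_P(u_P)=c(I_P)$, the last equality because $t=1$ is the unique maximum of $t\mapsto I_P(tu_P)$ when $s\mapsto(f_P+k_P)(x,s)/s$ is strictly increasing. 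In the second case of \ref{h_minimax} (autonomous periodic problem) replace the ray by the dilation path $\zeta(t)=u_P(\cdot/t)$, which lies in $\Gamma_{I_P}$ and has $t=1$ as its unique critical point by the Pohozaev identity \eqref{pohoci}; the same pointwise inequalities give $I(\zeta(t))\le I_P(\zeta(t))$ for all $t\ge0$, whence $c(I)\le\sup_{t\ge0}I_P(\zeta(t))=I_P(u_P)=c(I_P)$.

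It remains to upgrade $\le$ to $<$. Here we use the extra information in \ref{Hast}. Suppose first alternative (a): one of $V\le V_P$, $F_P\le F$, $K_P\le K$ is strict on a set $A$ of positive measure. Let $t_\ast\ge0$ be the point where $I(\xi(\cdot))$ (resp.\ $I(\zeta(\cdot))$) attains its maximum; since $\xi(t_\ast)=t_\ast u_P\not\equiv0$ (indeed $t_\ast>0$ because $I$ has the mountain pass geometry and $\xi(0)=0$), and $u_P\ne0$ a.e.\ on a full-measure set by unique continuation for the linear equation it solves, the functions $t_\ast u_P$ and $u_P(\cdot/t_\ast)$ are nonzero a.e.\ on $A$. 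Therefore at least one of the three integral inequalities is \emph{strict}:
\begin{equation*}
	\int_{\mathbb{R}^N}V|t_\ast u_P|^2\dx<\int_{\mathbb{R}^N}V_P|t_\ast u_P|^2\dx\quad\text{or}\quad\int_{\mathbb{R}^N}F_P(x,t_\ast u_P)\dx<\int_{\mathbb{R}^N}F(x,t_\ast u_P)\dx,
\end{equation*}
and similarly for $K$; in every case this forces $I(\xi(t_\ast))<I_P(\xi(t_\ast))\le I_P(u_P)=c(I_P)$, and since $c(I)\le I(\xi(t_\ast))$ we conclude $c(I)<c(I_P)$. For alternative (b), where the strict inequality for the primitives holds on an open interval $(-\delta,\delta)$ around the origin, a different choice is needed because $t_\ast u_P$ need not take values in $(-\delta,\delta)$. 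Instead one uses the dilation path and the fact (from the proof of Proposition \ref{p_jota}, or directly) that for $t$ small the rescaled function $u_P(\cdot/t)$ spreads out and its $L^\infty$ norm stays fixed but the strict-inequality contribution near the origin can be made to dominate — more precisely, one should choose the truncation/rescaling so that the path passes through a function with range inside $(-\delta,\delta)$ near which $K_P<K$ strictly, giving a strict drop at the maximum. Concretely, since $I_P(\xi(t))>I(\xi(t))$ would follow once the argument of $F,K$ lies in $(-\delta,\delta)$ on a set of positive measure, one can instead compare along $\xi(t)=tu_P$ for the \emph{small} interval of $t$ near $0$ where $|tu_P|<\delta$: the maximum of $t\mapsto I(tu_P)$ being strictly positive and attained at some $t_\ast>0$, and $I\le I_P$ everywhere with strict inequality whenever $0<|tu_P(x)|<\delta$ on a positive-measure set, yields $c(I)\le\max_t I(tu_P)<\max_t I_P(tu_P)=c(I_P)$ provided the maximizing $t_\ast$ can be taken with $t_\ast\|u_P\|_\infty<\delta$; if not, one truncates $u_P$ to $u_P^\eta$ with small sup norm, checks $\xi^\eta(t)=tu_P^\eta\in\Gamma_I$ using \ref{h_tang1} and $F\ge\lambda|s|^{p_0}$ (hypothesis \ref{f_porbaixo}), and runs the same comparison.

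The main obstacle is precisely handling alternative (b): one has to guarantee that the chosen path realizing the near-optimal value of $c(I)$ actually ``feels'' the strict inequality of the primitives near the origin, which is delicate because the mountain-pass maximizer $t_\ast u_P$ is typically large in amplitude, while the strict gap lives only near $0$. The resolution is either a careful rescaling (exploiting that under \ref{h_minimax} case (ii) the dilation path $u_P(\cdot/t)$ visits small-amplitude regimes as $t\to\infty$, combined with the fact that $F\ge0$ so truncation does not destroy membership in $\Gamma_I$) or a truncation argument as sketched above; in both, one must keep track of the uniform convergences of $X,Y,X_P,Y_P$ from the proof of Proposition \ref{p_ccp} to pass to the limit. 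All the needed ingredients — the existence of the optimal $u_P$, the mountain-pass geometry, the Pohozaev identities, Remark \ref{r_compar}, and the pointwise monotonicity — are already in place, so the remaining work is bookkeeping around this single subtlety.
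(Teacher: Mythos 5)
Your overall route is the paper's: take the periodic minimizer $u_P$ with $I_P(u_P)=c(I_P)$, run the path $\xi(t)=tu_P$ (first case of \ref{h_minimax}) or $\zeta(t)=u_P(\cdot/t)$ (second case), use the pointwise ordering from \ref{Hast} to get $I\le I_P$ along the path, and invoke Remark \ref{r_compar} so that $\sup_{t\ge0}I_P=I_P(u_P)=c(I_P)$; strictness is then extracted at the maximizer $t_\ast>0$. Your treatment of alternative (a) is sound (the paper is terser, merely recording $u_P\in C(\mathbb{R}^N)$ from elliptic regularity, whereas you add a unique-continuation argument to ensure $u_P\neq0$ a.e.\ on the strictness set; that is a legitimate, if slightly heavier, way to justify the strict integral inequality).

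The gap is in alternative (b). The obstacle you describe — that the mountain-pass maximizer $t_\ast u_P$ is ``large in amplitude'' and therefore may not feel the strict inequality of the primitives near the origin — is not real, and the remedies you sketch do not work: truncating $u_P$ to $u_P^\eta$ with small sup norm destroys the key identity $\sup_{t\ge0}I_P(tu_P^\eta)=c(I_P)$, since Remark \ref{r_compar} requires the path to pass through the critical point $u_P$ itself (for a truncation the supremum is in general strictly larger than $c(I_P)$, and the chain of inequalities collapses); and the dilation $u_P(\cdot/t)$ never ``visits small-amplitude regimes,'' because $\|u_P(\cdot/t)\|_\infty=\|u_P\|_\infty$ for every $t>0$. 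The correct (and simple) observation, which is what the paper's opening remark $u_P\in C(\mathbb{R}^N)$ is for, is that for any fixed $t_\ast>0$ the function $t_\ast u_P$ is continuous, nontrivial and in $L^2(\mathbb{R}^N)$, so the open set $\{x:0<|t_\ast u_P(x)|<\delta\}$ is nonempty: otherwise $|t_\ast u_P|\ge\delta$ on the open set where $u_P\neq0$, and continuity forces that set to have empty boundary, i.e.\ to be all of $\mathbb{R}^N$, contradicting square-integrability. Hence the strict inequalities $F_P(x,s)<F(x,s)$, $K_P(x,s)<K(x,s)$ for $0<|s|<\delta$ already give $I(t_\ast u_P)<I_P(t_\ast u_P)$ at the maximizer (and likewise $I(u_P(\cdot/t_\ast))<I_P(u_P(\cdot/t_\ast))$ in case (ii) of \ref{h_minimax}), so no truncation, rescaling or extra limiting bookkeeping is needed; with this observation your proof closes and coincides with the paper's.
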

\begin{proof} From \cite{brezis-kato,struwebook}, we know that $u_P \in C(\mathbb{R}^N).$ 
	
	\noindent \textit{Case i)}: $ s \mapsto \left(f_P(x,s)+k_P(x,s) \right)|s|^{-1}$ is strictly increasing, for all $x \in \mathbb{R}^N.$ Following the argument of Lemma \ref{l_mpgeometry}, we have that $\xi(t) = t u_P$ belongs to $\Gamma _I.$ By Remark \ref{r_compar},
	\begin{equation*}
		c(I) \leq \sup _{t \geq 0} I(\xi (t)) = I(t _0 u_P) < I_P (t_0 u_P) \leq \sup _{t \geq 0} I_P(\xi (t)) = I_P(u_P) = c(I_P).
	\end{equation*}
	
	\noindent\textit{Case ii)}: Eq. \eqref{PP} is independent of $x,$ that is, $V_P(x) = V_P >0,$ $f_P(x,s) = f_P(s)$ and $k_P(x,s) = k_P(s).$ In this case, we consider $\zeta (t) = u_P (\cdot / t).$ We use \eqref{pohoci} to prove that $\zeta \in \Gamma _I,$ 
	\begin{equation*}
		I(\zeta(t)) \leq \frac{t^{N-2}}{2}\| u_P \|_2^2 - t^N \int _{\mathbb{R}^N} F_P(u_P) + K_P(u_P) - \frac{V_P}{2} u^2_P\dx \rightarrow - \infty,\quad \text{as }t \rightarrow \infty.
	\end{equation*}
	Using Remark \ref{r_compar} again, we have
	\begin{equation*}
		c(I) \leq \sup _{t \geq 0} I(\zeta (t)) = I(u_P(\cdot / t_0)) < I_P (u_P(\cdot / t_0)) \leq \sup _{t \geq 0} I_P(\zeta (t)) = I_P(u_P) = c(I_P).\qedhere
	\end{equation*}
\end{proof}


\begin{thebibliography}{10}
	
	\bibitem{MR1843972}
	C.~O. Alves, P.~C. Carri\~ao and O.~H. Miyagaki, Nonlinear perturbations of a
	periodic elliptic problem with critical growth, \emph{J. Math. Anal. Appl.}
	{\bf 260} (2001) 133--146.
	
	\bibitem{zbMATH01658785}
	C.~O. Alves, J.~Marcos~do {\'O} and O.~H. Miyagaki, On perturbations of a class
	of a periodic {{\(m\)}}-{Laplacian} equation with critical growth,
	\emph{Nonlinear Anal., Theory Methods Appl., Ser. A, Theory Methods} {\bf 45}
	(2001) 849--863.
	
	\bibitem{brezis-kato}
	H.~Br\'ezis and T.~Kato, Remarks on the {S}chr\"odinger operator with singular
	complex potentials, \emph{J. Math. Pures Appl. (9)} {\bf 58} (1979) 137--151.
	
	\bibitem{MR699419}
	H.~Br\'ezis and E.~Lieb, A relation between pointwise convergence of functions
	and convergence of functionals, \emph{Proc. Amer. Math. Soc.} {\bf 88} (1983)
	486--490.
	
	\bibitem{MR709644}
	H.~Br\'ezis and L.~Nirenberg, Positive solutions of nonlinear elliptic
	equations involving critical {S}obolev exponents, \emph{Comm. Pure Appl.
		Math.} {\bf 36} (1983) 437--477.
	
	\bibitem{MR1051888}
	I.~Ekeland, \emph{Convexity methods in {H}amiltonian mechanics},
	\emph{Ergebnisse der Mathematik und ihrer Grenzgebiete (3) [Results in
		Mathematics and Related Areas (3)]}, vol.~19, Springer-Verlag, Berlin (1990).
	
	\bibitem{ferrazpenal}
	D.~Ferraz, Existence of bound states for quasilinear elliptic problems
	involving critical growth and frequency, \emph{NoDEA Nonlinear Differential
		Equations Appl.} {\bf 31} (2024) 35.
	
	\bibitem{zbMATH01215984}
	P.~G{\'e}rard, Description of the lack of compactness for the {Sobolev}
	imbedding, \emph{ESAIM, Control Optim. Calc. Var.} {\bf 3} (1998) 213--233.
	
	\bibitem{MR1009077}
	M.~Guedda and L.~V\'eron, Quasilinear elliptic equations involving critical
	{S}obolev exponents, \emph{Nonlinear Anal.} {\bf 13} (1989) 879--902.
	
	\bibitem{zbMATH01270852}
	S.~Jaffard, Analysis of the lack of compactness in the critical {Sobolev}
	embeddings, \emph{J. Funct. Anal.} {\bf 161} (1999) 384--396.
	
	\bibitem{MR2532816}
	H.~F. Lins and E.~A.~B. Silva, Quasilinear asymptotically periodic elliptic
	equations with critical growth, \emph{Nonlinear Anal.} {\bf 71} (2009)
	2890--2905.
	
	\bibitem{MR778970}
	P.-L. Lions, The concentration-compactness principle in the calculus of
	variations. {T}he locally compact case. {I}, \emph{Ann. Inst. H. Poincar\'e{}
		Anal. Non Lin\'eaire} {\bf 1} (1984) 109--145.
	
	\bibitem{zbMATH04155282}
	P.-L. Lions, The concentration-compactness principle in the calculus of
	variations. {The} locally compact case. {II}, \emph{Ann. Inst. H.
		Poincar\'e{} Anal. Non Lin\'eaire} {\bf 1} (1984) 223--283.
	
	\bibitem{zbMATH04155283}
	P.-L. Lions, The concentration-compactness principle in the calculus of
	variations. {The} limit case. {I}, \emph{Rev. Mat. Iberoam.} {\bf 1} (1985)
	145--201.
	
	\bibitem{zbMATH04155284}
	P.-L. Lions, The concentration-compactness principle in the calculus of
	variations. {The} limit case. {II}, \emph{Rev. Mat. Iberoam.} {\bf 1} (1985)
	45--121.
	
	\bibitem{zbMATH06723549}
	J.~Liu, J.-F. Liao and C.-L. Tang, Ground state solution for a class of
	{Schr{\"o}dinger} equations involving general critical growth term,
	\emph{Nonlinearity} {\bf 30} (2017) 899--911.
	
	\bibitem{MR3834729}
	A.~N. Lyberopoulos, Quasilinear scalar field equations involving critical
	{S}obolev exponents and potentials vanishing at infinity, \emph{Proc. Edinb.
		Math. Soc. (2)} {\bf 61} (2018) 705--733.
	
	\bibitem{MR1455065}
	O.~H. Miyagaki, On a class of semilinear elliptic problems in {${\bf R}^N$}
	with critical growth, \emph{Nonlinear Anal.} {\bf 29} (1997) 773--781.
	
	\bibitem{zbMATH07573810}
	M.~Okumura, Profile decomposition in {Sobolev} spaces and decomposition of
	integral functionals. {I}: {Inhomogeneous} case, \emph{J. Funct. Anal.} {\bf
		283} (2022) 52.
	
	\bibitem{zbMATH06324011}
	G.~Palatucci and A.~Pisante, Improved {Sobolev} embeddings, profile
	decomposition, and concentration-compactness for fractional {Sobolev} spaces,
	\emph{Calc. Var. Partial Differential Equations} {\bf 50} (2014) 799--829.
	
	\bibitem{zbMATH06419145}
	G.~Palatucci and A.~Pisante, A global compactness type result for
	{Palais}-{Smale} sequences in fractional {Sobolev} spaces, \emph{Nonlinear
		Anal., Theory Methods Appl., Ser. A, Theory Methods} {\bf 117} (2015) 1--7.
	
	\bibitem{MR2409928}
	I.~Schindler and K.~Tintarev, Mountain pass solutions to semilinear problems
	with critical nonlinearity, \emph{Discrete Contin. Dyn. Syst.}  (2007)
	912--919.
	
	\bibitem{sirakov2000}
	B.~Sirakov, Existence and multiplicity of solutions of semi-linear elliptic
	equations in {$\bold R^N$}, \emph{Calc. Var. Partial Differential Equations}
	{\bf 11} (2000) 119--142.
	
	\bibitem{zbMATH00834238}
	S.~Solimini, A note on compactness-type properties with respect to {Lorentz}
	norms of bounded subsets of a {Sobolev} space, \emph{Ann. Inst. Henri
		Poincar{\'e}, Anal. Non Lin{\'e}aire} {\bf 12} (1995) 319--337.
	
	\bibitem{zbMATH06505690}
	S.~Solimini and C.~Tintarev, On the defect of compactness in {Banach} spaces,
	\emph{C. R., Math., Acad. Sci. Paris} {\bf 353} (2015) 899--903.
	
	\bibitem{zbMATH03849694}
	M.~Struwe, A global compactness result for elliptic boundary value problems
	involving limiting nonlinearities, \emph{Math. Z.} {\bf 187} (1984) 511--517.
	
	\bibitem{struwebook}
	M.~Struwe, \emph{Variational methods}, Berlin, Springer-Verlag (1990),
	applications to nonlinear partial differential equations and Hamiltonian
	systems.
	
	\bibitem{MR463908}
	G.~Talenti, Best constant in {S}obolev inequality, \emph{Ann. Mat. Pura Appl.
		(4)} {\bf 110} (1976) 353--372.
	
	\bibitem{MR3194360}
	X.~H. Tang, New super-quadratic conditions on ground state solutions for
	superlinear {S}chr\"odinger equation, \emph{Adv. Nonlinear Stud.} {\bf 14}
	(2014) 361--373.
	
	\bibitem{zbMATH06252858}
	C.~Tintarev, Concentration analysis and cocompactness, \emph{Concentration
		analysis and applications to PDE. ICTS workshop, Bangalore, India, January
		3--12, 2012}, Basel: Birkh{\"a}user/Springer (2013) 117--141.
	
	\bibitem{MR2409935}
	K.~Tintarev, Positive solutions of elliptic equations with a critical
	oscillatory nonlinearity, \emph{Discrete Contin. Dyn. Syst.}  (2007)
	974--981.
	
	\bibitem{MR2465979}
	K.~Tintarev, Concentration compactness at the mountain pass level in semilinear
	elliptic problems, \emph{NoDEA Nonlinear Differential Equations Appl.} {\bf
		15} (2008) 581--598.
	
	\bibitem{tintabook}
	K.~Tintarev and K.-H. Fieseler, \emph{Concentration compactness.
		{Functional}-analytic grounds and applications}, London, Imperial College
	Press (2007).
	
	\bibitem{MR1400007}
	M.~Willem, \emph{Minimax theorems}, \emph{Progress in Nonlinear Differential
		Equations and their Applications}, vol.~24, Birkh\"auser Boston, Inc.,
	Boston, MA (1996).
	
	\bibitem{zbMATH06375868}
	J.~Zhang and W.~Zou, The critical case for a {Berestycki}-{Lions} theorem,
	\emph{Sci. China, Math.} {\bf 57} (2014) 541--554.
	
\end{thebibliography}
\end{document}